\theoremstyle{definition}
\newtheorem{theorem}{Theorem}[section]
\newtheorem{lemma}[theorem]{Lemma}
\newtheorem{proposition}[theorem]{Proposition}
\newtheorem{corollary}[theorem]{Corollary}
\newtheorem{definition}[theorem]{Definition}
\theoremstyle{remark}
\newtheorem{remark}[theorem]{Remark}
\numberwithin{equation}{section}
\newcommand{\Real}{\exponential{Re}}
\newcommand{\Image}{\exponential{Im}}
\newcommand{\smallo}{o}
\newcommand{\bigO}{\mathcal{O}}
\definecolor{lgreen}{rgb}{0.0, 0.48, 0.0}
\definecolor{lpurple}{rgb}{0.48, 0.0, 0.48}
\definecolor{bblue}{rgb}{0.2, 0.4, 0.8}
\definecolor{bgreen}{rgb}{0.2, 0.6, 0.4}
\definecolor{bred}{rgb}{0.8, 0.4, 0.2}
\definecolor{bviolet}{rgb}{0.7, 0.2, 0.7}
\definecolor{blackred}{rgb}{0.6, 0.3, 0.3}
\definecolor{blackblue}{rgb}{0.3, 0.3, 0.6}
\definecolor{borange}{rgb}{0.8, 0.4, 0.2}
\tikzset{
  treenode/.style = {align=center, inner sep=0pt, text centered,
    font=\sffamily},
  arnBleuPetit/.style = {treenode, circle, bblue, draw=bblue,
    fill=bblue!10,
    minimum width=0.8em, minimum height=0.5em
  },
  arnRougePetit/.style = {treenode, circle, bred, draw=bred,
    fill=bred!40,
    minimum width=0.8em, minimum height=0.5em
  },
  arnBleuGrande/.style = {treenode, circle, bblue, draw=bblue,
    text width=1.5em, very thick,
    fill=bblue!10
  },
  arnOrangePetit/.style = {treenode, circle, black, draw=borange,
    fill=borange!20,
    minimum width=0.8em, minimum height=0.5em
  },
  arnVioletPetit/.style = {treenode, circle, black, draw=bviolet,
    fill=bviolet!08,
    minimum width=0.8em, minimum height=0.5em
  },
  arnVioletGrande/.style = {treenode, circle, bviolet, draw=bviolet,
    text width=1.5em, very thick,
    fill=bblue!10
  },
  arnOrangeGrande/.style = {treenode, circle, borange, draw=borange,
    text width=1.5em, very thick,
    fill=borange!10
  },
  arnVertPetit/.style = {treenode, circle, black, draw=bgreen,
    fill=bgreen!20,
    minimum width=0.8em, minimum height=0.5em
  },
  arnVertGrande/.style = {treenode, circle, bgreen, draw=bgreen,
    text width=1.5em, very thick,
    fill=bgreen!10}
}
\newcommand{\fromto}[2]{
    \path (#1) edge [black,thick,
    decoration={markings,mark=at position 1 with
    {\arrow[ultra thick,blackblue, rotate=0]{>}}}, postaction={decorate}
    ] node {} (#2);
}
\newcommand{\aprod}[2]{
    \path (#1) edge [blackred,thick,
    decoration={markings,mark=at position 1 with
    {\arrow[ultra thick,blackred, rotate=0]{>}}}, postaction={decorate}
    ] node {} (#2);
}
\def\GrER           {\mathbb{G}}    
\def\GrERMulti      {\mathbb{MG}}   
\def\GrGilb         {\mathbb{G}}    
\def\Di             {\mathbb{D}}    
\def\DiER           {\mathbb{SD}}   
\def\DiERBoth       {\mathbb{D}}    
\def\DiGilb         {\mathbb{SD}}   
\def\DiGilbBoth     {\mathbb{D}}    
\def\DiERMulti      {\mathbb{MD}}   
\def\DiGilbMulti    {\mathbb{MD}}   
\let\geq\geqslant
\let\leq\leqslant
\newcommand{\graphic}[1]{\widehat{\mathbf{#1}}}
\newcommand{\gset}{\graphic{Set}}
\newcommand{\gsetsimple}{\graphic{Set}^{(\text{simple})}}
\newcommand*{\eg}{\textit{e.g.}\@\xspace}
\newcommand*{\cf}{\textit{cf.}\@\xspace}
\newcommand*{\ie}{\textit{i.e.}\@\xspace}
\newcommand*{\whp}{\textit{w.h.p.}\@\xspace}
\newcommand*{\resp}{\textit{resp.}\@\xspace}
\newcommand{\exponential}[1]{\operatorname{#1}}
\newcommand{\eComplex}{\exponential{Complex}}
\newcommand{\eStrong}{\exponential{Strong}}
\newcommand{\eStrongSimple}{\exponential{Strong}^{(\text{simple})}}
\newcommand{\eStrongStrict}{\exponential{Strong}^{(\text{strict})}}
\newcommand{\MG}{M\!G}
\newcommand{\MD}{M\!D}
\newcommand{\ai}{\operatorname{Ai}}
\newcommand{\bi}{\operatorname{Bi}}
\newcommand{\hi}{\operatorname{Hi}}
\newcommand{\Delemsimple}{\widehat D_{\mathrm{elem}}^{(\mathrm{simple})}}
\title{The birth of the strong components\footnote{
    This paper is an extended version of several earlier works by the
    authors. Some parts of this paper appeared in
    the 10th European Conference on Combinatorics, Graph Theory and Applications
    (EUROCOMB 2019),
    in the 32nd International Conference on Formal Power Series and Algebraic
    Combinatorics (FPSAC 2020),
    and in
    the 31st International Conference on Probabilistic,  Combinatorial and
    Asymptotic Methods for the Analysis of Algorithms (AofA 2020),
    see~\cite{de2019symbolic,Panafieu2020,Naina2020}.
    Authors are presented in alphabetical order.
  }}
\author[$\dag$, $\sharp$, $\natural$]{Sergey \textsc{Dovgal}}
\author[$*$]{\'Elie \textsc{de Panafieu}}
\author[$\ddag$]{Dimbinaina \textsc{Ralaivaosaona}}
\author[$\S$]{Vonjy \textsc{Rasendrahasina}}
\author[$\ddag$, $\P$]{Stephan \textsc{Wagner}}
\affil[$\dag$]{LIPN, Universit\'e  Sorbonne Paris Nord, France}
\affil[$\sharp$]{LaBRI, Universit\'e de Bordeaux, France}
\affil[$\natural$]{IMB, Universit\'e de Bourgogne, France}
\affil[$*$]{Nokia Bell Labs, France}
\affil[$\ddag$]{Department of Mathematical Sciences, Stellenbosch
  University, South Africa}
\affil[$\S$]{\'Ecole Normale Supérieure, Universit\'e d'Antananarivo, Madagascar}
\affil[$\P$]{Department of Mathematics, Uppsala University, Sweden}
\date{}
\begin{document}
\maketitle

\begin{abstract}
    It is known that random directed graphs $D(n,p)$ undergo a phase transition
    around the point $p = 1/n$.
    Earlier, \L{}uczak and Seierstad
    have established that as $n \to \infty$ when $p = (1 + \mu n^{-1/3})/n$,
    the asymptotic probability that
    the strongly connected components of a random directed graph are only cycles
    and single vertices decreases from 1 to 0 as
    $\mu$ goes from $-\infty$ to $\infty$.

    By using techniques from analytic combinatorics, we establish the exact
    limiting value of this probability as a function of $\mu$ and provide
    more statistical insights into the structure of a random digraph around,
    below and above its transition point.
    We obtain the limiting probability that a
    random digraph is acyclic and the probability that it has one strongly
    connected complex component with a given difference between the number of
    edges and vertices (called excess). Our result can be extended to the case of
    several complex components with given excesses as well in the whole range of
    sparse digraphs.

    Our study is based on a general symbolic method which can deal with
    a great variety of possible digraph families, and a version of the
    saddle point method which can be systematically applied to the complex contour
    integrals appearing from the symbolic method.
    While the technically easiest model is the model of random multidigraphs,
    in which
    multiple edges are allowed, and where edge multiplicities are
    sampled independently
    according to a Poisson distribution with a fixed parameter $p$,
    we also show how to systematically approach the family of
    simple digraphs, where multiple edges are forbidden, and where 2-cycles are
    either allowed or not.

    Our theoretical predictions are supported by numerical simulations when
    the number of vertices is finite, and we provide tables of numerical values
    for the integrals of Airy functions that appear in this study.
\end{abstract}


\tableofcontents

\section{Introduction}
\label{section:introduction}

Random \emph{graphs} and \emph{directed graphs} (\emph{digraphs})
are omnipresent in many modern
research areas such as computer science, biology, the study of social and
technological networks, analysis of algorithms, and theoretical physics.
Depending on the problem formulation, one chooses either an undirected or a
directed version.
Many different models of randomness
can be defined on the set of graphs, ranging from the models where all the
edges are sampled independently or almost independently to the models where
edge attachment is more preferential towards vertices with higher degrees.
The current interest in this thriving topic is witnessed by a large number
of books on random graphs, such as the books of Bollob\'as~\cite{Bollobas2001},
Kolchin~\cite{Kolchin1998}, Janson, \L{}uczak and Ruci\'
nski~\cite{Janson2000},  Frieze and Karo\'nski~\cite{Frieze2015} and
van der Hofstad~\cite{Remco2016}.

\subsection{Historical background}

The simplest possible model of a random graph is that of Erd\H{o}s and Rényi,
where all \( 2^{\binom{n}{2}} \) graphs whose vertex set is \( [n] := \{1,
\ldots, n\} \) have the same probability of being chosen.
Following this work, the study of (undirected) random graphs  was launched by
Erd\H{o}s and Rényi in a series of seminal
papers~\cite{Erdos1959,Erdos1960,Erdos1964,Erdos1966} where they set the
foundations of random graph theory while studying the \emph{uniform model}
$\GrER(n,m)$. We will make the precise definitions of the various models in the
following section.
Simultaneously, Gilbert~\cite{Gilbert1959} proposed the \emph{binomial model}
$\GrGilb(n,p)$ which is also studied by
Stepanov~\cite{Stepanov1970a,Stepanov1970b,Stepanov1973}. These two models are
expected to have a similar asymptotic behaviour provided that the expected number
of edges in  $\GrGilb(n,p)$ is near $m$. This equivalence of random graph models
has been established for instance in \cite[Chapter 2]{Bollobas2001}.
Another useful variation (\emph{multigraphs}) allows multiple edges and loops.

When the number of edges  $m$ or, equivalently, the probability $p$  is increased,
the graph  changes from being \emph{sparse} to \emph{dense}.
In the sparse case the number of edges is $m=cn/2$  for $\GrER
(n,m)$   (or equivalently $p=c/n$ for $\GrGilb(n,p)$) for some
constant $c$, while in the dense case \( c \to \infty \) as \( n \to \infty \).
A fascinating transition appears inside the sparse case
when \emph{the density}
$c$ of the graph passes through the \emph{critical value} $c=1$
where the structure of the graph changes abruptly.
This spectacular phenomenon, also called the \emph{phase transition}
or the \emph{double-jump threshold}, has been established by Erd\H{o}s and Rényi.
More precisely, in the \emph{subcritical phase} where $c<1$,
the graph contains
\emph{with high probability}  (\emph{\whp})\footnote{We shall
say that a random (di)graph $G$
 has some property with high probability if the probability that $G$
 has this property tends to $1$ as $n\to\infty$.}
only
acyclic  and unicyclic components  and the largest  component is  of
size of order  $\Theta(\log n)$ while in the \emph{supercritical phase} where
$c>1$ it contains a unique giant component of size
proportional to $n$. Furthermore, on the borders of the critical window defined as
$c = 1+ \mu n^{-1/3}$ with $|\mu|=o(n^{1/3})$, the graph still demonstrates a
subcritical behaviour as $\mu \to -\infty$,
while as \( \mu \to +\infty \) it has a unique giant component
of size $\mu n^{2/3}$ \emph{\whp}
Finally, when $\mu = \Theta(1)$, the largest component has a size of order
$\Theta(n^{2/3})$, where several components of this order may occur.
Another remarkable  phenomenon occurs also when $c = \log
n$. As \( c \) passes \( \log n \), the graph changes from having isolated
vertices to being connected \emph{\whp}

\begin{figure}[hbt]
    \RawFloats
    \begin{minipage}[t]{0.5\textwidth}
    \begin{center}
        \begin{tikzpicture}[>=stealth',thick, node distance=1.0cm]
\draw
node[arnBleuPetit](a) at ( 0, 0)  { }
node[arnBleuPetit](s) at ( 1,-1)  { }
node[arnBleuPetit](d) at ( 2, 0)  { }
node[arnBleuPetit](f) at ( 1, 1)  { }
;
\fromto{a}{d};
\fromto{a}{f};
\fromto{s}{a};
\fromto{d}{s};
\fromto{f}{d};
\draw
node[arnOrangePetit](q)  at (3.8, 0)  { }
node[arnVertPetit](w)  at (3,  -1)  { }
node[arnVioletPetit](e)  at (4.8,-1)  { }
node[arnOrangePetit](r)  at (4.8, 0)  { }
node[arnOrangePetit](t)  at (3.8, 1)  { }
;
\aprod{q}{w};
\fromto{q}{r};
\aprod{e}{r};
\fromto{r}{t};
\fromto{t}{q};
\aprod{d}{t};
\aprod{d}{q};
\aprod{s}{w};
\node[rectangle,dashed,draw,fit=(a)(s)(d)(f), very thick,
      rounded corners=5mm,inner sep= 5pt, bblue] {};
\node[rectangle,dashed,draw,fit=(w), very thick,
      rounded corners=3mm,inner sep= 4pt, bgreen] {};
\node[rectangle,dashed,draw,fit=(t)(q)(r), very thick,
      rounded corners=5mm,inner sep= 5pt, borange] {};
\node[rectangle,dashed,draw,fit=(e), very thick,
      rounded corners=3mm,inner sep= 4pt, bviolet] {};
\end{tikzpicture}
    \end{center}
    \caption{
    \label{fig:strong:comps}
    A directed graph and its strong components. For the sake of clarity, the vertex labels have been omitted.}
    \end{minipage}%
    \begin{minipage}[t]{0.5\textwidth}
    \begin{center}
        \begin{tikzpicture}[>=stealth',thick, node distance=1.0cm]
\draw
node[arnBleuGrande](a) at (-1,-1) {a$\circlearrowright$}
node[arnOrangeGrande](b) at (2,-.5) {b$\circlearrowright$}
node[arnVertGrande](c) at (1,-2) {c$\circlearrowright$}
node[arnVioletGrande](d) at (3,-2) {d$\circlearrowright$}
;
\aprod{a}{b};
\aprod{a}{c};
\aprod{b}{c};
\aprod{d}{b};
\end{tikzpicture}
    \end{center}
    \caption{\label{fig:condensation}Condensation of a directed graph.}
    \end{minipage}
\end{figure}

In \emph{directed graphs}, there are several notions of connected
components. If edge directions are ignored, then the resulting connected
components of the underlying undirected graph are called the
\emph{weakly connected} components of the directed graph. Next,
a component is called \emph{strongly connected}, or
just \emph{strong}, if there is a directed path between any pair of its
vertices (see~\cref{fig:strong:comps}).
If each strong component is replaced
by a single vertex, then the resulting digraph is called a \emph{condensation
digraph}, and it does not contain oriented cycles anymore
(see~\cref{fig:condensation}).

The two random models \( \GrGilb(n,p) \) and \( \GrER(n,m) \) have their
counterparts \( \Di(n,p)\) and \( \Di(n,m) \)
in the world of directed graphs, with the additional possibility
of either allowing or not allowing multiple edges, loops or cycles of length \( 2 \)
(\emph{2-cycles}). A similar
phase transition has been discovered in \( \Di(n,p) \) as well. In the
model where multiple edges and
loops are forbidden, and $2$-cycles are allowed,
Karp~\cite{Karp1990} and \L{}uczak~\cite{Luczak1990} proved that
(i) if $np$ is fixed with \( np < 1 \), then
for any sequence \( \omega(n) \) tending to infinity
arbitrarily slowly every strong component has \emph{\whp} at most \(
\omega(n) \) vertices, and all the strong components are either cycles or single
vertices; (ii) if \( np \) is fixed with \( np > 1 \), then there exists a unique
strong component of size \( \Theta(n) \), while all the other strong components
are of logarithmic size (see also~\cite[Chapter~13]{Frieze2015}).
In the current paper, we call a digraph \emph{elementary} if its strong
components are either single vertices or cycles.
Recently,
\L{}uczak and Seierstad~\cite{Luczak2009} obtained more precise results about
the width of the window in which the phase transition
occurs.
They established that the   scaling window is
given by $np = 1 + \mu n^{-1/3}$, where  $\mu$ is fixed. There, the
largest strongly connected components have sizes of order $n^{1/3}$.
Bounds on  the tail probabilities of the distribution of the size of
the largest component are also given by Coulson~\cite{Coulson2019}.

The structure of the strong components of
a random digraph
has been studied by
many  authors in the dense case, i.e., when $np\to \infty$ as
$n\to\infty$.   The largest strong components in a
random digraph with a given degree sequence were studied by Cooper and
Frieze~\cite{Cooper2004} and the strong connectivity of an
inhomogeneous random digraph was studied by Bloznelis, G\"otze and Jaworski
in~\cite{Bloznelis2012}. The hamiltonicity
was investigated by  Hefetz, Steger and Sudakov~\cite{Hefetz2016} and by
Ferber, Nenadov, Noever, Peter and \v Skori\'c~\cite{Ferber2017}, by
Cooper, Frieze and  Molloy~\cite{Cooper1994}  and by
Ferber, Kronenberg and Long~\cite{Ferber2015}.
Krivelevich, Lubetzky and  Sudakov~\cite{Krivelevich2013} also proved
the existence of  cycles of linear size \whp
when $np$ is large enough.

\subsection{The analytic method}

The \emph{analytic method} used in the current paper manipulates
generating functions and uses complex contour integration to extract their
coefficients. The \emph{symbolic method}, which is a part of this framework,
allows us to construct various families of graphs and
digraphs using a dictionary of admissible operations.
A spectacular example of
such an application is the enumeration of connected graphs \emph{à la} Wright.
Let the \emph{excess} be the difference
between the numbers of edges and vertices of a connected graph.
Any connected graph with given excess can be \emph{reduced}
to one of finitely many multigraphs of the same excess by
recursively removing vertices of degree $1$, and then
by recursively replacing each vertex of degree $2$ by an edge connecting its two
neighbours.
The degree of the resulting multigraph is at least $3$ for each vertex.
A similar reduction procedure can be applied to directed graphs as well. If the
excess of a strongly connected digraph is positive, then such a digraph is called
\emph{complex}.
The reduction procedure can be efficiently expressed on the level of generating
functions using the standard operations. It has been shown
in~\cite{Flajolet89,Janson93} that numerous statistical properties of a random
graph around the point of its phase transition can be obtained in a systematic
manner using the analytic method. More applications of the analytic method can
be found in~\cite{FSBook}.

The enumeration of \emph{directed acyclic graphs} (DAGs)
and strongly connected digraphs on the level of generating
functions or recurrences
has been successfully approached at least since 1969.
Apparently, it was Liskovets~\cite{liskovets1969,liskovets1970number}
who first deduced a recurrence for the number
of strongly connected digraphs and also introduced and
studied the concept of initially connected digraph,
a helpful tool for their enumeration.
Subsequently, Wright~\cite{wright1971number} derived
a simpler recurrence for strongly connected digraphs
and Liskovets~\cite{liskovets1973} extended his techniques
to the unlabelled case.
Stanley counted labelled DAGs in~\cite{stanley1973acyclic}, and Robinson,
in his paper~\cite{Robinson73},
developed a
general framework to enumerate digraphs with given strong components, and
introduced a \emph{special generating function}, which incorporates a special
convolution rule inherent to directed graphs.
In the unlabelled case, his approach is very much related
to species theory~\cite{bergeron1998combinatorial}
which systematises the usage of cycle index series.
Robinson~\cite{robinson1977strong} also provided
a simple combinatorial explanation for the generating function
of strongly connected digraphs
in terms of the cycle index function.

More explicitly, it was established by several authors
that if we let \( a_{n,m} \) denote the
number of acyclic digraphs with \( n \) vertices and \( m \) edges, and set
\[
    A(z, w) = \sum_{n,m \geq 0} a_{n,m}
    \frac{w^m}{(1 + w)^{\binom{n}{2}}} \dfrac{z^n}{n!} \, ,
\]
then this bivariate generating function can be written in the simple form
\[
    A(z, w) = \dfrac{1}{\widetilde \phi(z, w)} \, ,
    \quad \text{where} \quad
    \widetilde \phi(z, w) = \sum_{n = 0}^\infty
    (1 + w)^{-\binom{n}{2}}\dfrac{(-z)^n}{n!} \, .
\]
The function \( \widetilde \phi(\cdot, \cdot) \) is called the \emph{deformed
exponential}, because \( \widetilde \phi(z, 0) = e^{-z} \).

Using this generating function of directed acyclic graphs, Bender, Richmond,
Robinson and Wormald~\cite{Bender86,bender88} analysed the asymptotic number of
DAGs in the \emph{dense} case in the model \( \Di(n, p) \) when \( p \) is a
positive constant, for both labelled and unlabelled cases.
In the current paper, we continue asymptotic enumeration for the sparse
case \( p = c/n \)
and discover a phase transition for the asymptotic number of DAGs around
\( c = 1 \). We present a refined analysis for
the region \( c = 1 + \mu n^{-1/3} \), where \( \mu \) is a bounded real value,
or when \( \mu \to \pm \infty \) as \( n \to \infty \).

\subsection{Our results}

Our study is based on a general symbolic method which can deal with
a great variety of possible digraph families. This method allows us to construct
integral representations of the generating functions of interest.
Then we develop a version of the saddle point method which can be systematically
applied to the complex contour integrals appearing from the symbolic method.
Note that while similar techniques for undirected graphs have been
well developed, the integral representations for directed graphs have not yet
been pushed to their full potential.

When \( p = (1 + \mu n^{-1/3})/n \) and \( n \to \infty \), we establish the
exact limiting values of the probabilities that a random digraph is (i) acyclic,
(ii) elementary, or (iii) has one complex component of given excess, as a
function of \( \mu \), and provide more statistical insights into the structure
of a random digraph around, below and above its transition point \( p = 1/n \).
Our results can be extended to the case of several complex components with given
excesses as well in the whole range of sparse digraphs.
Note that while the technically easiest model is the model of random
\emph{multidigraphs}, in which multiple edges are allowed,
and where edge
multiplicities are sampled independently according to a Poisson distribution
with a fixed parameter $p$, we also show how to systematically approach the
family of simple digraphs, where multiple edges are forbidden, and where
2-cycles are either allowed or not.

As Janson, Knuth, \L{}uczak and Pittel put it in~\cite{Janson93},

\begin{quote}\emph{``%
    \ldots what seems to be the single most
    important related area ripe for investigation at the present time.
    \ldots Random directed multigraphs are of great importance in computer
    applications, and it is shocking that so little attention has been
    given to their study so far.
    \ldots A complete analysis of the random
    directed multigraph process is clearly called for, preferably based on
    generating functions so that extensive quantitative information can be
    derived without difficulty.%
''}\end{quote}

The form of our results suggests that the analytic method is the most
natural approach to the problem, since the numerical constants involve
certain complex integrals. In a similar context, we can quote a fragment of the
paper ``The first cycles in an evolving graph''
by Flajolet, Pittel and Knuth~\cite{Flajolet89}.

\begin{quote}\emph{``%
    \ldots an interesting question posed by Paul Erd\H{o}s and communicated
    by Edgar Palmer to the 1985 Seminar on Random Graphs in Posna\'{n}:
    `What is the expected length of the first cycle in an evolving graph?'
    The answer turns out to be rather surprising: the first cycle has length
    \( K n^{1/6} + \bigO(n^{3/22}) \) on the average, where
    \[
        K = \dfrac{1}{\sqrt{8\pi} i}
        \int_{-\infty}^{+\infty}
        \int_{1 - i \infty}^{1 + i \infty}
        e^{(\mu + 2s)(\mu-s)^2 / 6}
        \dfrac{\mathrm ds}{s}
        \mathrm d \mu
        \approx
        2.0337\, .
    \]
    The form of this result suggests that the expected behavior may be quite
    difficult to derive using techniques that do not use contour
    integration.%
''}\end{quote}

Consider a random digraph in the model \( \Di(n,p) \) whose vertex set
is $[n]$ and where each of the $n(n-1)$ possible edges occurs independently
with probability $p$, $0 \leq p\leq 1$.
The value \( p = 1/n \) is special because it represents the centre of the phase
transition window.  The following two questions and their answers are
paradigmatic for the approach considered in the current paper:
\begin{itemize}
    \item What is the probability that a digraph \( \Di(n,\frac{1}{n}) \)
        is acyclic?
    \item What is the probability that a digraph \( \Di(n,\frac{1}{n}) \)
        is elementary?
\end{itemize}
The asymptotic answers to the two questions are, respectively,
\[
    \mathbb P_{\mathsf{acyclic}}(n, \tfrac{1}{n})
    \sim
    (2n)^{-1/3} \dfrac{e}{2 \pi i}
    \int_{-i \infty}^{i \infty}
    \dfrac{1}{\ai(-2^{1/3} s)} \mathrm ds
    \approx
    1.328524 \cdot n^{-1/3}
\]
and
\[
    \mathbb P_{\mathsf{elementary}}(n, \tfrac{1}{n})
   \sim -\dfrac{2^{-2/3}}{2 \pi i}
    \int_{-i \infty}^{i \infty}
    \dfrac{1}{\ai'(-2^{1/3} s)} \mathrm ds
    \approx
    0.6997 \, ,
\]
where \( \ai(\cdot) \) is the Airy function. Its definition and properties are
discussed in detail in~\cref{section:airy}.
The integral form presented above may seem exotic, but turns out to be quite efficient for numerical computations.

\paragraph*{Semi-assisted computer verification.}
In order to ensure that this work is free from arithmetic mistakes and other types of mistakes to a maximum possible extent, we created a repository with detailed \texttt{ipython} notebooks
whose purpose is to provide a way to formally verify the correctness of the theorems in the form of mathematical assertions inside a computer algebra system.

This repository also includes toolboxes for
    (i) symbolic and arithmetic transformations, including the generalised Airy function and Airy integrals;
    (ii) numerical computations of the probabilities from multivariate generating functions;
    (iii) exhaustive enumeration of directed graphs according to the structure of their strongly connected components;
    (iv) detailed per-section analysis of most of the propositions and proofs from the current paper.
In addition, we provide a way to reproduce the numerical computations of the constants that we are using in our theorems, and show how to reproduce the numerical plots of the probabilities of various digraph families around the point of the phase transition.
The list of available notebooks is collected in~\cref{table:cas}. The repository contains six utility libraries written in \texttt{Python}.

\begin{table}[hbt]
    \centering
    \begin{tabular}{l p{.7\textwidth}}
    \textsf{Filename} & \textsf{Description} \\
        \hline
        \hline
        \texttt{\href{https://nbviewer.jupyter.org/urls/gitlab.com/sergey-dovgal/strong-components-aux/-/raw/master/ipynb/Section\%201.ipynb}{Section 1.ipynb}}
        & High-precision numerical evaluation of the constants
        arising from complex contour integrals,~\cref{section:introduction}.\\
        \hline
        \texttt{\href{https://nbviewer.jupyter.org/urls/gitlab.com/sergey-dovgal/strong-components-aux/-/raw/master/ipynb/Section\%203.ipynb}{Section 3.ipynb}}
        & Validation of the series expansions from~\cref{section:symbolic:method} using an exhaustive enumeration for small digraph sizes.\\
        \hline
        \texttt{\href{https://nbviewer.jupyter.org/urls/gitlab.com/sergey-dovgal/strong-components-aux/-/raw/master/ipynb/Section\%206.x.ipynb}{Section 6.x.ipynb}}
        & Plots and numerical computations for~\cref{section:asymptotics:multidigraphs}.
        \\
        \texttt{\href{https://nbviewer.jupyter.org/urls/gitlab.com/sergey-dovgal/strong-components-aux/-/raw/master/ipynb/Section\%207.x.ipynb}{Section 7.x.ipynb}}
        & Plots for~\cref{section:asymptotics:simple}.
        \\
        \hline
        \texttt{\href{https://nbviewer.jupyter.org/urls/gitlab.com/sergey-dovgal/strong-components-aux/-/raw/master/ipynb/Section\%208.1.ipynb}{Section 8.1.ipynb}}
        & Numerical evaluation of Airy integrals for~\cref{section:numerical:airy}.
        \\
        \texttt{\href{https://nbviewer.jupyter.org/urls/gitlab.com/sergey-dovgal/strong-components-aux/-/raw/master/ipynb/Section\%208.2.ipynb}{Section 8.2.ipynb}}
        & Evaluating empirical probabilities for various digraph families
        based on formal power series for~\cref{section:empirical:probabilities:within}.
        \\
        \texttt{\href{https://nbviewer.jupyter.org/urls/gitlab.com/sergey-dovgal/strong-components-aux/-/raw/master/ipynb/Section\%208.3.ipynb}{Section 8.3.ipynb}}
        & Comparing empirical and theoretical probabilities outside of the
        critical window for~\cref{section:empirical:probabilities:outside}.
        \\
        \hline
        \hline
        & \textit{Validating intermediate computations using Sympy:}\\
        \hline
        \texttt{\href{https://nbviewer.jupyter.org/urls/gitlab.com/sergey-dovgal/strong-components-aux/-/raw/master/ipynb/Section\%204.2.ipynb}{Section 4.2.ipynb}}
        & Intermediate computations for~\cref{section:zeros}.
        \\
        \texttt{\href{https://nbviewer.jupyter.org/urls/gitlab.com/sergey-dovgal/strong-components-aux/-/raw/master/ipynb/Section\%204.3.ipynb}{Section 4.3.ipynb}}
        & Intermediate computations for~\cref{sec:roots:deformed:exponential}.
        \\
        \hline
        \texttt{\href{https://nbviewer.jupyter.org/urls/gitlab.com/sergey-dovgal/strong-components-aux/-/raw/master/ipynb/Section\%205.1.ipynb}{Section 5.1.ipynb}}
        & Intermediate computations for~\cref{section:external:integration:multi}.
        \\
        \texttt{\href{https://nbviewer.jupyter.org/urls/gitlab.com/sergey-dovgal/strong-components-aux/-/raw/master/ipynb/Section\%205.2.ipynb}{Section 5.2.ipynb}}
        & Intermediate computations for~\cref{section:external:integration:simple}.
        \\
        \hline
        \texttt{\href{https://nbviewer.jupyter.org/urls/gitlab.com/sergey-dovgal/strong-components-aux/-/raw/master/ipynb/Section\%206.1.ipynb}{Section 6.1.ipynb}}
        & Intermediate computations for~\cref{section:dag:multi}.
        \\
        \texttt{\href{https://nbviewer.jupyter.org/urls/gitlab.com/sergey-dovgal/strong-components-aux/-/raw/master/ipynb/Section\%206.2.ipynb}{Section 6.2.ipynb}}
        & Intermediate computations for~\cref{section:elementary:multi}.
        \\
        \hline
        \texttt{\href{https://nbviewer.jupyter.org/urls/gitlab.com/sergey-dovgal/strong-components-aux/-/raw/master/ipynb/Section\%207.1.ipynb}{Section 7.1.ipynb}}
        & Intermediate computations for~\cref{section:dag:simple}.
        \\
        \texttt{\href{https://nbviewer.jupyter.org/urls/gitlab.com/sergey-dovgal/strong-components-aux/-/raw/master/ipynb/Section\%207.2.ipynb}{Section 7.2.ipynb}}
        & Intermediate computations for~\cref{section:elementary:simple}.
        \\
        \hline
        \hline
    \end{tabular}
    \caption{
    \label{table:cas}
        Available notebooks in the auxiliary repository.}
\end{table}

This auxiliary material is available under the following URL on \texttt{GitLab}
\begin{center}
    \url{https://gitlab.com/sergey-dovgal/strong-components-aux}
\end{center}
We encourage other researchers to reuse these tools if they wish to.

\paragraph*{Structure of the paper.}
Our main results are summarised in
\cref{sec:summary:exact:results}.
There, two propositions provide exact expressions
for the probability that a random (multi)digraph is acyclic,
elementary, or contain exactly one strong component
satisfying some constraint.
Pointers to the corresponding asymptotic results
are provided as well.
In order to express and derive those results,
we start with~\cref{section:models}, where we recall various models of
digraphs and introduce their generating functions.
In~\cref{section:symbolic:method} we describe the symbolic method for digraphs
that allows us to construct special generating functions of various digraph
families in the form of integral representations.
Next, in~\cref{section:airy}
we carry out an asymptotic analysis of these integrals using the saddle point method
and express them in terms of a generalised Airy function. There, we also point
out the asymptotic properties of the zeros of the deformed exponential
\(
    \sum_{k \geq 0} (1 + y)^{-\binom{n}{2}} \frac{(-x)^k}{k!}
\)
and the generalised deformed exponential.
\cref{section:external:integral} builds on the results from \cref{section:airy}
to express the asymptotics of the coefficients of generating functions
involving generalised deformed exponentials.
This analysis is carried using, again, the saddle point method.
In~\cref{section:asymptotics:multidigraphs} we apply these asymptotic
approximations to obtain the probabilities of various multidigraph families.
The case of multidigraphs is convenient to start with, and the
whole of~\cref{section:asymptotics:multidigraphs} is devoted to this case.
We extend our
tools to simple digraph models
in~\cref{section:asymptotics:simple}. Numerical constants and empirical
probability evaluations are provided in~\cref{section:numerical:results}.
Finally, in~\cref{section:discussion} we discuss some further directions and open
problems related to the current research and also discuss the appearance of Airy
integrals in a totally different context.

            \section{Models of random graphs and digraphs}
            \label{section:models}

Before analysing directed graphs and their asymptotics,
we define various models of graphs and directed graphs.
They model different situations,
and each has its own benefits and drawbacks:
efficiency of random generation,
simplicity of the analysis and asymptotic formulae.
The choice of the right model highlights
the essential tools for digraph analysis
and will guide the analysis for other models.

\cref{sec:models:models} presents the models
and random distributions on them.
Exponential generating functions are introduced in
\cref{sec:models:egf}.
In \cref{sec:multigraph:model},
we motivate the labelling of the edges in our multidigraph model
and compare it to a multigraph model from the literature.
\cref{sec:graphic} introduces another type of generating functions,
called \emph{graphic}, essential for directed graph enumeration.
We also link those graphic generating functions
to probability computations.
Finally, tools translating exponential generating functions
into graphic generating functions are proposed
in \cref{sec:exponential:graphic}.

        \subsection{Models}
        \label{sec:models:models}

We first define four graph-like models.
One model has undirected edges, the \emph{(simple) graphs},
and is widely accepted as canonical in the combinatorial community.
The other three models have directed edges:
\emph{(simple) digraphs}, \emph{strict digraphs} and \emph{multidigraphs}.
They differ on whether loops and multiple edges are allowed,
and whether edges are labelled.
The multidigraph model is not common in the literature.
We introduce it because calculations are easier on it,
and the results can then be extended to the other models.
We will see in \cref{sec:multigraph:model}
its relation with the multigraph models
used by \cite{Flajolet89}, \cite{Janson93}
and \cite{panafieu2019analytic}.

\begin{definition}
A (simple) graph, (simple) digraph, strict digraph
or multidigraph is a pair $(V, E)$
where $V$ represents a finite set of labelled vertices
$V = \{1, 2, \ldots, n\}$
and $E$ represents the set of edges,
which differ depending on the model.
\begin{itemize}
\item[$\GrER$.]
For (simple) graphs, the edges are unlabelled,
unoriented, and loops and multiple edges are forbidden.
Formally, $E$ is then a set of unordered pairs of distinct vertices
\( E \subset \{ \{ u, v \} \mid u, v \in V, \, u \neq v \} \).
\item[$\DiERBoth$.]
For (simple) digraphs, the edges are unlabelled,
oriented, loops and multiple edges are forbidden,
but cycles of length $2$ are allowed.
Formally, $E$ is then a set of ordered pairs of distinct vertices
\( E \subset \{ (u, v) \mid u, v \in V, \, u \neq v \} \).
\item[$\DiER$.]
Strict digraphs are simple digraphs that contain no
cycle of length $2$, \ie each pair of vertices is linked
by at most one edge.
\item[$\DiERMulti$.]
For multidigraphs, the edges are labelled and oriented,
loops and multiple edges are allowed.
Formally, $E$ is then a pair $(L, r)$
where $L = \{1, 2, \ldots, m\}$ for some integer $m$
and $r$ is a mapping from $L$ to
the ordered pairs of vertices $V \times V$.
\end{itemize}
\end{definition}

An equivalent representation of the multidigraph edges
is as an ordered sequence of ordered pairs of
(not necessarily distinct) vertices.
Allowing for loops and multiple edges
simplifies graph enumeration,
as illustrated by \cite{Flajolet89}, \cite{Janson93}
and the configuration model \cite{Bo80}.
This motivates the introduction of multigraphs
and multidigraphs to study graphs and digraphs.
However, multiple edges introduce symmetries in digraphs:
in most natural random generation models,
the probability of a multidigraph with unlabelled edges
varies depending on edge multiplicities.
\cite{Flajolet89} and \cite{Janson93}
handle those symmetries by counting multidigraphs
with a weight.
Following \cite{panafieu2019analytic},
we are able to establish the same distribution on the space
of multidigraphs by labelling the edges of our multidigraphs
instead (see \cref{sec:multigraph:model} for more details).

The numbers of graphs, digraphs, strict digraphs, multidigraphs
with $n$ vertices and $m$ edges are
\begin{equation} \label{eq:counting:graph:like}
    \GrER_{n,m} = \binom{\binom{n}{2}}{m}, \quad
    \DiERBoth_{n,m} = \binom{n (n - 1)}{m}, \quad
    \DiER_{n,m} = 2^m \binom{\binom{n}{2}}{m}, \quad
    \DiERMulti_{n,m} = n^{2m}.
\end{equation}
The last expression is obtained by writing
down the edges of the multidigraph
following the order of their labels
as a sequence of $2m$ vertices.


The uniform distribution on simple graphs
with $n$ vertices and $m$ edges
is denoted by $\GrER(n,m)$
and called the \emph{Erd\H{o}s--R\'enyi model},
following their work \cite{Erdos1959}, \cite{Erdos1960}.
It extends naturally to simple digraphs,
strict digraphs and multidigraphs.

\begin{definition}
The $\GrER(n,m)$, $\DiERBoth(n,m)$, $\DiER(n,m)$
and $\DiERMulti(n,m)$ models
correspond to the uniform distribution on, respectively,
simple graphs, simple digraphs, strict digraphs and multidigraphs
with $n$ vertices and $m$ edges.
\end{definition}

An elegant algorithm to generate a random multidigraph
from $\DiERMulti(n,m)$ is the \emph{multidigraph process},
inspired by the multigraph process from \cite{Flajolet89}.
It applies the following two steps:
\begin{enumerate}
\item
sample independently $2m$ vertices $(x_1, x_2, \ldots, x_{2m})$,
each drawn uniformly in $\{1, 2, \ldots, n\}$,
\item
for $i$ from $1$ to $m$, the edge of label $i$
is the oriented pair of vertices $(x_{2i-1}, x_{2i})$.
\end{enumerate}

Gilbert \cite{Gilbert1959} proposed a different model
for random simple graphs, denoted by $\GrGilb(n,p)$.
It also extends naturally to simple digraphs,
strict digraphs and multidigraphs.

\begin{definition}
Consider a nonnegative integer $n$ and a probability $p$.
\begin{itemize}
\item
$\GrGilb(n,p)$ generates a random simple graph
with $n$ vertices by adding each of the $\binom{n}{2}$
possible edges independently with probability $p$.
\item
$\DiGilbBoth(n,p)$ generates a random simple digraph
with $n$ vertices by adding each of the $n (n-1)$
possible oriented edges independently with probability $p$.
\item
$\DiGilb(n,p)$ generates a random strict digraph
with $n$ vertices by sampling a random simple graph
from $\GrGilb(n,p)$,
then choosing the orientation of each edge
independently uniformly at random.
\item
$\DiGilbMulti(n,p)$ adds, between each of the $n^2$ oriented pairs
of vertices, a multiple edge whose multiplicity follows
a Poisson law of parameter $p$.
Then the edges are labelled uniformly at random.
\end{itemize}
\end{definition}

An equivalent algorithm to generate a random multidigraph
in $\DiGilbMulti(n,p)$ is to first fix the number $m$ of edges
following a Poisson law of parameter $n^2 p$,
and second, draw the multidigraph from $\DiERMulti(n,m)$
(using the multidigraph process, for example).
Indeed, let $m_{x,y}$ denote the number of edges of $D$
from the vertex $x$ to the vertex $y$,
and let $m(D)$ denote the total number of edges of $D$.
Then, the probability to draw the multidigraph $D$
with the first method is
\[
    \mathbb P_{n,p}(D) =
    \bigg(
    \prod_{x=1}^n \prod_{y=1}^n
        e^{-p}
        \frac{p^{m_{x,y}}}{m_{x,y}!}
    \bigg)
    \times
    \frac{\prod_{x=1}^n \prod_{y=1}^n m_{x,y}!}{m(D)!}
    =
    e^{-n^2 p}
    \frac{p^{m(D)}}{m(D)!},
\]
while this probability for the second method is
\[
    e^{-n^2 p}
    \frac{(n^2 p)^{m(D)}}{m(D)!}
    \frac{1}{n^{2 m(D)}}
    =
    e^{-n^2 p}
    \frac{p^{m(D)}}{m(D)!}.    
\]

    \subsection{Exponential generating functions}
    \label{sec:models:egf}

Our analysis in this paper relies on analytic combinatorics.
Its principle is to represent sequences of interest,
typically counting digraphs in some family,
as the coefficients of formal power series,
called \emph{generating functions}.
We introduce the variable $z$ to mark the vertices,
and $w$ to mark the edges.

\begin{definition}
Let $n(G)$, $m(G)$ denote the number of vertices and edges
of a graph-like object $G$.
Consider a simple graph (\resp simple digraph, \resp strict digraph) family $\mathcal F$
containing $\mathcal F_{n,m}$ elements
with $n$ vertices and $m$ edges.
Its \emph{exponential generating function} is then defined as
\[
    F(z,w) =
    \sum_{G \in \mathcal F}
    w^{m(G)}
    \frac{z^{n(G)}}{n(G)!}
    =
    \sum_{n,m}
    \mathcal F_{n,m}
    w^m
    \frac{z^n}{n!}.
\]

If $\mathcal F$ is a family of multidigraphs
and $\mathcal F_{n,m}$ denotes the number of multidigraphs
it contains with $n$ vertices and $m$ edges,
we define the exponential generating function of $\mathcal F$ as
\[
    F(z,w) =
    \sum_{G \in \mathcal F}
    \frac{w^{m(G)}}{m(G)!}
    \frac{z^{n(G)}}{n(G)!}
    =
    \sum_{n,m}
    \mathcal F_{n,m}
    \frac{w^m}{m!}
    \frac{z^n}{n!}.
\]
\end{definition}

In the vocabulary of the \emph{symbolic method} (\cite{FSBook}),
we use for graphs, digraphs and strict digraphs
generating functions that are exponential with respect to $z$
(because vertices are labelled)
and ordinary with respect to $w$ (since edges are unlabelled).
For multidigraphs, since both vertices and edges are labelled,
the generating functions are exponential with respect to both
$z$ and $w$.

As follows from the definition, the exponential generating function
of all simple graphs is
\[
    G(z,w) =
    \sum_{n \geq 0}
    (1+w)^{\binom{n}{2}}
    \frac{z^n}{n!},
\]
and the exponential generating function of all multidigraphs is
\[
    \MD(z,w) =
    \sum_{n \geq 0}
    e^{n^2 w}
    \frac{z^n}{n!}.
\]
Those results can be obtained using the expression
\eqref{eq:counting:graph:like}
for the total number of graphs $\GrER_{n,m}$
and multidigraphs $\DiERMulti_{n,m}$
with $n$ vertices and $m$ edges.
Another way to derive the expression of $G(z,w)$
is to consider that to build a graph on $n$ vertices,
one has to choose for each of the $\binom{n}{2}$ edges
whether it belongs to the graph or not.
For the expression of $\MD(z,w)$,
each multidigraph on $n$ vertices
is a set of labelled edges,
each edge chosen among $n^2$ possibilities.

\begin{lemma} \label{th:egfs:connections}
Consider a multidigraph family $\mathcal F_{\mathrm{MD}}$
stable by edge relabelling.
Assume erasing the edge labels produces
a simple digraph family $\mathcal F_{\mathrm D}$
(\ie no multidigraph from $\mathcal F_{\mathrm{MD}}$
contains a loop or a multiple edge). Then we have
\[
    F_{\mathrm{D}}(z,w) = F_{\mathrm{MD}}(z,w).
\]
Assume furthermore that no multidigraph from $\mathcal F_{\mathrm{MD}}$
contains a cycle of length $2$,
and let $\mathcal F_{\mathrm{SD}}$ denote
the strict digraph family obtained by erasing the edge labels. Then we have
\[
    F_{\mathrm{SD}}(z,w) = F_{\mathrm{MD}}(z,w).
\]
Assume in addition that $\mathcal F_{\mathrm{MD}}$
is stable by change of orientation of the edges.
Then the exponential generating function
of the simple graph family $\mathcal F_{\mathrm{G}}$
obtained by erasing the edge labels and orientations
satisfies
\[
    F_{\mathrm{G}}(z,w) = F_{\mathrm{MD}}(z,w/2).
\]
\end{lemma}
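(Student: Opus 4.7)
The plan is to prove each of the three assertions by a direct counting argument relating each multidigraph in $\mathcal{F}_{\mathrm{MD}}$ to its image under edge-label erasure (and, for the last claim, orientation erasure), then matching coefficients against the generating function definitions from \cref{sec:models:egf}.

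For the first assertion, consider a simple digraph $D \in \mathcal{F}_{\mathrm{D}}$ with $m$ edges. The multidigraphs in $\mathcal{F}_{\mathrm{MD}}$ that map to $D$ upon erasure of edge labels are exactly the multidigraphs obtained by assigning a label in $\{1,2,\ldots,m\}$ to each edge of $D$. The stability of $\mathcal{F}_{\mathrm{MD}}$ under edge relabelling guarantees that all $m!$ such labellings belong to $\mathcal{F}_{\mathrm{MD}}$, and the assumption that $\mathcal{F}_{\mathrm{MD}}$ contains no loop or multiple edge guarantees that these $m!$ labellings produce $m!$ pairwise distinct multidigraphs. Hence the contribution of the preimages of $D$ to $F_{\mathrm{MD}}(z,w)$ equals $m! \cdot \frac{w^m}{m!} \frac{z^{n(D)}}{n(D)!} = w^{m(D)} \frac{z^{n(D)}}{n(D)!}$, which is exactly the contribution of $D$ to $F_{\mathrm{D}}(z,w)$. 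Summing over all $D \in \mathcal{F}_{\mathrm{D}}$ yields $F_{\mathrm{D}}(z,w) = F_{\mathrm{MD}}(z,w)$.

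The second assertion follows by the same argument, with the additional observation that the no-$2$-cycle hypothesis guarantees that the image of each multidigraph in $\mathcal{F}_{\mathrm{MD}}$ under edge-label erasure is indeed a strict digraph, so that $\mathcal{F}_{\mathrm{SD}}$ is well-defined and the preimage of each $D \in \mathcal{F}_{\mathrm{SD}}$ still consists of exactly $m!$ distinct multidigraphs.

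For the third assertion, given a simple graph $G \in \mathcal{F}_{\mathrm{G}}$ with $m$ edges, its preimages in $\mathcal{F}_{\mathrm{MD}}$ under the combined edge-label and edge-orientation erasure are constructed in two independent steps: first choose an orientation for each of the $m$ edges of $G$ ($2^m$ choices, all lying in $\mathcal{F}_{\mathrm{MD}}$ by orientation stability), then choose a labelling of the oriented edges ($m!$ choices, all lying in $\mathcal{F}_{\mathrm{MD}}$ by relabelling stability). The absence of loops and multiple edges ensures these $2^m \cdot m!$ multidigraphs are pairwise distinct. Their total contribution to $F_{\mathrm{MD}}(z,w)$ is therefore $2^m m! \cdot \frac{w^m}{m!} \frac{z^{n(G)}}{n(G)!} = (2w)^{m(G)} \frac{z^{n(G)}}{n(G)!}$, which identifies the contribution of $G$ to $F_{\mathrm{G}}(z,2w)$. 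Summing gives $F_{\mathrm{MD}}(z,w) = F_{\mathrm{G}}(z,2w)$, equivalent to the claimed identity $F_{\mathrm{G}}(z,w) = F_{\mathrm{MD}}(z,w/2)$.

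The argument is essentially bookkeeping; the only mild subtlety is to justify carefully that the various stability hypotheses are the minimal ones ensuring that every preimage is counted exactly once, and that the $m!$ in the denominator of the multidigraph generating function is precisely what absorbs the edge-labelling multiplicity. No analytic or asymptotic input is needed.
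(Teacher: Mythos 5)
Your proof is correct and follows essentially the same approach as the paper: count that each simple digraph (resp.\ strict digraph, resp.\ simple graph) with $m$ edges has exactly $m!$ (resp.\ $m!$, resp.\ $2^m m!$) preimages under edge-label (and orientation) erasure, observe that the stability hypotheses ensure the preimage set is entirely contained in $\mathcal F_{\mathrm{MD}}$, and then match coefficients using the extra $1/m!$ in the multidigraph generating function. The paper states the same counts more tersely at the level of counting sequences $\mathcal F_{\cdot,n,m}$; your version makes the preimage argument and the role of each hypothesis slightly more explicit, but there is no material difference.
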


\begin{proof}
Let
$\mathcal F_{\mathrm{D}, n, m}$,
$F_{\mathrm{SD}, n, m}$,
$\mathcal F_{\mathrm{G}, n, m}$
and $\mathcal F_{\mathrm{MD}, n, m}$
denote the number of elements of each family
with $n$ vertices and $m$ edges.
There are $m!$ ways to label the edges
of a simple or strict digraph,
and $2^m m!$ ways to label and orient
the edges of a simple graph to turn them into a multidigraph.
Thus, under the first assumption,
the first two assumptions,
or the three assumptions of the lemma,
we have respectively
\[
    \mathcal F_{\mathrm{D}, n, m} =
    \frac{\mathcal F_{\mathrm{MD}, n, m}}{m!},
    \qquad
    \mathcal F_{\mathrm{SD}, n, m} =
    \frac{\mathcal F_{\mathrm{MD}, n, m}}{m!},
    \qquad
    \mathcal F_{\mathrm{G}, n, m} =
    \frac{\mathcal F_{\mathrm{MD}, n, m}}{2^m m!}.
\]
Multiplying by $w^m z^n / n!$ and summing over $n$ and $m$
provides the claimed results on the exponential generating functions.
\end{proof}

    \subsection{Link between multidigraphs and multigraphs}
    \label{sec:multigraph:model}

The reader discovering our multidigraph definition
might wonder why we chose to label the edges.
To answer, this subsection presents two options
explored in the literature on multigraphs
(instead of multidigraphs):
either assign a weight to multigraphs, or label their edges.
This is also the opportunity to link
the models of (weighted) multigraphs and multidigraphs (with edge labels).

\paragraph{Compensation factors.}
\cite{Flajolet89} and \cite{Janson93}
introduced a model of \emph{multigraphs}
where edges are unlabelled and unoriented,
but loops and multiple edges are allowed.
In this model, each multigraph $G$ is counted with a weight,
called the \emph{compensation factor}, and defined as
\[
    \varkappa(G) =
    \prod_{x = 1}^{n(G)} 2^{-m_{x, x}}
    \prod_{y = x}^{n(G)} \dfrac{1}{m_{x, y}!}
\]
where $n(G)$ denotes the number of vertices of $G$,
and $m_{x,y}$ denotes the number of edges linking
the vertices $x$ and $y$.
The $\GrERMulti(n,m)$ model is then defined
as generating each multigraph
on $n$ vertices and $m$ edges
with a probability proportional to its compensation factor.

\paragraph{Multigraph process.}
To generate such a multigraph, the authors proposed
an algorithm called the \emph{multigraph process}:
\begin{enumerate}
\item
sample independently $2m$ vertices $(x_1, x_2, \ldots, x_{2m})$
each drawn uniformly in $\{1, 2, \ldots, n\}$,
\item
define the set of edges of the multigraph as
$\{\{x_{2i-1}, x_{2i}\} \mid 1 \leq i \leq m\}$.
\end{enumerate}
Let us recall the proof that the distribution on multigraphs
induced by $\GrERMulti(n,m)$ and the multigraph process
are the same.
The multigraph process corresponds to
sampling a multidigraph using the multidigraph process,
then erasing its edge labels and orientations
to turn it into a multigraph.
To conclude, observe that the number of multidigraphs
corresponding to a given multigraph $G$
by labelling and orienting its edges
is $2^{m(G)} m(G)! \varkappa(G)$.

\paragraph{Generating functions.}
\cite{Flajolet89} and \cite{Janson93} associate
to any multigraph family $\mathcal F$
the generating function
\[
    F(z,w) =
    \sum_{G \in \mathcal F}
    \varkappa(G)
    w^{m(G)}
    \frac{z^{n(G)}}{n(G)!}.
\]
Let $\mathcal F_{\mathrm{MD}}$ denote
the multidigraph family obtained from $\mathcal F$
by labelling and orienting the edges
in all possible ways.
Since each multigraph $G$ corresponds to exactly
$2^{m(G)} m(G)! \varkappa(G)$ multidigraphs,
the exponential generating functions are linked by the relation
\[
    F(z,w) = F_{\mathrm{MD}}(z, w/2).
\]
In particular, since the exponential generating function
of all multidigraphs is
\[
    \MD(z,w) =
    \sum_{n \geq 0}
    e^{n^2 w}
    \frac{z^n}{n!},
\]
we recover the exponential generating function of all multigraphs
\[
    \MG(z,w) =
    \sum_{n \geq 0}
    e^{n^2 w / 2}
    \frac{z^n}{n!}
\]
obtained by \cite{Flajolet89} and \cite{Janson93}.
Thus, any result from those papers obtained on $\MG(z,w)$
translates to $\MD(z,w)$ by the simple change of variables
$w \mapsto w/2$.

\paragraph{Conclusion.}

To summarise, two equivalent approaches exist
when working on multigraphs
\begin{enumerate}
\item
consider multigraphs weighted by their compensation factor,
\item
or consider multidigraphs, then erase the edge labels and orientations.
\end{enumerate}
As we saw, both approaches give the same distribution
on multigraphs with $n$ vertices and $m$ edges,
and related generating functions.
The generating functions with respect to the variable $w$
marking the edges are ordinary or exponential,
depending on which option is chosen.
Combinatorial operations are simpler to express
in the exponential setting compared to the ordinary setting,
so we prefer the second option.
To illustrate this point, consider the generating function of all multigraphs
\[
    \MG(z,w) =
    \sum_{n \geq 0}
    e^{n^2 w / 2}
    \frac{z^n}{n!}.
\]
Explaining the factor $e^{n^2 w / 2}$
as a set of edges, each marked by $w/2$ and
chosen among $n^2$ possibilities,
is simpler than summing compensation factors.

Our multidigraph model is identical to
the multigraph model used by \cite{panafieu2019analytic},
except this paper marks edges with $w/2$ instead of $w$,
so that its generating functions agree
with those of \cite{Flajolet89} and \cite{Janson93}.

    \subsection{Graphic generating functions}
    \label{sec:graphic}

Contrary to the case of graphs,
the exponential generating functions in their classical form
are not sufficient to capture the combinatorics of directed graphs.
So \cite{Robinson73}, \cite{liskovets1970number}, \cite{G96}
introduced a different class of generating functions,
referred to as \emph{graphic generating functions},
or \emph{special generating functions}.
The motivation for their definition is provided
in \cref{section:symbolic:method}.
To distinguish between the exponential
and graphic generating functions,
we put a hat on the second type: $\widehat F(z,w)$.
In the following, we sometimes replace
\emph{graphic} with \emph{simple graphic}
to emphasise the distinction with \emph{multi-graphic}
(defined below).

\begin{definition} \label{def:SGGF}
Let \( \mathcal F \) be a family of simple or strict digraphs
and denote by $\mathcal F_{n,m}$ its number of elements
containing $n$ vertices and $m$ edges.
Then its \emph{graphic generating function} \( \widehat F(z,w) \)
is defined as
\[
    \widehat F(z,w) =
    \sum_{D \in \mathcal F}
    \frac{w^{m(D)}}{(1 + w)^{\binom{n(D)}{2}}}
    \dfrac{z^{n(D)}}{n(D)!}
    =
    \sum_{n,m}
    \mathcal F_{n,m}
    \frac{w^m}{(1+w)^{\binom{n}{2}}}
    \frac{z^n}{n!},
\]
where \( n(D) \) and \( m(D) \) denote, respectively,
the numbers of vertices and edges of \( D \).

When \( \mathcal F \) is a multidigraph family,
its \emph{multi-graphic generating function}
\( \widehat F(z,w) \) is defined as
\[
    \widehat F(z,w) =
    \sum_{D \in \mathcal F}
    e^{-n(D)^2 w/2}
    \dfrac{w^{m(D)}}{m(D)!}
    \dfrac{z^{n(D)}}{n(D)!}
    =
    \sum_{n,m}
    \mathcal F_{n,m}
    e^{-n^2 w / 2}
    \frac{w^m}{m!}
    \frac{z^n}{n!}.
\]
\end{definition}

\begin{remark}
The terms \( (1 + w)^{\binom{n(D)}{2}} \)
for digraphs and \( e^{w n(D)^2/2} \) for multidigraphs
are reminiscent of the exponential generating functions
of all graphs and all multigraphs
\[
    G(z,w) =
    \sum_{n \geq 0}
    (1+w)^{\binom{n}{2}}
    \frac{z^n}{n!},
    \qquad
    \MG(z,w) =
    \sum_{n \geq 0}
    e^{n^2 w / 2}
    \frac{z^n}{n!}.
\]
From the analytic viewpoint, when the value of \( w \)
is small enough, the term \( (1 + w)^{\binom{n}{2}} \)
can be approximated by \( e^{w n^2/2} \).
If \( w \) is of order \( 1/n \), this approximation
gains an additional constant factor.
\end{remark}

\begin{lemma} \label{lemma:P:Dnp}
Let \( \mathcal F \) be a simple digraph family
whose graphic generating function is \( \widehat F(z, w) \).
Then, the probability that a random simple digraph
from \(\DiGilbBoth(n,p) \) belongs to $\mathcal F$ is
\[
    \mathbb P_{\mathcal F}(n,p)=
    (1 - p)^{\binom{n}{2}}
    n! [z^n] \widehat F \left(z, \dfrac{p}{1 - p}\right)\, .
\]
Let \( \mathcal F \) be a strict digraph family
whose graphic generating function is \( \widehat F(z, w) \).
Then, the probability that a random strict digraph
from \( \DiGilb(n,p) \) belongs to $\mathcal F$ is
\[
    \mathbb P_{\mathcal F}(n,p)  =
    (1 - p)^{\binom{n}{2}}
    n! [z^n] \widehat F \left(z, \dfrac{p}{1 - 2p}\right)\, .
\]
Let \( \mathcal F \) be a multidigraph family
whose multi-graphic generating function is \( \widehat F(z, w) \).
Then the probability that a random multidigraph
from \( \DiGilbMulti(n,p) \) belongs to $\mathcal F$ is
\[
    \mathbb P_{\mathcal F}(n,p) =
    e^{-n^2 p/2} n! [z^n] \widehat F(z, p)\, .
\]
\end{lemma}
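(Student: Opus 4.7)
The plan is to prove each of the three identities by directly computing $\mathbb{P}_{\mathcal{F}}(n,p)$ from the definition of the corresponding random model, then matching the result to the right-hand side via the definition of the (multi-)graphic generating function and a suitable algebraic substitution for $w$.

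For $\DiGilbBoth(n,p)$, the probability of a specific simple digraph $D$ on $n$ vertices is $p^{m(D)}(1-p)^{n(n-1)-m(D)}$, since each of the $n(n-1)$ oriented edges is included independently with probability $p$. Summing over $D \in \mathcal{F}$ with $n(D)=n$ and using the identity $n(n-1) = 2\binom{n}{2}$, I factor out $(1-p)^{2\binom{n}{2}}$ and perform the substitution $w = p/(1-p)$, which gives $(1+w)^{-1}=1-p$. The key algebraic relation $(1-p)^{\binom{n}{2}}(1+w)^{-\binom{n}{2}} = (1-p)^{2\binom{n}{2}}$ then shows that the right-hand side coincides, term by term, with $\sum_D p^{m(D)}(1-p)^{n(n-1)-m(D)}$, as required.

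The strict digraph case follows the same pattern, but with the substitution $w = p/(1-2p)$, which now yields $(1-p)/(1+w) = 1-2p$. Under this substitution, $(1-p)^{\binom{n}{2}}(1+w)^{-\binom{n}{2}} w^{m(D)} = p^{m(D)}(1-2p)^{\binom{n}{2}-m(D)}$, which is exactly the probability of $D$ in $\DiGilb(n,p)$, where each of the $\binom{n}{2}$ unordered pairs contributes a factor $1-2p$ for the absence of an edge or $p$ for each of the two possible orientations.

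Finally, for the multidigraph case, the computation performed in~\cref{sec:models:models} already established that $\mathbb{P}_{n,p}(D) = e^{-n(D)^2 p}\, p^{m(D)}/m(D)!$. Summing this over $D \in \mathcal{F}$ with $n(D)=n$ and splitting $e^{-n^2 p}$ as $e^{-n^2 p/2} \cdot e^{-n^2 p/2}$, the inner factor $e^{-n(D)^2 p/2}$ is precisely the prefactor appearing in the definition of the multi-graphic generating function $\widehat{F}(z,p)$, so that the whole sum equals $e^{-n^2 p/2} \cdot n![z^n]\widehat{F}(z,p)$. No substantial obstacle arises in any of these computations; the work is bookkeeping of the algebraic substitutions that link the model parameter $p$ to the formal variable $w$, together with a careful reading of the definitions of the graphic and multi-graphic generating functions.
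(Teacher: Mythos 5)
Your proof is correct and takes essentially the same route as the paper: in all three cases you compute $\mathbb{P}_{\mathcal F}(n,p)$ directly from the model's per-digraph probability, reorganise it as a sum of $w^{m(D)}$ with the appropriate substitution for $w$, and match the prefactor against the $(1+w)^{-\binom{n}{2}}$ (resp.\ $e^{-n^2w/2}$) normalisation in the definition of the (multi-)graphic generating function. The reference back to the earlier computation $\mathbb{P}_{n,p}(D) = e^{-n^2p}\,p^{m(D)}/m(D)!$ in the multidigraph case is a legitimate and slightly cleaner shortcut than re-deriving the Poisson factor as the paper does.
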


\begin{proof}
Let $\mathcal F_n$ denote the subfamily of $\mathcal F$
containing the elements with $n$ vertices.
In the \( \DiGilbBoth(n,p) \) model, the probability that
a random digraph belongs to \( \mathcal F \)
is given by
\begin{align*}
    \mathbb P_{\mathcal F}(n,p) &=
        \sum_{D \in \mathcal F_n} p^{m(D)}
        (1 - p)^{ n(n-1) - m(D)}
        =
        (1 - p)^{ n(n-1) }
        \sum_{D \in \mathcal F_n}
        \left(\dfrac{p}{1 - p}\right)^{m(D)}
    \\ &=
        \left.
        (1 - p)^{n (n-1)}
            (1 + w)^{\binom{n}{2}}
        \right|_{w = \frac{p}{1 - p}}
        n! [z^n] \widehat F \left(z, \dfrac{p}{1 - p} \right)
    \\ &=
        (1 - p)^{\binom{n}{2}} n! [z^n] \widehat F \left(z, \dfrac{p}{1 - p}\right).
\end{align*}
Similarly, in the \( \DiGilb(n,p) \) model,
\begin{align*}
    \mathbb P_{\mathcal F}(n,p) &=
        \sum_{D \in \mathcal F_n} \dfrac{(2p)^{m(D)}}{2^{m(D)}}
        (1 - 2p)^{ {\binom{n}{2}} - m(D)}
        =
        (1 - 2p)^{ {\binom{n}{2}}}
        \sum_{D \in \mathcal F_n}
        \left(\dfrac{p}{1 - 2p}\right)^{m(D)}
    \\ &=
        \left.
            (1 - 2p)^{\binom{n}{2}}
            (1 + w)^{\binom{n}{2}}
        \right|_{w = \frac{p}{1 - 2p}}
        n! [z^n] \widehat F \left(z, \dfrac{p}{1 - 2p} \right)
    \\ &=
        (1 - p)^{\binom{n}{2}} n! [z^n] \widehat F \left(z, \dfrac{p}{1 - 2p}\right).
\end{align*}
Finally, for the \( \DiGilbMulti(n,p) \) model,
we have
\[
    \mathbb P_{\mathcal F}(n,p) =
    \sum_{D \in \mathcal F_n}
    \frac{(n^2 p)^{m(D)}}{m(D)!}
     e^{-n^2 p}
    \frac{1}{n^{2 m(D)}}
    =
    e^{- n^2 p /2}
    n! [z^n]
    \widehat F(z, p)
\]
because the probability for a given multidigraph
to be generated by $\DiGilbMulti(n,p)$
conditioned on having $m$ edges is $1/n^{2m}$.
\end{proof}

\begin{remark}
The probability that a digraph is acyclic
can be easily converted between the first and the second model.
Indeed, since acyclic digraphs do not contain loops and
2-cycles, the corresponding probabilities can be obtained
from one another using the conditional probability that a digraph obtained
in the model \( \DiGilbBoth(n,p) \) does not have 2-cycles.
\end{remark}

    \subsection{Linking exponential and graphic generating functions}
    \label{sec:exponential:graphic}

In order to convert an exponential generating function
into a graphic generating function,
we can use two strategies:
the \emph{exponential Hadamard product}
and an integral representation using the Fourier transform.
The exponential Hadamard product of two bivariate series
\[
    A(z) = \sum_{n \geq 0} a_n(w) \dfrac{z^n}{n!}
    \qquad \text{and} \qquad
    B(z) = \sum_{n \geq 0} b_n(w) \dfrac{z^n}{n!}
\]
with respect to the variable \( z \) is denoted by and defined as
\[
    A(z, w) \odot_z B(z, w)
    =
    \bigg(
        \sum_{n \geq 0} a_n(w) \dfrac{z^n}{n!}
    \bigg)
    \odot_z
    \bigg(
        \sum_{n \geq 0} b_n(w) \dfrac{z^n}{n!}
    \bigg)
    =
    \sum_{n \geq 0}
    a_n(w) b_n(w) \dfrac{z^n}{n!}.
\]
The following elementary graphic generating functions
are primary building blocks for more complex directed (multi-)graphs,
and also a tool for conversion between exponential
and graphic generating functions.

\begin{definition}
The multi-graphic generating function of sets
(labelled graphs that do not have any edges)
is denoted by and defined as
\[
    \gset(z, w) = \sum_{n \geq 0} e^{-n^2 w/2} \dfrac{z^n}{n!}.
\]
The graphic generating function of sets is given by
\[
    \gsetsimple(z, w)
    =
    \sum_{n \geq 0} (1 + w)^{-\binom{n}{2}}
    \dfrac{z^n}{n!}.
\]
\end{definition}

The generating function \( \gset(z, w) \) and its variations
are ubiquitous in combinatorics and many other areas.
It can be seen immediately, for example,
that the generating functions of multigraphs and simple graphs
are given by
\[
    \MG(z, w) = \gset(z,-w)
    \qquad \text{and} \qquad
    G(z, w) = \gsetsimple\left(z, \frac{-w}{1 + w} \right).
\]
In his paper~\cite{sokal2012leading},
Sokal calls a variant of \( \gset \)
the ``deformed exponential function''.
He also provides a lot of conjectures related
to both combinatorics and analysis
in his review~\cite{sokal2013some},
see in addition the references therein.
Another variant of this function
without the factorials in the denominator,
which can be obtained by applying the Laplace transform,
is called the \emph{partial theta function}
after Andrews and has also been extensively studied,
including in relation with Ramanujan's lost notebook.

The following conversion lemma allows us to switch
from exponential to graphic generating functions.
The integral transform is a classical Fourier-type integral, considered, for example, in~\cite{AiryAndConnected}.
In this paper, the authors consider the function
\( \gset(z, -w) \) as an exponential generating function
of all graphs (after a mapping \( w \to -w \)
in the case of multigraphs,
or \( (1 + w) \mapsto (1 + w)^{-1} \) in the case of simple graphs),
and use it to study the asymptotic properties of connected graphs.
They also obtain a saddle point representation in their paper.
Even earlier, an integral representation of a variant
of \( \gset(z, w)\) was obtained in~\cite{Mahler40} by Mahler in 1940.

\begin{lemma} \label{lemma:conversion:egf:ggf}
Assume $w$ stays in a compact interval of ${\mathds R}_{> 0}$.
Let \( A(z,w) = \sum_{n \geq 0} a_n(w) \frac{z^n}{n!} \)
be an exponential generating function
whose non-negative coefficients \( a_n(w) \)
grow more slowly than \( e^{n^2 w/2} \).
Then, for its corresponding multi-graphic generating function
\( \widehat A(z, w) \)
and graphic generating function
\( \widehat A^{(\text{simple})}(z, w) \) we have
\begin{enumerate}
    \item
    \(
        \widehat A(z, w)
        :=
        \displaystyle\sum\limits_{n \geq 0}
        a_n(w) e^{-n^2w/2} \dfrac{z^n}{n!}
        =
        A(z, w) \odot_z \gset(z, w)
        \,;
    \)
    \item
    \(
    \widehat A^{(\text{simple})}(z, w)
    :=
    \displaystyle\sum\limits_{n \geq 0}
        a_n(w) (1 + w)^{-{\binom{n}{2}}} \dfrac{z^n}{n!}
        =
        A(z, w) \odot_z \gsetsimple(z, w)
        \,;
    \)
    \item
    \(
        \widehat A(z, w) =
        \dfrac{1}{\sqrt{2 \pi w}}
        \displaystyle\int_{-\infty}^{+\infty}
        \exp \left(
            {-\dfrac{x^2}{2 w}}
        \right)
        A\left(z e^{-i x}, w\right)
        \mathrm dx\,;
    \)
    \item
    \(
        \widehat A^{(\text{simple})}(z, w)=
        \dfrac{1}{\sqrt{2 \pi \alpha}}
        \displaystyle\int_{-\infty}^{+\infty}
        \exp \left(
            {-\dfrac{x^2}{2 \alpha}}
        \right)
        A\left(
            z \beta e^{- i x}, w
        \right)
        \mathrm dx\,;
    \)
    \end{enumerate}
where $\alpha = \log(1 + w)$ and $\beta = \sqrt{1 + w}$.
\end{lemma}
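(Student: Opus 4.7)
The plan is to deduce (1) and (2) essentially by unwinding the definitions, and then derive the two integral representations via the Fourier transform of the Gaussian kernel. For part (1), the definition of the exponential Hadamard product gives immediately
\[
    A(z,w) \odot_z \gset(z,w) = \sum_{n \geq 0} a_n(w)\, e^{-n^2 w/2}\, \dfrac{z^n}{n!},
\]
which is precisely $\widehat A(z,w)$; part (2) is identical, with $\gsetsimple$ in place of $\gset$.

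For part (3), the workhorse is the classical Fourier--Gaussian identity, verified by completing the square:
\[
    \dfrac{1}{\sqrt{2\pi w}} \int_{-\infty}^{+\infty} e^{-x^2/(2w)}\, e^{-inx}\, \mathrm{d}x = e^{-n^2 w/2}.
\]
Inserting this into the defining expression $\widehat A(z,w) = \sum_n a_n(w) e^{-n^2 w/2} z^n/n!$ and interchanging the sum with the integral, the inner sum collapses to $\sum_n a_n(w)(z e^{-ix})^n/n! = A(z e^{-ix}, w)$, which yields the stated formula. Part (4) then reduces to part (3) through the factorisation
\[
    (1+w)^{-\binom{n}{2}} = (1+w)^{n/2}\, (1+w)^{-n^2/2} = \beta^n\, e^{-n^2 \alpha/2},
\]
with $\alpha = \log(1+w)$ and $\beta = \sqrt{1+w}$. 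Absorbing $\beta^n$ into the argument of $A$ as $(z\beta)^n$ and reapplying the Gaussian identity with parameter $\alpha$ in place of $w$ produces the claimed integral.

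The only subtle point is the justification of the sum--integral interchange; this is where the hypothesis enters. Since $|a_n(w)(z e^{-ix})^n/n!| = a_n(w)|z|^n/n!$ is independent of $x$ (using $a_n(w) \geq 0$), Tonelli's theorem reduces the question to the finiteness of $A(|z|, w)$, which holds on the disk of convergence of $A(\cdot, w)$; the growth assumption $a_n(w) = o(e^{n^2 w/2})$, uniform in $w$ on the compact subinterval, guarantees that $\widehat A$ and $\widehat A^{(\text{simple})}$ are entire in $z$, and that the integral representations make analytic sense on the appropriate range of $z$. A clean alternative that sidesteps convergence issues altogether is to verify the identity coefficient-by-coefficient in $z$: extracting $[z^n/n!]$ on the right-hand side of (3) gives $a_n(w)$ times the Fourier--Gaussian integral of $e^{-inx}$, which equals $e^{-n^2 w/2}$, matching the left-hand side, and likewise for (4) with the additional $\beta^n$ factor.
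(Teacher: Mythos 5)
Your proof is correct and follows essentially the same route as the paper: parts (1)--(2) are definitional, part (3) comes from the Fourier--Gaussian identity combined with a sum/integral interchange justified via non-negativity of the $a_n(w)$, and part (4) reduces to (3) by the factorisation $(1+w)^{-\binom{n}{2}} = \beta^n e^{-n^2\alpha/2}$ (the paper phrases this as rewriting $\gsetsimple$ in terms of $\gset$ with parameters $\alpha,\beta$, which is the same computation). The only cosmetic difference is that the paper first applies the Fourier integral with the substitution $t=-n\sqrt w$ and then rescales $y=\sqrt w x$, whereas you bake the parameter $w$ directly into the Gaussian kernel. Your closing remark that the identity can be verified coefficient-by-coefficient in $z$ is a useful supplementary observation not made in the paper, and it does neatly sidestep the issue that $A(\cdot,w)$ need not have positive radius of convergence under the stated growth hypothesis.
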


\begin{proof}
    The first two identities are directly obtained
    from the definition of the exponential Had\-am\-ard product.
    To prove the third identity, we use the Fourier integral
    \[
        e^{-t^2/2} = \dfrac{1}{\sqrt{2 \pi}}
        \int_{-\infty}^{+\infty}
        e^{ixt} e^{-x^2/2} \mathrm dx
    \]
    with $t = - n \sqrt{w}$ and obtain
    \[
        \widehat A(z, w) =
        \sum_{n \geq 0}
        a_n(w)
        \dfrac{1}{\sqrt{2 \pi}}
        \int_{-\infty}^{+\infty}
        e^{- i n x \sqrt{w}} e^{-x^2/2} \mathrm dx
        \frac{z^n}{n!}.
    \]
    Since the coefficients \( a_n(w) \) are non-negative,
    the integration and summation of the convergent series
    can be interchanged:
    \[
        \widehat A(z, w) =
        \dfrac{1}{\sqrt{2 \pi}}
        \int_{-\infty}^{+\infty}
        \sum_{n \geq 0}
        a_n(w)
        e^{- i n x \sqrt{w}} 
        \frac{z^n}{n!}
        e^{-x^2/2} \mathrm dx
        =
        \dfrac{1}{\sqrt{2 \pi}}
        \int_{-\infty}^{+\infty}
        A(z e^{- i x \sqrt{w}}, w)
        e^{-x^2/2} \mathrm dx.
    \]
    Finally, we apply the change of variables $y = \sqrt{w} x$,
    just to slightly simplify the forthcoming asymptotic analysis.
    The lemma can be generalised to other cases where
    interchanging of the summation and integration is possible---for example,
    when the coefficients \( a_n(w) \) are
    not necessarily positive, but the series converges sufficiently fast.
    
    For the fourth identity,
    we use the fact that
    \[
        \gsetsimple(z, w) =
        \sum_{n \geq 0}
        (1 + w)^{-\binom{n}{2}}
        \frac{z^n}{n!}
        =
        \sum_{n \geq 0}
        e^{- \alpha n^2 / 2}
        \frac{(\beta z)^n}{n!}
    \]
    where $\alpha = \log(1+w)$ and $\beta = \sqrt{1+w}$.
    The rest of the proof is the same as for the third identity.
\end{proof}

It is worth noting  that the integral representation of
$\gsetsimple(-z,w)$ can be obtained by means of Mahler's
transformation. In fact, $\gsetsimple(-z,w)$
is exactly the same as $F(-z(1+y)^{-1})$
where $F(z)$ is defined by \cite[Equation~(6)]{Mahler40}.
Thus using the integral form of $F(z)$ in
\cite[Equation~(4)]{Mahler40} we obtain the following formula:
\begin{equation*}
\gsetsimple(-z,w)=\sqrt{\frac{\alpha}{2\pi}}\int_{-\infty}^{+\infty}
\exp\left(-\frac{\alpha}{2}x^2-z\beta e^{-i\alpha x} \right)\mathrm dx.
\end{equation*}

\begin{remark}\label{rem:simple-digraphs}
    The two functions \( \gset(z, w) \) and \( \gsetsimple(z, w) \)
    are connected by the simple transform
    \[
        \gsetsimple(z, w) = \gset \left(z \sqrt{1 + w}, \log(1 + w) \right),
    \]
    which will allow us to extend the asymptotic analysis of multidigraphs to the case
    of simple digraphs. The inverse transformation takes the form
    \[
        \gset(z, w) = \gsetsimple(z e^{-w/2}, e^w - 1).
    \]
    Using the expressions for
    the generating functions of multigraphs \( \MG(z, w) = \gset(z, -w) \)
    and simple graphs \( G(z, w) = \gsetsimple(z, \tfrac{-w}{1+w}) \),
    we can rewrite this identity as
    \[
        \MG(z, w) = G(z e^{w/2}, e^w - 1),
    \]
    which has the following interpretation.
    The change of variables \( w \mapsto e^w - 1 \) represents the insertion of
    a non-empty multiset of edges in place of each single edge,
    and \( z \mapsto z e^{w/2} \) allows the insertion of a set of loops,
    where the factor \( 1/2 \) accounts for the fact
    that the loops are not directed.
\end{remark}

In this paper, we often encounter exponential generating functions
of the form $(1 - w z)^r e^{-z} F(w z)$.
The following corollary provides integral representations
for their graphic counterparts.

\begin{corollary} \label{theorem:phi}
Consider an entire function $F(\cdot)$
and an integer $r$.
Let $\alpha = \log(1+w)$ and $\beta = \sqrt{1+w}$ and
let $\phi_r(z,w;F(\cdot))$ and $\widetilde \phi_r(z,w;F(\cdot))$
denote the functions
\begin{align}
\label{eq:phi:tilde:phi}
    \phi_r(z,w;F(\cdot)) &=
    \dfrac{1}{\sqrt{2 \pi w}}
    \int_{-\infty}^{+\infty}
    (1 - w z e^{-i x})^r
    F(w z e^{-i x})
    \exp \left(-\dfrac{x^2}{2 w} - ze^{-i x} \right)
    \mathrm dx,
    \\
    \widetilde \phi_r(z,w;F(\cdot)) &=
    \frac{1}{\sqrt{2 \pi \alpha}}
    \int_{-\infty}^{+\infty}
    (1 - w z \beta e^{-i x})^r
    F(w z \beta e^{-i x})
    \exp \left( - \frac{x^2}{2 \alpha} - z \beta e^{-i x} \right)
    \mathrm dx.
\end{align}
Then we have
\begin{align*}
    (1 - w z)^r e^{-z} F(w z) \odot_z \gset(z, w)
    &=
    \phi_r(z,w;F(\cdot)),
    \\
    (1 - w z)^r e^{-z} F(w z) \odot_z \gsetsimple(z, w)
    &=
    \widetilde \phi_r(z,w;F(\cdot)).
\end{align*}
\end{corollary}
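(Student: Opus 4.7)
The plan is to apply Lemma~\ref{lemma:conversion:egf:ggf} directly with the exponential generating function $A(z,w) = (1-wz)^r e^{-z} F(wz)$, using its third identity to obtain the first claim and its fourth identity to obtain the second.

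First, I would note that since $F$ is entire, $A(\cdot,w)$ is entire in $z$ for every $w$, and therefore admits a power series expansion $A(z,w) = \sum_{n \geq 0} a_n(w) z^n/n!$ with finite radius of convergence equal to infinity. Substituting $z \mapsto z e^{-ix}$ in $A$ gives exactly
\[
A(z e^{-ix}, w) = (1 - wz e^{-ix})^r e^{-z e^{-ix}} F(wz e^{-ix}),
\]
so once the third identity of Lemma~\ref{lemma:conversion:egf:ggf} is applied, the integrand is precisely the one appearing in the definition of $\phi_r(z,w;F(\cdot))$ in \eqref{eq:phi:tilde:phi}. The first identity then follows immediately.

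For the second identity I would carry out the analogous substitution $z \mapsto z \beta e^{-ix}$ inside $A$, yielding
\[
A(z \beta e^{-ix}, w) = (1 - w z \beta e^{-ix})^r e^{-z\beta e^{-ix}} F(w z \beta e^{-ix}),
\]
which, combined with the Gaussian kernel $\exp(-x^2/(2\alpha))$ and the prefactor $1/\sqrt{2\pi\alpha}$ from the fourth identity of Lemma~\ref{lemma:conversion:egf:ggf}, matches the definition of $\widetilde\phi_r(z,w;F(\cdot))$ verbatim.

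The only genuine obstacle is that the coefficients $a_n(w)$ of $A$ are in general signed (because of the factor $(1-wz)^r e^{-z}$ as well as possibly non-positive coefficients of $F$), so the positivity hypothesis of Lemma~\ref{lemma:conversion:egf:ggf} does not literally apply. I would therefore invoke the generalisation mentioned at the end of the proof of that lemma: the interchange of summation and integration only requires sufficiently fast convergence. This is guaranteed here because for any fixed $z$ and $w > 0$ the estimate $|a_n(w)| \leq M_{z,w} \cdot n!/|z|^n$ (an arbitrary radius can be used since $A$ is entire) implies that $\sum_n |a_n(w)|\,|z|^n/n!$ converges, while $|e^{-inx\sqrt{w}}| = 1$ and the Gaussian weight is integrable. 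Fubini's theorem therefore justifies the swap, and the proof proceeds exactly as in the proof of Lemma~\ref{lemma:conversion:egf:ggf} but applied to our specific $A$.
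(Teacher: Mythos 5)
Your proposal is correct and takes the same route the paper implicitly intends: the corollary is obtained by specialising Lemma~\ref{lemma:conversion:egf:ggf} to $A(z,w) = (1-wz)^r e^{-z}F(wz)$, and you rightly observe that the non-negativity hypothesis of that lemma does not hold here, invoking instead the remark about interchanging sum and integral when convergence is fast enough.

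Two small wobbles are worth fixing. First, the claim ``since $F$ is entire, $A(\cdot,w)$ is entire in $z$'' fails when $r$ is a negative integer: $(1-wz)^r$ has a pole at $z = 1/w$, so $A$ is only analytic on the disk $|z| < 1/w$ and its Taylor coefficients satisfy a Cauchy bound only with radii $R < 1/w$. This is exactly the restriction the paper flags in the remark following the corollary, so your argument should be phrased as valid for $|z| < 1/w$ (which suffices; the general case is then handled by analytic continuation as in that remark). Second, the Cauchy estimate as written, $|a_n(w)| \leq M_{z,w}\, n!/|z|^n$, does not by itself give convergence of $\sum_n |a_n(w)| |z|^n/n!$ --- you need a strictly larger radius $R$ with $|z| < R$ (and $R < 1/w$ when $r < 0$), giving $|a_n(w)|/n! \leq M_{R,w}/R^n$ and hence a geometric majorant $(|z|/R)^n$. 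With these adjustments the Fubini step is fully justified and the rest of your derivation follows verbatim.
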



\begin{remark}
If \( r \) is a negative integer, then, strictly speaking, the integral
representation of \( \phi_r(z, w; F) \)
is invalid for \(|zw|=1\). Even while we do not use
values beyond $e^{-1}$, one can show that for any fixed \(w>0\),
\( \phi_r(z, w; F(\cdot)) \) has an analytic continuation.
To see this, we consider, for example, an alternative representation
\[
\phi_r(z, w) =
\dfrac{1}{\sqrt{2 \pi w}}
    \int_{i-\infty}^{i+\infty}
    (1 - zw e^{-i x})^r
    F(w z e^{-i x})
    \exp \left(
        -\dfrac{x^2}{2w} - z e^{-i  x}
    \right)
    \mathrm dx,
\]
which is valid for \(|z|=w^{-1}\). Both representations define analytic
functions which agree for $z$ in an open neighbourhood of zero. Hence, we can
use the latter representation to define \( \phi_r(z, w; F) \) for \(|z|=w^{-1}\).
The second branch of the analytic continuation is given by an integral from \( -i-\infty \) to \( -i + \infty \).
\end{remark}


\section{Symbolic method for directed graphs}
\label{section:symbolic:method}

In the previous section we mentioned that the exponential generating
function in its classical form is not suitable for the structural analysis of
directed graphs. In his paper~\cite{Robinson73}, Robinson develops a general
theory for counting digraphs whose strong components belong to a given class,
and introduces the \emph{special generating function}. This method has been
rediscovered later again in~\cite{de2019symbolic}, see also references therein.
The current paper aims to systematise and solidify the methods presented there.
Afterwards, the method has been extended even further to the case of directed
hypergraphs~\cite{Ravelomanana2020} and enumeration of satisfiable 2-SAT
formulae~\cite{dovgal2021exact}.

In \cref{sec:basic:principles}, we recall the definition
of the \emph{arrow product},
a key combinatorial operation on digraph families,
and its connection with graphic generating functions.
As a first application, we express the graphic generating function
of digraphs where all strongly connected components
belong to a given family.
In \cref{section:symbolic:elementary},
we apply this result to count acyclic digraphs and elementary digraphs
(all strongly connected components are isolated vertices or cycles).
\cref{section:repr:alt} presents alternative formulae
based on graph decomposition.
Our asymptotic analysis does not rely on them,
but they are interesting on their own
and might lead to future developments.
We recall Wright's approach for digraph decomposition
in \cref{sec:complex:components},
adapting it to multidigraphs.
Those results are used in \cref{sec:one:complex:component}
to count digraphs with one complex component.
We also summarise our exact enumerative results
in \cref{sec:summary:exact:results}.

    \subsection{Basic principles}
    \label{sec:basic:principles}

Let us recall the \emph{arrow product} of digraph families defined
in~\cite{de2019symbolic}
(see also previous work from \cite{Robinson73} and \cite{G96}),
which is an analogue of the Cartesian product for graphs.

\begin{definition} \label{def:arrow_product}
    An \emph{arrow product} of two digraph families \( \mathcal A \) and \(
    \mathcal B \) is the family \( \mathcal C \) consisting of a pair \( (a, b) \) of digraphs
    \( a \in \mathcal A \) and \( b \in \mathcal B \) and an arbitrary set of
    additional edges oriented from the vertices of \( a \) to the vertices of \( b \).
\end{definition}

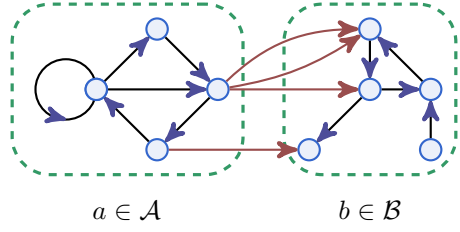
\begin{figure}[hbt!]
    \begin{center}
        \begin{tikzpicture}[>=stealth',thick, scale = 0.8]
\draw
node[arnBleuPetit](a) at ( 0, 0)  { }
node[arnBleuPetit](s) at ( 1,-1)  { }
node[arnBleuPetit](d) at ( 2, 0)  { }
node[arnBleuPetit](f) at ( 1, 1)  { }
node(loop) at ( -1, 01)  { }
;
\draw
node[arnBleuPetit](q)  at (4.5, 0)  { }
node[arnBleuPetit](w)  at (3.5,  -1)  { }
node[arnBleuPetit](e)  at (5.5,-1)  { }
node[arnBleuPetit](r)  at (5.5, 0)  { }
node[arnBleuPetit](t)  at (4.5, 1)  { }
;
\node at (.5, -2) {$a \in \mathcal A$};
\node at (4.5, -2) {$b \in \mathcal B$};
%
%
\node[rectangle,dashed,draw,fit=(a)(s)(d)(f)(loop), very thick,
      rounded corners=5mm,inner sep= 5pt, bgreen] {};
\node[rectangle,dashed,draw,fit=(q)(w)(e)(r)(t), very thick,
      rounded corners=5mm,inner sep= 5pt, bgreen] {};
%
%
\fromto{a}{d};
\fromto{a}{f};
\fromto{s}{a};
\fromto{d}{s};
\fromto{f}{d};
%
\fromto{q}{w};
\fromto{q}{r};
\fromto{e}{r};
\fromto{r}{t};
\fromto{t}{q};
%
\path (d) edge [blackred,thick, bend left=22,
decoration={markings,mark=at position 1 with
{\arrow[ultra thick,blackred, rotate=0]{>}}}, postaction={decorate}
] node {} (t);
\path (d) edge [blackred,thick, bend right=12,
decoration={markings,mark=at position 1 with
{\arrow[ultra thick,blackred, rotate=0]{>}}}, postaction={decorate}
] node {} (t);
\aprod{d}{q};
\aprod{s}{w};
%
%
\draw[
    decoration={markings, mark=at position 0.75 with
    {\arrow[ultra thick, blackblue, rotate=-15]{>}}},
    postaction={decorate}
    ]
    (-.5,0) circle (.5);
\draw node[arnBleuPetit](a0) at ( 0, 0)  { };
\end{tikzpicture}
    \end{center}
    \caption{The arrow product. As usual, labels are omitted.}
    \label{fig:arrow:product}
\end{figure}

For all the digraph models that we consider in~\cref{section:models}, the arrow
product corresponds to a product of (simple or multi-) graphic generating functions.
This is the motivation behind \cref{def:SGGF}.

\begin{lemma}
    \label{lemma:arrow:product}
    Let \( \widehat A(z, w) \) and \( \widehat B(z, w) \)
    be the (simple or multi-) graphic generating functions
    corresponding to two digraph families \( \mathcal A \) and \( \mathcal B \),
    and let \( \mathcal C \) be their arrow product with a corresponding
    graphic generating function \( \widehat C(z, w) \). Then we have
    \[
        \widehat C(z, w) = \widehat A(z, w) \widehat B(z, w).
    \]
\end{lemma}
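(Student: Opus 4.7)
The plan is to expand $\widehat{C}(z,w)$ directly from the definition of the arrow product, separating the contribution of each $c \in \mathcal{C}$ into (i) the pair $(a,b) \in \mathcal{A} \times \mathcal{B}$, (ii) the interleaving of vertex labels between $a$ and $b$, and (iii) the set (or labelled sequence, in the multidigraph case) of cross-edges from $V_a$ to $V_b$, then recognise that the resulting triple sum factorises into the product $\widehat{A}(z,w)\,\widehat{B}(z,w)$. I would treat the simple/strict and multidigraph cases in parallel, since the underlying combinatorics is the same but the weighting conventions differ.

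For simple and strict digraphs, fix $a \in \mathcal{A}$ and $b \in \mathcal{B}$ with $n_a, n_b$ vertices and $m_a, m_b$ edges. The $\binom{n_a+n_b}{n_a}$ vertex relabellings combined with the $1/(n_a+n_b)!$ in the definition of $\widehat{C}$ yield $1/(n_a!\,n_b!)$, and the cross-edge contribution evaluates to $\sum_{k=0}^{n_a n_b}\binom{n_a n_b}{k} w^k = (1+w)^{n_a n_b}$. The algebraic identity $\binom{n_a+n_b}{2} = \binom{n_a}{2} + \binom{n_b}{2} + n_a n_b$ splits the compensation factor $(1+w)^{-\binom{n_a+n_b}{2}}$ multiplicatively, with the cross term $(1+w)^{-n_a n_b}$ exactly cancelling the cross-edge sum; the remaining factors reassemble into $\widehat{A}(z,w)\,\widehat{B}(z,w)$. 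The strict case needs no extra argument, since cross-edges always go from $V_a$ to $V_b$ and cannot create $2$-cycles.

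For multidigraphs, the same decomposition applies, but the cross edges form a labelled sequence of pairs in $V_a \times V_b$: a multinomial coefficient $\binom{m_a+m_b+k}{m_a,\,m_b,\,k}$ accounts for interleaving the $m_a$, $m_b$ original edge labels with the $k$ new ones, and this multinomial cancels against the $(m_a+m_b+k)!$ in the denominator of the multi-graphic generating function. The cross-edge contribution then becomes $\sum_{k\ge 0}(n_a n_b)^k\, w^k / k! = e^{n_a n_b w}$, which is precisely cancelled by the cross term in $e^{-(n_a+n_b)^2 w/2}$ under the splitting $(n_a+n_b)^2 = n_a^2 + n_b^2 + 2 n_a n_b$.

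Nothing in the argument is a genuine obstacle; the only care required is the bookkeeping of vertex and edge relabellings, plus the observation that the two algebraic identities above are precisely what make the compensation factors $(1+w)^{-\binom{n}{2}}$ and $e^{-n^2 w/2}$ behave multiplicatively under disjoint union of vertex sets. This is, of course, the very reason those factors are built into \cref{def:SGGF}, so the lemma is essentially a verification that the design of the graphic generating function is compatible with the arrow product.
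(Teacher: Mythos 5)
Your proof is correct and is essentially the same as the paper's: both reduce to the identities $\binom{n_a+n_b}{2}=\binom{n_a}{2}+\binom{n_b}{2}+n_an_b$ (resp.\ $(n_a+n_b)^2=n_a^2+n_b^2+2n_an_b$) together with the cross-edge generating functions $(1+w)^{n_an_b}$ and $e^{n_an_bw}$, the only stylistic difference being that you expand $\widehat C$ forward from the arrow-product definition and factorise, whereas the paper expands the product $\widehat A\widehat B$ and reads off the combinatorial meaning. Your explicit handling of the multinomial interleaving of edge labels and the observation about $2$-cycles in the strict case are both things the paper leaves implicit in the exponential-GF machinery, but they are correct and align with the paper's argument.
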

\begin{proof}
    Let the two sequences associated with the graphic generating functions \( \widehat A(z, w)
    \) and \( \widehat B(z, w) \) be, respectively, \( (a_n(w))_{n \geq 0} \)
    and \( (b_n(w))_{n \geq 0} \). Depending on which kind of generating
    function (multigraphic or graphic) is considered, we have either
    \[
        \widehat A(z, w) =
        \sum_{n \geq 0} a_n(w) e^{-n^2w/2}\dfrac{z^n}{n!}
        \quad
        \text{and}
        \quad
        \widehat B(z, w) =
        \sum_{n \geq 0} b_n(w) e^{-n^2w/2}\dfrac{z^n}{n!},
    \]
    or
    \[
        \widehat A(z, w) = \sum_{n \geq 0} a_n(w)
        (1 + w)^{-{\binom{n}{2}}} \dfrac{z^n}{n!}
        \quad
        \text{and}
        \quad
        \widehat B(z, w) = \sum_{n \geq 0} b_n(w)
        (1 + w)^{-{\binom{n}{2}}} \dfrac{z^n}{n!}.
    \]
    Let \( (c_n(w))_{n \geq 0} \) be the sequence associated with the
    generating function \( \widehat C(z, w) \).

    \textit{The multigraphic case.} If the graphic generating function is of the
    multidigraph type, the resulting sequence \( c_n(w) \) is equal to
    \begin{align*}
        c_n(w) &= e^{n^2w/2} n! [z^n]
        \left(
            \sum_{k \geq 0} a_k(w) e^{-k^2w/2} \dfrac{z^k}{k!}
        \right)
        \left(
            \sum_{\ell \geq 0} b_\ell(w) e^{-\ell^2w/2}\dfrac{z^\ell}{\ell!}
        \right)
        \\ &=
        \sum_{k + \ell = n}
        {\binom{n}{k}}
        e^{k \ell w} a_k(w) b_\ell(w).
    \end{align*}
    This sum has the following interpretation:
    a new digraph is formed by choosing two digraphs \( a \in \mathcal A \) and
    \( b \in \mathcal B \) with the respective sizes \( k \) and \( \ell \);
    the binomial coefficient gives the number of ways to choose the labels of
    the vertices so that they form a partition of the set
    \( \{ 1, \ldots, n\} \) into two sets; for any pair of vertices
    \( u \) in \( a \) and \( v \) in \( b \), an arbitrary collection of edges is
    added, which has generating function \( e^w \) for each of the
    \( k \ell \) edges.

    \textit{The graphic case.} Similarly, the convolution product of the graphic
    type of the two sequences yields
    \begin{align*}
        c_n(w) &= (1 + w)^{\binom{n}{2}} n! [z^n]
        \bigg(
            \sum_{k \geq 0} a_k(w) (1 + w)^{-{\binom{k}{2}}} \dfrac{z^k}{k!}
        \bigg)
        \bigg(
            \sum_{\ell \geq 0} b_\ell(w) (1 + w)^{-{\binom{\ell}{2}}}
            \dfrac{z^\ell}{\ell!}
        \bigg)
        \\ &=
        \sum_{k + \ell = n}
        {\binom{n}{k}}
        (1 + w)^{k \ell} a_k(w) b_\ell(w),
    \end{align*}
    which has a similar interpretation. The only difference is that instead of
    a set of multiple edges between any pair of vertices \( u \in a \) and \(
    v \in b \), it is only allowed to either choose or not choose an edge, which
    results in a generating function \( (1 + w) \) instead of \( e^w \).
\end{proof}

The next theorem, first derived by \cite{RobinsonGeneral},
is the key to the enumeration of digraph families.
All the asymptotic results of this paper
rely on exact generating function expressions
obtained from this theorem.

\begin{theorem}
    \label{theorem:given:scc}
    Let \( \mathcal S \) be a family of strongly connected multidigraphs,
    and let \( S(z, w) \) denote its exponential generating
    function. Then, the multi-graphic generating function \( \widehat D_{\mathcal S}(z, w) \)
    of all the multidigraphs whose strongly connected components belong to
    \( \mathcal S \) is given by the formula
   \[
        \widehat D_{\mathcal S}(z, w)
        = \dfrac{1}{e^{-S(z, w)} \odot_z \gset(z, w)}.
    \]
    Moreover, the multi-graphic generating function of the same family
    where a variable \( u \) marks source-like components
    (components without incoming edges) is
    \[
        \widehat D_{\mathcal S}(z, w, u) =
        \dfrac
        {e^{(u-1)S(z, w)} \odot_z \gset(z, w)}
        {e^{-S(z, w)} \odot_z \gset(z, w)}.
    \]
    A similar statement holds for simple graphic generating functions,
    and is obtained by replacing
    \( \gset(z, w) \) with \( \gsetsimple(z, w) \).
\end{theorem}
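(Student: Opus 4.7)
The plan is to prove both formulas simultaneously, the first being the specialisation $u = 1$ of the second. The main tool is \cref{lemma:arrow:product} (arrow products translate to products of graphic generating functions) combined with an inclusion-exclusion over \emph{sink} strongly connected components of the combined digraph.

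First I would rewrite the target identity by multiplying through by the denominator, so that it takes the form
\[
    \widehat D_{\mathcal S}(z, w, u) \cdot \bigl(e^{-S(z,w)} \odot_z \gset(z,w)\bigr) = e^{(u-1)S(z,w)} \odot_z \gset(z,w).
\]
By \cref{lemma:conversion:egf:ggf}, the factor $e^{-S} \odot_z \gset$ is the multi-graphic generating function of the family $\mathcal{A}^-$ of sets of strongly connected digraphs from $\mathcal S$, where each set of size $k$ carries weight $(-1)^k$; similarly $e^{(u-1)S} \odot_z \gset$ is the multi-graphic generating function of the analogous family in which sets of size $k$ carry weight $(u-1)^k$. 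Then \cref{lemma:arrow:product} identifies the left-hand side as the multi-graphic generating function of the arrow product $\mathcal D_{\mathcal S}(u) \cdot \mathcal A^-$, where $\mathcal D_{\mathcal S}(u)$ is the $u$-marked family sitting on the left. Because $\mathcal A^-$ sits on the right, its components receive arrow-edges from the left factor but have no outgoing edges to it, and they carry no edges among themselves: every SCC of $\mathcal A^-$ is therefore a sink of the condensation of the combined digraph.

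The next step is to regroup the sum by the resulting combined digraph $E' \in \mathcal D_{\mathcal S}$. Every such $E'$ is produced by the construction in a canonical way: the right-factor choices correspond bijectively to subsets $A$ of the sink SCCs of $E'$, with the left factor $D = E' \setminus A$ and with the arrow-edges being the $E'$-edges from $D$ to $A$. The crucial combinatorial observation is that removing sink SCCs never creates new sources, since a sink SCC has no edges leaving it to other SCCs in the condensation. Writing $\sigma$, $\tau$ for the source and sink SCCs of (the condensation of) $E'$ and $\iota := \sigma \cap \tau$ for the SCCs that are both, the sources of $D$ are exactly $\sigma \setminus (A \cap \iota)$, so the contribution of $E'$ is
\[
    \sum_{A \subseteq \tau} (-1)^{|A|}\, u^{|\sigma| - |A \cap \iota|},
\]
which factorises over $A = A_1 \sqcup A_2$ with $A_1 \subseteq \iota$ and $A_2 \subseteq \tau \setminus \iota$ as $u^{|\sigma|} (1 - u^{-1})^{|\iota|} \cdot 0^{|\tau \setminus \iota|}$.

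This vanishes unless every sink of the condensation is also a source, and a short argument on the condensation DAG shows that this forces the condensation to be edgeless: following a maximal directed path from any non-isolated SCC leads to a sink that has an incoming edge, contradicting that sinks are sources. Hence the only $E'$ contributing a nonzero term is a disjoint union of $k$ SCCs from $\mathcal S$ with no edges between them, and the sum evaluates to $(u-1)^k$, matching exactly the coefficient on the right-hand side. Specialising $u = 1$ gives the first formula, and the simple-graphic version follows \emph{verbatim} with $\gset$ replaced by $\gsetsimple$, since the only properties invoked are those two lemmas, which hold in both settings. The step I expect to require the most care is the direction of the arrow product: placing $\mathcal A^-$ on the source side instead of the sink side would require an inclusion-exclusion over source SCCs, which does not telescope as cleanly because removing a source can create new sources further down the condensation.
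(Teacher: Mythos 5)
Your proof is correct, and it follows a genuinely different route from the paper's. The paper marks each source-like SCC of the \emph{combined} digraph with an independent on/off choice, which yields the decomposition $\widehat D_{\mathcal S}(z,w,u+1) = \bigl(e^{uS}\odot_z\gset\bigr)\,\widehat D_{\mathcal S}(z,w)$ as an arrow product (marked sources on the left, rest on the right); evaluating at $u=-1$ immediately gives $\widehat D_{\mathcal S}(z,w)=1/(e^{-S}\odot_z\gset)$, since only the empty digraph has no source-like SCC, and the general $u$-formula follows by the shift $u\mapsto u-1$. You instead start from the desired product identity, interpret its left side as the arrow product of the $u$-marked family with a \emph{signed} set family $\mathcal A^-$ placed on the \emph{sink} side, and perform an explicit inclusion-exclusion over sink SCCs of the combined digraph; the key observation that removing sink SCCs never creates new sources lets the sum factor into $u^{|\sigma|}(1-u^{-1})^{|\iota|}\cdot 0^{|\tau\setminus\iota|}$, which vanishes unless the condensation is edgeless. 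Both proofs hinge on \cref{lemma:arrow:product}; the paper's is shorter because it deduces the reciprocal formula as the $u=0$ specialization of the $u$-marked one (the ``no digraph without source-like SCC except the empty one'' fact plays the structural role that your ``edgeless condensation'' argument plays), while yours proves the full $u$-identity head-on and makes the sieve explicit. Your concluding remark about the direction of the arrow product is accurate for your setup, where $\mathcal D_{\mathcal S}(u)$ tracks the sources of the left factor alone; the paper sidesteps this subtlety by marking the sources of the combined object, which is why it can put the set factor on the source side without bookkeeping trouble.
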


\begin{proof}
    Consider the family of digraphs whose
    strongly connected components are from \( \mathcal S \), where each
    source-like component is either marked by a variable \( u \) or left
    unmarked. The multi-graphic generating function of this family is
    \( \widehat D_{\mathcal S}(z, w, u+1) \).
    The family can also be expressed as the arrow product of
    the marked source-like components, whose exponential generating
    function is $e^{u S(z,w)}$, with the rest of the digraph.
    This decomposition translates into a product of graphic generating
    functions
    \[
        \widehat D_{\mathcal S}(z, w, u+1)
        =
        \left(
            e^{u S(z, w)}\odot_z \gset(z, w)
        \right)
        \widehat D_{\mathcal S}(z, w).
    \]
    By definition, the only digraph without source-like component
    is the empty digraph, whose generating function is $1$,
    so $\widehat D_{\mathcal S}(z, w, 0) = 1$.
    By letting \( u = -1 \) we obtain the first part of the proposition.
    The proof is finished by replacing \( u \) with \( u-1 \).
\end{proof}

    \subsection{Directed acyclic graphs and elementary digraphs}
    \label{section:symbolic:elementary}

The simplest possible non-trivial application of the symbolic method for
directed graphs is the case of directed acyclic graphs, as the family of its
allowed strongly connected components only consists of the single-vertex graph.

\begin{lemma}
    \label{lemma:ggf:dags}
    Let $\phi(z,w) = \phi_0(z,w; 1)$ be defined as in \cref{theorem:phi}.
    The multi-graphic generating function \( \widehat D_{\mathrm{DAG}}(z, w) \)
    of acyclic multidigraphs is
    \[
        \widehat D_{\mathrm{DAG}}(z, w)
        =
        \dfrac{1}{\gset(-z, w)}
        =
        \dfrac{1}{\phi(z,w)}.
    \]
\end{lemma}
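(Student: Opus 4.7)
The plan is to apply \cref{theorem:given:scc} with the smallest possible family of allowed strongly connected components, then identify the resulting Hadamard product with the two expressions in the statement.

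First, I observe that a multidigraph is acyclic if and only if every one of its strongly connected components consists of a single isolated vertex. Indeed, any strongly connected multidigraph on at least two vertices contains a directed cycle; conversely, loops are themselves directed cycles, so a strongly connected component reduced to a single vertex must carry no edges at all. Let \( \mathcal S \) be the family whose only element is the one-vertex multidigraph with no edges. Its exponential generating function is \( S(z, w) = z \).

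Next, I plug \( S(z, w) = z \) into the formula provided by \cref{theorem:given:scc}, which gives
\[
    \widehat D_{\mathrm{DAG}}(z, w)
    = \dfrac{1}{e^{-z} \odot_z \gset(z, w)}.
\]
The Hadamard product in the denominator is straightforward to evaluate directly from the definition: since \( e^{-z} = \sum_{n \geq 0} (-1)^n \, z^n / n! \) and \( \gset(z, w) = \sum_{n \geq 0} e^{-n^2 w/2} z^n / n! \), one obtains
\[
    e^{-z} \odot_z \gset(z, w)
    =
    \sum_{n \geq 0} (-1)^n \, e^{-n^2 w/2} \dfrac{z^n}{n!}
    =
    \sum_{n \geq 0} e^{-n^2 w/2} \dfrac{(-z)^n}{n!}
    = \gset(-z, w),
\]
which establishes the first equality.

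Finally, for the integral representation, I invoke \cref{theorem:phi} with \( r = 0 \) and \( F(\cdot) \equiv 1 \). In that case the exponential generating function \( (1 - wz)^r e^{-z} F(wz) \) collapses to \( e^{-z} \), so the corollary identifies its Hadamard product with \( \gset(z, w) \) with \( \phi_0(z, w; 1) = \phi(z, w) \). Chaining this with the previous computation yields \( \gset(-z, w) = \phi(z, w) \), and therefore
\[
    \widehat D_{\mathrm{DAG}}(z, w)
    = \dfrac{1}{\gset(-z, w)}
    = \dfrac{1}{\phi(z, w)},
\]
as claimed. There is no real obstacle here: all the work is done by \cref{theorem:given:scc,theorem:phi}, and the only step requiring any combinatorial care is identifying the strongly connected components of a DAG as precisely the single vertices.
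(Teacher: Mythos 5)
Your proof is correct and follows essentially the same route as the paper's: it applies \cref{theorem:given:scc} with the single-vertex family (so \(S(z,w)=z\)), identifies the Hadamard product \(e^{-z}\odot_z\gset(z,w)=\gset(-z,w)\), and invokes \cref{theorem:phi} with \(r=0\) and \(F\equiv 1\) for the integral form. The only difference is that you spell out the combinatorial observation (SCCs of an acyclic multidigraph are exactly single vertices with no loops) and compute the Hadamard product explicitly, where the paper leaves these as immediate.
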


\begin{proof}
    The first expression of $\widehat D_{\mathrm{DAG}}(z, w)$
    is a direct application of \cref{theorem:given:scc},
    where the family of allowed strongly connected components
    includes only a single vertex.
    In this case, the exponential generating function of this family is \( S(z, w) = z \).
    The integral representation is obtained by \cref{theorem:phi}.
\end{proof}

The following lemma presents the corresponding result
for simple graphic generating functions,
and is obtained by replacing $\gset(-z, w)$ with $\gsetsimple(-z, w)$.

\begin{lemma}
  \label{lemma:sggf:dags}
  Let $\widetilde \phi(z,w) = \widetilde \phi_0(z,w;1)$
  be defined as in \cref{theorem:phi}.
  The graphic generating function
  \( \widehat D_{\mathrm{DAG}}^{(\text{simple})}(z, w) \)
  of simple  directed acyclic graphs is
    \[
        \widehat D_{\mathrm{DAG}}^{(\text{simple})}(z, w)
        =
        \dfrac{1}{\gsetsimple(-z, w)}
        =
        \dfrac{1}{
            \widetilde \phi(z,w)
        }\, .
      \]
\end{lemma}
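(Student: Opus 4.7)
The plan is to mirror the proof of \cref{lemma:ggf:dags} exactly, substituting the simple graphic machinery for the multi-graphic one. The paper has already noted after \cref{theorem:given:scc} that the theorem carries over to the simple graphic setting by replacing \(\gset(z,w)\) with \(\gsetsimple(z,w)\). Since a directed acyclic graph is precisely a digraph whose strongly connected components are isolated vertices, the relevant family \(\mathcal{S}\) of allowed strong components consists of a single vertex, whose exponential generating function is \(S(z,w)=z\).

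First I would apply this simple graphic version of \cref{theorem:given:scc} with \(S(z,w)=z\) to obtain
\[
\widehat D_{\mathrm{DAG}}^{(\text{simple})}(z,w) = \dfrac{1}{e^{-z}\odot_z \gsetsimple(z,w)}.
\]
Next I would evaluate the Hadamard product directly from the definitions: writing \(e^{-z}=\sum_{n\geq 0}(-1)^n z^n/n!\) and \(\gsetsimple(z,w)=\sum_{n\geq 0}(1+w)^{-\binom{n}{2}} z^n/n!\), the exponential Hadamard product collapses term by term to
\[
e^{-z}\odot_z \gsetsimple(z,w) = \sum_{n\geq 0}(-1)^n (1+w)^{-\binom{n}{2}}\dfrac{z^n}{n!} = \gsetsimple(-z,w),
\]
which gives the first equality in the statement.

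For the second equality, I would invoke \cref{theorem:phi} in its simple graphic incarnation with \(r=0\) and \(F\equiv 1\). That corollary says precisely that \((1-wz)^0 e^{-z}\cdot 1 \odot_z \gsetsimple(z,w)=\widetilde\phi_0(z,w;1)\), which by the definition in the statement of the lemma is \(\widetilde\phi(z,w)\). Combining this identification with the Hadamard product computation above yields \(\gsetsimple(-z,w)=\widetilde\phi(z,w)\), completing the chain of equalities.

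I do not anticipate a genuine obstacle here: everything reduces to citing \cref{theorem:given:scc} (simple version), \cref{theorem:phi}, and performing the one-line Hadamard product computation. The only mild care needed is to make sure the simple graphic version of \cref{theorem:given:scc} is invoked correctly, and to verify that the sign \(z\mapsto -z\) produced by the factor \(e^{-S(z,w)}=e^{-z}\) is consistent with the definition of \(\gsetsimple(-z,w)\); both are routine bookkeeping.
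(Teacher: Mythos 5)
Your proposal is correct and follows exactly the route the paper intends: the paper simply states that the lemma is obtained from \cref{lemma:ggf:dags} by replacing $\gset(-z,w)$ with $\gsetsimple(-z,w)$, invoking the simple-graphic version of \cref{theorem:given:scc} with $S(z,w)=z$ and \cref{theorem:phi}. You have merely filled in the routine details (the term-by-term Hadamard product and the identification with $\widetilde\phi_0(z,w;1)$) that the paper leaves implicit.
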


The next family of directed graphs is the family of \emph{elementary digraphs},
whose strongly connected components can only be single vertices and cycles. Such
digraphs are central in the study of the phase transition, and it has been shown
in~\cite{Luczak2009} that below the critical point of the phase transition, a
directed graph is, with high probability, elementary.

\begin{lemma}
    \label{lemma:ggf:elementary}
    Let $\phi_1(z,w) = \phi_1(z,w;1)$
    be defined as in \cref{theorem:phi}.
    The multi-graphic generating function \( \widehat D_{\mathrm{elem}}(z, w) \)
    of elementary multidigraphs is
    \[
        \widehat D_{\mathrm{elem}}(z, w)
        =
        \dfrac{1}{\gset(-z, w) - zw e^{-w/2} \gset(-z e^{-w}, w)}
        =
        \dfrac{1}{\phi_1(z,w)}.
    \]
\end{lemma}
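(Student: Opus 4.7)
The proof is a direct application of Theorem~\ref{theorem:given:scc} and Corollary~\ref{theorem:phi}, once the right family of strong components is identified and its exponential generating function is computed.

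First I would determine the family $\mathcal{S}$ of allowed strong components for elementary digraphs: isolated vertices (one vertex, no edges) together with directed cycles of all lengths $k \geq 1$, where a cycle of length $1$ is a loop. On $k$ labelled vertices there are $(k-1)!$ cyclic arrangements forming a directed cycle, and since multidigraph edges are labelled, each cycle admits $k!$ edge-labellings, giving $(k-1)!\,k!$ multidigraphs on $k$ vertices with $k$ edges. Its contribution to the EGF is therefore
\[
    (k-1)!\,k!\,\frac{w^k}{k!}\frac{z^k}{k!}=\frac{(wz)^k}{k},
\]
so, adding the single-vertex term $z$,
\[
    S(z,w)=z+\sum_{k\geq 1}\frac{(wz)^k}{k}=z-\log(1-wz).
\]
Then $e^{-S(z,w)}=(1-wz)\,e^{-z}$, which is of the form $(1-wz)^r e^{-z}F(wz)$ from Corollary~\ref{theorem:phi} with $r=1$ and $F\equiv 1$.

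Next, Theorem~\ref{theorem:given:scc} yields
\[
    \widehat D_{\mathrm{elem}}(z,w)=\frac{1}{e^{-S(z,w)}\odot_z \gset(z,w)},
\]
and Corollary~\ref{theorem:phi} identifies the denominator as $\phi_1(z,w;1)=\phi_1(z,w)$, which gives the second equality claimed in the lemma.

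To derive the first equality, I would compute the Hadamard product directly by splitting
\[
    (1-wz)e^{-z}\odot_z \gset(z,w)=\bigl(e^{-z}\odot_z \gset(z,w)\bigr)-\bigl(wze^{-z}\odot_z \gset(z,w)\bigr).
\]
The first term equals $\gset(-z,w)$, exactly as in Lemma~\ref{lemma:ggf:dags}. For the second, expand $wze^{-z}=\sum_{n\geq 1}\frac{(-1)^{n-1}n}{n!}wz^n$, take the Hadamard product against $\gset(z,w)$, shift $n=k+1$, and use the identity $e^{-(k+1)^2w/2}=e^{-w/2}e^{-k^2w/2}e^{-kw}$ to pull out $wze^{-w/2}$ and absorb the factor $e^{-kw}$ into $z\mapsto ze^{-w}$. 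This gives
\[
    wze^{-z}\odot_z \gset(z,w)=wze^{-w/2}\,\gset(-ze^{-w},w),
\]
so
\[
    \phi_1(z,w)=\gset(-z,w)-zwe^{-w/2}\gset(-ze^{-w},w),
\]
establishing the first equality. The only real care needed is in the combinatorial identification of $\mathcal{S}$ (verifying that loops are naturally captured as cycles of length $1$, so that $S(z,w)$ contains the full $-\log(1-wz)$ rather than its truncation from $k\geq 2$); once $S$ is correct, both equalities follow mechanically from the previously established tools.
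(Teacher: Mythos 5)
Your proof is correct and follows essentially the same route as the paper: identify $\mathcal S$ as single vertices and cycles, compute $S(z,w)=z-\log(1-wz)$, apply \cref{theorem:given:scc} to get the Hadamard form, invoke \cref{theorem:phi} for the $\phi_1$ identification, and split the Hadamard product by linearity to obtain the explicit expression. The only difference is a minor mechanical one: where you establish $wze^{-z}\odot_z\gset(z,w)=wze^{-w/2}\gset(-ze^{-w},w)$ by a direct index shift in the series, the paper routes it through the rule $z^ke^{-z}\odot_z\gset(z,w)=(-z)^k\frac{\mathrm d^k}{\mathrm dz^k}\gset(-z,w)$ together with the derivative identity $\frac{\mathrm d}{\mathrm dz}\gset(-z,w)=-e^{-w/2}\gset(-ze^{-w},w)$; the two computations are equivalent and your version correctly produces the $e^{-w/2}$ factor appearing in the statement.
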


\begin{proof}
    Consider the family of multidigraphs containing a multidigraph
    consisting of a single vertex and multidigraphs that are cycles. The exponential
    generating function of this family is
    \[
        S_{\mathrm{elem}}(z, w) = z + \log \dfrac{1}{1 - zw}.
    \]
    Plugging the exponential generating function of this family
    into \cref{theorem:given:scc}, we obtain
    \[
        \widehat D_{\mathrm{elem}}(z, w) = \dfrac{1}
        {(1 - zw)e^{-z}\odot_z \gset(z, w)}
    \]
    and the corresponding integral representation by \cref{theorem:phi}.
    The exponential Hadamard product is linear, so the Hadamard product of
    \( (1 - zw)e^{-z} \odot_z \gset(z, w) \) in the denominator
    can be decomposed into a sum of Hadamard products
    \( e^{-z} \odot_z \gset(z, w) = \gset(-z, w) \)
    and
    \( -zw e^{-z} \odot_z \gset(z, w) \). The last summand can be transformed
    using the rule
    \[
        z^k e^{-z} \odot_z \gset(z, w) = (-z)^k
        \dfrac{\mathrm d^k}{\mathrm dz^k} \gset(-z, w).
    \]
    Additionally, the derivative of \( \gset(-z, w) \) satisfies
    \[
        \dfrac{\mathrm d}{\mathrm dz} \gset(-z, w)
        =
        - e^{w/2} \gset(-z e^{-w}, w),
    \]
    which yields another expression for \( \widehat D_{\mathrm{elem}}(z, w) \).
\end{proof}

In order to enumerate elementary digraphs among simple digraphs, we need to
exclude cycles of length 1 and 2, or, depending on the model, we might still
want to preserve the cycles of length 2.
We need to exclude multiple edges as well, and this is treated automatically by
choosing graphic instead of multi-graphic generating functions.
In total, the graphic generating functions
of elementary digraphs, corresponding to the two different models
\( \DiER(n, p) \) and \( \DiERBoth(n, p) \) are denoted by
\( \Delemsimple(z, w; 2) \) and \( \Delemsimple(z, w; 1) \), and defined in the
following lemma.

\begin{lemma}
    \label{lemma:ggf:elementary-simple}
    Let $\widetilde \phi_r(z,w; F(\cdot))$ be defined as in \cref{theorem:phi},
    and set
    \[
        C_k(z)= z + \frac{z^2}{2}+\cdots+\frac{z^k}{k}\, .
    \]
    For any $k \geq 1$, the graphic generating function \(\Delemsimple(z, w;k) \)
    of elementary digraphs where cycles of length $\leq k$
    and multiple edges are forbidden is
    \[
        \Delemsimple(z, w;k) =
        \frac{1}{(1 - w z) e^{-z} e^{C_k(w z)} \odot_z \gsetsimple(z,w)}
        =
        \frac{1}{\widetilde \phi_1(z,w; e^{C_k(\cdot)})}.
    \]
\end{lemma}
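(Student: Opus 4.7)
The plan is to mirror the proof of \cref{lemma:ggf:elementary} but in the simple graphic setting, and then to use \cref{theorem:phi} in its $r=1$ form to recognise the Hadamard product as $\widetilde \phi_1$.

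First I would identify the family of allowed strongly connected components. In a simple digraph where cycles of length $\leq k$ are forbidden, the strong components of an elementary digraph are either single vertices or directed cycles of length at least $k+1$. Since the EGF of single vertices is $z$ and the EGF of all non-empty directed cycles (marking edges by $w$) is $\log\frac{1}{1-wz}$, subtracting the forbidden short cycles gives the EGF of allowed strong components
\begin{equation*}
    S(z,w) = z + \log\frac{1}{1 - wz} - C_k(wz).
\end{equation*}

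Next I would invoke the simple graphic version of \cref{theorem:given:scc} to obtain
\begin{equation*}
    \Delemsimple(z,w;k) = \frac{1}{e^{-S(z,w)} \odot_z \gsetsimple(z,w)},
\end{equation*}
and then simplify the integrand. A direct computation gives
\begin{equation*}
    e^{-S(z,w)} = e^{-z} \cdot (1 - wz) \cdot e^{C_k(wz)},
\end{equation*}
which is precisely of the shape $(1-wz)^r e^{-z} F(wz)$ with $r=1$ and $F(x) = e^{C_k(x)}$. Applying the simple graphic part of \cref{theorem:phi} (the entry of $\widetilde\phi_r$) yields
\begin{equation*}
    (1 - wz)\, e^{-z}\, e^{C_k(wz)} \odot_z \gsetsimple(z,w) = \widetilde \phi_1(z,w;\, e^{C_k(\cdot)}),
\end{equation*}
and the two claimed equalities follow.

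The only genuinely non-trivial step is ensuring that \cref{theorem:given:scc} and \cref{theorem:phi} are legitimately applicable: one must check that $F(x) = e^{C_k(x)}$ is entire (which it is, as $C_k$ is a polynomial), so the integral representation of $\widetilde\phi_1$ is well-defined, and one must verify that the convolution identity $e^{-S} \odot \gsetsimple$ converges as a formal power series in $z$, which holds since $S(z,w)$ has no constant term in $z$. Beyond these standard checks, the proof is essentially algebraic manipulation combined with the two previously established symbolic-method results, and there is no real technical obstacle.
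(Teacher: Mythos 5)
Your proof is correct and follows essentially the same route as the paper: identify the allowed strong-component family with EGF $S_k(z,w) = z + \log\tfrac{1}{1-wz} - C_k(wz)$, apply \cref{theorem:given:scc} (simple graphic version), and then recognise $e^{-S_k} = (1-wz)\,e^{-z}\,e^{C_k(wz)}$ as the $r=1$ shape in \cref{theorem:phi}. The only difference is that you spell out the intermediate algebraic simplification and the well-definedness checks, which the paper leaves implicit.
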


\begin{proof}
    The family $\mathcal S_k$ consisting of a single vertex
    and cycles of length at least \( k + 1 \)
    has exponential generating function
    \[
        S_k(z,w) = z + \log \frac{1}{1 - w z} - C_k(w z).
    \]
    A simple (resp.~strict) digraph is elementary if and only if
    all its strongly connected components belong to $\mathcal S_1$
    (resp.~$\mathcal S_2$).
    The first expression of $\Delemsimple(z, w;k)$
    is obtained by application of \cref{theorem:given:scc}.
    The second one comes from \cref{theorem:phi}.
\end{proof}

    \subsection{A graph decomposition approach to digraph enumeration}
    \label{section:repr:alt}

In this subsection, we provide alternative expressions
for the generating functions of acyclic and elementary multidigraphs.
They are based on the product decomposition of (multi-)graphs.
Our asymptotic analysis does not rely on them,
but they are interesting on their own.
A detailed version of their asymptotic analysis is given in \cite{Panafieu2020}.

The \emph{excess} of a (multi-)graph is defined as the
difference between the number of its edges and its vertices. For example, trees
have excess \( -1 \). A connected (multi-)graph with excess \( 0 \) is called a
\emph{unicycle}, or a \emph{unicyclic graph}, and a (multi-)graph whose connected
components have positive excess is called a \emph{complex (multi-)graph}.
Correspondingly, the \emph{complex components} of a graph are the
connected components whose excess is strictly positive.

Let \( U(z) \), \( V(z) \) and \( \eComplex_k(z) \)
denote the exponential generating functions of, respectively,
unrooted trees, unicycles and complex multigraphs of excess \( k \).
We follow the convention that the only complex graph of excess \( 0 \) is empty,
\ie, \( \eComplex_0(z) = 1 \).

The generating function of rooted trees \( T(z) \) satisfies the functional
equation \( T(z) = z e^{T(z)} \). The generating functions of unrooted
trees and unicycles can be expressed in terms of \( T(z) \):
\[
    U(z) = T(z) - \dfrac{T(z)^2}{2},
    \quad \text{and} \quad
    V(z) = \dfrac{1}{2} \log \dfrac{1}{1 - T(z)},
\]
see \eg~\cite{Janson93}. It is also known that a complex
graph of excess \( r \) is reducible to a \emph{kernel} (multigraph of minimal
degree at least $3$) of the same excess, by recursively removing vertices of degree
$0$ and $1$ and fusing edges sharing a degree $2$ vertex.
The total weight of \emph{cubic} kernels (all degrees equal to $3$) of excess $r$
is given by~\eqref{eq:complex:cf}.
They are central in the study of large critical graphs,
because non-cubic kernels do not typically occur.

\begin{lemma}[{\cite[Section 5]{Janson93}} and \cite{W772}]
\label{th:complex}
For each \( r \geq 0 \) there exists a polynomial \( P_r(T) \) such that
\begin{equation}
    \label{eq:complex:cf}
    \eComplex_r(z) =
    e_{r}\cfrac{T(z)^{2r}}{(1-T(z))^{3r}}
    +
    \dfrac{P_r(T(z))}{(1 - T(z))^{3r - 1}},
    \quad
    \text{where}
    \quad
    e_r = \frac{(6r)!}{2^{5r} 3^{2r} (2r)! (3r)!}.
\end{equation}
\end{lemma}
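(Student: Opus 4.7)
The plan is to implement Wright's kernel decomposition for complex multigraphs, then separate the resulting generating function into the cubic and non-cubic contributions.

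First, I would turn the reduction sketched in the introduction into a bijective construction. Any complex multigraph of excess $r$ is reconstructed from its \emph{kernel} $K$ (a multigraph with every vertex of degree at least $3$ and excess $r$) by attaching a set of rooted trees to each kernel vertex, and by replacing each kernel edge with a path whose internal vertices each carry such a set of attached rooted trees. This translates directly into exponential generating functions: each kernel vertex contributes a factor $T(z) = z e^{T(z)}$ (the vertex together with the set of rooted subtrees hanging from it), and each kernel edge contributes $1/(1 - T(z))$ (a sequence of ``internal vertex with attached trees'' blocks of any length $\geq 0$). Hence a kernel with $v(K)$ vertices and $e(K)$ edges gives a contribution proportional to $T(z)^{v(K)} / (1 - T(z))^{e(K)}$ to $\eComplex_r(z)$.

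Next, I would invoke the minimum-degree constraint. The handshake inequality $2 e(K) \geq 3 v(K)$ together with $e(K) - v(K) = r$ yields $v(K) \leq 2r$, with equality if and only if $K$ is cubic, in which case $v(K) = 2r$ and $e(K) = 3r$. Cubic kernels therefore produce the term $e_r T(z)^{2r} / (1 - T(z))^{3r}$, while a non-cubic kernel gives a term $T^v / (1 - T)^{v + r}$ with $v < 2r$, hence $v + r \leq 3r - 1$. Rewriting each such term over the common denominator $(1 - T)^{3r - 1}$ leaves a polynomial numerator in $T$, and summing over all non-cubic kernel types yields the claimed error term $P_r(T(z)) / (1 - T(z))^{3r - 1}$ for some explicit polynomial $P_r$.

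To compute the constant $e_r$, I would use a configuration-model count of labelled cubic multigraphs: there are $6r$ half-edges (three per vertex), paired into $(6r)! / (2^{3r} (3r)!)$ matchings, quotiented by the $(3!)^{2r}$ permutations of the three half-edges at each vertex, giving $(6r)! / (2^{5r} 3^{2r} (3r)!)$ weighted cubic multigraphs on $2r$ labelled vertices (the weighting coincides with the compensation factor of \cite{Janson93}). The EGF normalisation $1/(2r)!$ then gives $e_r = (6r)! / (2^{5r} 3^{2r} (2r)! (3r)!)$. The main technical obstacle will be that the kernel is a multigraph with possible loops and parallel edges, whose combinatorial weight must be compatible with the multigraph weighting used throughout the paper; this is precisely where the compensation factors enter and where Wright's original bookkeeping must be carried out with care.
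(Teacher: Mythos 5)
The paper itself does not prove this lemma; it cites \cite[Section 5]{Janson93} and \cite{W772}, and your reconstruction is precisely the Wright--Janson--Knuth--\L{}uczak--Pittel kernel decomposition used there (each kernel vertex contributes $T$, each kernel edge contributes $1/(1-T)$, and the handshake bound $2e(K)\geq 3v(K)$ combined with $e(K)-v(K)=r$ isolates the cubic term with $v=2r$). This is also exactly the strategy the paper itself later deploys for the directed analogue in \cref{th:complex:scc}, so your approach agrees with the paper's methodology; the configuration-model count of the cubic constant, $\frac{(6r)!}{2^{3r}(3r)!}\cdot\frac{1}{(3!)^{2r}(2r)!}=e_r$, is correct and consistent with the compensation-factor weighting (e.g.\ for $r=1$ it gives $\tfrac{5}{24}=\tfrac{1}{2!}\big(\tfrac{1}{3!}+\tfrac{1}{4}\big)$, the theta graph plus the dumbbell).
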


Having the multigraph decomposition into its connected components at hand, we
can use it together with the symbolic method for directed graphs to obtain
several different representations of the multi-graphic generating function of
directed acyclic graphs.

\begin{lemma}
    The multi-graphic generating function of acyclic multidigraphs
    is equal to
    \[
        \widehat D_{\mathrm{DAG}}(z, w) =
        \dfrac{e^{U(zw)/w - V(zw)}}
        { \sum_{k \geq 0} \eComplex_k(zw) (-w)^k}.
    \]
\end{lemma}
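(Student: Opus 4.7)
By~\cref{lemma:ggf:dags}, $\widehat D_{\mathrm{DAG}}(z,w) = 1/\gset(-z,w)$, and the identity $\MG(z,w) = \gset(z,-w)$ recalled in~\cref{sec:graphic} gives $\gset(-z,w) = \MG(-z,-w)$. The plan is therefore to establish the factorisation
\[
    \MG(z,w) \;=\; e^{U(zw)/w + V(zw)} \sum_{k \geq 0} \eComplex_k(zw)\, w^k,
\]
then specialise $(z,w) \mapsto (-z,-w)$ and invert.

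To prove the factorisation, I decompose a multigraph into its connected components sorted by excess. The exponential formula gives $\MG(z,w) = \exp(\tilde C(z,w))$, where $\tilde C$ is the EGF of connected multigraphs, and I split $\tilde C = C_{-1} + C_0 + \sum_{r \geq 1} C_r$ according to excess. A connected component on $n$ vertices with $n + r$ edges contributes a factor of the shape $(zw)^n w^r$ (up to combinatorial weights), so $C_r(z,w) = \tilde c_r(zw)\, w^r$ for some series $\tilde c_r$. Classical identifications yield $C_{-1}(z,w) = U(zw)/w$ and $C_0(z,w) = V(zw)$, so that $\exp(C_{-1} + C_0) = e^{U(zw)/w + V(zw)}$. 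For the positive-excess part, a second application of the exponential formula gives $\exp\bigl(\sum_{r \geq 1} \tilde c_r(zw)\, w^r\bigr) = \sum_{k \geq 0} \eComplex_k(zw)\, w^k$, since by definition $\eComplex_k$ is the EGF of (not necessarily connected) complex multigraphs of total excess $k$. Multiplying the two factors produces the desired formula for $\MG(z,w)$.

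The main obstacle is the precise identification of the three connected-component EGFs in the multigraph model---especially $C_0(z,w) = V(zw)$, which must account for loops and multiple edges absent from the simple-graph setting. This is the classical result of~\cite{Janson93}, but transporting it to our edge-labelled convention (\cref{sec:models:egf}) requires either direct bookkeeping of the edge labels or the dictionary between labelled multidigraphs and compensation-weighted multigraphs explained in~\cref{sec:multigraph:model}. Once these identifications are in place, the substitution $(z,w) \mapsto (-z,-w)$ and the inversion leading to $\widehat D_{\mathrm{DAG}}$ are routine.
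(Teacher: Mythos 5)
Your proposal is correct and follows essentially the same route as the paper: start from $\widehat D_{\mathrm{DAG}}(z,w)=1/\gset(-z,w)$, use $\gset(-z,w)=\MG(-z,-w)$, decompose multigraphs into trees, unicycles and complex components to factor the multigraph generating function, then take the reciprocal. The only cosmetic difference is that you first establish the product form of $\MG(z,w)$ in the variables $(z,w)$ and then substitute $(z,w)\mapsto(-z,-w)$, whereas the paper writes the decomposition directly for $\MG(-z,-w)$; the substitution bookkeeping $(zw\mapsto zw$, $U(zw)/w\mapsto -U(zw)/w$, $w^k\mapsto(-w)^k)$ works out as you indicate.
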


\begin{proof}
    From \cref{lemma:ggf:dags}, we have
    \[
        \widehat D_{\mathrm{DAG}}(z, w) =
        \frac{1}{\gset(-z,w)}.
    \]
    The generating function of all multigraphs and
    the graphic generating function of sets
    are linked by the identity \( \gset(-z, w) = \MG(-z,-w) \).
    Next, we represent the exponential generating function of multigraphs in a product form.
    Note that each of the functions \( U(z) \), \( V(z) \) and
    \( \eComplex(z) \) is a function of one variable, and in order to mark both
    vertices and edges, we need to perform the substitution \( z \mapsto zw \) and multiply by
    \( w \) to the power of the excess of the component.
    Since every multigraph can be decomposed into a set of trees, unicycles and
    its complex components, we obtain
    \begin{equation}
        \label{eq:multigraph:decomposition}
        \gset(-z, w) =
        \MG(-z, -w) = e^{-U(zw)/w + V(zw)} \sum_{k \geq 0}
        \eComplex_k(zw) (-w)^k.
    \end{equation}
    The statement of the lemma is obtained by taking the reciprocal.
\end{proof}

\begin{lemma}
    The multi-graphic generating function of elementary multidigraphs is
    \[
        \widehat D_{\mathrm{elem}}(z, w) =
        \dfrac{1}{1 - T(zw)}
        \cdot
        \dfrac{
            \exp \left(
                \dfrac{U(zw)}{w} - V(zw)
            \right)
        }{
            \left(
                1 + \dfrac{w T(zw)}{2(1 - T(zw))^3}
            \right)
            E(z, w)
            +
            zw\, \partial_z
            E(z, w)
        },
    \]
    where \( E(z, w) = \sum_{k \geq 0} \eComplex_k(zw)(-w)^k \).
\end{lemma}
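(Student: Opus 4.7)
The strategy is to start from the compact expression for $\widehat D_{\mathrm{elem}}(z,w)$ given in \cref{lemma:ggf:elementary} and rewrite its denominator via the multigraph product decomposition~\eqref{eq:multigraph:decomposition}.

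First, I would observe that $\partial_z\gset(-z,w) = -e^{-w/2}\gset(-z e^{-w},w)$. This follows by differentiating the defining series $\gset(-z,w)=\sum_{n\ge 0}e^{-n^2 w/2}(-z)^n/n!$ term-by-term and reindexing $n\mapsto n+1$: the exponent $-(n+1)^2 w/2$ decomposes as $-w/2 - nw - n^2 w/2$, which corresponds precisely to evaluating $\gset$ at $-ze^{-w}$ with an overall prefactor $-e^{-w/2}$. Substituting $\gset(-ze^{-w},w) = -e^{w/2}\partial_z\gset(-z,w)$ into \cref{lemma:ggf:elementary} collapses the denominator to the concise form
\[
\frac{1}{\widehat D_{\mathrm{elem}}(z,w)} \;=\; \gset(-z,w) + zw\,\partial_z \gset(-z,w).
\]

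Next, I would invoke $\gset(-z,w)=g(z,w)\,E(z,w)$ with $g(z,w)=\exp(-U(zw)/w + V(zw))$, established in the proof of the previous lemma. By the product rule,
\[
gE + zw\,\partial_z(gE) \;=\; g\bigl[(1+zw\,\partial_z\log g)\,E + zw\,\partial_z E\bigr].
\]
To compute $\partial_z\log g$, I would use $U'(y)=T(y)/y$ and $V'(y)=T(y)/(2y(1-T(y))^2)$, both consequences of the functional equation $T(y)=y e^{T(y)}$ together with the explicit expressions $U(y)=T(y)-T(y)^2/2$ and $V(y)=\tfrac12\log(1/(1-T(y)))$ from \cref{sec:complex:components}. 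A short calculation then yields
\[
zw\,\partial_z\log g \;=\; -T(zw) + \frac{w\,T(zw)}{2(1-T(zw))^2}.
\]

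The final step is algebraic: writing $T=T(zw)$ for brevity, the identity
\[
(1-T) + \frac{wT}{2(1-T)^2} \;=\; (1-T)\left(1+\frac{wT}{2(1-T)^3}\right)
\]
allows one to factor $(1-T)$ out of the coefficient of $E$. Reassembling the bracket, taking reciprocals, and multiplying numerator and denominator by $g^{-1}=\exp(U(zw)/w-V(zw))$ produces the stated expression.

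The main obstacle, as usual when mixing the multigraph decomposition with derivatives of $\gset$, is the last step: the natural factorisation of $(1-T(zw))$ out of the bracket must also accommodate the $zw\,\partial_z E$ contribution. This requires careful bookkeeping of how the derivative of the product $g\cdot E$ distributes over the complex-component factor, and one may cross-check the resulting closed form by using the explicit expressions for $\eComplex_r$ from \cref{th:complex} to verify the identity at each order symbolically.
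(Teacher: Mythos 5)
Your approach is the same as the paper's: reduce the denominator of $\widehat D_{\mathrm{elem}}(z,w)$ to $\gset(-z,w) + zw\,\partial_z\gset(-z,w)$ via the $\gset$-derivative identity, substitute the product decomposition~\eqref{eq:multigraph:decomposition} to write $\gset(-z,w) = g(z,w)E(z,w)$, compute the logarithmic derivative of the tree/unicycle prefactor via $z\,\partial_z U(zw) = T(zw)$ and $z\,\partial_z V(zw) = T(zw)/(2(1-T(zw))^2)$, and then try to factor $(1-T(zw))$ out. Every intermediate computation you give is correct; in particular, your sign $\partial_z\gset(-z,w) = -e^{-w/2}\gset(-ze^{-w},w)$ is the right one (the $-e^{w/2}$ appearing in the proof of \cref{lemma:ggf:elementary} is a typo, harmless there because the end formula of that lemma already reflects the correct sign).

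The obstacle you flag in your last paragraph, however, is a real one and cannot be made to disappear. The derivation terminates, exactly as you compute, at
\[
\frac{1}{\widehat D_{\mathrm{elem}}(z,w)} \;=\; g\left[(1-T)\left(1 + \frac{wT}{2(1-T)^3}\right)E + zw\,\partial_z E\right],
\qquad g = \exp\!\left(-\frac{U(zw)}{w} + V(zw)\right),\ T = T(zw).
\]
Writing $(1-T)^{-1}$ as a prefactor on the whole fraction, as the lemma's display does, would require $(1-T)$ to divide both summands inside the bracket, i.e.\ it would require $zw\,\partial_z E = (1-T)\,zw\,\partial_z E$, which is false. The factor can only stay attached to the coefficient of $E$. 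The lemma as printed therefore has a misplaced prefactor; the expression the computation actually yields is
\[
\widehat D_{\mathrm{elem}}(z,w) \;=\; \frac{\exp\!\left(\dfrac{U(zw)}{w} - V(zw)\right)}{(1 - T(zw))\left(1 + \dfrac{w\,T(zw)}{2(1-T(zw))^3}\right)E(z,w) + zw\,\partial_z E(z,w)}\,.
\]
So your derivation is sound, and your decision not to assert that the factorisation goes through was exactly the right call: the ``careful bookkeeping'' you wished for would reveal that the $(1-T(zw))^{-1}$ cannot be pulled out front.
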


\begin{proof}
    From \cref{lemma:ggf:elementary}, we have
    \[
        \widehat D_{\mathrm{elem}}(z, w) =
        \dfrac{1}{\gset(-z, w) - zw e^{-w/2} \gset(-z e^{-w}, w)}.
    \]
    We use the exponential generating function of multigraphs again,
    and replace \( \gset(-z, w) \) with \( \MG(-z, -w) \). This yields
    \[
        \widehat D_{\mathrm{elem}}(z, w) = \dfrac{1}{
            \MG(-z, -w) + zw \dfrac{\mathrm d}{\mathrm dz} \MG(-z, -w)
        }.
    \]
    Plugging the identity~\eqref{eq:multigraph:decomposition} in, we obtain
    \[
        \widehat D_{\mathrm{elem}}(z, w) =
        \dfrac{
            \exp \left(
                \dfrac{U(zw)}{w} - V(zw)
            \right)
        }{
            \left(
                zw\dfrac{\mathrm d}{\mathrm dz}
                \big(
                {-\frac{U(zw)}{w}} + V(zw)
                \big)
                +1
            \right)
            E(z, w)
            +
            zw\, \partial_z
            E(z, w)
        }.
    \]
    After simplifying the derivatives using the rules
    \[
        z \dfrac{\mathrm d}{\mathrm dz} U(zw) = T(zw)
        \quad \text{and} \quad
        z \dfrac{\mathrm d}{\mathrm dz} V(zw) = \dfrac{T(zw)}{2(1 - T(zw))^2},
    \]
    we finally obtain the product form.
\end{proof}

\begin{remark}
    \cref{th:complex} allows us to approximate \( E(z, w) \) by a series
    in \( w(1 - T(zw))^{-3} \), with coefficient \( (-1)^r e_r \) at the
    \( r \)-th power.
    Consequently, it lets one carry out an asymptotic analysis of the product
    form expression and obtain an asymptotic answer in a different form.
\end{remark}

    \subsection{Enumeration of strongly connected components}
    \label{sec:complex:components}

In this subsection, we derive the exponential generating function
of strongly connected digraphs, strict digraphs and multidigraphs.
Those results were first obtained for digraphs by
\cite{wright1971number}.

The difference between the numbers of edges and vertices
of a digraph is called its \emph{excess}.
For example, the excess of a cycle is equal to \( 0 \),
and the excess of an isolated vertex is equal to \( -1 \).
These are, respectively, the only possible strongly connected digraphs
with excesses \( 0 \) and \( -1 \).
We say that a strongly connected component of a digraph
is \emph{complex} if it has positive excess.
The \emph{deficiency} of a digraph with $n$ vertices
and excess $r$ is defined as $d = 2r - n$.

Similarly in spirit to~\cref{th:complex},
we derive an asymptotic approximation
for strongly connected multidigraphs with given excess
using the exponential generating function
of cubic strongly connected multidigraphs,
where \emph{cubic} means that the sum of in- and out-degrees
of each vertex is equal to $3$.
We say that a vertex has type \( (d_1, d_2) \)
if it has \( d_1 \) ingoing and \( d_2 \) outgoing edges.
If a cubic multidigraph is strongly connected,
then its vertices can only have type \( (1, 2) \) or \( (2, 1) \).

\begin{lemma} \label{th:complex:scc}
Consider a strongly connected kernel \( K \)
of excess \( r \) and deficiency \( d \),
and let \( \mathcal S \) denote the family
of strongly connected multidigraphs whose kernel is $K$.
Then the exponential generating function of
\( \mathcal S \) is
\[
    S_{r,d}(z, w) =
    \frac{1}{(3r-d)!}
    \dfrac{w^r}{(1 - w z)^{3r - d}}
    \dfrac{(w z)^{2r-d}}{(2r - d)!}.
\]
Let \( \eStrong_r(z, w) \),
\( \eStrongSimple_r(z, w) \), and
\( \eStrongStrict_r(z, w) \)
denote the exponential generating functions
of strongly connected multidigraphs, simple digraphs, and
strict digraphs of excess \( r \), respectively,
with the convention that they are equal to $1$ for $r=0$.
Then, for each \( r \geq 0 \),
there exist polynomials
\( A_r(z) \), \( A^{(\text{simple})}_r(z) \),
\( A^{(\text{strict})}_r(z) \)
of respective degrees at most $2r$, $5r$ and $8r$,
such that
\begin{align*}
    \eStrong_r(z, w) &=
    w^r \dfrac{A_r(w z)}{(1 - w z)^{3r}},
    \\
    \eStrongSimple_r(z, w) &=
    w^r \dfrac{A^{(\text{simple})}_r(w z)}{(1 - w z)^{3r}},
    \\
    \eStrongStrict_r(z, w) &=
    w^r \dfrac{A^{(\text{strict})}_r(w z)}{(1 - w z)^{3r}}.
\end{align*}
Furthermore, letting $s_r$ denote the number
of cubic strongly connected multidigraphs of excess $r$
divided by $(2r)! (3r)!$, we have
\[
    A_r(1) =
    A^{(\text{simple})}_r(1) =
    A^{(\text{strict})}_r(1) =
    s_r.
\]
\end{lemma}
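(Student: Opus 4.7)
The plan is to derive the first formula by a direct combinatorial construction of strongly connected multidigraphs with a prescribed kernel, and then to obtain the three generating function identities by summing $S_{r,d}$ over the finitely many strongly connected kernels of excess $r$.

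For the formula for $S_{r,d}$, a degree count on the kernel $K$ combined with $m_K - n_K = r$ yields $n_K = 2r - d$ and $m_K = 3r - d$. Any multidigraph in $\mathcal S$ arises from $K$ by subdividing each of the $m_K$ kernel edges into a directed path carrying $k_e \geq 0$ labelled internal vertices of in- and out-degree $1$, turning one labelled kernel edge into $k_e + 1$ labelled edges. With $z$ marking labelled vertices and $w$ marking labelled edges, the bivariate EGF of a single subdivided edge is $\sum_{k \geq 0} z^k w^{k+1} = w/(1 - wz)$. Multiplying over the $m_K$ independent subdivisions and combining with the labelling of the $n_K$ kernel vertices, the labelling of the $m_K$ kernel edges, and the normalisation by $|\mathrm{Aut}(K)|$, one obtains the claimed closed form.

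Second, summing $S_{r,d(K)}$ over all strongly connected kernels of excess $r$ --- a finite family since $n_K \leq 2r$ and $m_K \leq 3r$ --- gives $\eStrong_r(z,w)$. Putting every term over the common denominator $(1 - wz)^{3r}$ by multiplying the $d$-th term by $(1 - wz)^d$, the numerator becomes a polynomial in $wz$ whose degree is at most $(2r - d) + d = 2r$. This establishes the form $\eStrong_r(z,w) = w^r A_r(wz)/(1 - wz)^{3r}$ with $\deg A_r \leq 2r$. The same approach applies to simple and strict digraphs, but the per-edge subdivision EGF must be modified to forbid short subdivisions that would create loops, parallel edges, or $2$-cycles; these modifications take the form of $w/(1 - wz)$ plus a polynomial in $wz$ of bounded degree, and tracking the worst-case degree increase across the $m_K$ kernel edges yields the stated bounds $5r$ and $8r$.

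Finally, the identities $A_r(1) = A_r^{(\text{simple})}(1) = A_r^{(\text{strict})}(1) = s_r$ follow by evaluating $A_r^{(\cdot)}(wz) = w^{-r}(1 - wz)^{3r}\eStrong_r^{(\cdot)}(z,w)$ at $wz = 1$. In the multi case only the cubic ($d = 0$) contributions survive, since any $d \geq 1$ contributes a vanishing factor $(1-wz)^d$; summing the cubic contributions gives $N_r/((3r)!(2r)!) = s_r$, where $N_r$ is the number of labelled cubic strongly connected multidigraphs of excess $r$. In the simple and strict cases the corrections to the subdivision EGF are regular at $wz = 1$, so only the dominant $(w/(1 - wz))^{3r}$ singular part of the edge EGF product contributes at the cubic level, yielding the same value $s_r$. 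The main obstacle will be the bookkeeping of the simple and strict subdivision EGFs --- attaining the degree bounds $5r$ and $8r$ exactly by identifying the worst-case short-path exclusions, and confirming that every correction term collapses to the same constant $s_r$ upon evaluation at $wz = 1$.
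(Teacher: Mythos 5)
Your plan for the multidigraph part is essentially the paper's: obtain $S_{r,d}$ by subdividing each kernel arc into a path of length $\geq 1$, sum over the finite family of strongly connected kernels of excess $r$, and read off the degree bound $2r$ and the value $A_r(1)=s_r$ from the cubic ($d=0$) contribution. One slip worth flagging: $|\mathrm{Aut}(K)|$ should not appear. In the statement $K$ is a fixed \emph{labelled} kernel (labelled vertices and labelled arcs), so its own EGF contribution is already $\frac{w^{3r-d}}{(3r-d)!}\frac{z^{2r-d}}{(2r-d)!}$, and the arc-to-path substitution $w\mapsto w/(1-wz)$ yields the claimed closed form directly; the denominator $(3r-d)!(2r-d)!$ comes from the EGF normalisation, not from automorphisms, and in particular the formula is the same for every labelled kernel with given $(r,d)$.

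The genuine gap is in the simple and strict cases. You propose to alter the \emph{per-edge} subdivision EGF to ``forbid short subdivisions that would create loops, parallel edges, or $2$-cycles.'' A per-arc modification does handle the loop constraint (a loop must become a path of length $\geq 2$, resp.\ $\geq 3$ in the strict case), but the parallel-edge and $2$-cycle constraints are \emph{joint} constraints on all kernel arcs sharing the same endpoint pair (ordered pair for simple, unordered pair for strict), not independent per-arc constraints. If the kernel carries $j\geq 2$ arcs from $x$ to $y$, then at most one of the $j$ replacement paths may have length $1$; this is not a product of per-arc EGFs. The paper's proof refines the kernel count into a polynomial $K_r(z,w,u_0,\ldots,u_{3r})$, where $u_j$ marks the number of (ordered, resp.\ unordered) vertex pairs carrying exactly $j$ arcs, and then substitutes $u_j\mapsto\left(\tfrac{wz}{1-wz}\right)^j+j\left(\tfrac{wz}{1-wz}\right)^{j-1}$ (and $u_0\mapsto\tfrac{wz}{1-wz}$ or $\tfrac{(wz)^2}{1-wz}$). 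The degree bounds $5r$, $8r$ and the identities $A_r^{(\text{simple})}(1)=A_r^{(\text{strict})}(1)=s_r$ come out of this group-wise substitution: each kernel arc adds at most $1$ (simple) or $2$ (strict) to the numerator degree, and for cubic kernels the dominant pole has the same coefficient as in the multidigraph case because cubic kernels have no loops and the $(wz/(1-wz))^j$ term in the $u_j$-substitution dominates at $wz=1$. You frame the missing piece as ``bookkeeping,'' but it requires replacing your per-edge decomposition by the group-wise one; the plan as written would not produce the correct generating functions for simple or strict digraphs.
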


\begin{proof}
The proof is similar to the one
of \cite{W772} and \cite[Section 9]{Janson93}.
The in- and out-degree of each vertex
in a strongly connected multidigraph is at least one.
By fusing the vertices of type \( (1, 1) \),
the multidigraph is reduced to its \emph{kernel},
whose vertex degrees are at least \( (1, 2) \) or \( (2, 1) \).
Observe that the multidigraph and its kernel
share the same excess.
Consider a strongly connected kernel $K$
of excess $r$ and deficiency $d$. It contains $2r-d$ vertices and $3r-d$ edges,
so its exponential generating function is
\[
    \frac{w^{3r-d}}{(3r-d)!}
    \frac{z^{2r-d}}{(2r-d)!}.
\]
The multidigraphs whose kernel is $K$
are obtained by replacing each edge
with a sequence (edge, vertex, edge, $\ldots$ , vertex, edge).
Thus their exponential generating function is
\[
    S_{r,d}(z,w) =
    \frac{\left(\frac{w}{1 - w z} \right)^{3r-d}}{(3r-d)!}
    \frac{z^{2r-d}}{(2r-d)!}.
\]

\noindent \textbf{Strong multidigraphs.}
For any given excess $r$, there is only
a finite number of kernel multidigraphs of excess $r$.
Indeed, the sum of the degrees (out- and in-)
is twice the number of vertices, so $3n \leq 2m$
and we deduce $n \leq 2r$ and $m \leq 3r$.
Let $K_{r,d}$ denote the number of strongly connected kernels
of excess $r$ and deficiency $d$.
They contain $2r-d$ vertices and $3r-d$ edges,
and the exponential generating function
of strongly connected kernels of excess $r$ is
\[
    K_r(z,w) =
    \sum_{d=0}^{2r}
    K_{r,d}
    \frac{w^{3r-d}}{(3r-d)!}
    \frac{z^{2r-d}}{(2r-d)!}.
\]
Thus, by the previous result,
the exponential generating function
of strongly connected multidigraphs of excess $r$ is
\[
    \eStrong_r(z,w) =
    w^r
    \sum_{d=0}^{2r}
    \frac{K_{r,d}}{(3r-d)! (2r-d)!}
    \frac{(w z)^{2r - d}}{(1 - w z)^{3r - d}},
\]
so there exists a polynomial $A_r(z)$
of degree at most $2r$
with $A_r(1) = K_{r,0} / ((2r)! (3r)!) = s_r $ such that
\[
    \eStrong_r(z, w) =
    w^r \dfrac{A_r(w z)}{(1 - zw)^{3r}}.
\]

\noindent \textbf{Strong simple digraphs.}
For a multidigraph $D$,
let $\ell_0(D)$ denote the number of loops,
and for each $j \geq 1$,
let $\ell_j(D)$ denote the number of oriented pairs
of distinct vertices $(x,y)$ linked by exactly $j$ edges
oriented from $x$ to $y$.
By definition, $\sum_{j \geq 0} \ell_j(D)$
is equal to the total number of edges of $D$.
Let also $K_{r,d,\ell_0, \ldots, \ell_{3r}}$
denote the number of strongly connected kernels $D$
of excess $r$, deficiency $d$,
and satisfying $\ell_j(D) = \ell_j$ for all $j \geq 1$
(recall that $3r$ is a bound on the total number of edges of $D$).
The exponential generating function
of strongly connected kernels of excess $r$
is refined into the multivariate polynomial
\[
    K_r(z,w,u_0, \ldots, u_{3r}) =
    \sum_{d, \ell_0, \ldots, \ell_{3r}}
    K_{r,d,\ell_0, \ldots, \ell_{3r}}
    u_0^{\ell_0}
    \cdots
    u_{3r}^{\ell_{3r}}
    \frac{w^{3r-d}}{(3r-d)!}
    \frac{z^{2r-d}}{(2r-d)!}.
\]
By \cref{th:egfs:connections},
the exponential generating function
of strongly connected simple digraphs is equal to
the exponential generating function
of strongly connected multidigraphs
that contain neither loops nor multiple edges.
To obtain any simple digraph reducing to a given kernel,
observe that
\begin{itemize}
\item
each loop of the kernel is replaced by
a path of length at least $2$,
\item
each multiple edge of multiplicity $j$
is replaced by $j$ paths,
all of length at least $2$ except possibly one of them.
\end{itemize}
It follows that the exponential generating function
of strongly connected simple digraphs
is obtained from $K_r(z,w,u_0, \ldots, u_{3r})$
by replacing
$u_0$ with $\frac{w z}{1 - w z}$,
and
$u_j$ with
$\left( \frac{w z}{1 - w z}\right)^j + j \left( \frac{w z}{1 - w z} \right)^{j-1}$
for each $j \geq 1$.
We indeed obtain a series
\[
    \eStrongSimple_r(z, w) =
    w^r \dfrac{A^{(\text{simple})}_r(w z)}{(1 - w z)^{3r}},
\]
where $A^{(\text{simple})}_r(z)$ is a polynomial
whose value at $1$ is $s_r = K_{r,0} / ((2r)! (3r)!)$.
Its degree is at most $5r$ because
each edge of the kernel increases
the degree of $A^{(\text{simple})}_r(z)$
by at most $1$,
and there are at most $3r$ such edges.

\noindent \textbf{Strong strict digraphs.}
We follow the same principle as in the previous case.
By \cref{th:egfs:connections},
the exponential generating function
of strongly connected strict digraphs
is equal to the exponential generating function
of strongly connected multidigraphs
that contain neither loops,
nor unordered pairs of vertices $\{x,y\}$
linked by at least $2$ edges (in any orientation).
Any strongly connected strict digraph
reducing to a given kernel is obtained by replacing
\begin{itemize}
\item
each loop with a path of length at least $2$,
\item
each unordered pair of distinct vertices $\{x,y\}$
linked in total by $j \geq 1$ edges
(from $x$ to $y$ or $y$ to $x$)
with $j$ paths, all of length at least $2$
except possibly one of them.
\end{itemize}
Let $\ell_0(D)$ denote the number of loops
of the multidigraph $D$,
and for each $j \geq 1$, let $\ell_j(D)$ denote
the number of unordered pairs of distinct vertices $\{x,y\}$
linked by exactly $j$ edges (from $x$ to $y$ or $y$ to $x$).
Let $K_{r,d,\ell_0, \ldots, \ell_{3r-d}}$
denote the number of strongly connected kernels $D$
of excess $r$, deficiency $d$,
and satisfying $\ell_j(D) = \ell_j$
for all $j \geq 0$.
Let us refine again the exponential generating function
of strongly connected kernels into the polynomial
\[
    K_r(z,w,u_0, \ldots, u_{3r}) =
    \sum_{d, \ell_0, \ldots, \ell_{3r}}
    K_{r,d,\ell_0, \ldots, \ell_{3r}}
    u_0^{\ell_0}
    \cdots
    u_{3r}^{\ell_{3r}}
    \frac{w^{3r-d}}{(3r-d)!}
    \frac{z^{2r-d}}{(2r-d)!}.
\]
The exponential generating function
of strongly connected strict digraphs
is then obtained from this polynomial by replacing
$u_0$ with $\frac{(w z)^2}{1 - w z}$,
and
$u_j$ with $\left( \frac{w z}{1 - w z} \right)^j + j \left( \frac{w z}{1 - w z} \right)^{j-1}$
for each $j \geq 1$.
We obtain
\[
    \eStrongStrict_r(z, w) =
    w^r \dfrac{A^{(\text{strict})}_r(w z)}{(1 - w z)^{3r}}
\]
for some polynomial $A^{(\text{strict})}_r(z)$
satisfying $A^{(\text{strict})}_r(1) = s_r$.
Its degree is at most $8r$ because
each edge of the kernel increases
the degree of $A^{(\text{simple})}_r(z)$
by at most $2$,
and there are at most $3r$ such edges.
\end{proof}

\begin{remark} \label{rem:strong:complex:simple}
\cite{liskovets1970number,robinson1977strong,de2019symbolic}
provide a direct expression for
the exponential generating function
of strongly connected simple digraphs,
which extends naturally to multidigraphs and strict digraphs, namely
\begin{align*}
    \eStrong(z,w) &=
    - \log \left(
        \MG(z,w) \odot_z \dfrac{1}{\MG(z,w)}
    \right),
    \\
    \eStrongSimple(z,w) &=
    - \log \left(
        G(z, w) \odot_z \dfrac{1}{G(z, w)}
    \right),
    \\
    \eStrongStrict(z,w) &=
    - \log \left(
        G(z, w) \odot_z
            \bigg( \sum_{n = 0}^\infty
            \dfrac{(1+2w)^{\binom{n}{2}}}{(1+w)^{\binom{n}{2}}}
            \dfrac{z^n}{n!}
            \bigg)^{-1}
    \right) \, .
\end{align*}
Those expressions allow us to compute the polynomials
mentioned in the previous lemma;
\[
    A_r(w z) =
    \frac{(1 - w z)^{3 r}}{w^r}
    \eStrong_r(z,w) \, ,
\]
since the excess $r$ denotes the difference between the number of edges and
vertices.
Rewriting $\eStrong_r(z,w)$ as $[y^r] \eStrong(z/y,y w)$
and replacing the redundant variable $w$ with $1$,
we obtain
\begin{equation} \label{eq:A:r:z}
    A_r(z) =
    (1 - z)^{3r}
    [y^r] \eStrong(z/y,y),
\end{equation}
and similarly for $A^{(\text{simple})}_r(z)$
and $A^{(\text{strict})}_r(z)$.
Observe that since a bound on the degree of $A_r(z)$ is known,
we need only expand the right-hand side series
up to degree $2r$ (\resp $5r$ and $8r$ in the simple and strict cases)
to obtain the desired polynomial.

While the coefficient \( e_r \) in~\cref{th:complex}
carries a similar meaning to \( s_r \),
namely the sum of compensation factors of cubic multigraphs
divided by \( (2r)! \), it is much easier
to obtain the exact expression for \( e_r \)
than for \( s_r \).
In fact, having a simple recurrence for \( s_r \)
is mentioned as an open problem at the end
of the paper~\cite{Janson93}.
A table of the first several values of \( s_r \)
is provided in that paper as well:
\[
    (s_r)_{r \geq 1} = \left(
        \frac{1}{2},
        \frac{17}{8},
        \frac{275}{12},
        \frac{26141}{64},
        \frac{1630711}{160},
        \ldots
    \right).
\]
For example, there are exactly six
strongly connected cubic multidigraph of excess $1$,
so \( s_1 = \tfrac{6}{2!3!} = \tfrac12 \)
(\cf~\cref{fig:bicycles}, the multidigraph on the left).
The coefficients $s_r$ can be obtained from \cite[(4.2)]{W772}
using the expression for $\eStrong_r(z,w)$.
Let us present a direct approach.
By definition, we have $s_r = A_r(1)$,
and $A_r(z)$ is a polynomial of degree at most $2r$.
Since the value at $1$ of a polynomial $P(z)$
of degree at most $d$ is equal to $[z^d] (1-z)^{-1} P(z)$,
we deduce, from \eqref{eq:A:r:z},
\[
    s_r =
    [z^{2r} y^r]
    (1 - z)^{3r - 1}
    \eStrong(z/y, y).
\]
After the change of variables $(z,w) = (z/y,y)$
and replacing $\eStrong(z,w)$ with its expression,
we obtain
\[
    s_r =
    - [z^{2r} w^{3r}]
    (1 - w z)^{3r - 1}
    \log \left(
        \MG(z,w) \odot_z \dfrac{1}{\MG(z,w)}
    \right).
\]
\end{remark}

\begin{figure}[hbt!]
    \RawFloats
    \begin{minipage}[t]{0.7\textwidth}
    \begin{center}
        \begin{tikzpicture}[>=stealth',thick, scale = 0.8]
\draw
node[arnBleuPetit](a) at (0, 0)  { }
node[arnBleuPetit](b) at (2, 0)  { }
node[arnBleuPetit](c) at (5, 0)  { }
;
%
\path (a) edge [black,thick, bend left=80,
decoration={markings,mark=at position .99 with
{\arrow[ultra thick,blackblue, rotate=16]{>}}}, postaction={decorate}
] node {} (b);
\path (b) edge [black,thick,
decoration={markings,mark=at position .99 with
{\arrow[ultra thick,blackblue, rotate=0]{>}}}, postaction={decorate}
] node {} (a);
\path (a) edge [black,thick, bend left = -80,
decoration={markings,mark=at position .99 with
{\arrow[ultra thick,blackblue, rotate=-16]{>}}}, postaction={decorate}
] node {} (b);
%
%
\draw[
    decoration={markings, mark=at position 0.75 with
    {\arrow[ultra thick, blackblue, rotate=-15]{>}}},
    postaction={decorate}
    ]
    (5-.5,0) circle (.5);
\draw[
    decoration={markings, mark=at position 0.2 with
    {\arrow[ultra thick, blackblue, rotate=-15]{>}}},
    postaction={decorate}
    ]
    (5+.5,0) circle (.5);
\draw node[arnBleuPetit](a0) at ( 5, 0)  { };
\end{tikzpicture}
    \caption{Bicyclic strongly connected kernel multidigraphs}
    \label{fig:bicycles}
    \end{center}
    \end{minipage}%
    \begin{minipage}[t]{0.3\textwidth}
    \begin{center}
        \begin{tikzpicture}[>=stealth',thick, scale = 0.8]
\draw
node[arnBleuPetit](a) at (-.5, 0)  { }
node[arnBleuPetit](b) at (1, 0)  { }
node[arnBleuPetit](c) at (2, 1)  { }
node[arnBleuPetit](d) at (2, -1)  { }
;
%
%
\path (a) edge [black,thick,
decoration={markings,mark=at position .99 with
{\arrow[ultra thick,blackblue, rotate=0]{>}}}, postaction={decorate}
] node {} (b);
\path (b) edge [black,thick,
decoration={markings,mark=at position .99 with
{\arrow[ultra thick,blackblue, rotate=0]{>}}}, postaction={decorate}
] node {} (c);
\path (b) edge [black,thick,
decoration={markings,mark=at position .99 with
{\arrow[ultra thick,blackblue, rotate=0]{>}}}, postaction={decorate}
] node {} (d);
%
\path (a) edge [black,thick, bend left = 40,
decoration={markings,mark=at position .99 with
{\arrow[ultra thick,blackblue, rotate=3]{>}}}, postaction={decorate}
] node {} (c);
\path (c) edge [black,thick, bend left = 20,
decoration={markings,mark=at position .99 with
{\arrow[ultra thick,blackblue, rotate=3]{>}}}, postaction={decorate}
] node {} (d);
\path (d) edge [black,thick, bend left = 40,
decoration={markings,mark=at position .99 with
{\arrow[ultra thick,blackblue, rotate=3]{>}}}, postaction={decorate}
] node {} (a);
\end{tikzpicture}
    \caption{Another example of a cubic strongly connected kernel}
    \label{fig:complex}
    \end{center}
    \end{minipage}%
\end{figure}

    \subsection{Digraphs with one complex component}
    \label{sec:one:complex:component}

A natural application of~\cref{th:complex:scc} in connection
with~\cref{theorem:given:scc} is to enumerate digraphs having exactly one strong
complex component of given excess, while all other components are single vertices
and cycles. A simple application of this lemma for a component of excess \(
1 \) can also be derived.

\begin{lemma}
    \label{lemma:one:marked:scc}
    If \( \mathcal S \) is some family of complex strongly
    connected multidigraphs, and \( S(z, w) \) is its exponential generating
    function, then the multi-graphic generating function
    \( \widehat H_{\mathcal S}(z, w) \) of multidigraphs containing
    exactly one strongly connected component from \( \mathcal S \) while all others
    are single vertices or cycles, is
    \[
        \widehat H_{\mathcal S}(z, w)
        =
        \dfrac{
            (1 - w z) S(z, w)
            e^{-z}
            \odot_z
            \gset(z, w)
        }{
            \big(
                (1 - w z) e^{-z} \odot_z
                \gset(z, w)
            \big)^2
        }.
    \]
    In particular, if $S(z,w)$ is of the form $A(w) (1 - w z)^{r} B(w z)$
    for some integer $r$ and functions $A(\cdot)$ and $B(\cdot)$, then
    \[
        \widehat H_{\mathcal S}(z, w) =
        A(w)
        \frac{\phi_{1 + r}(z,w;B(\cdot))}
            {\phi_1(z,w;1)^2},
    \]
    where $\phi_r(z,w;F(\cdot))$ comes from \cref{theorem:phi}.
\end{lemma}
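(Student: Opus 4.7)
The plan is to apply the marked version of Theorem~3.3 (strong components in a prescribed family) to the union family $\mathcal{S}\cup\mathcal{S}_{\mathrm{elem}}$, where $\mathcal{S}_{\mathrm{elem}}$ consists of single vertices and cycles, and then extract the coefficient corresponding to ``exactly one'' component from $\mathcal{S}$. Concretely, I would introduce an auxiliary marking variable $v$ that tags each strongly connected component lying in $\mathcal{S}$, while components from $\mathcal{S}_{\mathrm{elem}}$ are left unmarked. Writing $S_{\mathrm{elem}}(z,w)=z+\log\tfrac{1}{1-wz}$ for the EGF of the elementary family, Theorem~3.3 (applied to $\mathcal{S}\cup\mathcal{S}_{\mathrm{elem}}$ with the natural variant where $v$ marks components in $\mathcal{S}$) yields
\[
    \widehat{D}(z,w,v)
    =\frac{1}{e^{-vS(z,w)-S_{\mathrm{elem}}(z,w)}\odot_z\operatorname{Set}(z,w)}.
\]
The quantity we want is $\widehat{H}_{\mathcal{S}}(z,w)=[v^1]\widehat{D}(z,w,v)$.

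Next, I would differentiate in $v$ at $v=0$. Setting $h(v)=e^{-vS-S_{\mathrm{elem}}}\odot_z\operatorname{Set}$, linearity of the exponential Hadamard product gives $h'(0)=-S\,e^{-S_{\mathrm{elem}}}\odot_z\operatorname{Set}$, and therefore
\[
    \widehat{H}_{\mathcal{S}}(z,w)
    =-\frac{h'(0)}{h(0)^{2}}
    =\frac{S(z,w)\,e^{-S_{\mathrm{elem}}(z,w)}\odot_z\operatorname{Set}(z,w)}
          {\bigl(e^{-S_{\mathrm{elem}}(z,w)}\odot_z\operatorname{Set}(z,w)\bigr)^{2}}.
\]
Substituting $e^{-S_{\mathrm{elem}}(z,w)}=(1-wz)e^{-z}$ produces exactly the displayed formula.

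For the specialisation, suppose $S(z,w)=A(w)(1-wz)^{r}B(wz)$. Then
\[
    (1-wz)\,S(z,w)\,e^{-z}=A(w)\,(1-wz)^{r+1}\,e^{-z}\,B(wz),
\]
and because the exponential Hadamard product with $\operatorname{Set}(z,w)$ is $z$-linear, the factor $A(w)$ pulls out. Applying \cref{theorem:phi} to the numerator (with parameter $r+1$ and function $B(\cdot)$) and to the denominator (with parameter $1$ and function $F\equiv 1$) yields the integral representations $\phi_{r+1}(z,w;B(\cdot))$ and $\phi_{1}(z,w;1)$ respectively, giving the claimed closed form.

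The only genuinely delicate point is justifying the ``marked'' variant of Theorem~3.3 for the family $\mathcal{S}\cup\mathcal{S}_{\mathrm{elem}}$: one has to see that the same arrow-product decomposition used in the proof of that theorem still works when source-like components carry an extra marker $v$ on the $\mathcal{S}$-parts, with the variable $u$ cleared by the same $u=-1$ substitution. Once this marked version is granted, the rest is a short coefficient extraction and an invocation of \cref{theorem:phi}.
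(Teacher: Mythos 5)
Your proof is correct and follows essentially the same route as the paper's: introduce a marking variable $v$ on the $\mathcal{S}$-components, apply \cref{theorem:given:scc} with the weighted family $z + \log\tfrac{1}{1-wz} + vS(z,w)$, extract $[v^1]$, and invoke \cref{theorem:phi} for the specialisation. The paper is terser (it just says ``by extracting the coefficient of $v^1$'' rather than writing out the $-h'(0)/h(0)^2$ step), but the content is identical; your closing caveat about the marked variant of \cref{theorem:given:scc} is well taken, though note the marker $v$ sits on all $\mathcal{S}$-components, not only source-like ones, and the $u=-1$ substitution clears $u$ (the source-like marker), not $v$.
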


\begin{proof}
    We introduce an additional variable \( v \) marking the  components from $\mathcal S$
    in a digraph. According to~\cref{theorem:given:scc},
    the multi-graphic generating function \( \widehat H(z, w, v) \)
    of digraphs whose strongly connected components are either single
    vertices or cycles, or from \( \mathcal S \),
    where each component from \( \mathcal S \) is marked by \( v \), is
    \[
        \dfrac{1}{(1 - w z)e^{-z - vS(z, w)} \odot_z \gset(z, w)}.
    \]
    By extracting the coefficient of \( v^1 \), we obtain the first expression.
    The second result is a direct application of \cref{theorem:phi}.
\end{proof}

To illustrate the last lemma, we apply it
to count (multi)digraphs where one strong component is bicyclic,
while the others are isolated vertices and cycles.
Strong components of higher excess
and in higher numbers could be handled in the same way.

\begin{corollary}
    \label{lemma:one:bicycle}
    The multi-graphic generating function \( \widehat H_{\mathrm{bicycle}}(z, w) \) of
    multidigraphs having exactly one bicyclic
    strongly connected component and whose other strongly connected components
    are either single vertices or cycles, is given by
    \[
        \widehat H_{\mathrm{bicycle}}(z, w)
        =
        \frac{w}{2}
        \dfrac{
            w z (1 - w z)^{-2}
            e^{-z}
            \odot_z
            \gset(z, w)
        }{
            \big(
                (1 - w z) e^{-z} \odot_z
                \gset(z, w)
            \big)^2
        }
        =
        \frac{w}{2}
        \frac{\phi_{-2}(z,w; x \mapsto x)}
            {\phi_1(z,w;1)^2}.
    \]
\end{corollary}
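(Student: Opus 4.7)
The plan is to invoke \cref{lemma:one:marked:scc} with $\mathcal{S}$ being the family of bicyclic strongly connected multidigraphs (that is, those of excess $1$). Everything then reduces to computing a closed form for $S(z,w) := \eStrong_1(z,w)$ in the factored shape $A(w)(1-wz)^r B(wz)$ required by the second statement of that lemma.

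To obtain $\eStrong_1(z,w)$, I would apply \cref{th:complex:scc}, which reduces the problem to enumerating the strongly connected kernels of excess $1$. The bound $n \leq 2r = 2$ together with the minimum in-plus-out-degree condition on kernels leaves only two cases: the cubic kernels on $2$ vertices and $3$ edges, of which there are $K_{1,0} = 6$ (two choices for which of the labelled vertices has type $(1,2)$, times $\binom{3}{1}=3$ labellings of the three edges in each configuration, as depicted in \cref{fig:bicycles} left), and the unique one-vertex kernel with two labelled loops, giving $K_{1,1}=1$ (the multidigraph on the right of \cref{fig:bicycles}). Substituting these counts into the formula from \cref{th:complex:scc} and using the identity $wz + (1-wz) = 1$ collapses the two surviving terms into the tidy closed form
\[
    \eStrong_1(z,w) = \frac{w^2 z}{2\,(1-wz)^3}.
\]

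Finally, I would write $S(z,w) = A(w)(1-wz)^{r}B(wz)$ with $A(w) = w/2$, $r = -3$, $B(x) = x$, and read off the conclusion from the second statement of \cref{lemma:one:marked:scc}:
\[
    \widehat H_{\mathrm{bicycle}}(z,w) = \frac{w}{2}\cdot\frac{\phi_{1+(-3)}(z,w;\, x \mapsto x)}{\phi_1(z,w;1)^2} = \frac{w}{2}\cdot\frac{\phi_{-2}(z,w;\, x \mapsto x)}{\phi_1(z,w;1)^2}.
\]
The equivalent Hadamard-product form then follows from the first statement of the same lemma by factoring $(1-wz)S(z,w) = \tfrac{w}{2}\cdot wz(1-wz)^{-2}$, together with the identification $\phi_1(z,w;1) = (1-wz)e^{-z}\odot_z \gset(z,w)$ from \cref{theorem:phi}.

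The main obstacle is getting the kernel enumeration right: both the cubic kernels and the loop-only kernel contribute nontrivially, and it is only after summing their contributions that the cancellation $wz + (1-wz) = 1$ produces the clean expression for $\eStrong_1(z,w)$. Once this closed form is in hand, the remainder is a direct substitution into the machinery already developed in \cref{lemma:one:marked:scc,theorem:phi}.
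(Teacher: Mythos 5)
Your proof is correct and follows essentially the same route as the paper: compute the exponential generating function $S_{\mathrm{bicycle}}(z,w)=\tfrac{w^2 z}{2(1-wz)^3}$ and plug it into the second statement of \cref{lemma:one:marked:scc}. The only difference is one of explicitness: the paper simply writes down the two-term expression for $S_{\mathrm{bicycle}}$ (citing \cref{fig:bicycles}) and simplifies, whereas you derive it from scratch via the kernel counts $K_{1,0}=6$, $K_{1,1}=1$ in \cref{th:complex:scc}, which is a correct and slightly more self-contained account of the same computation.
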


\begin{proof}
    The exponential generating function of bicyclic multidigraphs
    (\cf~\cref{fig:bicycles}) is
    \[
        S_{\mathrm{bicycle}}(z, w) =
        \dfrac{1}{2} \dfrac{w^3 z^2}{(1 - w z)^3}
        +
        \dfrac{1}{2} \dfrac{w^2 z}{(1 - w z)^2}
        =
        \frac{1}{2}
        \frac{w^2 z}{(1 - w z)^3}.
    \]
    Plugging this in for \( S(z, w) \), we obtain the statement.
\end{proof}

We now transfer the last two results to simple and strict digraphs.

\begin{lemma}
    \label{lemma:one:marked:scc:simple}
    If \( \mathcal S \) is some family of complex strongly
    connected digraphs where multiple edges and cycles of length $\leq
    k$ are  forbidden, and \( S(z, w;k) \) is its exponential generating
    function, then the simple graphic generating function
    \( \widehat H_{\mathcal S}^{\textrm{(simple)}}(z, w;k) \)
    of digraphs containing
    exactly one strongly connected component from \( \mathcal S \) while all others
    are single vertices or cycles of length $>k$, is
    \[
        \widehat H_{\mathcal S}^{\textrm{(simple)}}(z, w;k)
        =
        \dfrac{
            (1 - zw) S(z, w;k)
            e^{-z+C_k(w z)}
            \odot_z
            \gsetsimple(z, w)
        }{
            \big(
                (1 - zw) e^{-z+C_k(w z)} \odot_z
                \gsetsimple(z, w)
            \big)^2
        }\,,
    \]
    where $C_k(z) = z + z^2/2 + \cdots + z^k / k$.
    In particular, when $S(z,w;k)$ is of the form $A(w;k) (1 - w z)^{r} B(w z; k)$
    for some integer $r$ and functions $A(\cdot;k)$ and $B(\cdot;k)$,
    then
    \[
        \widehat H_{\mathcal S}^{\textrm{(simple)}}(z,w;k) =
        \frac{\widetilde \phi_{1 + r} \left( z,w; x \mapsto A(x; k) e^{C_k(x)} \right)}
            {\widetilde \phi_1 \left(z,w; e^{C_k(\cdot)} \right)^2},
    \]
    where $\widetilde \phi_r(z,w;F(\cdot))$ is defined in \cref{theorem:phi}.
\end{lemma}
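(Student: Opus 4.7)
The plan is to mirror the proof of Lemma~\ref{lemma:one:marked:scc}, but invoke the simple graphic version of Theorem~\ref{theorem:given:scc} (with $\gsetsimple$ in place of $\gset$) and additionally rule out cycles of length $\leq k$ via the polynomial $C_k(wz)$.

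First, I would introduce an auxiliary variable $v$ marking the strong components from $\mathcal{S}$. The family of admissible strongly connected components for the digraphs to be enumerated consists of isolated vertices, cycles of length strictly greater than $k$, and complex components from $\mathcal{S}$ carrying one power of $v$. Its exponential generating function is
\[
    S_{\mathrm{tot}}(z,w,v;k) = z + \log \tfrac{1}{1-wz} - C_k(wz) + v\, S(z,w;k).
\]
Applying Theorem~\ref{theorem:given:scc} in its simple variant produces
\[
    \widehat{D}(z,w,v;k) =
    \dfrac{1}{(1-wz)e^{-z + C_k(wz)} e^{-v S(z,w;k)} \odot_z \gsetsimple(z,w)}.
\]

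Next I would extract $[v^1]\widehat{D}$. Writing the denominator as $F_0(z,w;k) - v F_1(z,w;k) + O(v^2)$, linearity of the exponential Hadamard product with respect to $z$ gives
\[
    F_0 = (1-wz)e^{-z + C_k(wz)} \odot_z \gsetsimple(z,w), \qquad
    F_1 = (1-wz) S(z,w;k)\, e^{-z + C_k(wz)} \odot_z \gsetsimple(z,w),
\]
so that $[v^1]\widehat{D}(z,w,v;k) = F_1/F_0^2$, which is precisely the first claimed expression. Under the structural hypothesis $S(z,w;k)=A(w;k)(1-wz)^r B(wz;k)$, the numerator factors as
\[
    (1-wz) S(z,w;k)\, e^{-z + C_k(wz)} = A(w;k)\,(1-wz)^{r+1}\, e^{-z}\bigl(B(wz;k) e^{C_k(wz)}\bigr),
\]
which is exactly the form handled by Corollary~\ref{theorem:phi}, yielding a $\widetilde{\phi}_{r+1}$ with inner function $x\mapsto B(x;k)e^{C_k(x)}$ (the factor $A(w;k)$ either pulls out or is absorbed into the inner function, depending on the reading of the notation). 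The denominator similarly reduces to $\widetilde{\phi}_1(z,w;e^{C_k(\cdot)})^2$ by the same corollary applied with $r=0$ and inner function $e^{C_k(\cdot)}$.

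The main obstacle is not any single computation but rather the bookkeeping: verifying that the arrow-product decomposition underlying Theorem~\ref{theorem:given:scc} still applies when restricted to simple (respectively strict) digraphs, so that multiple edges between distinct strong components are automatically excluded by using $\gsetsimple$, and that the forbidden short-cycle term $C_k(wz)$ threads consistently through the Hadamard product and the subsequent application of Corollary~\ref{theorem:phi}. Once this compatibility is checked, the algebraic manipulation is routine and proceeds in parallel with the multidigraph case.
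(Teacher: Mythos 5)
Your proof is correct and follows the same route as the paper: introduce a marking variable $v$, apply the simple version of Theorem~\ref{theorem:given:scc} to the combined SCC family with EGF $z + \log\frac{1}{1-wz} - C_k(wz) + v\,S(z,w;k)$, extract $[v^1]$ from $1/(F_0 - vF_1)$ to get $F_1/F_0^2$, and specialize via Corollary~\ref{theorem:phi}. Your parenthetical concern about where $A(w;k)$ goes is also on target: since $A$ depends only on $w$, it must pull out as a prefactor and the inner function of $\widetilde\phi_{1+r}$ should read $x \mapsto B(x;k)e^{C_k(x)}$, which is a typo in the lemma's second displayed formula (compare the multidigraph version Lemma~\ref{lemma:one:marked:scc} and the bicyclic corollary that immediately follows, both of which exhibit $A(w;k)$ in front and $B$ inside).
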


\begin{proof}
    According to~\cref{theorem:given:scc},
    the graphic generating function of digraphs whose strongly connected components are
    either single vertices or cycles of length $>k$, or from \( \mathcal S \),
    where each component from \( \mathcal S \) is marked by \( v \), is
    \[
        \dfrac
        {1}
        {
            (1 - zw) e^{-z - v S(z, w; k) + C_k(w z)}
            \odot_z \gsetsimple(z, w)
        } \,.
    \]
    Next, as in \cref{lemma:one:marked:scc}, the formula follows
    by extracting the coefficient of \( v^1 \).
    In the particular case where $S(z,w;k) = A(w;k) (1 - w z)^{r} B(w z; k)$,
    the result is obtained by application of \cref{theorem:phi}.
\end{proof}

\begin{corollary}
    The exponential generating functions of bicyclic simple and strict digraphs
    are respectively
    \[
        S_{\mathrm{bicycle}}(z, w;1) =
       \frac{1}{2}{\frac {{w}^{4}{z}^{3} \left(
       2- wz \right) }{
            \left( 1-zw \right) ^{3}}}
        +
        \frac{1}{2} {\frac {{w}^{4}{z}^{3}}{ \left( 1- w z \right) ^{2}}}\,
    \]
    and
    \[
        S_{\mathrm{bicycle}}(z, w;2)
       =  \frac{1}{2}{\frac {{w}^{5}{z}^{4} \left( 3- 2\,w z \right) }{
            \left( 1- wz\right) ^{3}}} +
       \frac{1}{2} {\frac {{w}^{6}{z}^{5}}{ \left( 1- w z \right)
           ^{2}}}\,,
    \]
    depending on whether \(2\)-cycles are allowed or not.
    The graphic generating functions of simple (for $k=1$) and strict (for $k=2$) digraphs
    where the only complex strongly connected component is bicyclic are
    \[
        \widehat H_{\mathrm{bicycle}}^{\textrm{(simple)}}(z, w;k) =
        \frac{w}{2}
        \frac{\widetilde \phi_{-2}(z,w;F_k(\cdot))}
            {\widetilde \phi_1 \left( z,w;e^{C_k(\cdot)} \right)^2}
    \]
    where $F_1(z) = z^3 (3 - 2 z) e^z$, $F_2(z) = z^4 (3 - z - z^2) e^{x + x^2/2}$
    and $C_k(z) = z + z^2/2 + \cdots + z^k/k$.
\end{corollary}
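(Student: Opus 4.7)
The plan is to extend the computation of Corollary 3.15 (the multidigraph case) by adapting the kernel-substitution framework from the proof of Lemma 3.13 to the simple and strict settings, and then to feed the resulting EGFs into Lemma 3.17 to obtain the graphic generating functions. The starting point is the enumeration of strongly connected kernels of excess one, already implicit in Corollary 3.15: up to labelling, there is a two-vertex kernel (three edges between the two vertices, two in one direction and one in the other, with automorphism of order $2$) contributing kernel-weight $w^{3}z^{2}/2$, and a one-vertex kernel with two loops contributing $w^{2}z/2$.

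For the simple case ($k=1$), I would apply the substitutions from the proof of Lemma 3.13: $u_{0}\mapsto wz/(1-wz)$ (kernel loops expand to paths of length $\geq 2$) and $u_{j}\mapsto (wz/(1-wz))^{j}+j(wz/(1-wz))^{j-1}$ on ordered pairs with multiplicity $j$ (all but possibly one expansion must have length $\geq 2$ to forbid multi-edges). For the two-vertex kernel the relevant occupancies are $\ell_{1}=\ell_{2}=1$, which after using $wz/(1-wz)+1=1/(1-wz)$ collapse to $wz(2-wz)/(1-wz)^{3}$; for the one-vertex kernel the occupancy $\ell_{0}=2$ gives $(wz)^{2}/(1-wz)^{2}$. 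Multiplying by the kernel weights reproduces the claimed $S_{\mathrm{bicycle}}(z,w;1)$.

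For the strict case ($k=2$), the substitutions are $u_{0}\mapsto (wz)^{2}/(1-wz)$ (loops must now expand to paths of length $\geq 3$ to avoid forming a $2$-cycle) and $u_{j}\mapsto (wz/(1-wz))^{j}+j(wz/(1-wz))^{j-1}$ on unordered pairs with total multiplicity $j$. The three parallel edges of the two-vertex kernel form a single unordered pair of multiplicity three, which after simplification gives $(wz)^{2}(3-2wz)/(1-wz)^{3}$; the loops at the one-vertex kernel contribute $(wz)^{4}/(1-wz)^{2}$. Combining with the kernel weights yields $S_{\mathrm{bicycle}}(z,w;2)$.

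Finally, to obtain the graphic generating function, I would factor each $S_{\mathrm{bicycle}}(z,w;k)$ into the product form $A(w;k)\,(1-wz)^{r}B(wz;k)$ required by the specialised version of Lemma 3.17, namely $A(w;k)=w/2$, $r=-3$, $B(x;1)=x^{3}(3-2x)$ and $B(x;2)=x^{4}(3-x-x^{2})$; substituting into $\widetilde\phi_{1+r}(z,w;\,x\mapsto B(x;k)e^{C_{k}(x)})/\widetilde\phi_{1}(z,w;e^{C_{k}(\cdot)})^{2}$ produces exactly $F_{1}$ and $F_{2}$ as announced. The main obstacle is bookkeeping: the ordered-pair substitution in the simple case and the unordered-pair substitution in the strict case look identical in appearance but refer to different multiplicity counts, so the two-vertex kernel is assigned to $u_{1}u_{2}$ in one case and to $u_{3}$ in the other; double-checking against the Sympy notebook for Section 7.2 is the natural way to eliminate any residual mistake.
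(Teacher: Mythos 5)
Your approach is correct and is precisely the one the paper's framework calls for (the corollary is stated without a proof, but it is an immediate consequence of \cref{th:complex:scc} and \cref{lemma:one:marked:scc:simple}). I checked the substitutions: the two-vertex kernel carries weight $\tfrac{w^3z^2}{2}$ with profile $u_1u_2$ (simple) or $u_3$ (strict), the single-vertex kernel carries $\tfrac{w^2z}{2}$ with profile $u_0^2$, and the results assemble exactly to the stated $S_{\mathrm{bicycle}}(z,w;k)$ and then, via the factorisation $A=w/2$, $r=-3$, $B(x;1)=x^3(3-2x)$, $B(x;2)=x^4(3-x-x^2)$, to the claimed graphic generating function — you also implicitly corrected the paper's typo in \cref{lemma:one:marked:scc:simple}, where the argument of $\widetilde\phi_{1+r}$ should be $B(x;k)e^{C_k(x)}$ rather than $A(x;k)e^{C_k(x)}$.
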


    \subsection{Summary of the exact results}
    \label{sec:summary:exact:results}

In \cref{section:asymptotics:multidigraphs}
and \cref{section:asymptotics:simple},
we obtain the asymptotic probability
of various (multi)digraphs properties.
We summarise their exact expressions
in the following propositions.

As in \cref{theorem:phi},
we define $\alpha = \log(1+w)$, $\beta = \sqrt{1+w}$ and
\begin{align*}
    \phi_r(z,w;F(\cdot)) &=
    \dfrac{1}{\sqrt{2 \pi w}}
    \int_{-\infty}^{+\infty}
    (1 - w z e^{-i x})^r
    F(w z e^{-i x})
    \exp \left(-\dfrac{x^2}{2 w} - ze^{-i x} \right)
    \mathrm dx,
    \\
    \widetilde \phi_r(z,w;F(\cdot)) &=
    \frac{1}{\sqrt{2 \pi \alpha}}
    \int_{-\infty}^{+\infty}
    (1 - w z \beta e^{-i x})^r
    F(w z \beta e^{-i x})
    \exp \left( - \frac{x^2}{2 \alpha} - z \beta e^{-i x} \right)
    \mathrm dx.
\end{align*}
When $r=0$ or $F(\cdot) = 1$,
those parameters are often omitted.
For example, $\phi(z,w) = \phi_0(z,w; 1)$.

\begin{proposition} \label{th:summary:multidigraph}
For a random multidigraph from $\DiGilbMulti(n,p)$,
the probability
\begin{itemize}
\item
to be acyclic is equal to
\[
    e^{- n^2 p / 2} n ! [z^n] \frac{1}{\phi(z,p)},
\]
see \cref{theo:MD_model_lambda_fixed}
and \cref{th:acyclic-window} for the asymptotics;
\item
to be elementary
(all its strong components are vertices or cycles)
is equal to
\[
    e^{- n^2 p / 2} n ! [z^n] \dfrac{1}{\phi_{1}(z, p)},
\]
see \cref{theo:elementary:multi} for the asymptotics;
\item
to contain only one complex component,
whose kernel is a given kernel of excess $r$ and deficiency $d$,
is equal to
\[
    \dfrac{p^r}{(2 r - d)!}
    e^{- n^2 p / 2} n ! [z^n]
    \frac{\phi_{1 - 3 r + d}(z,p;x \mapsto x^{2r-d})}
    {\phi_1(z,p)^2},
\]
see \cref{theorem:probability:one:strong:component}
for the asymptotics;
\item
to contain only one complex component,
which is bicyclic, is equal to
\[
    \dfrac{p}{2}
    e^{- n^2 p / 2} n ! [z^n]
    \frac{\phi_{-2}(z,p;x \mapsto x^{2})}{\phi_1(z,p)^2},
\]
see \cref{corollary:bicyclic} for the asymptotics.
\end{itemize}
\end{proposition}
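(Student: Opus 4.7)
The proposition is a compendium of four probability formulas for the $\DiGilbMulti(n,p)$ model, each obtained by pairing the probability-extraction identity of \cref{lemma:P:Dnp} for multidigraphs, namely
\[
    \mathbb P_{\mathcal F}(n,p) = e^{-n^2 p/2}\, n!\,[z^n]\,\widehat F(z,p),
\]
with the appropriate multi-graphic generating function built earlier in the paper. The plan is therefore a verification step for each item rather than a fresh argument; no saddle-point or analytic estimate enters, only symbolic-method bookkeeping. I would spell out each item in turn as follows.

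For item (i), invoke \cref{lemma:ggf:dags}, which identifies the multi-graphic generating function of acyclic multidigraphs as $\widehat D_{\mathrm{DAG}}(z,w) = 1/\phi(z,w)$. Substituting into \cref{lemma:P:Dnp} with $w=p$ immediately yields the stated expression. For item (ii), the same pattern applies with \cref{lemma:ggf:elementary}, which gives $\widehat D_{\mathrm{elem}}(z,w) = 1/\phi_1(z,w)$; plugging into \cref{lemma:P:Dnp} closes the case.

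For item (iii), I would first use \cref{th:complex:scc} to write the EGF of strongly connected multidigraphs with the prescribed kernel as
\[
    S_{r,d}(z,w) = \frac{w^r}{(3r-d)!\,(2r-d)!}\,(1-wz)^{d-3r}\,(wz)^{2r-d},
\]
which has the shape $A(w)(1-wz)^{R}B(wz)$ required by the second clause of \cref{lemma:one:marked:scc}, with $R=d-3r$ and $B(x)=x^{2r-d}$. Applying that clause produces the multi-graphic generating function
\[
    \widehat H_{\mathcal S}(z,w) = \frac{w^r}{(3r-d)!\,(2r-d)!}\,
    \frac{\phi_{1-3r+d}\bigl(z,w;x\mapsto x^{2r-d}\bigr)}{\phi_1(z,w;1)^{2}},
\]
after which \cref{lemma:P:Dnp} yields the displayed probability (up to reconciling the combinatorial normalisation factor, see below). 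For item (iv), I would simply combine \cref{lemma:one:bicycle} with \cref{lemma:P:Dnp}; no further manipulation is needed, since \cref{lemma:one:bicycle} already gives $\widehat H_{\mathrm{bicycle}}(z,w)$ in the exact form required to identify the stated integrand $\phi_{-2}(z,w;x\mapsto x^2)/\phi_1(z,w;1)^2$.

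The only point that requires care is the normalising constant in item (iii): the routine substitution yields a prefactor $p^r/\bigl((3r-d)!(2r-d)!\bigr)$, whereas the proposition states $p^r/(2r-d)!$. Reconciling these means accounting correctly for what ``a given kernel'' means combinatorially; the proof in \cref{th:complex:scc} treats the kernel as a specific labelled structure, and one must either remove the $1/(3r-d)!$ edge-labelling factor when the kernel is regarded as fixed, or absorb it into the counting convention on kernels. This is the single place where a careful reading of the conventions of \cref{th:complex:scc} and \cref{lemma:one:marked:scc} is needed; everything else is mechanical substitution.
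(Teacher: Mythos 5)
Your proof follows the same route as the paper's own: each item is obtained by pairing \cref{lemma:P:Dnp} with the corresponding multi-graphic generating function, namely \cref{lemma:ggf:dags}, \cref{lemma:ggf:elementary}, \cref{th:complex:scc} together with \cref{lemma:one:marked:scc}, and \cref{lemma:one:bicycle}. Items (i) and (ii) are exactly right. For item (iii) you were correct to flag the prefactor: the literal combination of \cref{th:complex:scc} and \cref{lemma:one:marked:scc} gives $p^r/\bigl((3r-d)!\,(2r-d)!\bigr)$, while the displayed formula shows only $p^r/(2r-d)!$; the paper's own proof merely cites the two lemmas without accounting for the $(3r-d)!$, and your observation that the reconciliation depends on whether ``a given kernel'' is taken with labelled or unlabelled edges identifies the right place to look.

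However, item (iv) contains a parallel discrepancy that you asserted away rather than checked. You claim that \cref{lemma:one:bicycle} gives $\widehat H_{\mathrm{bicycle}}(z,w)$ ``in the exact form required to identify the stated integrand $\phi_{-2}(z,w;x\mapsto x^2)/\phi_1(z,w;1)^2$,'' but \cref{lemma:one:bicycle} actually yields $\phi_{-2}(z,w;x\mapsto x)$: since the bicyclic EGF simplifies to $S_{\mathrm{bicycle}}(z,w)=\tfrac{1}{2}\,w^2 z/(1-wz)^3$, the relevant $B$-function is $B(x)=x$, not $x^2$. The form with $x\mapsto x^2$ appearing in the proposition coincides with item (iii) specialised to $r=1$, $d=0$ only (so it drops the deficiency-one loop kernels), and it agrees with the corollary's form only at leading order in the subsequent saddle-point analysis, because $F(1)=1$ in both cases. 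Since you tracked $B(\cdot)$ explicitly in item (iii), the same scrutiny applied to item (iv) would have caught this second mismatch as well.
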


\begin{proof}
\cref{lemma:P:Dnp} expresses
the probability of a random multidigraph family
using its multi-graphic generating function.
The four results of the proposition
are obtained by application of this lemma.
\cref{lemma:ggf:dags} provides the expression
of the multi-graphic generating function of acyclic multidigraphs.
\cref{lemma:ggf:elementary} is used for elementary multidigraphs.
\cref{th:complex:scc} provides the exponential generating function
of multidigraphs with a given strongly connected kernel
of excess $r$ and deficiency $d$.
Combined with \cref{lemma:one:marked:scc},
this gives the multi-graphic generating function
of multidigraphs containing exactly one complex component,
whose kernel is a given kernel of excess $r$ and deficiency $d$.
Finally, \cref{lemma:one:bicycle}
gives the multi-graphic generating function
of multidigraphs containing exactly one complex component,
which is bicyclic.
\end{proof}

\begin{proposition} \label{th:summary:digraph}
Consider a random simple digraph from $\DiGilbBoth(n,p)$
or strict digraph from $\DiGilb(n,p)$.
We fix $a=1$ in the first case, and $a=2$ in the second case.
Then the probability for the digraph
\begin{itemize}
\item
to be acyclic is equal to
\[
    (1 - p)^{\binom{n}{2}} n!
    [z^n]
    \frac{1}{\widetilde\phi(z,w)}\Big|_{w=\frac{p}{1 - a p}},
\]
see \cref{theo:SD_model_lambda_fixed}
and \cref{th:digraph-acyclic-window} for the asymptotics;
\item
to be elementary is equal to
\[
    (1 - p)^{\binom{n}{2}} n!
    [z^n]
    \frac{1}{\widetilde \phi_1(z,w; F_a(\cdot))}
    \Big|_{w=\frac{p}{1 - a p}},
\]
where $F_1(z) = e^z$ and $F_2(z) = e^{z + z^2/2}$,
see \cref{theo:D_model_lambda_fixed} for the asymptotics;
\item
to contain exactly one complex component,
whose excess is $r$, is equal to
\[
    (1 - p)^{\binom{n}{2}} n!
    [z^n]
    w^r \dfrac{
      \widetilde \phi_{1-3r} (z,w;F_a(\cdot))
    }{
      {\widetilde \phi}_1(z, w;G_a(\cdot))^2
    }
    \Big|_{w=\frac{p}{1 - a p}},
\]
where  $F_1(z) = A^{(\text{simple})}_r(z) e^{z}$
and $G_1(z) = e^{z}$,
$F_2(z) =  A^{(\text{strict})}_r(z) e^{z+z^2/2}$
and $G_2(z) = e^{z+z^2/2}$,
$A_r^{(\text{simple})}(z)$ and $A_r^{(\text{strict})}(z)$
are the polynomials defined in \cref{th:complex:scc}
and computable using \cref{rem:strong:complex:simple}.
See \cref{theorem:digraph:one:strong:component}
for the asymptotics.
\end{itemize}
\end{proposition}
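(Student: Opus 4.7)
The plan is to follow the same template as the proof of~\cref{th:summary:multidigraph}, combining~\cref{lemma:P:Dnp} with the graphic generating functions derived earlier for each family. First I note that~\cref{lemma:P:Dnp} yields $\mathbb P_{\mathcal F}(n,p) = (1-p)^{\binom{n}{2}} n! [z^n] \widehat F(z, p/(1-p))$ for simple digraphs and the analogous formula with $p/(1-2p)$ for strict digraphs, so both models are captured uniformly by the substitution $w = p/(1-ap)$ with $a \in \{1,2\}$ declared in the statement.

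For the acyclic bullet, I would invoke~\cref{lemma:sggf:dags}, which identifies $\widehat D^{(\mathrm{simple})}_{\mathrm{DAG}}(z,w) = 1/\widetilde \phi(z,w)$, and plug it into~\cref{lemma:P:Dnp}. For the elementary bullet, I would invoke~\cref{lemma:ggf:elementary-simple}, reading off that simple digraphs correspond to $k=1$ (loops and multiple edges forbidden) and strict digraphs to $k=2$ (additionally no 2-cycles). Since $C_1(x)=x$ and $C_2(x)=x+x^2/2$, the function $F$ appearing in $\widetilde\phi_1(z,w;F(\cdot))$ becomes exactly $F_1(x) = e^x$ or $F_2(x) = e^{x+x^2/2}$, matching the statement.

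The one-complex-component case combines~\cref{th:complex:scc} with~\cref{lemma:one:marked:scc:simple}. By~\cref{th:complex:scc}, the exponential generating function of strongly connected simple (resp.\ strict) digraphs of excess $r$ has the product form $w^r A^{(\mathrm{simple})}_r(wz)/(1-wz)^{3r}$ (resp.\ with $A^{(\mathrm{strict})}_r$), which fits the hypothesis of the specialised statement of~\cref{lemma:one:marked:scc:simple} with the outer factor $A(w;k) = w^r$, exponent $-3r$, and inner polynomial $B(x;k) = A^{(\mathrm{simple})}_r(x)$ or $A^{(\mathrm{strict})}_r(x)$. Because $w^r$ depends on $w$ only, it commutes with the Hadamard product $\odot_z$ and can be pulled outside; then~\cref{theorem:phi} produces the graphic generating function $w^r \widetilde\phi_{1-3r}(z,w;F_a(\cdot))/\widetilde\phi_1(z,w;G_a(\cdot))^2$ with $F_a(x) = A^{(*)}_r(x) e^{C_k(x)}$ and $G_a(x) = e^{C_k(x)}$. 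A final application of~\cref{lemma:P:Dnp} with $w = p/(1-ap)$ delivers the last bullet.

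The argument is essentially a repackaging of earlier results, so the only mild obstacle is bookkeeping: tracking the $w^r$ prefactor past the Hadamard product, correctly mapping the generic parameters $(A(w;k),r,B(x;k))$ of~\cref{lemma:one:marked:scc:simple} to the data $(w^r,-3r,A^{(*)}_r)$ coming from~\cref{th:complex:scc}, and then reading off the explicit $F_a$, $G_a$ from $C_1(x)=x$ and $C_2(x)=x+x^2/2$. Once these identifications are in place the proposition reduces to three direct substitutions into~\cref{lemma:P:Dnp}.
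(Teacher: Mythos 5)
Your proof is correct and follows exactly the same route as the paper's own proof: apply \cref{lemma:P:Dnp} with the appropriate substitution $w = p/(1-ap)$, then plug in the graphic generating functions from \cref{lemma:sggf:dags} (acyclic), \cref{lemma:ggf:elementary-simple} (elementary), and \cref{th:complex:scc} combined with \cref{lemma:one:marked:scc:simple} (one complex component). The extra bookkeeping you do — pulling $w^r$ past the Hadamard product and spelling out $C_1(x)=x$, $C_2(x)=x+x^2/2$ — is sound and matches the shape of the final formulas in the proposition.
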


\begin{proof}
\cref{lemma:P:Dnp} expresses the probability
of a random simple or strict digraph family
using its graphic generating function.
The three results of the proposition
are obtained by application of this lemma.
\cref{lemma:sggf:dags} provides the graphic generating function
of acyclic simple or strict digraphs.
\cref{lemma:ggf:elementary-simple} is applied
for the elementary case.
The exponential generating function
of strongly connected simple or strict digraphs
of a given excess is provided by \cref{th:complex:scc}.
It is plugged into \cref{lemma:one:marked:scc:simple}
to obtain our third result.
\end{proof}

We could obtain other results,
such as the probability for a random digraph
from $\DiGilbBoth(n,p)$ to contain
exactly one complex component,
which has a specific kernel, or which is bicyclic.
We aim to illustrate the scope of the techniques presented rather
than to provide an exhaustive treatment.

The last two propositions show that
one way to obtain various asymptotic probabilities
of interest on random (multi)graphs
is to gain a good understanding
of the functions $\phi_r(z,w;F(\cdot))$
and $\widetilde \phi_r(z,w;F(\cdot))$,
then to extract coefficients in products of powers
of those functions.
Those two steps are achieved
respectively in \cref{section:airy}
and \cref{section:external:integral}.
The asymptotics corresponding to the exact results
from the last two propositions are finally derived
in \cref{section:asymptotics:multidigraphs}
and \cref{section:asymptotics:simple}.

\section{Analysis of the inner integrals}
\label{section:airy}

In the previous section, we expressed the generating functions of various
digraph families using a reciprocal or a quotient of different
functions \( \phi_r(z, w; F(\cdot)) \), \( r \in \mathbb Z \).
In this section, we develop the tools required
to analyse the asymptotic behaviour of these functions.
The results will then be used in
\cref{%
section:external:integral,%
section:asymptotics:multidigraphs,%
section:asymptotics:simple}
to obtain the asymptotic probabilities for a random digraph
to belong to one of these families.
The target probabilities are then extracted by
relying on~\cref{lemma:P:Dnp},
which is further transformed into an integral using
Cauchy's integral formula:
\[
    [z^n] H(z) = \dfrac{1}{2 \pi i} \oint_{|z| = R} \dfrac{H(z)}{z^{n+1}} \mathrm dz.
\]

In \cref{section:airy:trees} we recall some properties of the Airy function
and its variants. Then, in \cref{section:zeros}, we turn to the analysis
of asymptotics of \( \phi_r(z, w; F(\cdot)) \) and
\( \widetilde\phi_r(z, w; F(\cdot)) \).
\Cref{prop:complex:general,corollary:complex:general} are designed specifically for this purpose and
generalise the analytic lemma from~\cite{Naina2020}.
Finally, in~\cref{sec:roots:deformed:exponential},
we investigate the roots of the generalised deformed exponential function.

Let us present a general
scheme of the analysis when \( w \to 0^+ \).
This scheme will guide our choices of the crucial ranges
in which we want to obtain the asymptotic approximations.

\begin{itemize}
    \item
The \emph{Laplace method} is designed to estimate
asymptotics of integrals of the form
\begin{equation} \label{eq:laplace:method}
    \int A(x) e^{\lambda B(x)} d x
\end{equation}
as $\lambda$ tends to infinity.
It states that, under certain technical conditions,
the main contribution is concentrated around
the \emph{saddle points}, which are values $x$ such that $B'(x) = 0$.
In the following, we assume for simplicity
that $B(x)$ has a unique saddle point $\zeta$.
The \emph{saddle point method} is applicable
when $A(x)$ and $B(x)$ are complex-analytic.
The value of the integral is then independent of the integration path.
A desirable property of the contour is
to pass through $\zeta$,
so that the Laplace method becomes applicable.
The geometry of the path around $\zeta$ is important as well.
A common rule-of-thumb is to choose a contour
that follows the \emph{steepest descent},
meaning a contour on which $|e^{B(x)}|$ decreases fast
as $x$ moves away from $\zeta$.
For more details on the Laplace and saddle point methods,
see \cite{Bruijn81} and \cite[Chapter VIII]{FSBook}.
    \item The graphic generating functions of families of interest
        are given in~\cref{section:symbolic:method}.
        While \( w \) is tending to zero as \( n \to \infty \), we can still
        consider that it is fixed, from the viewpoint of variable \( z \).
        The coefficient extraction $[z^n]$
        is accomplished by
        taking a Cauchy integral over a circle of some
        radius \( |z| = R \), 
        provided that the only singularity inside this circle is \( z = 0 \).
        We distinguish three regimes.
        In the subcritical regime, when \( wn \sim \lambda < 1 \),
        the rescaled radius of integration \( wR \) is a positive real
        number in the interval \( [0, e^{-1}) \).
        In the critical phase of the phase transition,
        \ie, when \( w = n^{-1}(1 + \mu n^{-1/3}) \) and \( \mu \) is fixed,
        the value \( R \) needs to be chosen in such a way that \( zw \)
        is close to the singular point of the generating function of trees, \ie,
        \( Rw \approx e^{-1} \).
        For the supercritical case,
        \( wn > 1 \), we will need information on the roots of the
        generating function and its derivatives at the roots.
    \item
        The radius \( R \) is chosen in such a way that
        the asymptotic value of the integral over a circle \( z = R e^{iu} \)
        is concentrated near the saddle point \( z = R \), \ie, when \( u = 0 \).
        After having zoomed in around this point, the contour locally becomes a
        vertical line. Consider a fixed complex
        point \( z \) on this rescaled line.
        The function inside the Cauchy integral is
        represented as a product of functions of the form
        \( \phi_r(z, w; F(\cdot)) \) or \( \widetilde \phi_r(z, w; F(\cdot)) \)
        (or the inverses thereof), which are
        themselves integrals over \( \mathbb R \).
        Therefore, the goal is  to find an approximation for each of these integrals
        as a function of \( z \), while paying particular attention to the vicinity of the point $z = (ew)^{-1}$ dictated by the outer integration procedure.
        Again, we use a saddle point method to
        approximate these inner
        integrals by deforming their respective
        integration contours, since the inner integrands are complex analytic.
        The choice of a deformation is
        dictated by Taylor expansion of the inner integrand around the
        point on the imaginary line
        where its first derivative vanishes (the point of stationary
        phase), and motivates the contours in~\cref{fig:path}.
\end{itemize}

In order to estimate the asymptotics of various digraph families in
the supercritical phase, when \( w = \lambda/n \) and \( \lambda > 1 \),
we need to modify the above scheme.
\begin{itemize}
    \item Assuming that
        we have an integer power of
        \( \phi_r(z, w; F) \)
        (or of \( \widetilde \phi_r(z, w; F) \))
        in the denominator of
        the inner graphic generating function,
        we need to use the refined
        asymptotic approximation of \( \phi_r(z, w; F) \)
        (\resp \( \widetilde \phi_r(z, w; f) \))
        up to \emph{two terms} in order to
        establish an asymptotic approximation of the \emph{roots} of
        \( \phi_r(z, w; F) \)
        (or \resp \( \widetilde \phi_r(z, w; F) \)).
        Then, we shall need \emph{three terms} of the asymptotic expansion of
        the roots.
    \item Using the fine asymptotic approximations of the roots
        and of the function \( \phi_r(z, w; F) \)
        (\resp \( \widetilde \phi_r(z, w; F) \)),
        we can obtain the main asymptotic term of the \emph{derivative} of
        \( \phi_r(z, w; F) \) with respect to \( z \) at its roots.
        Also, in one of the cases
        that we consider, the dominant root of \( \phi_1(z, w; F) \) is not simple,
        and therefore, we will need its \emph{second derivative} as well.
    \item The radius of the contour of integration is increased
        in a way that enables us to capture the dominant root of \( \phi_r(z, w;
        F) \).
        In this case, the absolute value \( |zw| \) exceeds \( e^{-1} \).
        Then, since the target probability is expressed using Cauchy's
        integral formula,
        it can be asymptotically expressed as a complex residue at the point
        corresponding to the first root of \( \phi_r(z, w; F) \).
\end{itemize}

\subsection{Generalised Airy functions and trees}
\label{section:airy:trees}

In this section, we introduce a special function which generalises the
Airy function and recall some of its asymptotic properties.
Furthermore, we recall some of the
asymptotic properties of the generating functions of labelled trees that are
involved in our results and use them to establish the asymptotic properties of
the principal generating functions involved: the deformed exponential function
\( \phi(z, w) \) and its generalisations arising in the symbolic method
for digraphs.

The \emph{Airy function} is defined by
\begin{equation}\label{eq:def-airy-function}
    \ai(z) = \dfrac{1}{2 \pi i} \int_{-i\infty}^{+i\infty}
    \exp\left(
        -zt + t^3/3
    \right) \mathrm dt
    =   \dfrac{1}{\pi} \int_0^{+\infty} \cos \left(
          \dfrac{t^3}{3} + zt
      \right)\mathrm dt\, ,
\end{equation}
and solves  the linear differential equation that is also called
\emph{Airy equation}, namely
\begin{equation}\label{eq:airy-equation}
\ai''(z) - z \ai(z) = 0\, .
\end{equation}
Let us  recall the following classical properties
of the Airy function and its derivatives.

\begin{lemma} \label{lem:airy_function}
For any non-negative integer $k$,
\begin{equation}
    \label{eq:airy:recurrence}
    \ai^{(k+3)}(z)=(k+1) \ai^{(k)}(z)+z \ai^{(k+1)}(z) \, .
\end{equation}
For any angle $\pi/6 \leq \varphi \leq \pi/2$
and complex value $z$, when $x$ tends to infinity, we have
\[
    \ai^{(k)}(z) \sim
    \frac{(-1)^k}{2 i \pi}
    \int_{x e^{- i \varphi}}^{x e^{i \varphi}}
    t^k
    e^{- z t + t^3/3}
\mathrm  d t.
\]
The convergence is exponentially fast if $\pi/6 < \varphi < \pi/2$.
Furthermore,
\begin{equation}\label{eq:ai_asymp}
    \ai(z)
    \sim
    \frac
    {e^{-\frac{2}{3}z^{3/2}}}
    {2\sqrt{\pi}z^{1/4}}
    \  \text{ as } z\to \infty,
    \ \text{and } |\arg (z)| \leq \pi - \epsilon
\, ,
\end{equation}
see for example \cite[(9.7.5)]{NIST}.
\end{lemma}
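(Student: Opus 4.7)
The four assertions are classical, so the plan is to derive them from the integral definition \eqref{eq:def-airy-function} by elementary calculus and contour deformation, keeping the presentation short.

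For the recurrence \eqref{eq:airy:recurrence}, I would simply differentiate the Airy equation \eqref{eq:airy-equation} exactly $k+1$ times with respect to $z$. By the Leibniz rule applied to $z\,\ai(z)$, one obtains
\[
    \ai^{(k+3)}(z) = \frac{d^{k+1}}{dz^{k+1}}\bigl(z\,\ai(z)\bigr)
    = z\,\ai^{(k+1)}(z) + (k+1)\,\ai^{(k)}(z),
\]
which is exactly the stated identity. So the only real content here is the Airy ODE itself, which is standard.

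For the integral representation of $\ai^{(k)}$, the plan is to start from \eqref{eq:def-airy-function}, to justify differentiating under the integral sign (the integrand decays super-exponentially along the imaginary axis, so this is routine), and to obtain
\[
    \ai^{(k)}(z) = \frac{(-1)^k}{2\pi i} \int_{-i\infty}^{+i\infty} t^k \exp\!\bigl(-zt + t^3/3\bigr)\,dt.
\]
To move the contour to the rays from $xe^{-i\varphi}$ to $xe^{i\varphi}$, I would analyse where $\Real(t^3)<0$: since $\Real(t^3) = |t|^3 \cos(3\arg t)$, the integrand is exponentially small in the open sectors $\arg t \in (\pi/6,\pi/2)$ and $\arg t \in (-\pi/2,-\pi/6)$. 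Hence for any $\varphi\in(\pi/6,\pi/2)$, Cauchy's theorem applied to a large closed contour (imaginary segment of length $2R$ on one side, the two rays at angles $\pm\varphi$ truncated to $|t|=R$, and two connecting arcs at $|t|=R$) gives the claimed representation in the limit $R\to\infty$, the arcs contributing negligibly because they lie in the decay sectors. The case $\varphi=\pi/6$ is handled by a limiting argument (the arcs now live on the boundary of the decay sector, so a standard integration-by-parts once in $t$ ensures convergence).

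The exponential convergence statement for $\pi/6<\varphi<\pi/2$ is then immediate: along each ray $t=se^{\pm i\varphi}$ with $s\geq x$, one has $\Real(t^3/3)=\frac{s^3}{3}\cos(3\varphi)<0$ with a strictly negative constant, so the tail beyond $|t|=x$ is bounded by a factor of the form $C(x,z)\,\exp(\tfrac{1}{3}x^3\cos(3\varphi))$. Finally, for \eqref{eq:ai_asymp} I would apply the saddle-point method to the same integral representation: the phase $t^3/3-zt$ has saddle points $t=\pm\sqrt{z}$, and choosing the ray through $t=\sqrt z$ (the one along which the phase decreases steepest, giving contribution $-\tfrac{2}{3}z^{3/2}$) together with the standard Gaussian approximation $\int e^{-\sqrt z\, s^2}\,ds = \sqrt{\pi/\sqrt z}$ around the saddle yields the prefactor $1/(2\sqrt\pi\,z^{1/4})$. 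Extending this to $|\arg z|\leq \pi-\epsilon$ is done by rotating the contour so that the saddle still lies in an allowed decay sector. The only mildly delicate step is the last one: verifying that the chosen contour of steepest descent can indeed be obtained from the original contour without crossing singularities and remains inside the decay sectors as $\arg z$ varies; since this is completely standard (and documented in NIST \cite[(9.7.5)]{NIST}), I would cite the reference rather than reproduce the calculation.
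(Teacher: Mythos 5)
Your proposal is mostly correct, and since the paper itself gives no proof of this lemma --- it simply cites NIST \cite[(9.7.5)]{NIST} --- you are supplying a self-contained derivation that the paper omits. The Leibniz-rule derivation of the recurrence from the Airy ODE is exactly right (only the $j=0$ and $j=1$ terms of the $(k+1)$-fold derivative of $z\,\ai(z)$ survive), the contour deformation into the sectors $\arg t \in (\pi/6,\pi/2)$ and $\arg t \in (-\pi/2,-\pi/6)$ where $\Real(t^3)<0$ is the standard argument, the integration-by-parts remark for the boundary case $\varphi=\pi/6$ is the right tool, and the saddle-point computation at $t=\sqrt z$ with $g''(\sqrt z)=2\sqrt z$ gives the prefactor $1/(2\sqrt\pi\,z^{1/4})$ as claimed.

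There is, however, one genuine slip in the justification for differentiating under the integral sign. You assert that the integrand of \eqref{eq:def-airy-function} ``decays super-exponentially along the imaginary axis.'' It does not: along $t=iy$ one has $t^3/3 = -iy^3/3$, which is purely imaginary, so
\[
    \bigl| e^{-zt + t^3/3} \bigr| = \bigl| e^{-izy - iy^3/3} \bigr| = e^{y\,\Image z},
\]
which is identically $1$ when $z$ is real and grows exponentially in one direction when $\Image z\neq 0$. The integral in \eqref{eq:def-airy-function} converges only conditionally (as a Fresnel-type oscillatory integral), so the extra polynomial factor $t^k$ you introduce by differentiating makes the imaginary-axis representation even more delicate, not less. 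The fix is a simple reordering: \emph{first} deform the contour to a path lying in the decay sectors (for instance the two rays $\arg t=\pm\pi/3$, which is the standard NIST representation, or the path $\Pi(\varphi)$ of~\cref{def:special:airy} for any $\varphi\in(\pi/6,\pi/2)$), where $\Real(t^3)\to-\infty$ genuinely super-exponentially; \emph{then} differentiate under the integral sign there, where the estimate is routine; then deform further to the rays at angle $\varphi$ as you do. Your deformation argument itself is correct, so this reordering of the two steps closes the gap.
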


It is natural to consider a biparametric special function capturing the Airy
function and its derivatives as well as antiderivatives. This more general
function will enter the final approximations for the probabilities of interest.
Note that the contour of integration can be deformed provided that the starting
and the finishing points at infinity remain unchanged.
\begin{definition}
    \label{def:special:airy}
    The generalised Airy function \( \ai(k; z) \) is defined for all
    integer \( k \) and complex \( z \) and is given by
    \begin{equation}
        \label{eq:special:airy}
        \ai(k; z)
        :=
        \dfrac{(-1)^k}{2 \pi i}
        \int_{t \in \Pi(\varphi)}
        t^k e^{-zt + t^3/3}
        \mathrm dt,
    \end{equation}
    where \( \Pi(\varphi) \) is composed of three line segments
    (see~\cref{fig:contour:Pi})
    \[
        t(s) =
        \begin{cases}
            -e^{-i \varphi} s ,
            & \text{for}  \quad {-\infty} < s \leq -1; \\
            \cos \varphi + is \sin \varphi,
                    & \text{for} \quad
            -1 \leq s \leq 1 ; \\
            e^{i \varphi} s ,
                & \text{for}   \quad 1 \leq s < +\infty,
        \end{cases}
    \]
    with \( \varphi \in [\pi/6, \pi/2] \).
    If \( k \) is non-negative, then \( \ai(k;z)=\partial_z^k \ai(z) \).
    When \( k \) is negative, then \( \ai(k;z) \) is a \( (-k) \)-fold
    antiderivative of the Airy function, \ie,
    \( \partial_z^{-k} \ai(k;z) = \ai(z) \). Moreover, the
    recurrence~\eqref{eq:airy:recurrence} is satisfied for negative \( k \) as
    well, which can be easily checked by integration by parts.
\end{definition}

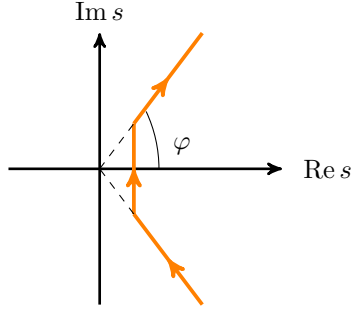
\begin{figure}[hbt]
    \begin{center}
        \pgfdeclarelayer{edgelayer}
\pgfsetlayers{edgelayer,main}
\tikzstyle{axe}=[->,draw=black,line width=1]
\tikzstyle{arc}=[-,draw=orange,postaction={decorate},decoration={markings,mark=at position .5 with {\arrow{>}}},line width=1.400]
\tikzstyle{edge}=[-,draw=black,line width=1.4]
\begin{tikzpicture}[>=stealth',scale=0.6]
	\begin{pgfonlayer}{edgelayer}
		\draw [style=axe] (-15, 0) to (-15, 6);
		\draw [style=axe] (-17, 3) to (-11, 3);

		\draw [style=arc] (-12.75, 0) to (-14.25, 2);
        \draw [style=arc] (-14.25, 2) to (-14.25, 4);
		\draw [style=arc] (-14.25, 4) to (-12.75, 6);

		\draw [style=dashed] (-15, 3) to (-14.25, 4);
		\draw [style=dashed] (-14.25, 2) to (-15, 3);

        \draw (-13.7, 3) arc (0:25:3) ;
        \draw (-13.2, 3.5)  node {$\varphi$};

        \draw (-10, 3) node {$\Real s$};
        \draw (-15, 6.5) node {$\Image s$};
	\end{pgfonlayer}
\end{tikzpicture}
    \end{center}
    \caption{The contour \( \Pi(\varphi) \).}
    \label{fig:contour:Pi}
\end{figure}

The following lemma proves the convergence
of the generalised Airy function
and will be used in the proof of \cref{prop:complex:general}.

\begin{lemma} \label{th:airy:bigO}
For any $\varphi \in (\pi/6, \pi/2)$, integer $k$, and any function $f$ satisfying
$f(t) = \bigO(t^k)$ for $t \in \Pi(\varphi)$,
we have
\[
    \int_{\Pi(\varphi)} f(t) e^{-z t + t^3/3} \mathrm dt = \bigO(1)
\]
uniformly for $z$ in any compact set.
\end{lemma}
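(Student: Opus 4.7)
The plan is to split $\Pi(\varphi)$ into its three constituent pieces and bound the integral separately on each, uniformly for $z$ in a fixed compact set $K \subset \mathbb{C}$. Let $M := \sup_{z \in K} |z|$ and let $C$ be the implicit constant in $f(t) = \bigO(t^k)$ on $\Pi(\varphi)$.

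On the middle segment $t(s) = \cos\varphi + is\sin\varphi$, $s \in [-1,1]$, the contour is compact, $|t|\leq 1$, and the integrand is jointly continuous in $(s,z)$ on the compact set $[-1,1]\times K$, so this contribution is trivially $\bigO(1)$ uniformly in $z \in K$.

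The essential step is to bound the contributions of the two infinite rays $t = re^{\pm i\varphi}$ with $r \geq 1$. On such a ray, $\operatorname{Re}(t^3/3) = (r^3/3)\cos(3\varphi)$, and the hypothesis $\varphi \in (\pi/6,\pi/2)$ forces $3\varphi \in (\pi/2, 3\pi/2)$, hence $\cos(3\varphi) < 0$. Setting $c := -\cos(3\varphi)/3 > 0$ we obtain $|e^{t^3/3}| = e^{-cr^3}$, while $|e^{-zt}| \leq e^{Mr}$ uniformly in $z \in K$ and $|f(t)| \leq C r^k$ by hypothesis. Hence the integrand on each infinite ray is bounded in absolute value by $C r^k e^{Mr - cr^3}$, and
\[
\int_1^{\infty} r^k e^{Mr - cr^3}\, dr < \infty
\]
is a finite constant independent of $z \in K$, which yields the claim after summing the three pieces.

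The only real subtlety, and what I would flag in writing it up, is that the \emph{strict} inclusion $\varphi \in (\pi/6, \pi/2)$ is precisely what gives the strict inequality $\cos(3\varphi) < 0$, so that the cubic decay coming from $e^{t^3/3}$ dominates the linear growth $e^{Mr}$ from $e^{-zt}$ uniformly in $z$ on the compact set. At either endpoint $\varphi = \pi/6$ or $\varphi = \pi/2$ one only has $\cos(3\varphi) = 0$ and the argument collapses, so the hypothesis on $\varphi$ is sharp for this approach.
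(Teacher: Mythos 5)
Your proof is correct and follows essentially the same approach as the paper: split $\Pi(\varphi)$ into the compact middle segment and the two infinite rays, observe that $\varphi\in(\pi/6,\pi/2)$ gives $\cos(3\varphi)<0$ strictly, and use the resulting cubic decay $e^{-cr^3}$ to dominate the at-most-linear growth $e^{Mr}$ uniformly for $z$ in a compact set. The only quibble is in your treatment of the middle segment: $f$ is assumed only to satisfy $f(t)=\bigO(t^k)$, not to be continuous, so the appeal to ``joint continuity'' is an overstep. What you actually need there is that $|f(t)|\leq C|t|^k$ is bounded on the middle segment, which requires noting not only $|t|\leq 1$ but also $|t|\geq\cos\varphi>0$ (the latter is what matters for negative $k$); the paper handles exactly this case explicitly. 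With that one-line fix the proof matches the paper's argument in all essentials.
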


\begin{proof}
Since $f(t) = \bigO(t^k)$,
there is a constant $C$ such that
for any $t \in \Pi(\varphi)$, we have
\[
    |f(t) e^{-z t + t^3/3}|
    \leq
    C |t|^k e^{\Real(-z t + t^3/3)}.
\]
Consider $t$ in the third part of the path $\Pi(\varphi)$,
so $t = e^{i \varphi} s$ for $1 \leq s < +\infty$.
Then
\[
    \Real(-z t + t^3/3) =
    - \Real(z s e^{i \varphi})
    + \cos(3 \varphi) \frac{s^3}{3}
    =
    \left(1 - \frac{3 \Real(z e^{i \varphi})}{\cos(3 \varphi) s^2} \right)
    \cos(3 \varphi) \frac{s^3}{3}.
\]
Since $\varphi \in (\pi/6, \pi/2)$, we have $\cos(3 \varphi) < 0$.
We assumed that $z$ belongs to a compact set,
so there is a constant $S \geq 1$ such that for all $s \geq S$, we have
\[
    1 - \frac{3 \Real(z e^{i \varphi})}{\cos(3 \varphi) s^2} > \frac{1}{2}.
\]
This implies for $s \geq S$
\[
    \Real(-z t + t^3/3) < \cos(3 \varphi) \frac{s^3}{6}
\]
and
\[
    |t^k| e^{\Real(-z t + t^3/3)} < s^k e^{\cos(3 \varphi) s^3/6}.
\]
The right-hand side is integrable on $s \in [S, +\infty)$,
so the left-hand side is integrable as well.

By symmetry, when $t$ belongs to the first part of the $\Pi(\varphi)$ path,
so $t = - e^{- i \varphi} s$ for $-\infty < s \leq -1$,
we also have for any $s \leq -S$
\[
    |t^k| e^{\Real(-z t + t^3/3)} < |s|^k e^{- \cos(3 \varphi) s^3/6},
\]
which is integrable for $-\infty < s \leq -S$.

Finally, for any non-negative $k$,
\[
    |t^k| e^{\Real(-z t + t^3/3)}
\]
is bounded when $t$ belongs to $\Pi(\varphi)$ and $|t| \leq S$.
This also holds for negative $k$.
Indeed, on the second part of $\Pi(\varphi)$,
$t$ has absolute value at least $\cos(\varphi)$,
which is positive.
Thus, this part is integrable as well.

Combining those three parts, we conclude that
\[
    |f(t) e^{-z t + t^3/3}|
\]
is integrable on $\Pi(\varphi)$,
and the upper bound we derived is uniform with respect to $z$
in any compact set.
\end{proof}

The generalised Airy function was considered first in a paper of Grohne in
1954~\cite{grohne1954spektrum} in the context of hydrodynamics, and somewhat
independently (only for the case of the first antiderivative), in 1966 by
Aspnes~\cite{aspnes1966} in the context of electric optics.
Further asymptotic results appear in a paper where the authors study
approximations of the Orr--Sommerfeld equation~\cite{hughes1968},
originating in hydrodynamics.
The zeros of generalised Airy functions have been
studied by Baldwin in~\cite{baldwin1985}.
The numerous properties of Airy functions reach far
beyond the scope of the current paper, but the interested reader can find more
details in~\cite{vallee2004airy}.

The function \( \ai(k; z) \) is also defined in
\texttt{SageMath} as \texttt{airy\_ai\_general(k,z)}
and in \texttt{Maple} as \texttt{AiryAi(k, z)} for non-negative integers $k$.
It can also be computed numerically through its
hypergeometric series representations, which we discuss below, or using its
integral representation as well. We provide an independent numerical
implementation of \( \ai(k; z) \) for arbitrary integers $k$
in one of the utility libraries located in the auxiliary repository
mentioned in~\cref{section:introduction}.
Another special function related to \( \ai(k;z) \), has been considered
in~\cite[Lemma 3]{Janson93}, \cite{banderier2001random} and~\cite[Theorem
2]{de2015phase}. This function arises in situations where one needs to extract
  the
coefficients of a large power of a generating function \( g(z) \) with a known
Newton--Puiseux expansion, or when this function is multiplied by another
function \( h(z) \) with a coalescent singularity. That is, the coefficient extraction from
expressions of the type \( [z^n] h(x) g(z)^m \), where both \( n \) and \( m \) tend
to infinity, and where the singularity of \( h(x) \) is also a saddle point of
\( g(z) \). The results of~\cite{banderier2001random}
and \cite{Janson93} generalise the Airy function in two different directions,
and the most general version is given in~\cite[Theorem 2]{de2015phase}
and can be defined as
\[
    G(\lambda, r; x) := \dfrac{-1}{2 \pi i \lambda}
    \int_{-\infty}^{(0)}
    e^{u - x u^{1/\lambda}} u^{(1 - r)/\lambda} \dfrac{\mathrm du}{u},
\]
where the integration contour is a classical Hankel contour which goes
counter-clockwise from \( -\infty-0i \) to \( 0 \) and back to \( -\infty +
0i \), and \( u^{1/\lambda} \)
represents the principal branch of the power function. This function can be
expressed as an infinite sum involving the Gamma function by expanding
the Taylor series of \( e^{-x u^{1/\lambda}} \), see~\cite{de2015phase} for an
explicit expression.

\begin{remark}
    If \( r \) is a non-negative integer, then
    \[
        \ai(r; z) = (-1)^{r-1} 3^{(r+1)/3} G(3, -r; 3^{1/3}z).
    \]
    Indeed, since, according to~\cref{lem:airy_function}, the \( r \)-th derivative of the Airy
    function can be expressed as
    \[
        \ai^{(r)}(z) = \dfrac{(-1)^r}{2 \pi i}
        \int_{\infty e^{-i \varphi}}^{\infty e^{i \varphi}}
        t^r e^{-zt + t^3/3} \mathrm d t,
    \]
    for any angle \( \varphi \in (\pi / 6,  \pi/2 ) \), we can perform the change of
    variables \( t^3/3 = s \), so that the new variable \( s \) is running
    along a new contour starting at \( \infty e^{3i \varphi} \), winding around
    zero, and going back to \( \infty e^{-3i \varphi} \). By choosing
    \( \varphi = \pi/3 \), we obtain the classical Hankel contour. This yields
    \begin{align*}
        \ai^{(r)}(z)
        & = \dfrac{(-1)^r}{2 \pi i}
        3^{r/3}
        \int_{-\infty}^{(0)}
        s^{r/3} e^{s -z 3^{1/3} s^{1/3}} \dfrac{\mathrm d s}{(3s)^{2/3}}
        \\
        & =
        (-1)^{r-1} 3^{(r+1)/3} \cdot
        \dfrac{-1}{2 \pi i} \cdot \dfrac{1}{3}
        \int_{-\infty}^{(0)}
        s^{(r-2)/3} e^{s - (3^{1/3}z) s^{1/3}} \mathrm ds.
    \end{align*}
\end{remark}

\begin{remark}
    Similarly to~\cref{lem:airy_function}, it is possible to obtain the asymptotics
    of \( \ai(r, z) \) when \( z \to \infty \),
    \( |\arg(z)| \leq \pi - \epsilon \), using the classical saddle point method
    (\cf \cite[(9.7.5)]{NIST}):
    \begin{equation}
        \label{eq:airy:asymptotics}
        \ai(r; z)
        \sim
        (-1)^{r}
        \dfrac{
            z^{r/2-1/4}
            \exp(-\tfrac{2}{3} z^{3/2})
        }{
            2
            \pi^{1/2}
        }.
    \end{equation}
 \end{remark}


Finally, let us recall some basic properties of the generating functions associated with labelled trees,
as they will be needed for the asymptotic analysis of
the deformed exponential functions and their roots.
Let $U(z)$ be the
exponential generating function of labelled unrooted trees and $T
(z)$  be the  exponential generating function of rooted labelled trees,
see~\cref{section:symbolic:elementary}.
Using the Newton--Puiseux expansions at a
singularity~\cite[Theorem~VII.7]{FSBook} (see
also~\cite{Knuth-Pittel89}),  we have
\begin{equation}\label{eq:expansion-T}
    T(z) = 1 - S(\sqrt{2(1- ez)})\,
\end{equation}
where the analytic function
\[
  S(z) = z - \frac{1}{3}z^2 + \frac{11}{72}z^3 -\frac{43}{540} z^4
  +\cdots\,
\]
satisfies the functional relation $(1-S(z))e^{S(z)} = 1 - z^2/2$.
This yields asymptotic expansions around the singular point \( z \sim e^{-1} \)
for both \( T(z) \) and \( U(z) \):
\begin{align} \label{eq:tree-expansion}
\begin{split}
    U(z) &= \dfrac{1}{2} - (1 - ez) + \dfrac{2^{3/2}}{3} (1 - ez)^{3/2}
    + \bigO( (1 - ez)^2),\\
    T(z) &= 1 - \sqrt{2} \sqrt{1 - ez} + \dfrac{2}{3} (1 - ez)
    + \bigO( (1 - ez)^{3/2}).
\end{split}
\end{align}

It is useful to establish some properties of the behaviour of these functions in the complex plane. The generating function \( T(z) \) has non-negative coefficients, and therefore, satisfies the property
\( |T(z)| \leq T(|z|) \) for any \( z \) inside the circle of convergence.
Moreover, \( T(z) = 1 \) at the singular point \( z = e^{-1} \).
We would like to prove that if \( |T(z)| \approx 1 \) and \( z \) is near the circle of convergence, then \( z \) is restrained to be around the singular point.

\begin{lemma}
\label{lemma:complex:plane:trees}
Let \( |ze| \leq 1 + o(1) \) with \(z \notin \mathbb{R}_{> 1/e} \)
and \( |T(z)| = 1 + o(1) \).
Then, \( ze = 1 + o(1) \) and
\[
    T(z) = 1 - \sqrt{2} \sqrt{1 - ez} + O(|1 - ez|).
\]
\end{lemma}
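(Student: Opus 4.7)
The plan is to exploit the functional equation $T(z)=ze^{T(z)}$, or equivalently the inverse relation $z = T(z)e^{-T(z)}$, in two stages: first upgrade the modulus bound $|T(z)|=1+o(1)$ to the statement $T(z)=1+o(1)$, and then invert the local expansion of $w\mapsto we^{-w}$ around its critical point $w=1$.

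\emph{Step 1: showing $T(z)=1+o(1)$ and $ez=1+o(1)$.} Set $w=T(z)$, so that $z=we^{-w}$ and $|ez|=|w|\,e^{1-\operatorname{Re}(w)}$. The hypotheses $|T(z)|=1+o(1)$ and $|ez|\leq 1+o(1)$ give
\[
    e^{1-\operatorname{Re}(w)} \leq \frac{1+o(1)}{|w|} = 1+o(1),
\]
hence $\operatorname{Re}(w)\geq 1-o(1)$. Combined with $\operatorname{Re}(w)\leq|w|=1+o(1)$ this forces $\operatorname{Re}(w)=1+o(1)$ and then $\operatorname{Im}(w)^2=|w|^2-\operatorname{Re}(w)^2=o(1)$, so $w=1+o(1)$. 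Plugging back, $ez=we^{1-w}=1+o(1)$.

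\emph{Step 2: inverting the relation around the critical point.} Write $u=w-1=o(1)$. Then
\[
    ez = (1+u)e^{-u} = 1 - \frac{u^{2}}{2} + \frac{u^{3}}{3} + O(u^{4}),
\]
so $1-ez = \tfrac{1}{2}u^{2}\bigl(1+O(u)\bigr)$. Inverting this algebraic relation yields $u = \pm\sqrt{2(1-ez)}\bigl(1+O(u)\bigr) = \pm\sqrt{2(1-ez)} + O(|1-ez|)$ for a suitable branch of the square root.

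\emph{Step 3: fixing the sign via the branch cut.} The hypothesis $z\notin\mathbb{R}_{>1/e}$ places $z$ in the domain where $T$ is analytic, namely $\mathbb{C}\setminus[1/e,\infty)$. By the Newton--Puiseux expansion~\eqref{eq:expansion-T}--\eqref{eq:tree-expansion} recalled in the paper, the expansion
\[
    T(z) = 1 - \sqrt{2(1-ez)} + O(|1-ez|)
\]
holds in a full punctured neighbourhood of $z=1/e$ inside that slit domain, with the principal branch of the square root that is positive on $(0,1/e)$; by uniqueness of analytic continuation this is the branch selected in Step~2, and the negative sign is forced. This yields the claimed expansion for $T(z)$.

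\emph{Main obstacle.} The only delicate point is the sign determination in Step~3: the map $w\mapsto we^{-w}$ is two-to-one near its critical value, so the algebraic inversion in Step~2 only gives $u$ up to sign. The assumption $z\notin\mathbb{R}_{>1/e}$ is precisely what keeps us on a single sheet and lets us identify the correct branch by comparison with the known Newton--Puiseux expansion of $T$ on the principal sheet.
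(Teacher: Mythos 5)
Your proof is correct and takes essentially the same approach as the paper's. Both arguments exploit the functional equation $z = T(z)e^{-T(z)}$ together with the elementary inequality $\operatorname{Re}(T(z)) \leq |T(z)|$ (which you use directly, and which the paper phrases as $|T(z)|\cos\varphi(z) \leq |T(z)|$) to upgrade the hypotheses to $T(z) = 1 + o(1)$ and $ez = 1 + o(1)$, and then both conclude by appealing to the Newton--Puiseux expansion of $T$ at $z = 1/e$; your Steps 2--3 simply re-derive that expansion by inverting $(1+u)e^{-u}$ and determining the branch, whereas the paper cites \eqref{eq:tree-expansion} directly.
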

\begin{proof}
Let \( T(z) = |T(z)| e^{i \varphi(z)} \),
where \( \varphi(z) \in [-\pi, \pi] \).
Since \( z = T(z) e^{-T(z)} \), we have
\[
    |z| = |T(z)| e^{-\Real T(z)}
        = |T(z)| e^{-|T(z)| \cos \varphi(z)}
         \geq |T(z)| e^{-|T(z)|}.
\]
Since \( |T(z)| = 1 + o(1) \),
we conclude \( |z| \geq e^{-1} (1 + o(1)) \). Moreover,
due to the condition \( |ze| \leq 1 + o(1) \) of the lemma,
we obtain the asymptotic equivalence \( |z| = e^{-1} (1 + o(1)) \).

It follows that \( |T(z)| e^{-|T(z)| \cos \varphi(z)} \sim e^{-1} \), which implies
\( \cos \varphi(z) = 1 + o(1) \), which further implies \( \varphi(z) = o(1) \).
Consequently, \( z = T(z) e^{-T(z)} = e^{-1}(1 + o(1)) \), and we use the Puiseux series approximation for \( T(z) \) near its singular point.
\end{proof}

\begin{remark}
An alternative proof of the last lemma is as follows.
Since $T(z)$ satisfies the conditions of the
Daffodil Lemma \cite[p.266]{FSBook},
for $|z| \leq 1/e$, the function $|T(z)|$
reaches its unique maximum at $z = 1/e$.
This maximum is $T(1/e) = 1$ according to~\eqref{eq:tree-expansion}.
Thus, by continuity of $T(z)$,
if $z$ stays in a small enough vicinity of the circle of radius $1/e$
and $|T(z)|$ converges to $1$, then $z$ converges to $1/e$
and~\eqref{eq:tree-expansion} is applicable.
\end{remark}

\subsection{The generalised deformed exponential and its asymptotics}
\label{section:zeros} 

The simplest possible integral representation arising
in~\cref{section:symbolic:method} corresponds to the case of directed acyclic
(multi-)digraphs. Let us denote by \( \phi(z, w) \) the denominator of the
multi-graphic generating function \( \widehat D_{\mathrm{DAG}}(z, w) \) given
in~\cref{lemma:ggf:dags}; after  the change of variable
$x \mapsto x / \sqrt{w}$ in the integrand expression, we have
\begin{equation}\label{eq:phi}
    \phi(z, w) = \gset(-z, w)
    = \dfrac{1}{\sqrt{2 \pi w}}
        \int_{-\infty}^{+\infty}
        e^{f(z)}
        \mathrm dx
        \quad  \mbox{where} \quad
        f(x) = -\dfrac{x^2}{2w} - ze^{-ix}
        \,.
\end{equation}
Since the zeros of \( \phi(z, w) \) are the poles of \( \widehat D_{\mathrm{DAG}}(z, w) \), it is clear that the behaviour of \( \phi(z, w) \) around its zeros plays an
important role in the enumeration of acyclic (multi)-digraphs. Here, by a zero of \( \phi(z, w) \) we mean a function
\( z = z(w) \) that satisfies \( \phi(z(w), w) = 0 \).
Similarly,  let us denote by \( \widetilde\phi(z, w) \) the
denominator of the graphic generating function
\( \widehat D_{\mathrm{DAG}}^{(\text{simple})}(z, w) \) given
in~\cref{lemma:sggf:dags}, that is,
\( \widetilde \phi(z, w) := \phi \left(z \sqrt{1 + w}, \log(1 + w) \right) \), or
\begin{equation}\label{eq:phi-simple}
    \widetilde \phi(z, w)
    = \gsetsimple(-z, w)
    = \dfrac{1}{\sqrt{2 \pi \alpha}}
        \int_{-\infty}^{+\infty} e^{g(x)}
        \mathrm dx
        \quad  \mbox{where} \quad
        g(x) = -\frac{x^2}{2\alpha}-z \beta e^{-ix}
        \, ,
\end{equation}
with \( \alpha = \log(1 + w) \) and \( \beta = \sqrt{1 + w} \).
The properties of these two functions and their
zeros are certainly interesting in their own right.



%
In order to find the asymptotic expressions for their zeros and derivatives at
these zeros, we first need to find an asymptotic
approximation for \( \phi_r(z, w; F) \) itself as \( w \to 0^+ \), and around
its roots.
The integral in the definition of $\phi_r(z,w; F)$
in \eqref{eq:phi:tilde:phi} is
an integral over the real line. However,
we can deform this path of integration
in the complex plane, which allows us
to  apply the saddle point method
to $\phi_r$. The objective is to find a path that goes through
a saddle point $x_0$ on the imaginary line, \ie a point $x_0$
such that $f'(x_0)=0$.
By using the definition of \(f\) in \eqref{eq:phi}, $f'(x_0)=0$ implies $-i x_0 = z w e^{-i x_0}$.
We consider the solution defined in the vicinity of $z w = 0$ that is given by
\begin{equation}\label{eq:saddle-point}
x_0 = i T(zw)\, .
\end{equation}
The fact that the generating function $T$  has a singularity at $z=
1/e$   suggests that we should
consider $z$ to be a function of $w$  such that $zw$ is close to
$1/e$.

Then, the Taylor series  of $f$ around $x_0$ is
\begin{equation}\label{eq:f-expansion}
  f(x)=f(x_0)+f''(x_0)\frac{(x-x_0)^2}{2!}+
  f'''(x_0)\frac{(x-x_0)^3}{3!}+\cdots  \, ,
\end{equation}
where
\begin{equation}\label{eq:first-derivatives-f}
f(x_0)  =-\frac{U(zw)}{w}, \quad
f''(x_0)  = \frac{T(zw)-1}{w},
\mbox{ and }
f^{(k)}(x_0) = -(-i)^k\frac{T(zw)}{w}
\mbox{ for all $k\geq$  3}\, .
\end{equation}
Looking at the first few terms in the Taylor series above, note that
the quadratic term $f''(x_0)(x-x_0)^2$ also vanishes when $x_0=i$.
So we proceed as follows: if $x_0$ is sufficiently far from $i$ (in our
case, this means $|x_0-i|\gg w^{2/3}$), then we shift the path of
integration to the horizontal line  passing through $x_0$,
and if  $|x_0-i|\ll w^{2/3}$, then we take the path $\Gamma$ on the
right-hand side  in~\cref{fig:path}. In the latter case, the angle between the two middle
segments of the path meeting at \( i \) will be \( 2 \varphi = 2 \pi/3 \).
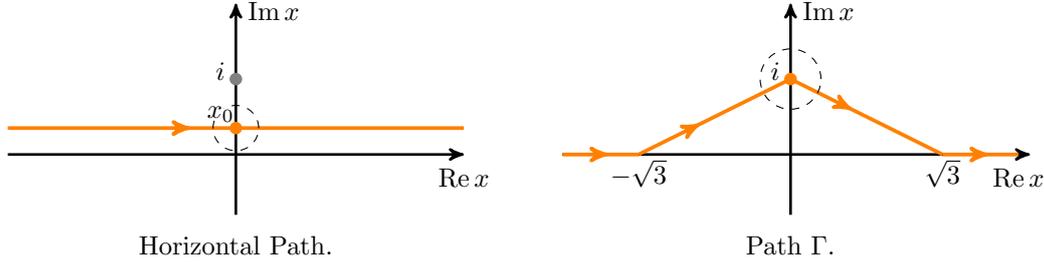
\begin{figure}[h]
\tikzstyle{arc}=[-,draw=orange,postaction={decorate},decoration={markings,mark=at
position .4 with {\arrow{>}}},line width=1.400]
\tikzstyle{invarc}=[-,draw=orange,postaction={decorate},decoration={markings,mark=at
position .6 with {\arrow{>}}},line width=1.400]
\begin{tikzpicture}[>=stealth',scale=1]
\draw[->,draw=black,line width=1] (-3,0)--(3,0) node[right]{};
\draw[->,draw=black,line width=1] (0,-.8)--(0,2) node[above]{};
\draw[style=arc] (-3,0.35)--(3,0.35);
\draw[dashed] (0,.35) circle (.3cm);
\node[circle,fill=orange,scale=.5] at (90:3.5mm) (0,0) {};
\node[circle,fill=gray,scale=.5] at (90:10mm) (0,1) {};
\node[] at (-0.2,1.1) {$i$};
\node[circle,fill=white,scale=.5] at (-0.2,.54) (0,1) {};
\node[] at (-0.2,.54) {$x_0$};
\node[] at (0,-1.2) {\text{Horizontal Path.}};
\draw (3, -.3) node {$\Real x$};
\draw (.5, 1.9) node {$\Image x$};
\end{tikzpicture}
\qquad
\begin{tikzpicture}[>=stealth',scale=1]
\draw[->,draw=black,line width=1] (-3,0)--(3.15,0) node[right]{};
\draw[->,draw=black,line width=1] (0,-.8)--(0,2) node[above]{};
\draw[invarc] (-3,0)--(-2,0);
\draw[invarc] (2,0)--(3,0);
\draw[arc] (-2,0)--(0,1);
\draw[arc] (0,1)--(2,0);
\draw[dashed] (0,1) circle (.4cm);
\node[circle,fill=orange,scale=.5] at (90:10mm) (0,1) {};
\node[] at (-2,-0.25) {$-\sqrt{3}$};
\node[] at (2,-0.25) {$\sqrt{3}$};
\node[] at (-0.2,1.1) {$i$};
\node[] at (0,-1.2) {\text{Path} $\Gamma.$};
\draw (3, -.3) node {$\Real x$};
\draw (.5, 1.9) node {$\Image x$};
\end{tikzpicture}
\caption{Two Paths.}\label{fig:path}
\end{figure}
We can use the path $\Gamma$ in the integral \eqref{eq:phi} since
$f(x)$ is an entire function (as a function of \(x\)).  But we can also shift the path of
integration to any horizontal line since \(|e^{f(x)}|\) tends to zero exponentially fast, as \(|\mathrm{Re}(x)|\to \infty\), in any fixed horizontal strip for \(w>0\). The dashed circles in the two
graphs in~\cref{fig:path} represent circles of radius $w^c$,
where  $c$ is a positive constant. The exponent $c$ will be chosen separately
for each  range of $z$. We can split the integral into two parts such that
the part of the integral stemming from the path within the circle
will be called the \textit{local integral} and the rest will be called
the \textit{tail}.

Now we are ready to formulate and prove the central analytic lemma of the current paper in full generality. We also need to compute refined asymptotic approximations of \( \phi_r(z, w; F(\cdot)) \) in order to give a refined asymptotic estimate of the \emph{zeros} of these functions. They will be helpful for studying the supercritical behaviour of families of random digraphs.

\begin{remark}
The forthcoming~\cref{prop:complex:general} partitions the disc \( \{ z \mid
|ezw| \leq 1 + Kw^{2/3} \} \) into three regions depending on the distance to
the branching point \( z = (ew)^{-1} \). The completeness of this partitioning
is justified by~\cref{lemma:complex:plane:trees}. An analogous analytic
lemma (Proposition 8) from~\cite{Naina2020} provides an identical
partitioning, but in their case, \( \varepsilon \) is fixed to be \( 1/12 \),
and the regions are defined in terms of the argument of the generating
function, and not in terms of \( T(zw) \). Doing so in parts (a) and (b) seems
more natural to us from the analytic perspective. Note that when \( \tau \) is
in a bounded closed subset of \( \mathbb C \), the approximations in parts (b)
and (c) overlap, but the technique to obtain them is slightly different:
firstly, \( \tau \) cannot be in a vicinity of the branch cut in part (b), and
secondly, in part~(c) we develop an additional asymptotic term which yields \(
K_r(\tau) \). This is required to get a finer asymptotic estimate of the roots
of \( \phi_r \) in~\cref{sec:roots:deformed:exponential} for \( r \in \{0, 1\}
\).
\end{remark}

\begin{lemma}
    \label{prop:complex:general}
    Let \( \phi_{r}(z,w; F(\cdot)) \) denote the function
    \[
    \phi_{r}(z, w; F(\cdot)) =
        \dfrac{1}{\sqrt{2 \pi w}}
        \int_{-\infty}^{+\infty}
        (1 - zw^+ e^{-i x})^r
        \exp \left(
            -\dfrac{x^2}{2w} - z e^{-i x}
        \right)
        F(zw^+ e^{-ix})
        \mathrm dx
        \enspace ,
    \]
    where \( w^+ = w + \mathcal O(w^2) \)
    and $F(\cdot)$ is an entire function that does not vanish on $\mathbb{C} \setminus \{0\}$.
    Then, we have the following asymptotic
    formulas for \( \phi_{r}(z, w; F(\cdot)) \) as \( w \to 0^+ \):
    \begin{enumerate}
        \item[(a)] If there are fixed positive numbers $K$ and $\varepsilon$ such that
        $1-|T(zw)|\geq w^{1/3-\varepsilon}$
        and
        \( |ezw|\leq 1+Kw^{2/3} \), then
        \[
            \phi_r(z,w; F(\cdot)) \sim e^{-U(zw)/w} (1-T(zw))^{r-1/2} F(T(zw))\, ;
        \]
    \item[(b)]If there are fixed positive numbers $K$ and $\varepsilon$
    such that $\varepsilon \leq \tfrac{1}{12}$,
    $w^{1/3}\leq 1-|T(zw)|\leq w^{1/3-\varepsilon}$ and
    \( |ezw|\leq 1+Kw^{2/3} \), then
        \[
            \phi_r(z, w; F(\cdot)) \sim
        (-1)^r\sqrt{2\pi} \cdot 2^{r/3+1/3}w^{r/3-1/6}\ai(r; 2^{-2/3}\theta^2)e^{\theta^3/3-U(zw)/w}F(1)\, ,
        \]
        where $\theta=w^{-1/3}(1-T(zw))$;
    \item[(c)] If $1 - ezw = \tau w^{2/3}$, then the estimate
        \begin{equation} \label{eq:partc}
            \phi_r(z,w; F(\cdot))\sim
            (-1)^r
            \sqrt{2 \pi} \cdot
            2^{r/3 + 1/3}
            w^{r/3 - 1/6}
            \ai(r; 2^{1/3}\tau)
            \exp \left(
                -\dfrac{1}{2 w} + \dfrac{\tau}{w^{1/3}}
            \right) F(1)
            \,
        \end{equation}
        holds uniformly for $\tau$ in any bounded closed subset of $\mathbb{C}$. Moreover, when \( r \in\{0,1\} \), the estimate is refined to
        \begin{equation} \label{eq:refined:partc}
            \phi_r(z,w; F(\cdot))=
            (-1)^r
            \sqrt{2 \pi} \cdot
            2^{r/3 + 1/3}
            w^{r/3 - 1/6}
            K_r(\tau)
            \exp \left(
                -\dfrac{1}{2 w} + \dfrac{\tau}{w^{1/3}}
            \right)
            \,,
        \end{equation}
        where
        \begin{align*}
        K_0(\tau) & =
            F(1) \ai(2^{1/3} \tau)
            + \\
            & \quad w^{1/3}
            \left(
                \tfrac{5}{6} \tau^2 F(1) \ai(2^{1/3}\tau)
                - \tfrac{1}{6} 2^{1/3} F(1) \ai'(2^{1/3}\tau)
                + 2^{1/3} F'(1) \ai'(2^{1/3}\tau)
            \right) + \bigO(w^{2/3}) \, ,
              \\
        K_1(\tau) & =
            F(1) \ai'(2^{1/3} \tau)
            + \\
            & \quad w^{1/3}
            \left(
                \tfrac{1}{3}( F(1) + 6 F'(1)) 2^{-1/3} \tau
                \ai(2^{1/3} \tau)
                +
                \tfrac{5}{6} F(1) \tau^2 \ai'(2^{1/3}\tau)
            \right) + \bigO(w^{2/3}) \, .
        \end{align*}
    \end{enumerate}
\end{lemma}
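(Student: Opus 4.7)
The plan is to apply the saddle-point method to the integral defining $\phi_r(z,w;F(\cdot))$. Because the integrand is entire in $x$ and decays super-exponentially as $|\Real x|\to\infty$ (thanks to the Gaussian factor $e^{-x^2/(2w)}$), Cauchy's theorem allows the real contour to be replaced by any path differing from it on a compact set. The phase $f(x)=-x^2/(2w)-ze^{-ix}$ has a unique relevant saddle at $x_0=iT(zw)$, and the Taylor expansion~\eqref{eq:f-expansion} together with~\eqref{eq:first-derivatives-f} shows that its quadratic coefficient $(T(zw)-1)/w$ vanishes precisely as $ezw\to 1$. This dichotomy between a non-degenerate quadratic saddle (part~(a)) and a degenerate saddle that must be resolved through the cubic term (parts~(b) and~(c)) drives the three-regime structure of the lemma.

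For part~(a), I would shift the contour to the horizontal line $\Image x=T(zw)$ through $x_0$. Within a window of width $\sqrt{w/(1-T(zw))}$ the quadratic is dominant and, thanks to the hypothesis $1-|T(zw)|\geq w^{1/3-\varepsilon}$, the cubic remainder is $o(1)$, so the Gaussian integral produces $\sqrt{2\pi w/(1-T(zw))}\,e^{-U(zw)/w}$; outside this window the Gaussian tail is exponentially small. Evaluating the prefactor at the saddle uses $zwe^{-ix_0}=T(zw)$ and absorbs the $\mathcal O(w^2)$ discrepancy between $w^+$ and $w$. Dividing by $\sqrt{2\pi w}$ then yields the announced form.

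For parts~(b) and~(c), the saddle sits within $\mathcal O(w^{1/3-\varepsilon})$ of $x=i$, so I would use the contour $\Gamma$ of~\cref{fig:path}. The substitution $x=i-i\cdot 2^{1/3}w^{1/3}t$ both rescales the local coordinate and rotates $\Gamma$ so that its image in the $t$-plane coincides with $\Pi(\pi/3)$ from~\cref{def:special:airy}. Taylor-expanding $f$ about $i$ using~\eqref{eq:first-derivatives-f}, a direct calculation shows that the linear, quadratic and cubic pieces combine into $-2^{1/3}\tau t+t^3/3$ up to $\mathcal O(w^{1/3})$, while the constant becomes $-1/(2w)+\tau/w^{1/3}$; the prefactor factors as $(2^{1/3}w^{1/3}t)^r(1+\mathcal O(w^{1/3}))$ and $F$ is evaluated at $1+o(1)$. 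Recognising the resulting integral as $(-1)^r\cdot 2\pi i\cdot\ai(r;2^{1/3}\tau)$ and tracking the $w$-power bookkeeping produces~\eqref{eq:partc}. In case~(b), the same substitution works with $\tau$ possibly as large as $w^{-2\varepsilon}$; the matching with~(a) in the overlap region uses the Puiseux expansion~\eqref{eq:tree-expansion} (giving $2^{-2/3}\theta^2=2^{1/3}\tau(1+o(1))$) together with the large-argument asymptotic~\eqref{eq:airy:asymptotics}. Tail contributions outside a disc of radius $w^c$ around $i$ are killed by the Gaussian $e^{-x^2/(2w)}$ on the horizontal portion of $\Gamma$ and by $\Real(t^3/3)$ on the tilted segments, controlled via~\cref{th:airy:bigO}.

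The hard step will be establishing the refined expansion~\eqref{eq:refined:partc} giving $K_0$ and $K_1$, which requires pushing every Taylor ingredient one order further in $w^{1/3}$ and then consolidating the result via the Airy recurrence. Three independent first-order sources contribute: the quartic term $f^{(4)}(i)u^4/24$ inserts a factor $1-2^{1/3}w^{1/3}t^4/12$ inside the exponent, the $w^{1/3}$-correction in $f''(i)u^2/2$ inserts a further factor $1+2^{-1/3}\tau w^{1/3}t^2$, the prefactor $(1-zw^+e^{-ix})^r$ acquires the multiplicative correction $1+r\cdot 2^{-1/3}\tau w^{1/3}/t$, and $F$ contributes $1-2^{1/3}w^{1/3}t\,F'(1)/F(1)$. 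After expansion, each surviving term becomes an integral $\int_\Pi t^k e^{-2^{1/3}\tau t+t^3/3}\,dt\propto\ai(k;2^{1/3}\tau)$ for $k\in\{r-1,r+1,r+2,r+4\}$, and the Airy recurrence~\eqref{eq:airy:recurrence} reduces all of them to $\ai(2^{1/3}\tau)$ and $\ai'(2^{1/3}\tau)$. The main obstacle is therefore the meticulous bookkeeping of first-order corrections from several distinct sources and the algebraic consolidation via the Airy recurrence; the restriction $r\in\{0,1\}$ reflects the scope of the refined zero analysis needed in~\cref{sec:roots:deformed:exponential}.
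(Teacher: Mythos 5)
Your outline of part~(a) and the high-level saddle-point strategy match the paper, and your account of part~(c) is essentially the paper's central-part argument. The refined bookkeeping for $K_0,K_1$ is also in the right spirit, though in your summary the correction from the prefactor $(1-zw^+e^{-ix})^r$ should be $1 + r(2^{-1/3}\tau/t - 2^{-2/3}t)w^{1/3}$, not just $1+r\cdot 2^{-1/3}\tau w^{1/3}/t$ (the extra term is visible in the paper's $R_1$), and you should also note that the rescaled $\Gamma$ still passes through $t=0$, so one must detour around it (as the paper does with $\Gamma_c^*$) when $r<0$.

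The genuine gap is in part~(b). You propose to use the bent contour $\Gamma$ through $i$ uniformly for both~(b) and~(c), obtain the $\tau$-form of~\eqref{eq:partc}, and then recover the stated $\theta$-form by Puiseux expansion and the large-argument Airy asymptotic~\eqref{eq:airy:asymptotics}. This does not close. Writing $u=1-T(zw)$, the argument of the Airy function changes by $\Delta z = 2^{-2/3}\theta^2 - 2^{1/3}\tau \asymp u^3/w^{2/3}$ while the exponents differ by $e^{(\theta^3/3 - U/w) - (-1/(2w)+\tau/w^{1/3})} = e^{-u^4/(8w)+\cdots}$. Via the large-$z$ Airy asymptotic, $\ai(r;z+\Delta z)/\ai(r;z) \approx e^{-z^{1/2}\Delta z} = e^{u^4/(3w)+\cdots}$, so the combined discrepancy is $e^{\Theta(u^4/w)}$. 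At the upper end of the part~(b) window, $u \asymp w^{1/3-\varepsilon}$ gives $u^4/w \asymp w^{1/3-4\varepsilon}$, which is $\Theta(1)$ when $\varepsilon=1/12$ — not $o(1)$. Equivalently, if you Taylor-expand $f$ about $i$ (rather than about $x_0=iT(zw)$), the quadratic remainder $2^{-1/3}\tau w^{1/3}t^2$ contributes $\asymp \tau^2 w^{1/3}$ on the scale $t\asymp\sqrt{\tau}$ where the Airy integrand concentrates, again $\Theta(1)$ at $\varepsilon=1/12$. The paper avoids this entirely by shifting, in part~(b), to the \emph{horizontal line through the true saddle} $x_0=iT(zw)$, where the quadratic coefficient $(T(zw)-1)/w$ is kept exact; the cubic remainder is then $\mathcal{O}(w^{3c-\varepsilon-2/3}+w^{4c-1})$, which is $o(1)$ for $c\in(1/4,1/3)$ and $\varepsilon\leq 1/12$. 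So part~(b) requires a different contour than the one you use, and the Puiseux-plus-Airy-asymptotics conversion you describe is only valid in the $\tau=\mathcal{O}(1)$ overlap region, not across the full part~(b) range.
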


\begin{proof}
    Let us recall the definition of the function \( f(x) \) and let us introduce
    a new function \( h(x) \):
    \[
        f(x) = -\dfrac{x^2}{2 w} - z e^{-ix},
        \quad \text{and} \quad
        h(x) = 1 - zw^+ e^{-ix}.
    \]
    The target function \( \phi_r(z, w; F(\cdot)) \) can then be expressed as
    \[
        \phi_r(z, w; F(\cdot)) = \dfrac{1}{\sqrt{2 \pi w}} \int_{-\infty}^{+\infty}
        h(x)^{r} e^{f(x)} F(1-h(x)) \mathrm dx\, .
    \]
	For a complex number $z$ and $w>0$, the function $e^{f(x)}$ decays exponentially fast as $x\to \infty$ in a fixed horizontal strip, while the term $h(x)^rF(1-h(x))$ remains bounded. Hence, we can shift path of integration above to any horizontal line as long as the path transformation does not go through any singularity of $h(x)^rF(1-h(x))$.

    \textbf{Part~(a):} If $z=0$, then
    \[
        \phi_r(0, w; F(\cdot)) =
        \frac{1}{\sqrt{2 \pi w}} \int_{-\infty}^{+\infty}
        \exp \left( - \frac{x^2}{2 w} \right) F(0) \mathrm dx
        =
        F(0),
    \]
    which matches the claimed asymptotics.
    We now assume $F(T(z w)) \neq 0$.
    For this case, let $c$ be a fixed number in the interval $(1/3,1/3+\varepsilon/2)$. We shift the line of integration in the definition of $\phi_r(z, w; F(\cdot))$ to the horizontal line passing through the point $x_0=iT(zw)$.  Define the local and tail integrals as follows:
    \begin{align*}
    I& =\int_{-w^c}^{w^c}
        h(x_0+t)^{r} e^{f(x_0+t)} F(1-h(x_0+t)) \mathrm dt\, ,\text{ and } \\[.5em]
    J & = \int_{|t|\geq w^c}
        h(x_0+t)^{r} e^{f(x_0+t)} F(1-h(x_0+t)) \mathrm dt\, .
    \end{align*}

Let us begin with the local integral. By expanding \( h(x) \) around the saddle point
\( x_0 = i T(zw) \) for $|t| \leq w^c$, we have
    \[
        h(x_0 + t)
        = 1 - T(zw) e^{-it} (1 + \mathcal O(w) )
        = 1 - T(zw) + \mathcal{O}(w^c)\, .
    \]
Similarly, we also have \( F(1-h(x_0+t)) \sim F(T(zw)) \), and the latter term is non-zero. On the other hand, the Taylor approximation of $f(x_0+t)$ is
\begin{align*}
    f(x_0+t)
    & =f(x_0)+f''(x_0)\frac{t^2}{2}+\mathcal{O}(w^{3c-1}) \\[.5em]
    & =-\frac{U(zw)}{w}-\frac{1-T(zw)}{w}\frac{t^2}{2}+\mathcal{O}(w^{3c-1})\, .
\end{align*}
Hence, the local integral satisfies the asymptotic formula
\[
    I \sim
    F(T(zw))
    (1 - T(zw))^r
    e^{-U(zw)/w}
    \int_{-w^c}^{w^c}
    e^{-w^{-1} (1-T(zw)) t^2/2} \mathrm dt\, .
\]
The range of integration of the integral on the right-hand side can be extended to
$(-\infty, +\infty)$ at the expense of a small error term.
To see this, we apply the change of variables $t \mapsto t \sqrt{w}$ and bound the integral
\[
    \left|
        \int_{w^c}^{+\infty}
        e^{-w^{-1}(1-T(zw))t^2/2}
        \mathrm dt
    \right|
    \leq
    \int_{w^{c-1/2}}^{+\infty}
    e^{-\Real(1-T(zw))t^2/2} \mathrm dt \, .
\]
Since $\Real(1-T(zw))\geq 1-|T(zw)|\geq w^{1/3-\varepsilon}$, the right-hand side is a $\mathcal{O}(e^{-w^{2c-2/3-\varepsilon}/2})$, which (by the choice of $c$) tends to zero faster than any power of $w$. Thus, we deduce
\[
    I \sim
    F(T(zw))(1 - T(zw))^r e^{-U(zw)/w}
    \int_{-\infty}^{+\infty}
    e^{-w^{-1}(1-T(zw))t^2/2} \mathrm dt\, .
\]

Next, we consider the tail integral $J$. From the definition of $f(x)$ in~\eqref{eq:phi} and $x_0$, we have
\[
    f(x_0 +t) - f(x_0) =
    -\frac{1}{2w}\left(
        t^2 + 2(e^{-it} + it - 1) T(zw)
    \right)\,.
\]
The first derivative with respect to $t$ of this function at $t=0$ is zero and its second derivative with respect to $t$ is $-w^{-1}(1-T(zw)e^{-it})$.
Using the mean-value
form of  the Taylor  approximation around zero of
$\Real (f(x_0 + t) - f(x_0))$,  there exists $\eta(t)\in [-\pi, \pi]$ such that
\[
    \Real (f(x_0 + t) - f(x_0))
    =
    - w^{-1} \Real \left(
        1 - T(zw) e^{-i\eta(t)}
    \right)
    \frac{t^2}{2}.
\]
On the other hand, we have
\[
    \Real \left(
        1 - T(zw) e^{-i\eta(t)}
    \right) = 1-\Real
    \left(
        T(zw) e^{-i\eta(t)}
    \right) \geq 1 - |T(zw)|\, .
\]
Hence,
\[
    \Real (f(x_0 + t) - f(x_0))
    \leq
    -w^{-1} \left(
        1 - |T(zw)|
    \right)
    \frac{t^2}{2}.
\]
Since the term \( h(x_0 + t)^r F(1 - h(x_0 + t)) \) remains bounded in any horizontal strip, this yields
\[
    e^{U(zw)/w} J
    =\mathcal{O}\left(
        \int_{|t|\geq w^c}
        e^{-w^{-1}(1 - |T(zw)|)t^2/2}
        \mathrm dt
    \right).
\]
Again as before, the integral on the right-hand side tends to zero faster than any power of $w$ as $w\to0^+$. Hence, the contribution from the tail integral is negligible. Therefore,
\[
    \phi_r(z, w; F(\cdot)) \sim
    \dfrac{F(T(zw))}{\sqrt{2 \pi w}}
    (1 - T(zw))^r
    e^{-U(zw)/w}
    \int_{-\infty}^{+\infty}
    \exp \left(- \frac{1-T(z w)}{w} \frac{t^2}{2} \right)
    \mathrm dt.
\]
The expression in the statement of part (a) in the lemma is obtained by evaluating the Gaussian integral above.

    \textbf{Part~(b):} Here, we choose $c$ to be a fixed number in the interval $(1/4,1/3)$, and we still consider the integral over the horizontal line passing through $x_0=iT(zw)$. The local and  the tail integrals are defined in the same way as in the previous case:
\begin{align*}
    I& =\int_{-w^c}^{w^c}
        h(x_0+t)^{r} e^{f(x_0+t)} F(1-h(x_0+t)) \mathrm dt\, , \\[.5em]
    J & = \int_{|t|\geq w^c}
        h(x_0+t)^{r} e^{f(x_0+t)} F(1-h(x_0+t)) \mathrm dt\, .
\end{align*}
Let us begin by estimating the local integral. Since the term $1 - T(zw)$ can be small, we consider further terms in various Taylor approximations.
According to~\cref{lemma:complex:plane:trees}, \( ezw \) is now in the
vicinity of \( 1 \), and \( T(zw) \sim 1 \), so \( T(zw) \sim |T(zw)|
\). Therefore, according to the condition of part (b),
\[
    |1 - T(zw)| = O(w^{1/3 - \varepsilon}).
\]
For $|t|\leq w^c$, we have
\begin{align*}
    h(x_0 + t)
    &=
    1 - T(zw) + it T(zw) + \bigO(t^2 + w T(zw))
    \\
    &=
    1 - T(zw) + it + it(T(zw) - 1) + \bigO(w^{2c})
    \\
    &=
    1-T(zw)+it+\mathcal{O}(w^{c+1/3-\varepsilon}+w^{2c})\, .
\end{align*}
Observe that
\[
    |1 - T(z w) + i t| \geq \Real(1-T(zw)+it)\geq 1-|T(zw)|\geq w^{1/3}.
\]
Thus we have $h(x_0 + t) =(1+o(1)) (1-T(zw)+it)$, uniformly for $|t|\leq w^c$. On the other hand, we still have \( F(1-h(x_0+t)) \sim F(1) \). The Taylor approximation of $f(x_0+t)$ gives
\[
    f(x_0+t)
    =
    - \frac{U(zw)}{w}
    - \frac{1 - T(zw)}{w} \frac{t^2}{2}
    - \frac{i}{w} \frac{t^3}{6}
    + \mathcal{O}(
        w^{3c-\varepsilon-2/3} + w^{4c-1}
    ).
\]
Since \( \varepsilon \leq \tfrac{1}{12} \), we also have
\( 3c - \varepsilon - 2/3 > 0 \), and \( w^{3c - \varepsilon - 2/3} \to 0 \).
Therefore,
\[
    I \sim F(1) e^{-U(zw)/w}
    \int_{-w^c}^{w^c}
        (1 - T(zw)+it)^r
        e^{-w^{-1}(1 - T(zw))t^2/2 - iw^{-1}t^3/6}
    \mathrm dt\, .
\]
In order to extend the range of integration as in the previous case, we need to estimate the integral
\[
    \int_{w^c}^{+\infty}|1 - T(zw)+it|^re^{-w^{-1}\Real(1 - T(zw))t^2/2} \mathrm dt.
\]
Since we have $\Real(1 - T(zw))\geq w^{1/3}$, the above integral is of the form $e^{-\Omega(w^{2c-2/3})}$, which tends to zero faster than any power of $w$.

To estimate the tail integral, the argument we used in part (a) is still valid:
again, since \( h(x_0 + t)^r F(1 - h(x_0 + t)) \) is bounded in any horizontal strip, we have
\[
    e^{U(zw)/w}J =\mathcal{O}\left(  \int_{|t|\geq w^c}
    e^{ - w^{-1}(1-|T(zw)|)t^2/2}\mathrm dt\right).
\]
Since $1-|T(zw)|\geq w^{1/3}$, the integral on the right-hand side tends to zero faster than any power of $w$.  Therefore, we deduce that
    \[
        \phi_r(z, w; F(\cdot)) \sim
        \dfrac{F(1)}{\sqrt{2 \pi w}}
        e^{-U(zw)/w}
        \int_{-\infty}^{+\infty}(1 - T(zw)+it)^re^{-w^{-1}(1 - T(zw))t^2/2-iw^{-1}t^3/6} 	    \mathrm dt\, .
    \]
    Since \( \Real( 1 - T(zw)) > 0 \), the term \( 1 - T(zw) + it \) follows the trajectory \( \Pi(\varphi) \) with \( \varphi = \pi/2 \) required to define the generalised Airy function as in~\eqref{eq:special:airy}. Therefore, after a change of variables
    \[
        t \mapsto 1 - T(zw) + it,
    \]
    the quadratic term in the exponent disappears, and the exponent becomes
    \[
        \frac{t^3}{6w}
        -
        \dfrac{t}{2w}(1-T(zw))^2
        +
        \dfrac{1}{3w}(1 - T(zw))^3.
    \]
    The remaining change of variables \( t \mapsto 2^{1/3} w^{1/3} t \) leads to the form given in the statement of the current lemma.

    \textbf{Part (c):} We take \( c \in (1/4, 1/3) \).
    The integration path is slightly different from the previous two. The
    saddle point $x_0$ is too close to $i$ so we choose the path of
    integration $\Gamma$ shown in~\cref{fig:path}.
    In this case
    the approximation also becomes different from the classical Airy function
    due to the fact that \( h(x) \) around the local part is not
    approximated by a constant any longer.

    Let us denote the part of $\Gamma$ that lies in the disk $|x-i|\leq  w^c$
    by $\Gamma_c$ and call it the \emph{central part}.
    The \emph{first tail} denotes the part of $\Gamma$
    equal to $(-\infty, -\sqrt{3}) \cup (\sqrt{3}, +\infty)$.
    Finally, the \emph{second tail} corresponds to the rest of $\Gamma$:
    two straight segments linking, respectively,
    $-\sqrt{3}$ to $i + w^c e^{-5 i \pi / 6}$,
    and $i + w^c e^{-i\pi/6}$ to $\sqrt{3}$.

    In the rest of the proof, we first consider the central part
    and prove that its asymptotic behaviour is given by~\eqref{eq:partc},
    then refine the estimate, which corresponds to~\eqref{eq:refined:partc},
    and finally prove that the first and second tails are negligible.

    \textsf{Central part.}
    For $x = i$ and $\tau = 0$, we have $h(i) = 1$.
    Thus, $x = i$ could be a singular point of the integrand
    $h(x)^r e^{f(x)} F(1 - h(x))$ when $r$ is negative.
    To avoid this case, we define an integration path $\Gamma_c^*$
    obtained from $\Gamma_c$ by circumventing $i$ from below.
    The local integral is
    \[
        I_r :=  \int_{\Gamma_c^*} h(x)^r e^{f(x)} F(1 - h(x))
        \mathrm dx.
    \]
    The substitution
    \[
        x-i = -i 2^{1/3} w^{1/3} t
    \]
    is applied and we deform the integration path,
    so that the new integration variable \( t \) follows a trajectory \( \Pi_c(\varphi) \)
    composed of three line segments (see~\cref{fig:contour:Pi}):
    \[
        t(s) =
        \begin{cases}
            -e^{-i \varphi} s ,
                & \text{for}  \quad {-w^{c-1/3}} \leq s \leq -1; \\
            \cos \varphi + is \sin \varphi,
                    & \text{for} \quad
            -1 \leq s \leq 1 ; \\
            e^{i \varphi} s ,
                & \text{for}   \quad 1 \leq s \leq w^{c-1/3},
        \end{cases}
    \]
    with \( \varphi = \pi/3 \).
    Note that this path is similar to the one considered
    in the proof of~\cite[Lemma 3]{Janson93}.
    Henceforth, the local part becomes
    \[
        I_r = \int_{t \in \Pi_c(\varphi)} h(x(t))^r e^{f(x(t))} F(1-h(x(t))) \mathrm d x(t),
    \]
    where \( t = t(s) \) and \( s \in [-w^{c-1/3}, w^{c-1/3}] \),
    which implies \( t = \bigO(w^{c-1/3}) \).
    In order to extract the asymptotics of $I_r$,
    we provide approximations for the terms
    $h(x(t))^r$,
    $F(1 - h(x(t)))$
    and $e^{f(x(t))}$.
    Since $h(x) = 1 - z w^+ e^{-i x}$,
    plugging
    $x(t) = i(1 - 2^{1/3} w^{1/3} t)$ and $e z w = 1 - \tau w^{2/3}$
    into the expression of $h(x(t))$, we obtain
    \begin{equation} \label{eq:hxt}
        h(x(t)) =
        1 - z w^+ e^{1 - 2^{1/3} w^{1/3} t} =
        1 - (1 - \tau w^{2/3}) e^{- 2^{1/3} w^{1/3} t} (1 + \mathcal O(w)).
    \end{equation}
    Expanding the exponential series and
    using $t = \bigO(w^{c-1/3})$
    provides the approximation
    \begin{equation} \label{eq:hxt:taylor}
        h(x(t)) =
        1 - (1 - \tau w^{2/3})
        \left(1 - 2^{1/3} w^{1/3} t + \bigO(w^{2 c}) \right)
        =
        2^{1/3} w^{1/3} t (1 + \bigO(w^c)).
    \end{equation}
    For any non-negative $r$, it follows that
    \[
        h(x(t))^r =
        2^{r/3} w^{r/3} t^r (1 + \bigO(w^c)).
    \]
    Thus, the main asymptotic term of \( h(x(t)) \) only depends on \( t \), but not on \( \tau \).
    Since $h(x(t)) = \bigO(w^c)$, we also deduce
    \[
        F(1 - h(x(t))) = F(1) (1 + \bigO(w^c)).
    \]
    Plugging $x(t) = i(1 - 2^{1/3} w^{1/3} t)$
    into $f(x) = -\frac{x^2}{2 w} - z e^{- i x}$, we obtain
    \[
        f(x(t)) =
        \frac{1}{2 w} - 2^{1/3} w^{-2/3} t + 2^{-1/3} w^{-1/3} t^2
        - z e^{1 - 2^{1/3} w^{1/3} t}.
    \]
    Now $z$ is replaced using the relation $e z w = 1 - \tau w^{2/3}$, yielding
    \[
        f(x(t)) =
        \frac{1}{2 w} - 2^{1/3} w^{-2/3} t + 2^{-1/3} w^{-1/3} t^2
        - (w^{-1} - \tau w^{-1/3}) e^{- 2^{1/3} w^{1/3} t}.
    \]
    Expanding the exponential series and using the relation $t = \bigO(w^{c-1/3})$
    gives the approximation
    \begin{align*}
        f(x(t)) &=
        \frac{1}{2 w} - 2^{1/3} w^{-2/3} t + 2^{-1/3} w^{-1/3} t^2
        \\& \quad
        - (w^{-1} - \tau w^{-1/3})
        \left(1 - 2^{1/3} w^{1/3} t + 2^{2/3} w^{2/3} \frac{t^2}{2} - 2 w \frac{t^3}{6} + \bigO(w^{4c}) \right).
    \end{align*}
    Since $1/4 < c < 1/3$, we deduce
    \[
        f(x(t)) =
        -\frac{1}{2 w} + \frac{\tau}{w^{1/3}}
        - 2^{1/3} \tau t + \frac{t^3}{3} + \bigO(w^{4 c - 1})
    \]
    and
    \[
        e^{f(x(t))} =
        \exp \left(-\frac{1}{2 w} + \frac{\tau}{w^{1/3}} \right)
        e^{- 2^{1/3} \tau t + t^3 /3}
        (1 + \bigO(w^{4 c - 1})).
    \]
    We finally plug
    the approximations of $h(x(t))^r$, $F(1 - h(x(t)))$,
    $e^{f(x(t))}$ and $\mathrm d x(t) = (2 w)^{1/3} \mathrm d t / i$
    into the integral $I_r$
    and apply \cref{th:airy:bigO} to interchange integral and big $\bigO$, to get
    \[
        I_r =
        (1 + \bigO(w^{4c-1}))
        F(1)
        \exp \left(-\frac{1}{2 w} + \frac{\tau}{w^{1/3}} \right)
        \dfrac{(2w)^{(1+r)/3}}{i}
        \int_{\Pi_c(\varphi)}
        t^{r} e^{-2^{1/3} \tau t + t^3/3} \mathrm dt\, .
    \]
    As in part (b), after adding the negligible tails,
    the integral yields the generalised Airy function
        \[
            I \sim
            2 \pi F(1) \exp \left(
                -\dfrac{1}{2 w} + \dfrac{\tau}{w^{1/3}}
            \right)
            (2w)^{(1 + r)/3}
            (-1)^r \ai(r; 2^{1/3}\tau),
        \]
    which is the desired asymptotic formula \eqref{eq:partc}.

    \textsf{Refined central part.}
    From now on let us abbreviate \(x(t)\) as just \(x\).
    In order to obtain the refined estimate~\eqref{eq:refined:partc}
    for the asymptotics of \( \phi_r(z, w; F(\cdot)) \) for $r \in \{0, 1\}$,
    we need better approximations for $h(x)$, \(F(1-h(x))\)
    and \( e^{f(x)} \).
    Pushing the Taylor expansion from~\eqref{eq:hxt} and~\eqref{eq:hxt:taylor} further,
    we obtain
    \[
        h(x) =
        Q(t, w^{1/3})
        (1 + \bigO(w^{2/3})),
    \]
    where
    \[
        Q(t, w^{1/3}) =
        2^{1/3} w^{1/3} t + (\tau - 2^{-1/3} t^2) w^{2/3}.
    \]
    We also have by Taylor expansion and using $F(1) \neq 0$
    \[
        F(1 - h(x)) =
        (F(1) - Q(t, w^{1/3}) F'(1))
        (1 + \bigO(w^{2/3})).
    \]
    Finally,
    considering only the relevant terms up to \( \bigO(w^{2/3}) \),
    we obtain
    \begin{align*}
        f(x) &=
        \frac{1}{2 w} - 2^{1/3} w^{-2/3} t + 2^{-1/3} w^{-1/3} t^2
        - (w^{-1} - \tau w^{-1/3})
        \bigg( \sum_{k=0}^4 \frac{(-2^{1/3} w^{1/3} t)^k}{k!}
        + \bigO(w^{5/3} t^5) \bigg)
        \\&=
        - \frac{1}{2 w}
        + \frac{\tau}{w^{1/3}}
        - 2^{1/3} \tau t
        + \frac{t^3}{3}
        + \tau 2^{-1/3} w^{1/3} t^2
        - 2^{-2/3} w^{1/3} \frac{t^4}{6}
        + \bigO(w^{2/3} t^5).
    \end{align*}
    Thus, we have
    \[
        e^{f(x)} =
        \exp\left(- \frac{1}{2 w} + \frac{\tau}{w^{1/3}} \right)
        e^{- 2^{1/3} \tau t + t^3/3}
        P(t, w^{1/3})
    \]
    where
    \[
        P(t, w^{1/3}) =
        1 +
        \left(
            \tau 2^{-1/3} t^2
            - 2^{-2/3} \frac{t^4}{6}
        \right) w^{1/3}
        + \bigO(w^{2/3} t^5).
    \]
    Plugging in the approximations of $h(x)$, $F(1 - h(x))$, $e^{f(x)}$
    and $\mathrm d x(t) = (2 w)^{1/3} \mathrm d t / i$
    in the integral $I_r$, we obtain
    \begin{align*}
        I_r &=
        \exp \left(-\frac{1}{2 w} + \frac{\tau}{w^{1/3}} \right)
        \dfrac{(2w)^{1/3}}{i}
        \int_{\Pi_c(\varphi)}
        R_r(t, w^{1/3})
        e^{-2^{1/3} \tau t + t^3/3} \mathrm dt,
    \end{align*}
    where
    \[
        R_r(t, w^{1/3}) =
        Q(t, w^{1/3})^r
        \left(F(1) - Q(t, w^{1/3}) F'(1) \right)
        P(t, w^{1/3})
        (1 + \bigO(w^{2/3})).
    \]
    Plugging in the values of $P(t, w^{1/3})$ and $Q(t, w^{1/3})$, we deduce
    \begin{align*}
        R_0(t, w^{1/3}) &=
        F(1)
        + \left(
            - F'(1) 2^{1/3}t
            + F(1) \tau 2^{-1/3} t^2
            - F(1) 2^{-5/3} \frac{t^4}{3}
        \right) w^{1/3}
        + \bigO(w^{2/3} t^5),
        \\
        R_1(t, w^{1/3}) &=
        F(1) 2^{1/3} t w^{1/3}
        \\ & \quad
        + \left(
            F(1) \tau
            - (F(1) 2^{-1/3} + F'(1) 2^{2/3}) t^2
            + F(1) \tau t^3
            - F(1) 2^{-1/3} \frac{t^5}{6}
        \right) w^{2/3}
        + \bigO(w t^5).
    \end{align*}
    Applying \cref{th:airy:bigO} to bound the error terms,
    we obtain for the integrals $I_0$ and $I_1$
    \begin{align*}
        I_0 &=  \exp\left(- \frac{1}{2 w} + \frac{\tau}{w^{1/3}} \right)
            \Big(
            C_{01} w^{1/3} + C_{02} w^{2/3} + \bigO(w)
        \Big),
        \\
        I_1 &=  \exp\left(- \frac{1}{2 w} + \frac{\tau}{w^{1/3}} \right)
        \Big(
            C_{12} w^{2/3} + C_{13} w + \bigO(w^{4/3})
        \Big),
    \end{align*}
    where

    \begin{align*}
        C_{01} &=
        \frac{2^{1/3}}{i} \int_{\Pi(\varphi)} F(1) e^{-2^{1/3} \tau t + t^3/3} dt =
        2^{4/3} \pi \ai(2^{1/3} \tau) F(1) \, , \\
        C_{02} &=
        \frac{2^{1/3}}{i} \int_{\Pi(\varphi)}
        \Big(
            - F'(1) 2^{1/3} t
            + F(1) \tau 2^{-1/3} t^2
            - F(1) 2^{-5/3} \frac{t^4}{3}
        \Big)
        e^{-2^{1/3} \tau t + t^3/3} dt
        \\&=
        2 \pi F(1) \Big(
            \tau \ai(2; 2^{1/3} \tau)
            - \frac{2^{2/3}}{12} \ai(4; 2^{1/3} \tau)
        \Big)
        +
        2^{5/3} \pi F'(1) \ai(1; 2^{1/3} \tau)
        \, , \\
        C_{12} &=
        \frac{2^{1/3}}{i} \int_{\Pi(\varphi)} 2^{1/3} t F(1) e^{-2^{1/3} \tau t + t^3/3} dt =
        -2^{5/3} \pi \ai(1; 2^{1/3} \tau) F(1) \, , \\
        C_{13} &=
        \frac{2^{1/3}}{i} \int_{\Pi(\varphi)}
        \Big(
            F(1) \tau
            - (F(1) 2^{-1/3} + F'(1) 2^{2/3}) t^2
            + F(1) \tau t^3
            - F(1) 2^{-1/3} \frac{t^5}{6}
        \Big)
        e^{-2^{1/3} \tau t + t^3/3} dt
        \\
        &=
        2 \pi F(1)
        \Big(
            2^{1/3} \tau \ai(2^{1/3} \tau)
            - 2^{1/3} \tau \ai(3; 2^{1/3} \tau)
            - \ai(2; 2^{1/3} \tau)
            + \tfrac16 \ai(5; 2^{1/3} \tau)
        \Big)
        \\&
        \quad - 4 \pi F'(1) \ai(2; 2^{1/3} \tau)
        \, . \\
    \end{align*}
    To simplify \( C_{02} \) and \( C_{13} \) further, we use the expressions for the derivatives of the Airy
    functions:
    \[
        \ai(2; z) = z\ai(z)
        \quad \text{and} \quad
        \ai(3; z) = \ai(z) + z \ai'(z),
    \]
    \[
        \ai(4; z) = 2 \ai'(z) + z^2 \ai(z)
        \quad \text{and} \quad
        \ai(5; z) = 4z \ai(z) + z^2 \ai'(z).
    \]
    Hence, we obtain
    \begin{align*}
    C_{02} &=
    2 \pi \left(
        \frac{5}{6} \tau^2 2^{1/3} F(1) \ai(2^{1/3}\tau)
        - \frac{1}{6} 2^{2/3} F(1) \ai'(2^{1/3}\tau)
        + 2^{2/3} F'(1) \ai'(2^{1/3}\tau)
    \right) \, , \\
    C_{13} &= - 2 \pi \left(
        \frac{1}{3}( F(1) + 6 F'(1)) 2^{1/3} \tau
        \ai(2^{1/3} \tau)
        +
        \frac{5}{6} F(1) 2^{2/3} \tau^2 \ai'(2^{1/3}\tau)
    \right) \, ,
    \end{align*}
    which gives the last estimate of the theorem:
    \[
        K_0(\tau) = \dfrac{2^{-1/3}}{2 \pi} \left(
            C_{01} + C_{02} w^{1/3}
        \right)
        \quad \text{and} \quad
        K_1(\tau) = \dfrac{-2^{-2/3}}{2 \pi} \left(
            C_{12} + C_{13} w^{1/3}
        \right).
    \]

    \textsf{First tail.}
    Now, we can complete the tail estimate.
    For $t$ real and $|t|\geq \sqrt{3}$, \( h(t)^rF(1-h(t)) \) is bounded by a constant, and we also have
    \[
    f(t)=-\frac{1}{2w}\left(t^2+2e^{-it-1}+\mathcal{O}(w^{2/3})\right).
    \]
This yields
\[
\Real (f(t)-f(i))=-\frac{1}{2w}\left(t^2-1+2e^{-1}\cos(t)+\mathcal{O}(w^{2/3})\right)\leq -\frac{1}{2w}\left(t^2-1-2e^{-1}+\mathcal{O}(w^{2/3})\right).
\]
Hence
\[
e^{-f(i)}\int_{|t|\geq \sqrt{3}}h(t)^rF(1-h(t))e^{f(t)} \mathrm dt
 =\mathcal{O}\left(\int_{|t|\geq \sqrt{3}}e^{\Real (f(t)-f(i))} \mathrm dt\right),
 \]
  and
\[
\int_{|t|\geq \sqrt{3}}e^{\Real (f(t)-f(i))} \mathrm dt\leq e^{(e^{-1}+1/2)w^{-1}+\mathcal{O}(w^{-1/3})}\int_{|t|\geq \sqrt{3}}e^{-t^2/(2w)} \mathrm dt=e^{(e^{-1}-1)w^{-1}+\mathcal{O}(w^{-1/3})},
\]
which tends to zero faster than any power of $w$. Hence, this contribution is negligible.

\textsf{Second tail.}
Another part of the tail corresponds to
$x = i + t e^{- i \pi / 6}$
with $t \in [w^c, 2]$. For this case, we write $\tau =a+ib$. So, by our assumption on $\tau$, $a$ and $b$ are contained in fixed bounded real intervals. We have
\[
\Real(f(i + t e^{- i \pi / 6}) - f(i))  =  w^{-1}\left(h_1(t)-a w^{2/3}h_2(t)-bw^{2/3}h_3(t)\right)\, ,
\]
where
\begin{align*}
  h_1(t) & =
    1 - \frac{t^2}{4} - \frac{t}{2}- \cos \left( \sqrt{3}\, t / 2 \right) e^{-t/2} \, , \\
    h_2(t)& =  1 - \cos \left( \sqrt{3}\, t / 2 \right) e^{-t/2} \, , \\
    h_3(t) & = -\sin \left( \sqrt{3}\, t / 2 \right) e^{-t/2} \, .
\end{align*}
Note that the function $h_1'(t)$ is negative on the interval $(0,2]$, and since $a$ and $b$ are bounded, the function $h'_1(t)-a w^{2/3}h'_2(t)-bw^{2/3}h'_3(t)$ is also negative if $t$ is bounded away from zero and for sufficiently small $w$. The first term in the Taylor approximation of $h'_1(t)-a w^{2/3}h'_2(t)-bw^{2/3}h'_3(t)$ with respect to $t$  is
\[
    h'_1(t) - a w^{2/3} h'_2(t) - bw^{2/3} h'_3(t)
    =
    -t^2 / 2
    - (a/2 - \sqrt{3} b / 2 ) w^{2/3} + \mathcal{O}(|t|^3 + w^{2/3}|t|).
\]
This implies that $h'_1(t)-a w^{2/3}h'_2(t)-bw^{2/3}h'_3(t)$ can only be positive or zero for $t$ below $\mathcal{O}(w^{1/3})$. But, since we chose $c<1/3$,
$w^{1/3} = o(w^c)$ as $w\to 0^+$. Hence, for sufficiently small $w$, the function $h_1(t)-a w^{2/3}h_2(t)-bw^{2/3}h_3(t)$ is decreasing on the interval  $[w^c,2]$. This implies that
\[
  \Real(f(i + t e^{- i \pi / 6}) - f(i)) \leq
  w^{-1}\left(h_1(w^c)-a w^{2/3}h_2(w^c)-bw^{2/3}h_3(w^c)\right).
\]
The Taylor expansions of $h_1(t)$, $h_2(t)$ and $h_3(t)$ at $0$ start with $-t^3/6 + \bigO(t^4)$,  $t/2+\mathcal{O}(t^2)$, and $\sqrt{3}t/2+\mathcal{O}(t^2)$ respectively. Thus
\[
  \Real(f(i + t e^{- i \pi / 6}) - f(i)) \leq
  - \frac{1}{6} w^{3 c - 1} \left(1 + \bigO(w^c+w^{-2(3c-1)/3})\right)
\]
holds uniformly for $t\in [w^c,2]$. Therefore
\[
  \left| \int_{w^c}^2 e^{f(i + t e^{-i \pi/6}) - f(i)}
    \mathrm d t \right|
  \leq
  2 \inf_{w^c \leq t \leq 2}
  e^{\Real(f(i + t e^{-i \pi/6}) - f(i))}
  \leq
  2 e^{-w^{3 c - 1} (1 + \bigO(w^c+w^{-2(3c-1)/3})) / 6}.
\]
Once again, since $3 c - 1 < 0$, the last term tends to zero as $w\to0^{+}$ faster than any power of $w$, which implies that this part of the tail is negligible. A similar proof establishes that the last part of the tail, corresponding to $x = i + t e^{i \pi / 6}$ for $t \in [-2, -w^c]$, is negligible as well.
\end{proof}


\begin{corollary}
    \label{corollary:complex:general}
    Let \( \widetilde \phi_r(z, w; F(\cdot)) \)
    be as defined in \eqref{eq:phi:tilde:phi},
    and assume $F(z)$ does not vanish on $\mathbb{C} \setminus\{0\}$.
    Then, we have the following asymptotic formulas for
    \(\widetilde \phi_r(z, w; F(\cdot))\) as \(w\to 0^+\):
    \begin{enumerate}
        \item[(a)] If there are fixed positive numbers $K$ and $\varepsilon$ such that
        $1-|T(zw)|\geq w^{1/3-\varepsilon}$
        and
        \( |ezw|\leq 1+Kw^{2/3} \), then
        \[
            \widetilde \phi_r(z,w; F(\cdot)) \sim e^{-U(zw)/w-U(zw)/2} (1-T(zw))^{r-1/2} F(T(zw))\, ;
        \]
    \item[(b)] If there are fixed positive numbers $K$ and $\varepsilon$ such that $\varepsilon \leq \tfrac{1}{12}$,
        $w^{1/3}\leq 1-|T(zw)|\leq w^{1/3-\varepsilon}$ and
        \( |ezw|\leq 1+Kw^{2/3} \), then
        \[
            \widetilde \phi_r(z, w; F(\cdot)) \sim
            (-1)^r \sqrt{2\pi}
            \cdot 2^{r/3+1/3} w^{r/3-1/6} e^{-1/4}
            \ai(r; 2^{-2/3}\theta^2)
            e^{\theta^3/3-U(zw)/w} F(1),
        \]
        where $\theta=w^{-1/3}(1-T(zw))$;
        \item[(c)] If $1 - ezw = \tau w^{2/3}$, then the estimate
        \[
            \widetilde \phi_r(z,w; F(\cdot))\sim
            (-1)^r
            \sqrt{2 \pi} \cdot
            2^{r/3 + 1/3}
            w^{r/3 - 1/6} e^{-1/4}
            \ai(r; 2^{1/3}\tau)
            \exp \left(
                -\dfrac{1}{2 w} + \dfrac{\tau}{w^{1/3}}
            \right) F(1)
            \,
        \]
        holds uniformly for $\tau$ in any bounded closed subset of $\mathbb{C}$. Moreover, if \( r \in \{0, 1\} \), then this estimate can be refined to
        \[
            \widetilde \phi_r(z,w; F(\cdot))=
            (-1)^r
            \sqrt{2 \pi} \cdot
            2^{r/3 + 1/3}
            w^{r/3 - 1/6} e^{-1/4}
            K_r(\tau)
            \exp \left(
                -\dfrac{1}{2 w} + \dfrac{\tau}{w^{1/3}}
            \right)
            \,,
        \]
        where $K_r(\tau)$ is defined in \cref{prop:complex:general}.
    \end{enumerate}
\end{corollary}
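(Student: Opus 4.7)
The plan is to reduce the corollary to \cref{prop:complex:general} via the substitution suggested by \cref{rem:simple-digraphs}. I would first verify the identity
$$
\widetilde\phi_r(z, w; F(\cdot)) = \phi_r(z\beta, \alpha; F(\cdot)),
$$
where on the right-hand side the free parameter $w^+$ appearing in the definition of $\phi_r$ is taken to equal the original $w = e^\alpha - 1$. This is legitimate because $e^\alpha - 1 = \alpha + \mathcal{O}(\alpha^2)$ satisfies the required form $w^+ = \alpha + \mathcal{O}(\alpha^2)$. The identity itself follows immediately by substituting $(z, w, w^+) \mapsto (z\beta, \alpha, w)$ into the integrand of $\phi_r$: this reproduces the polynomial factor $(1 - wz\beta e^{-ix})^r$, the function argument $F(wz\beta e^{-ix})$, the Gaussian weight $\exp(-x^2/(2\alpha))$, and the shift $-z\beta e^{-ix}$ appearing in the definition of $\widetilde\phi_r$.

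Next, I would transfer the hypotheses. A direct Taylor expansion gives $\beta\alpha = (1+w)^{1/2}\log(1+w) = w + \mathcal{O}(w^3)$ as $w \to 0^+$, so the product $z\beta\alpha$ equals $zw$ up to a relative error of $\mathcal{O}(w^2)$. Using the Puiseux expansion \eqref{eq:tree-expansion}, one checks in each regime that the difference $T(z\beta\alpha) - T(zw)$ is negligible compared to $1 - T(zw)$. Consequently, the hypotheses of parts (a), (b), (c) of the corollary, stated in terms of $|ezw|$ and $1 - |T(zw)|$, imply the analogous hypotheses of \cref{prop:complex:general} for the arguments $(z\beta, \alpha)$, possibly after slightly enlarging $K$ and $\varepsilon$. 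For part (c) specifically, writing $1 - ez\beta\alpha = \tau' \alpha^{2/3}$ yields $\tau' = \tau + \mathcal{O}(w^{4/3})$, which still lies in a bounded closed subset of $\mathbb{C}$.

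The remaining task is to rewrite the conclusions of the lemma in the form stated by the corollary. The factor $\alpha^{r/3-1/6}$ equals $w^{r/3-1/6}(1 + \mathcal{O}(w))$, and the Airy evaluations differ only through their arguments, which converge to the target values by continuity. The main contribution comes from the exponential factor: using $1/\alpha = 1/w + 1/2 + \mathcal{O}(w)$ together with the estimate $U(z\beta\alpha) = U(zw) + \mathcal{O}(w^3)$ (which holds because $U'(u) = T(u)/u$ remains bounded on the relevant domain), one computes
$$
\frac{U(z\beta\alpha)}{\alpha} = \frac{U(zw)}{w} + \frac{U(zw)}{2} + \mathcal{O}(w).
$$
In part (a) this immediately produces the additional factor $e^{-U(zw)/2}$ written in the corollary. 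In parts (b) and (c), the regime $ezw \to 1$ forces $U(zw) \to U(1/e) = 1/2$, so the factor collapses to the stated constant $e^{-1/4}$; the same $e^{-1/4}$ also reappears from $-1/(2\alpha) = -1/(2w) - 1/4 + \mathcal{O}(w)$ in the exponent of part (c).

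The main obstacle I anticipate is controlling the cubic term $\theta'^3/3$ appearing in the exponent of part (b), where $\theta' := \alpha^{-1/3}(1 - T(z\beta\alpha))$ can be as large as $w^{-\varepsilon}$. Here the estimate $\theta' = \theta(1 + \mathcal{O}(w))$, combined with the hypothesis $\varepsilon \leq 1/12$, gives $\theta'^3/3 - \theta^3/3 = \mathcal{O}(w^{1-3\varepsilon}) = o(1)$, which is enough to preserve the asymptotic. For the refined expression in part (c), the subleading term of $K_r$ carries an explicit $w^{1/3}$ prefactor, so replacing $w$ by $\alpha$ and $\tau$ by $\tau'$ introduces perturbations that are absorbed into the stated $\mathcal{O}(w^{2/3})$ remainder, yielding the same $K_r(\tau)$ as in the lemma up to this error.
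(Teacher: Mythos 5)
Your proof is correct and follows the paper's own route: reduce to \cref{prop:complex:general} via the substitution $(z,w)\mapsto(z\sqrt{1+w},\log(1+w))$, exploiting the $w^+$ freedom in the definition of $\phi_r$ to make the identity $\widetilde\phi_r(z,w;F(\cdot))=\phi_r(z\beta,\alpha;F(\cdot))$ exact, and then trace the effect of the change of variables on each conclusion. Two intermediate error exponents are slightly off (since $z\sim(ew)^{-1}$, the estimate is $U(z\beta\alpha)-U(zw)=\mathcal{O}(w^2)$ rather than $\mathcal{O}(w^3)$; and $\tau'-\tau=\tau w/3+\mathcal{O}(w^{4/3})=\mathcal{O}(w)$ rather than $\mathcal{O}(w^{4/3})$), but these are harmless: the corrected bounds still give $U(z\beta\alpha)/\alpha-U(zw)/w-U(zw)/2=\mathcal{O}(w)$ and a $\tau$-shift of order $\mathcal{O}(w)$, both well below the $w^{1/3}$ scale that enters $K_r(\tau)$, so your displayed conclusions are unaffected.
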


\begin{proof}
    Note that according to the way \( \phi \) is defined
    in~\cref{prop:complex:general}, we have in the given range
    \[
        \widetilde \phi_r(z, w; F(\cdot))
        = \phi_r(z\sqrt{1+w}, \log(1+w); F(\cdot))
        \times (1 + \mathcal O(w^{2/3})) \, ,
    \]
    and so,
    the corresponding asymptotic approximations can be obtained using a change
    of variables. In particular, if we let
    \[
        \widetilde z = z \sqrt{1+w}
        \quad \text{and} \quad
        \widetilde w = \log(1 + w),
    \]
    then \( \widetilde \phi_r(z, w; F) =
    \phi_r(\widetilde z, \widetilde w; F)
    \cdot (1 + \mathcal O(w^{2/3}))
    \), and,
    as \( \widetilde w \to 0^+ \), in part~(a), we have an
    asymptotic equivalence: \( zw = \widetilde z \widetilde w (1 + \mathcal
    O(w^2)) \), so the
    expressions \( U(zw) \) and \( T(zw) \) in part~(a) remain unchanged.
    However, by expanding further, we see that \( \widetilde w = w - w^2/2+\bigO(w^3) \), and therefore, the term \( -U(\widetilde z \widetilde w)/\widetilde w \) in the exponent of the first part
    becomes \( -U(zw)/w -U(zw)/2 + o(1) \). In Parts (b) and (c), if we let
    \( zw = e^{-1}(1 - \tau w^{2/3}) \),
    then
    \( \widetilde z \widetilde w
    =
    e^{-1}(1 - \widetilde \tau \widetilde w^{2/3}) \),
    where
    \[
        \widetilde \tau = \tau (1 + w/3) + \bigO(w^{4/3}).
    \]
    After substituting this linear shift into the asymptotic expressions,
    we see that it does not affect the validity of
    the asymptotic approximations: only a change by \( w^{1/3} \) or \( w^{2/3}
    \) in \( \tau \)
    would affect the final expression, in the exponent or in the refined part,
    but not a smaller change \( \mathcal O(w) \).
    Therefore, the only change, again, happens in the exponent, where by replacing \( w \)
    with \( \widetilde w = w - w^2/2 + \bigO(w^3) \),
    we obtain
    \[
        -\frac{U(\widetilde z \widetilde w)}{\widetilde w} =
        - \frac{U(z w)}{w} - \frac{1}{2} U(z w) + \mathcal O(w).
    \]
    Since $T(z w) \sim 1$ and $U(z w) = T(z w) - T(z w)^2/2$,
    we obtain
    \[
        -\frac{U(\widetilde z \widetilde w)}{\widetilde w} =
        - \frac{U(z w)}{w} - \frac{1}{4} + \mathcal O(w)
    \]
    giving the additional factor $e^{-1/4}$.
\end{proof}

\subsection{Roots and derivatives of the deformed exponential}
\label{sec:roots:deformed:exponential}
The roots of the deformed exponential function have already been examined
before in the range where \( w \) is a positive constant.
It is known, for example, that all zeros of $\widetilde\phi$ and \(
\phi \) are real, positive and  distinct when $w > 0$,
see~\cite{Polya14,Laguerre83}. For a given $w>0$
and $j\in \mathbb{N}$, let $\widetilde\varrho_j(w)$ be the $j$-th smallest
solution to the equation $\widetilde\phi(z,w)=0$. As mentioned
in~\cite{Bender86},
we have $\widetilde \varrho_1(1)\approx 1.488079$.  Grabner and
Steinsky~\cite{Grabner05} studied the behaviour of the other zeros of
$\widetilde \phi(z,1)$, extending  the work of Robinson~\cite{Robinson73}.

Before we estimate the asymptotics of the roots and derivatives of \( \phi_r \),
it is useful to provide an explicit expression for its derivative first.

\begin{lemma}
\label{lemma:derivative:explicit}
Let \( \phi_r(z, w; F(\cdot)) \) be defined, as in~\eqref{eq:phi:tilde:phi}, by
\[
    \phi_{r}(z, w; F(\cdot)) =
    \dfrac{1}{\sqrt{2 \pi w}}
    \int_{-\infty}^{+\infty}
    (1 - zw e^{-i x})^r
    \exp \left(
        -\dfrac{x^2}{2w} - z e^{-i x}
    \right)
    F(zwe^{-ix})
    \mathrm dx.
\]
Then, locally uniformly
for \( |z| < w^{-1} \), we have
\begin{multline}\label{eq:partial:next}
    \partial_z \phi_r(z, w; F(\cdot))
    =\\
    \frac{1}{z}\left(
        - r \phi_{r-1}\big(z, w; x \mapsto xF(x)\big)
        - \frac{1}{w}\phi_{r}\big(z, w; x \mapsto xF(x)\big)
        + \phi_{r}\big(z, w; x \mapsto xF'(x)\big)
    \right).
\end{multline}
\end{lemma}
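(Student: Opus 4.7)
The plan is a direct differentiation under the integral sign, justified by the Gaussian decay in $x$. I first verify the regularity hypothesis: for real $x$, $|e^{-ix}| = 1$, so the product $(1 - zwe^{-ix})^r F(zwe^{-ix})$ is bounded by a polynomial in $|x|$ (or simply bounded, since it is continuous and the argument $zwe^{-ix}$ is bounded), while the Gaussian factor $e^{-x^2/(2w)}$ gives super-polynomial decay. The same bound holds uniformly for $z$ in a compact subset of $\{|z| < w^{-1}\}$, together with its $z$-derivative. Hence dominated convergence allows me to differentiate under the integral.

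Writing $u = zwe^{-ix}$ and applying the product rule to $(1-u)^r \, e^{-x^2/(2w) - ze^{-ix}}\, F(u)$, I obtain three contributions, coming respectively from the factors $(1-u)^r$, $e^{-ze^{-ix}}$, and $F(u)$:
\begin{align*}
    \partial_z\bigl[(1-u)^r\bigr] &= -r\,(1-u)^{r-1}\cdot we^{-ix}, \\
    \partial_z\bigl[e^{-ze^{-ix}}\bigr] &= -e^{-ix}\cdot e^{-ze^{-ix}}, \\
    \partial_z\bigl[F(u)\bigr] &= F'(u)\cdot we^{-ix}.
\end{align*}
The key algebraic observation is that in every term the factor $e^{-ix}$ can be rewritten as $u/(zw)$, thereby producing a factor $1/z$ outside and an additional $u = zwe^{-ix}$ inside the integrand. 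Concretely, $we^{-ix} = u/z$ and $e^{-ix} = u/(zw)$.

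Collecting, the integrand of $\partial_z\phi_r(z,w;F(\cdot))$ equals
\begin{equation*}
    \frac{1}{z}\,
    e^{-x^2/(2w) - ze^{-ix}}
    \Bigl[
        -r\,u\,(1-u)^{r-1}F(u)
        - \tfrac{1}{w}\,u\,(1-u)^r F(u)
        + u\,(1-u)^r F'(u)
    \Bigr].
\end{equation*}
Dividing through by $\sqrt{2\pi w}$ and integrating, each of the three bracketed terms is, by the very definition~\eqref{eq:phi:tilde:phi}, the integrand of a function $\phi_s$ with the stated choice of $s\in\{r-1,r\}$ and auxiliary function $x\mapsto xF(x)$ or $x\mapsto xF'(x)$. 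This yields exactly
\[
    \partial_z \phi_r(z, w; F(\cdot))
    =
    \frac{1}{z}\Bigl(
        - r\,\phi_{r-1}(z, w; x \mapsto xF(x))
        - \tfrac{1}{w}\,\phi_{r}(z, w; x \mapsto xF(x))
        + \phi_{r}(z, w; x \mapsto xF'(x))
    \Bigr).
\]

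There is no real obstacle beyond the bookkeeping: the only subtle point is the legitimacy of interchanging $\partial_z$ and the integral, which is routine thanks to the Gaussian factor. The local uniformity for $|z|<w^{-1}$ comes from the fact that on any compact subset of this disc the dominating integrable majorant can be chosen independently of $z$.
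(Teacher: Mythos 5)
Your proof is correct and takes essentially the same route as the paper: differentiate under the integral sign (justified by the Gaussian decay and the fact that $|zw e^{-ix}|<1$ keeps $(1-zwe^{-ix})^r$ bounded on compacta of $\{|z|<w^{-1}\}$), then carry out the product-rule computation and rewrite each $e^{-ix}$ as $zwe^{-ix}/(zw)$ to recognize the three $\phi_s$ terms. The paper compresses this into a one-line remark about differentiating the integrand; you simply spell out the bookkeeping.
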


\begin{proof}
Since the integral representation of \( \phi_r(z, w; F(\cdot)) \)
converges locally uniformly for \(|z|<w^{-1}\), we have
\[
\partial_z \phi_r(z, w; F(x))=\dfrac{1}{\sqrt{2 \pi w}}
        \int_{-\infty}^{+\infty}
        \partial_z \left((1 - zw e^{-i x})^r
        \exp \left(
            -\dfrac{x^2}{2w} - z e^{-i x}
        \right)
        F(zwe^{-ix})\right)
        \mathrm dx.
\]
By calculating the partial derivative in the integrand, we obtain  the statement of the lemma.
\end{proof}

Our first result in the direction of analysis of the integral representation
provides asymptotic formulas for the zeros of \( \phi_0 \) and \(\widetilde
\phi_0 \) as \( w \to 0^+ \).

\begin{theorem}\label{theo:smallest_zero}
Let \( \phi_0(z, w; F(\cdot)) \) be given by
\[
    \phi_{0}(z, w; F(\cdot)) =
    \dfrac{1}{\sqrt{2 \pi w}}
    \int_{-\infty}^{+\infty}
    \exp \left(
        -\dfrac{x^2}{2w} - z e^{-i x}
    \right)
    F(zw^+e^{-ix})
    \mathrm dx \,
\]
where \( w^+ = w + \mathcal O(w^2) \)
and assume \( F(z) \) analytic, \( F(z) \neq 0 \) for \(0 < |z| \leq 1 \)
and \( \overline{F(z)}  = F(\overline z)\).
For a given $w$, let $\varrho_{j}(w)$ be the solution to the equation
\( \phi_0(z,w; F(\cdot))=0 \) that is the $j$-th closest to zero.
If $j\in \mathbb{N}$ is fixed, then we have
\begin{equation}\label{eq:varrho}
\varrho_{j}(w)
     =
    \dfrac{1}{ew}
    \left(
        1
        - \frac{a_j}{2^{1/3}} \, w^{2/3}
        - w \left( \frac{1}{6} - \dfrac{F'(1)}{F(1)} \right)
        + \bigO(w^{4/3})
    \right),
\end{equation}
as \(w\to 0^+\), where $a_j$ is the zero of the Airy function that is the $j$-th closest to
$0$. Moreover, we have the following estimates for the partial derivatives of
$\phi_0(z,w; F(\cdot))$ at these zeros:
\begin{equation}\label{eq:derivative_PHI}
    \partial_z \phi_0(\varrho_j(w),w; F(\cdot))
    \sim
    -\kappa_j \, w^{1/6}\,
    \exp\left(
        -\frac{1}{2w} + \frac{2^{-1/3}a_j}{w^{1/3}}
        - \dfrac{F'(1)}{F(1)}
    \right) F(1),
  \end{equation}
as \(w\to 0^+\), where
\(
    \kappa_j =
    \sqrt{2 \pi} \cdot 2^{2/3}
    e^{7/6}\ai'(a_j)
 \).
\end{theorem}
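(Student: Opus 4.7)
The plan is to locate the zeros by combining the refined local expansion in part~(c) of \cref{prop:complex:general} with a perturbation argument, and then to compute the derivative via the explicit identity of \cref{lemma:derivative:explicit}. Throughout I parametrise the relevant region by $1 - ezw = \tau w^{2/3}$, so that part~(c) gives
\[
\phi_0(z,w;F(\cdot)) = \sqrt{2\pi}\cdot 2^{1/3} w^{-1/6}\, K_0(\tau)\, \exp\!\Bigl(-\tfrac{1}{2w} + \tfrac{\tau}{w^{1/3}}\Bigr)\,\bigl(1+o(1)\bigr)
\]
uniformly on any bounded $\tau$-set, with the refined two-term expansion of $K_0(\tau)$ recalled there.

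\emph{Step 1 (root expansion).} Setting $K_0(\tau)=0$ at leading order forces $\ai(2^{1/3}\tau_0)=0$, i.e.\ $\tau_0 = 2^{-1/3} a_j$ for some $j$. Writing $\tau = \tau_0 + \tau_1 w^{1/3} + \bigO(w^{2/3})$ and Taylor-expanding $\ai(2^{1/3}\tau) = 2^{1/3}\tau_1 \ai'(a_j) w^{1/3} + \bigO(w^{2/3})$, vanishing of $K_0$ at order $w^{1/3}$ yields the linear equation $F(1)\tau_1 - F(1)/6 + F'(1) = 0$, hence $\tau_1 = 1/6 - F'(1)/F(1)$. Translating back, $z = (ew)^{-1}(1 - \tau w^{2/3})$ produces \eqref{eq:varrho}.

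\emph{Step 2 (existence, uniqueness, labelling).} To upgrade this formal computation to an actual root, I apply Rouch\'e's theorem on a small disk around $\tau_0$ in which $\ai(2^{1/3}\tau)$ has a unique simple zero (using $\ai'(a_j) \neq 0$) and the $o(1)$ perturbation to $K_0$ is uniformly controlled. The real-rootedness of $\phi_0(\cdot,w;F)$ available from the P\'olya--Laguerre theory recalled at the start of \cref{sec:roots:deformed:exponential}, together with the non-vanishing estimates from parts~(a) and~(b) of \cref{prop:complex:general}, shows that no zero is missed outside this local region, so $\varrho_j(w)$ is indeed the $j$-th closest root to zero.

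\emph{Step 3 (derivative at $\varrho_j$).} Specialising \cref{lemma:derivative:explicit} to $r=0$ gives
\[
\partial_z \phi_0(z,w;F(\cdot)) = \tfrac{1}{z}\Bigl(-\tfrac{1}{w}\phi_0(z,w;x\mapsto xF(x)) + \phi_0(z,w;x\mapsto xF'(x))\Bigr).
\]
By part~(c) of \cref{prop:complex:general} at $z=\varrho_j(w)$, both $\phi_0$ terms are $\bigO(w^{1/6})$ times the common exponential factor (since the leading $\ai$ contribution vanishes at $\tau_0$, one picks up the $w^{1/3}$ correction in $K_0$); the extra $w^{-1}$ on the first term makes the second term negligible, and $1/(w\varrho_j) \sim e$ by Step~1.

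\emph{Step 4 (combining constants).} Computing $K_0$ with $G(x)=xF(x)$, so $G(1)=F(1)$ and $G'(1)=F(1)+F'(1)$, the same Taylor expansion gives
\[
K_0^G(\tau) = 2^{1/3} \ai'(a_j)\,w^{1/3}\bigl(F(1)\tau_1 - F(1)/6 + F(1) + F'(1)\bigr) + \bigO(w^{2/3}),
\]
which simplifies exactly to $2^{1/3} F(1) \ai'(a_j) w^{1/3} + \bigO(w^{2/3})$ after substituting the value of $\tau_1$. Splitting $\exp(\tau/w^{1/3}) = \exp(2^{-1/3}a_j/w^{1/3})\cdot e^{\tau_1}(1+o(1))$ extracts the factor $e^{1/6 - F'(1)/F(1)}$, which combines with the $-e$ from $-1/(w\varrho_j)$ and with $\sqrt{2\pi}\cdot 2^{2/3}$ to reproduce $\kappa_j = \sqrt{2\pi}\cdot 2^{2/3} e^{7/6} \ai'(a_j)$, yielding \eqref{eq:derivative_PHI}.

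\emph{Main obstacle.} Steps~1 and~4 are routine perturbative calculations once the machinery is in place. The subtle point is Step~2: justifying that the Rouch\'e argument on a fixed disk really captures a zero, and that the labelling by $j$ matches the ordering ``$j$-th closest to zero''. This needs the error term in part~(c) to be uniform on a fixed $\tau$-disk, global non-vanishing away from that disk from parts~(a)--(b), and classical real-rootedness. The hypotheses $F(z)\neq 0$ on $0<|z|\leq 1$ and $\overline{F(z)}=F(\overline z)$ are exactly what supplies these two ingredients.
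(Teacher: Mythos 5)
Your plan follows the paper's skeleton: localise roots via the refined part~(c) expansion, extract $\tau_1 = \tfrac16 - F'(1)/F(1)$, and compute the derivative by combining \cref{lemma:derivative:explicit} with part~(c) applied to $G(x)=xF(x)$. Steps~1, 3 and~4 are sound and reproduce the paper's calculation, including the simplification $G(1)\tau_1 - G(1)/6 + G'(1) = F(1)$ and the assembly of $\kappa_j = \sqrt{2\pi}\,2^{2/3}e^{7/6}\ai'(a_j)$ from the factors $-1/(w\varrho_j)\sim -e$ and $e^{\tau_1}$. In Step~2 you genuinely diverge from the paper: there, a real root is obtained by the intermediate value theorem (valid because $\overline{F(z)}=F(\overline z)$ makes $\phi_0$ real on the real axis), and a second root obeying the same expansion is ruled out by Rolle's theorem together with \cref{lemma:derivative:explicit} --- a critical point between two such roots would have to be a root of $\phi_0(\cdot,w;x\mapsto xF(x))$, whose asymptotic expansion has a $w$-coefficient shifted by $1$ since $G'(1)/G(1)=1+F'(1)/F(1)$, a contradiction. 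Your Rouch\'e argument on a fixed $\tau$-disk is a valid alternative and in one respect tidier: it counts all complex zeros directly, so a non-real root near $2^{-1/3}a_j$ would drag its conjugate into the same disk, contradicting the simple-zero count and thereby forcing reality.

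One ingredient you cite is, however, not available. You appeal to ``the real-rootedness of $\phi_0(\cdot,w;F)$ available from the P\'olya--Laguerre theory'', but the result recalled at the start of \cref{sec:roots:deformed:exponential} concerns only $\phi(z,w)=\phi_0(z,w;1)$ and $\widetilde\phi(z,w)$, that is, the case $F\equiv 1$. It is not established for general $F$, and the hypothesis $\overline{F(z)}=F(\overline z)$ yields only conjugate symmetry of the zero set, not real-rootedness. Fortunately your argument does not actually need it: for bounded $\tau$, Rouch\'e pins exactly one zero to each $2^{-1/3}a_k$, and for $\tau$ outside any fixed bounded set within the disk $|ezw|\le 1+Kw^{2/3}$, parts~(a)--(b) of \cref{prop:complex:general} preclude zeros because their main terms do not vanish (here the hypothesis $F\ne 0$ on the punctured closed unit disk is what is actually used). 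Sorting the resulting roots by modulus, which increases with $|a_k|$, delivers the ``$j$-th closest'' labelling for each fixed $j$ by choosing $K$ large enough. The fix is simply to strike the P\'olya--Laguerre reference and rely on Rouch\'e plus the non-vanishing from parts~(a)--(b), which is what the estimates genuinely supply.
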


\begin{proof}
Observe that the main terms of $\phi_0(z,w; F)$ in
part (a) and part (b) of~\cref{prop:complex:general} cannot vanish.
However for part (c), the term
$K_0(\tau)$ can be zero, and this happens
when $\tau = 2^{-1/3}a_j + o(1),$ where $a_j$ is one of the
zeros of $\ai(z)$.
If we make $\tau$ vary in a
small interval around $2^{-1/3}a_j$, then the main term of
$\phi_0(z,w; F)$
changes its sign. So, by the intermediate value theorem, there
must be a zero \( z \) of \( \phi_0(z, w; F) \)
close to  $(1-2^{-1/3}a_jw^{2/3})/(ew)$.
In order to obtain the asymptotic formula of such a zero, let
\( \tau = 2^{-1/3} a_j + \varepsilon \) in part (c)
of~\cref{prop:complex:general}.
Then, if we let \( \varepsilon \to 0 \),
\[
    K_0(\tau) =
    \ai'(a_j) 2^{1/3} F(1) \varepsilon
    + 2^{1/3} w^{1/3} \ai'(a_j)
    \Big(
        F'(1) - \dfrac{F(1)}{6}
        + \bigO(\varepsilon)
    \Big)
    + \bigO(\varepsilon^2)
    ,
\]
from which we deduce
\( \varepsilon = w^{1/3} \left( \dfrac{1}{6} - \dfrac{F'(1)}{F(1)} + o(1)\right)  \).
This gives an asymptotic formula for the roots of the form
\begin{equation}\label{eq:zero_asymp}
    \varrho =
    \dfrac{1}{ew}
    \left(
        1
        - \frac{a_j}{2^{1/3}} \, w^{2/3}
        - w \left( \frac{1}{6} - \dfrac{F'(1)}{F(1)} \right)
        + \bigO(w^{4/3})
    \right),
\, \, \, \text{ as } \, w\to 0^+.
\end{equation}
To show that there is only one zero that satisfies this asymptotic
formula, we use the derivative from~\cref{lemma:derivative:explicit} with $r=0$:
\begin{align}
\label{eq:partial}
\begin{split}
    \partial_z \phi_0(z, w; F(\cdot))
    &= \dfrac{1}{z} \left(
        \phi_0\big(z, w; x \mapsto xF'(x)\big)
        - \dfrac{1}{w} \phi_0\big(z, w; x \mapsto xF(x)\big)
    \right)\\
    &=  - \dfrac{1}{zw} \phi_0\big(z, w; x \mapsto x F(x)\big)
    (1 + \bigO(w^{1/3})).
\end{split}
\end{align}
Now, suppose that there are two different zeros
$\varrho'$ and $\varrho''$ that both satisfy \eqref{eq:zero_asymp} for
the same $j$. Then by Rolle's theorem, there exists $C$ between
$\varrho'$ and $\varrho''$ such that
\( C \) is a zero of \( \partial_z \phi_0 \).
Consequently, \( C \) should have the form
\[
    C
    =
    \dfrac{1}{ew}
    \left(
        1
        - \frac{a_j}{2^{1/3}} \, w^{2/3}
        - w \left( \frac{1}{6} - \dfrac{F(1) + F'(1)}{F(1)} \right)
        + \bigO(w^{4/3})
    \right),
\]
as \( w \to 0^+ \).
Since $C$ stays between $\varrho'$ and $\varrho''$,
it satisfies the asymptotic formula \eqref{eq:zero_asymp}).
Those two asymptotic formulae that $C$ must satisfy
are different, because $F(1) = F(T(e^{-1})) \neq 0$.
Therefore, \( C \) cannot be a zero of \( \phi_0(z, w; F) \).

Now that we have established that there is only one zero of $\phi_0(z,w;F)$
satisfying \eqref{eq:zero_asymp} for each fixed $j\in
\mathbb{N}$ and sufficiently small $w$, we name it $\varrho_j(w).$
To estimate $\partial _z \phi_0(\varrho_j(w),w;F)$
as $w\to 0^+$, we make
use of the above relation~\eqref{eq:partial}
again with
\[
    z = \dfrac{1}{ew} (1 - \tau w^{2/3})
    \quad \text{and} \quad
    \tau = 2^{-1/3} a_j + w^{1/3} \left(
        \dfrac16 - \dfrac{F'(1)}{F(1)}
    \right).
\]
By expanding the term at \( w^{1/3} \) in \( K_0(\tau) \) in the asymptotic approximation of
\( \phi_0\big(z, w; x \mapsto x F(x)\big) \) from part (c) of~\cref{prop:complex:general},
we obtain the statement of the theorem.
\end{proof}

\begin{corollary}\label{theo:smallest_zero_simple}
For a given $w$, let $\widetilde \varrho_{j}(w)$ be the solution to the equation
$\widetilde\phi(z,w)=0$ that is the $j$-th closest to zero.  If $j\in \mathbb{N}$ is
fixed, then we have
\begin{equation}\label{eq:tilde_varrho}
\widetilde \varrho_{j}(w)
     =
    \dfrac{1}{ew}
    \left(
        1
        - \frac{a_j}{2^{1/3}} \, w^{2/3}
        - \frac{1}{6} w
        + \bigO(w^{4/3})
    \right),
\end{equation}
as \(w\to 0^+\), where $a_j$ is the zero of the Airy function that is $j$-th closest to
$0$. Moreover, we have the following estimates for the partial derivative of  $\widetilde\phi(z,w)$ at its zeros:
\begin{equation}\label{eq:derivative_PHI_simple}
    \partial_z \widetilde\phi(\widetilde \varrho_j(w),w)
     \sim
    -\widetilde\kappa_j \, w^{1/6}\,
    \exp\left(
        -\frac{1}{2w} + \frac{2^{-1/3}a_j}{w^{1/3}}
    \right)
\end{equation}

as \(w\to 0^+\), where
\(
    \widetilde\kappa_j =
    \sqrt{2 \pi} \cdot 2^{2/3}
    e^{11/12}\ai'(a_j).
\)
\end{corollary}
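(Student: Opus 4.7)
The plan is to reduce this corollary to \cref{theo:smallest_zero} by exploiting the exact identity
\[
    \widetilde \phi(z,w) = \phi\bigl(z\sqrt{1+w},\,\log(1+w)\bigr),
\]
which follows immediately from $(1+w)^{-\binom{n}{2}} = (1+w)^{n/2}\,e^{-n^{2}\log(1+w)/2}$ and the definitions of $\phi$ and $\widetilde\phi$ given in \eqref{eq:phi} and \eqref{eq:phi-simple}. Thus $z$ is a root of $\widetilde\phi(\,\cdot\,,w)$ if and only if $z\sqrt{1+w}$ is a root of $\phi(\,\cdot\,,\log(1+w))$. Setting $\widetilde w := \log(1+w) = w - w^{2}/2 + \bigO(w^{3})$ and specialising \cref{theo:smallest_zero} to $F \equiv 1$ (so that $F(1)=1$ and $F'(1)=0$), we obtain
\[
    \widetilde\varrho_j(w)\,\sqrt{1+w} \;=\; \varrho_j(\widetilde w)
    \;=\; \frac{1}{e\widetilde w}
    \Bigl(1 - \tfrac{a_j}{2^{1/3}} \widetilde w^{2/3} - \tfrac{\widetilde w}{6} + \bigO(\widetilde w^{4/3})\Bigr).
\]

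The first step is then a routine expansion. From $1/\widetilde w = w^{-1}(1 + w/2 + \bigO(w^{2}))$, $\widetilde w^{2/3} = w^{2/3}(1 + \bigO(w))$, and $1/\sqrt{1+w} = 1 - w/2 + \bigO(w^{2})$, one computes
\[
    \widetilde\varrho_j(w) \;=\; \frac{1}{ew}\Bigl(1 + \tfrac{w}{2} - \tfrac{a_j}{2^{1/3}}w^{2/3} - \tfrac{w}{6} + \bigO(w^{4/3})\Bigr)\bigl(1 - \tfrac{w}{2} + \bigO(w^{2})\bigr),
\]
and the cross terms involving $w$ combine to give $-w/6$, matching \eqref{eq:tilde_varrho}.

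For the derivative estimate, the chain rule applied to the identity above yields
\[
    \partial_z \widetilde\phi(z,w) \;=\; \sqrt{1+w}\,\cdot\,\partial_{\widetilde z}\phi(\widetilde z,\widetilde w)\Big|_{\widetilde z = z\sqrt{1+w}}.
\]
Evaluating at $z=\widetilde\varrho_j(w)$ gives $\widetilde z = \varrho_j(\widetilde w)$, so \cref{theo:smallest_zero} applied with $\widetilde w$ in place of $w$ produces
\[
    \partial_z \widetilde\phi(\widetilde\varrho_j(w),w)
    \;\sim\;
    \sqrt{1+w}\,\cdot\,(-\kappa_j)\,\widetilde w^{1/6}\exp\!\Bigl(-\tfrac{1}{2\widetilde w} + \tfrac{2^{-1/3}a_j}{\widetilde w^{1/3}}\Bigr).
\]
The main task is then to expand the exponent: using $1/\widetilde w = 1/w + 1/2 + \bigO(w)$ we obtain $-1/(2\widetilde w) = -1/(2w) - 1/4 + \bigO(w)$, while $\widetilde w^{-1/3} = w^{-1/3}(1 + \bigO(w))$ contributes only a $1+o(1)$ multiplicative factor. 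The prefactor $\sqrt{1+w}\,\widetilde w^{1/6}$ differs from $w^{1/6}$ by $1+\bigO(w)$, and collecting everything produces the additional constant factor $e^{-1/4}$. This multiplies $\kappa_j = \sqrt{2\pi}\,2^{2/3}\,e^{7/6}\,\ai'(a_j)$ to give $\widetilde\kappa_j = \sqrt{2\pi}\,2^{2/3}\,e^{7/6-1/4}\,\ai'(a_j) = \sqrt{2\pi}\,2^{2/3}\,e^{11/12}\,\ai'(a_j)$, as claimed.

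The only delicate point is bookkeeping of the error terms: since the exponential factor $\exp(2^{-1/3}a_j/w^{1/3})$ is large, one must check that the $\bigO(w)$ correction in $1/\widetilde w^{1/3}$ contributes only $\exp(\bigO(w^{2/3}))=1+o(1)$ and does not produce an unaccounted multiplicative constant. This is the one step that requires care, but it is straightforward since the $-1/4$ constant in the exponent is the only sub-exponential contribution that does not vanish in the limit.
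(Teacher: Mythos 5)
Your proof is correct, and the root asymptotics \eqref{eq:tilde_varrho} are derived exactly as in the paper, via the transform $\widetilde\phi(z,w) = \phi(z\sqrt{1+w}, \log(1+w))$ and the relation $\widetilde\varrho_j(w)\sqrt{1+w} = \varrho_j(\log(1+w))$.

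For the derivative estimate, though, you take a genuinely different route. The paper uses the elementary identity $\partial_z \widetilde\phi(z,w) = -\widetilde\phi\bigl(\tfrac{z}{1+w}, w\bigr)$, which is specific to $\widetilde\phi = \gsetsimple(-z,w)$, then re-invokes the asymptotics of $\widetilde\phi$ from \cref{corollary:complex:general} at a shifted $\tau$. You instead apply the chain rule to the transform, obtaining
\[
    \partial_z \widetilde\phi(z,w) = \sqrt{1+w}\cdot \partial_{\widetilde z}\phi(\widetilde z, \widetilde w)\big|_{\widetilde z = z\sqrt{1+w},\,\widetilde w = \log(1+w)},
\]
and then reuse the already-established derivative estimate \eqref{eq:derivative_PHI} from \cref{theo:smallest_zero} with $F\equiv 1$. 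This is cleaner: it avoids a second pass through the Airy asymptotics, needs no special formula for $\partial_z\widetilde\phi$, and would generalise without change to $\widetilde\phi_0(z,w;F)$ with $F\ne 1$ (matching the pattern of \cref{corollary:zeros:phi:r}). Your expansion of the exponent — extracting the $e^{-1/4}$ from $-1/(2\widetilde w) = -1/(2w) - 1/4 + \bigO(w)$ and checking that the $\widetilde w^{-1/3}$ correction contributes only $\exp(\bigO(w^{2/3})) = 1 + o(1)$ — is the same bookkeeping the paper performs, and the resulting constant $\widetilde\kappa_j = e^{-1/4}\kappa_j = \sqrt{2\pi}\,2^{2/3}\,e^{11/12}\ai'(a_j)$ is correct. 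One small inaccuracy in your closing remark: $\exp(2^{-1/3}a_j/w^{1/3})$ is exponentially \emph{small}, not large, since the $a_j$ are negative; this does not affect the argument, which only requires that multiplicative $1+o(1)$ corrections be identified.
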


\begin{proof}
Once again, we make use of  the relation
\(
    \widetilde \phi(z, w)
    =
    \phi(z \sqrt{1 + w}, \log(1 + w)).
\)
This equation and the estimate \eqref{eq:varrho} yield
\[
    \widetilde \varrho_j(w)
    =
    \dfrac{\varrho_j(\log(1 + w))}{\sqrt{1 + w}}
    =
    \dfrac{1}{ew} \left(
        1 - \dfrac{a_j}{2^{1/3}} w^{2/3}
        - \dfrac{1}{6}w
        + \bigO(w^{4/3})
    \right),
    \quad
    \text{as }
    w \to 0^+.
\]
By plugging \( \widetilde \varrho_j(w) \) into \( \partial_z \widetilde \phi(z,
w)\), we obtain
\[
    \partial_z \widetilde \phi(\widetilde \varrho_j(w), w)
    =
    -
    \widetilde \phi\left(
        \frac{\widetilde \varrho_j(w)}{1+w}, w
    \right).
\]
Since \( e^{-w} = 1 - w + \bigO(w^2) \) and
\( (1 + w)^{-1} = 1 - w + \bigO(w^2) \), we can use the same
adjustment of variables
\( z (1+w)^{-1} = (ew)^{-1}(1 - \tau_1 w^{2/3}) \) as in~\cref{corollary:complex:general}, therefore we obtain the expression for the
derivative of \( \widetilde \phi \), which differs only by a factor \(
e^{-1/4} \).
\end{proof}

In addition to~\cref{theo:smallest_zero} and~\cref{theo:smallest_zero_simple} ,
we can also deduce asymptotic estimates of the zeros of the functions \(
\phi_1(z, w; F(\cdot)) \) and \(\widetilde \phi_1(z, w; F(\cdot)) \) as well as their derivatives using~\cref{prop:complex:general} and~\cref{corollary:complex:general}.
\begin{theorem}\label{theorem:zeros:phi:r}
	Let
    \[
        \phi_1(z, w; F(\cdot)) =
        \dfrac{1}{\sqrt{2 \pi w}}
        \int_{-\infty}^{+\infty}
        (1 - zw^+ e^{-i x})
        \exp \left(
            -\dfrac{x^2}{2w} - z e^{-i x}
        \right)
        F(zw^+ e^{-ix})
        \mathrm dx \, ,
    \]
    where \( w^+ = w + \mathcal O(w^2) \)
    and assume \(F(z)\neq 0\) for \( 0 < |z| \leq 1\)
    and \(\overline{F(z)}=F(\overline{z})\).
    For a given \( w \), let \( \varsigma_j(w) \) be the solution to the
    equation \( \phi_1(z, w; F(\cdot)) = 0 \) that is the \( j \)-th closest to zero.
    If \( j \in \mathbb N \) is fixed, then we have, as \( w \to 0^+ \),
    \[
        \varsigma_j(w) = \dfrac{1}{ew} \left(
            1 - \dfrac{a_j'}{2^{1/3}} w^{2/3} + \left(\dfrac{1}{6}+\frac{F'(1)}{F(1)}\right)w
            + \bigO(w^{4/3})
        \right),
    \]
    where \( a_j' \) is the zero of the derivative of the Airy function
    \( \ai'(\cdot) \) that is \( j \)-th closest to zero.

    The \( k \)-fold partial derivatives of \( \phi_1(z, w; F(\cdot)) \) with
    respect to the first variable for \( k > 0 \) at \(z=\varsigma_j(w)\) admit the following estimate
    as \( w \to 0^+ \):
    \begin{align*}
        \partial_z^k \phi_1(\varsigma_j(w), w; F(\cdot))
        & \sim
        (-1)^{k+1}
        k w^{1/2} \varkappa_{j}F(1)
        \exp \left(
            - \dfrac{1}{2 w} + \dfrac{2^{-1/3} a_j'}{w^{1/3}}
            - \left(\dfrac{1}{6}+\frac{F'(1)}{F(1)}\right) + k
        \right),
    \end{align*}
    where
    \(
        \varkappa_j = 2 \sqrt{2 \pi} a_j' \ai(a_j')
    \).
    More generally, when $z = (e w)^{-1} \left(1 + \bigO(w^{2/3})\right)$
    and \( \phi_r(z, w; F(\cdot)) \neq 0 \),
    we have, as $w \to 0^+$,
    \[
        \partial_z \phi_r(z, w; F(\cdot)) \sim - \dfrac{1}{zw} \phi_r(z, w; x \mapsto x F(x)).
    \]
\end{theorem}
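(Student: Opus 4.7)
The plan mirrors the proof of \cref{theo:smallest_zero}, but applied to the refined expansion of $\phi_1$ via $K_1(\tau)$ from part (c) of \cref{prop:complex:general} rather than $\phi_0$ via $K_0(\tau)$. First I would locate the roots of $\phi_1(z, w; F(\cdot))$ in the critical window. The leading term of $K_1(\tau)$ is $F(1)\ai'(2^{1/3}\tau)$, which vanishes precisely at $2^{1/3}\tau = a_j'$. Writing $\tau = 2^{-1/3}a_j' + \varepsilon$ and using the Airy equation $\ai''(a_j') = a_j'\ai(a_j')$, one has $\ai'(2^{1/3}\tau) = 2^{1/3}a_j'\ai(a_j')\varepsilon + \mathcal{O}(\varepsilon^2)$. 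Inserting this into the explicit form of $K_1$ and equating the resulting $\mathcal{O}(w^{1/3})$ contribution to zero yields
\[
    F(1)\cdot 2^{1/3}a_j'\ai(a_j')\,\varepsilon + w^{1/3}\cdot \tfrac{1}{3}\bigl(F(1) + 6F'(1)\bigr)\cdot 2^{-2/3}a_j'\ai(a_j') = 0,
\]
from which $\varepsilon = -w^{1/3}\bigl(\tfrac{1}{6} + F'(1)/F(1)\bigr) + \mathcal{O}(w^{2/3})$, and hence the stated expansion of $\varsigma_j(w)$ after converting back to $z$ via $1 - ezw = \tau w^{2/3}$. Uniqueness within the prescribed neighborhood follows from a Rolle-type argument identical to the one in \cref{theo:smallest_zero}, using the explicit form of $\partial_z \phi_1$ given by \cref{lemma:derivative:explicit}.

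For the general formula $\partial_z\phi_r(z,w;F(\cdot)) \sim -\frac{1}{zw}\phi_r(z,w;x\mapsto xF(x))$, I would apply \cref{lemma:derivative:explicit} and compare the magnitudes of the three terms on the right-hand side using parts (b)--(c) of \cref{prop:complex:general}. In the critical window each $\phi_s(z,w;G)$ has magnitude $\Theta(w^{s/3-1/6})$ (away from its zeros), so $\frac{1}{w}\phi_r(z,w;xF(x)) = \Theta(w^{r/3-7/6})$ dominates both $r\phi_{r-1}(z,w;xF(x)) = \Theta(w^{r/3-1/2})$ and $\phi_r(z,w;xF'(x)) = \Theta(w^{r/3-1/6})$ by a factor of at least $w^{-2/3}$.

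Iterating this dominant approximation by induction on $k$ gives $\partial_z^k\phi_1(z,w;F(\cdot)) \sim \frac{(-1)^k}{(zw)^k}\,\phi_1(z,w; x\mapsto x^kF(x))$ for $z$ near $\varsigma_j(w)$, since differentiating the prefactor $(zw)^{-k}$ produces terms smaller by a factor of $w$. To evaluate $\phi_1(\varsigma_j(w),w; x\mapsto x^kF(x))$ via part (c) of \cref{prop:complex:general}, set $\tilde F(x) = x^kF(x)$ so that $\tilde F(1) = F(1)$ and $\tilde F'(1) = kF(1) + F'(1)$. At the value of $\tau$ corresponding to $\varsigma_j(w)$, the Taylor expansion of $\ai'(2^{1/3}\tau)$ near its zero gives $-2^{1/3}a_j'\ai(a_j')\bigl(\tfrac{1}{6} + F'(1)/F(1)\bigr)w^{1/3}$, which combines with the explicit $w^{1/3}$-term of $K_1$ (whose coefficient contains $\tilde F(1) + 6\tilde F'(1) = (6k+1)F(1) + 6F'(1)$) to produce, after simplification using $2^{-2/3} = 2^{1/3}/2$, the clean leading expression $k\,F(1)\cdot 2^{1/3}w^{1/3}a_j'\ai(a_j')$. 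Multiplying by the prefactor $-\sqrt{2\pi}\cdot 2^{2/3}w^{1/6}$ gives
\[
    \phi_1\bigl(\varsigma_j(w),w;x\mapsto x^kF(x)\bigr) \sim -k\,w^{1/2}\varkappa_j F(1)\exp\!\left(-\tfrac{1}{2w} + \tfrac{2^{-1/3}a_j'}{w^{1/3}} - \tfrac{1}{6} - \tfrac{F'(1)}{F(1)}\right),
\]
where $\varkappa_j = 2\sqrt{2\pi}a_j'\ai(a_j')$. Combining with $(-1)^k/(\varsigma_j w)^k \sim (-1)^k e^k$ produces the stated asymptotic, with the $+k$ in the exponent arising from $e^k = \exp(k)$.

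The main obstacle is verifying the algebraic cancellation that produces the clean factor $kF(1)\cdot 2^{1/3}$: the terms $-2^{1/3}(\tfrac{1}{6} + F'(1)/F(1))$ from the $\ai'$ expansion and $\tfrac{1}{3}((6k+1)F(1) + 6F'(1))\cdot 2^{-2/3}$ from the refined $w^{1/3}$-correction of $K_1$ must combine to produce precisely $kF(1)\cdot 2^{1/3}$ with no residual $F'(1)$-dependence. This cancellation is the reason the refined form of $K_1(\tau)$ in \cref{prop:complex:general}(c) is indispensable here: the leading-order estimate alone would only yield a vanishing approximation, and it is the interplay between the two $\mathcal{O}(w^{1/3})$ contributions that produces the correct prefactor, the linear-in-$k$ dependence, and the exponential shift $\exp(k)$.
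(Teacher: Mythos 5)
Your proposal is correct and follows essentially the same route as the paper: existence and uniqueness of the zeros via part~(c) of \cref{prop:complex:general} together with the Rolle argument, the dominant-term comparison in \cref{lemma:derivative:explicit} giving $\partial_z\phi_r \sim -\frac{1}{zw}\phi_r(z,w;xF(x))$, induction for the $k$-fold derivative, and evaluation of $\phi_1(\varsigma_j,w;x^kF(x))$ via the refined $K_1(\tau)$. You have additionally carried out the ``tedious calculations'' the paper chooses to skip, including the algebraic cancellation that yields the clean prefactor $kF(1)\cdot 2^{1/3}$; those details check out.
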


\begin{remark}
The extra condition \(\overline{F(z)}=F(\overline{z})\) guarantees that the
function \(\phi_1(z, w; F(\cdot)) \) is real for real \(z\), and that the first few zeros
are also real. However, the asymptotic formulas in the theorem may still hold without this condition.
\end{remark}

\begin{proof}
The arguments we used to prove existence of the zeros in~\cref{theo:smallest_zero}
are still valid since  \( \phi_1(z, w) \) is real for real \(z\). The
approximations in parts (a) and (b) of~\cref{prop:complex:general} cannot be
zero, so the  roots can be found in part (c). The asymptotic formula for
\(\phi_1(z, w; F(\cdot)) \) can be obtained in the same manner as in the proof of~\cref{theo:smallest_zero}.

By using~\cref{lemma:derivative:explicit}, we can obtain the derivative of \( \phi_r \) for any $r$, which can be inductively expanded to the \( k \)-fold derivative, by taking into account only the dominant term. More specifically,
using the estimates in~\cref{prop:complex:general}, we find that for \(ezw=1+\mathcal{O}(w^{2/3}),\) the dominant term comes from the middle term of~\eqref{eq:partial:next} in brackets. Hence, firstly,
\[
    \partial_z \phi_r(z, w; F(\cdot)) =
    -\frac{1}{zw}\phi_{r}(z, w;x \mapsto xF(x))(1 +\mathcal{O} (w^{1/3})).
\]
More generally, one can inductively prove from \eqref{eq:partial:next} and \cref{prop:complex:general}, skipping the tedious calculations, that
\begin{equation}\label{eq:derivatives}
    \partial_z^k \phi_r(z, w; F(\cdot)) =
    \frac{(-1)^k}{(zw)^k}
    \phi_{r}(z, w; x \mapsto x^kF(x))(1 +\mathcal{O} (w^{1/3})).
\end{equation}

Now, to  deduce the asymptotic formula for \(\partial_z^k
\phi_1(\varsigma_j(w), w; F(\cdot))\), we just need to estimate
\(\phi_1(\varsigma_j(w), w; x \mapsto x^kF(x))\)
using~\cref{prop:complex:general}, part (c).
\end{proof}

Once again, the version of~\cref{theorem:zeros:phi:r} for \(\widetilde
\phi_1(z,w; F(\cdot))\) is given as a corollary below.

\begin{corollary}\label{corollary:zeros:phi:r}
Let
    \(
    \widetilde \phi_1(z, w; F(\cdot))
    \) be given by \eqref{eq:phi:tilde:phi} when $r=1$, and assume that \(F(\cdot)\) satisfies the same conditions as in~\cref{theorem:zeros:phi:r}.
    For a given \( w \), let \( \widetilde \varsigma_j(w) \) be the solution to the
    equation \( \widetilde \phi_1(z, w; F(\cdot)) = 0 \) that is the \( j \)-th closest to zero.
    If \( j \in \mathbb N \) is fixed, then we have, as \( w \to 0^+ \),
    \[
        \widetilde \varsigma_j(w) = \dfrac{1}{ew} \left(
            1 - \dfrac{a_j'}{2^{1/3}} w^{2/3} + \left(\dfrac{1}{6}+\frac{F'(1)}{F(1)}\right)w
            + \bigO(w^{4/3})
        \right),
    \]
    where \( a_j' \) is the zero of the derivative of the Airy function
    \( \ai'(\cdot) \) that is \( j \)-th closest to zero.

    The \( k \)-fold partial derivatives of \( \widetilde \phi_1(z, w; F(\cdot)) \) with
    respect to the first variable for \( k > 0 \) at \(z=\widetilde \varsigma_j(w)\) admit the following estimate
    as \( w \to 0^+ \):
    \begin{align*}
        \partial_z^k \widetilde \phi_1(\widetilde \varsigma_j(w), w; F(\cdot))
        & \sim
        (-1)^{k+1}
        k w^{1/2} \varkappa_{j}F(1)
        \exp \left(
            - \dfrac{1}{2 w} + \dfrac{2^{-1/3} a_j'}{w^{1/3}}
            - \left(\dfrac{5}{12}+\frac{F'(1)}{F(1)}\right) + k
        \right),
    \end{align*}
    where
    \(
        \varkappa_j
    \) is as defined in~\cref{theorem:zeros:phi:r}.
  \end{corollary}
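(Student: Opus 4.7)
The proof will follow the pattern used to deduce Corollary~\ref{theo:smallest_zero_simple} from Theorem~\ref{theo:smallest_zero}: I would repeat the argument of Theorem~\ref{theorem:zeros:phi:r} verbatim, but with the refined estimate of $\widetilde \phi_r$ from part~(c) of Corollary~\ref{corollary:complex:general} replacing the corresponding one for $\phi_r$ from Lemma~\ref{prop:complex:general}. The crucial observation is that the function $K_1(\tau)$ controlling the location of the zeros is identical in both estimates; the only difference is an overall multiplicative factor $e^{-1/4}$, which cannot create or destroy zeros.

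Consequently, the root-finding step proceeds identically. Substituting $\tau = 2^{-1/3} a_j' + \varepsilon$ into the expansion of $K_1(\tau)$, using $\ai'(a_j') = 0$, expanding $\ai'$ and $\ai$ around $a_j'$ and balancing against the $w^{1/3}$-subleading term yields
\[
\varepsilon = w^{1/3}\Big(\tfrac{1}{6} + \tfrac{F'(1)}{F(1)} + o(1)\Big)\cdot(-1),
\]
which gives exactly the expansion of $\widetilde \varsigma_j(w)$ claimed in the statement. Uniqueness of a root with this asymptotic form is proved via Rolle's theorem together with the derivative identity
\[
\partial_z \widetilde \phi_r(z,w;F(\cdot)) \sim -\tfrac{1}{zw}\,\widetilde \phi_r\!\big(z,w;\, x\mapsto xF(x)\big),
\]
which can be established either by repeating Lemma~\ref{lemma:derivative:explicit} directly for the integral representation of $\widetilde \phi_r$, or by passing through the relation $\widetilde \phi_r(z,w;F) = \phi_r(z\sqrt{1+w},\log(1+w);F)\cdot(1+\mathcal O(w^{2/3}))$ and applying the chain rule (the extra factor $(1+w)^{1/2}$ tends to $1$).

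For the $k$-fold derivatives, I would inductively establish the analog of~\eqref{eq:derivatives}, namely
\[
\partial_z^k \widetilde \phi_r(z,w;F(\cdot)) = \frac{(-1)^k}{(zw)^k}\,\widetilde \phi_r\!\big(z,w;\, x\mapsto x^kF(x)\big)\big(1+\mathcal O(w^{1/3})\big),
\]
then evaluate the right-hand side at $z = \widetilde \varsigma_j(w)$ using the refined estimate from Corollary~\ref{corollary:complex:general}(c). The extra $e^{-1/4}$ factor appearing in that estimate (relative to its $\phi_r$-counterpart in Lemma~\ref{prop:complex:general}) combines with the $-1/6$ already present in the exponent of the derivative formula of Theorem~\ref{theorem:zeros:phi:r}, producing $-1/6 - 1/4 = -5/12$, which is precisely the constant asserted in the statement.

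The only delicate point is verifying that the $w^{2/3}$ coefficient in the root expansion is unchanged despite the mismatch between $w$ and $\log(1+w) = w - w^2/2 + \mathcal O(w^3)$. A short expansion shows that the $\mathcal O(w)$ discrepancy only feeds into the $\mathcal O(w^{4/3})$ remainder, leaving both the $w^{2/3}$ and the $w$ coefficient in $\widetilde \varsigma_j(w)$ unaffected, just as in the computation within Corollary~\ref{theo:smallest_zero_simple}. I do not anticipate any new analytic obstacles beyond those already handled in Lemma~\ref{prop:complex:general} and Corollary~\ref{corollary:complex:general}.
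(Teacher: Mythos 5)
Your proposal is correct, and the ingredients you invoke — the refined part~(c) estimate of Corollary~\ref{corollary:complex:general}, the derivative identity analogous to Lemma~\ref{lemma:derivative:explicit}, and the Rolle-type uniqueness argument — are exactly those needed. The paper's own proof is simply the shortcut you mention as an alternative path: it applies the change-of-variables relation $\widetilde\phi_1(z,w;F(\cdot)) = \phi_1\bigl(z\sqrt{1+w},\log(1+w);F(\cdot)\bigr)\,\bigl(1+\mathcal O(w^{2/3})\bigr)$ together with the chain rule to transfer the conclusions of Theorem~\ref{theorem:zeros:phi:r} wholesale (mirroring how Corollary~\ref{theo:smallest_zero_simple} is deduced from Theorem~\ref{theo:smallest_zero}), so the $e^{-1/4}$ factor and the preserved $w^{2/3}$ and $w$ coefficients emerge from the substitution rather than from a fresh expansion of $K_1(\tau)$.
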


  \begin{proof}
    Let us remark that in the given range of asymptotic approximation,
    $\widetilde \phi_1$ and $\phi_1$ are related by
      \[
          \widetilde \phi_1(z, w; F(\cdot))
          =
          \phi_1(z \sqrt{1 + w}, \log(1 + w); F(\cdot))
          \times (1 + \mathcal O(w^{2/3}))
          \, ,
      \]
      which implies that the value \( \tau \) in~\cref{prop:complex:general}
      changes at most by \( w^{2/3} \) and therefore,
    \[
      \widetilde \varsigma_j(w)
    =
    \dfrac{\varsigma_j(\log(1 + w))}{\sqrt{1 + w}}
    (1 + \mathcal O(w^{4/3})) \, .
  \]
  Thus,
\begin{align*}
    \partial_z^k \widetilde \phi_1(\widetilde \varsigma_j(w), w; F(\cdot))
    & =
    (1+w)^{k/2}\,\partial_z^k
    \phi_1\left(\sqrt{1+w} \,\widetilde \varsigma_j(w),
    \log(1+w); F(\cdot)\right)
    (1 + \mathcal O(w^{2/3}))
    \\
    & =
    (1+w)^{k/2}\,\partial_z^k
    \phi_1\left(\varsigma_j(\log(1+w)),
    \log(1+w); F(\cdot)\right)
    (1 + \mathcal O(w^{2/3})) \, .
\end{align*}
 The results can now be deduced from~\cref{theorem:zeros:phi:r}.
  \end{proof}
\begin{remark}
    The asymptotic formula in \eqref{eq:derivatives} in the proof of~\cref{theorem:zeros:phi:r}
    also applies for \(ezw=1+\mathcal{O}(w^{2/3})\) such that \( \phi_r(z, w;
    F(x)) \neq 0 \).
    This will be useful in further analysis of the
    structure of a random digraph near the point of its phase transition in the
    case where we would like to capture several complex components at the same
    time.
\end{remark}

        \section{Analysis of the external integrals}
        \label{section:external:integral}

As pointed out in~\cref{section:models}, the probability
that a random multidigraph \( D \in \DiGilbMulti(n, p) \)
belongs to a family \( \mathcal H \) is
\[
    \mathbb P_{\mathcal{H}}(n,p)=
    e^{-pn^2/2} n! [z^n] \widehat H(z,p),
\]
where \( \widehat H(z,p) \) is the multi-graphic generating function
of the required family \( \mathcal H \).
Similar statements hold for simple digraphs and strict digraphs.
Since the coefficient extraction operation can be expressed
as a complex contour integral using Cauchy's integral formula,
we establish the asymptotic values for these integrals.
As usual, we start with the multidigraph model,
where the analysis is simplest,
before moving to simple and strict digraphs in \cref{section:external:integration:simple}.

        \subsection{The case of multidigraphs}
        \label{section:external:integration:multi}

One of the basic expressions arising in
\cref{section:repr:alt} and
in part (a) of~\cref{prop:complex:general}
when we finally approach the coefficient extraction
corresponding to the subcritical range of
the phase transition,
is proportional to
\( {e^{U(zw)/w} (1 - T(zw))^{k}} \) for various half-integer values of \( k \).
It is, therefore, useful to have the asymptotics of the coefficient extraction
operation for this generating function in this range.
We also need to explicitly specify the region with respect to the variable \( z \)
in which the approximation is valid in order to extract the coefficient asymptotics.

In the following lemma, we want to remain in the subcritical case, but approach to the critical case as tightly as possible, while keeping the same technique. A variant of this lemma has been considered for the case when \( \lambda \) is not necessarily separated from $1$, see~\cite[Theorem 3.2]{herve2011random}, and also~\cite[Lemma 3, Case $\mu \to - \infty$]{Janson93}. For completeness, we provide a detailed proof with a careful treatment of error terms and tails of the integrals.

\begin{lemma}
    \label{lemma:analytic:first}
    Let \( r \) be a fixed real value, and let \( wn = \lambda \), where \(
    \lambda \in (0, 1) \).
    Let $F(\cdot)$ be an entire function
    satisfying the conditions in Lemma~\ref{prop:complex:general}.
    Suppose that a function \( H(z, w) \) satisfies an
    approximation
    \[
        H(z, w) \sim e^{U(zw)/w} (1 - T(zw))^{1/2 - r}F(T(zw))
    \]
    uniformly for \( |zw| \leq \lambda e^{-\lambda} \).
    Then,
    when \( 0 < \lambda < 1 \) and \( (1 - \lambda)n^{1/3} \to \infty \)
    as \( n \to \infty \), we have
    \[
        e^{-wn^2/2} n!
        [z^n] H(z, w)
        \sim
        (1 - \lambda)^{1 - r}F(\lambda)
        .
    \]
\end{lemma}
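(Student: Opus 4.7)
The plan is to evaluate $[z^n]H(z,w)$ via Cauchy's integral formula on the circle $|z|=R$ with $R=\lambda e^{-\lambda}/w = ne^{-\lambda}$, chosen so that on this contour one has $|zw|=\lambda e^{-\lambda}$, which lies inside the radius of convergence of $T$. With the parametrisation $z=Re^{i\theta}$ and the given asymptotic for $H$, the integrand takes the form $e^{\Phi(\theta)}(1-T(zw))^{1/2-r}F(T(zw))$ where
\[
\Phi(\theta)=\frac{U(zw)}{w}-in\theta,\qquad zw=\lambda e^{-\lambda}e^{i\theta}.
\]
Using $U'(x)=T(x)/x$ and $T'(x)=e^{T(x)}/(1-T(x))$, direct computation gives $\Phi'(0)=i(T(\lambda e^{-\lambda})-\lambda)/w=0$ and $\Phi''(0)=-zT'(zw)|_{\theta=0}=-\lambda/(w(1-\lambda))$, confirming that $\theta=0$ is the unique saddle and establishing the width $\sqrt{w(1-\lambda)/\lambda}$ of the Gaussian approximation.

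The next step is to split the integral into a central region $|\theta|\le\theta_0$, with $\theta_0=w^{1/3}(1-\lambda)^{-1/3}\log n$ (or any comparable choice with $\theta_0 \to 0$ and $\theta_0/\sqrt{w(1-\lambda)/\lambda}\to\infty$), and a tail region $\theta_0<|\theta|\le\pi$. In the central region, Taylor expansion of $\Phi$ together with the observation that the cubic error term is bounded by $\Phi'''(0) \theta^3 =O(\theta^3/(w(1-\lambda)^2))$ means that the assumption $(1-\lambda)n^{1/3}\to\infty$ guarantees this error tends to zero uniformly over the central range, so the classical Laplace estimate applies and produces
\[
\frac{1}{2\pi}\int_{|\theta|\le\theta_0}e^{\Phi(\theta)}(1-T(zw))^{1/2-r}F(T(zw))\,d\theta \sim e^{n(1-\lambda/2)}(1-\lambda)^{1/2-r}F(\lambda)\sqrt{\frac{w(1-\lambda)}{2\pi\lambda}}.
\]

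For the tail, the crucial fact is that $|T(zw)|<T(|zw|)=\lambda$ whenever $\theta\ne0$; this is the Daffodil-type property recorded in Lemma~\ref{lemma:complex:plane:trees} (and more classically a consequence of the positivity of the coefficients of $T$). Since $U(x)$ has positive coefficients as well, $\Real U(zw)$ achieves its unique maximum on $|zw|=\lambda e^{-\lambda}$ at $\theta=0$, and a quantitative version (using the quadratic lower bound $\lambda-\Real T(\lambda e^{-\lambda}e^{i\theta})\gtrsim \lambda\theta_0^2$ for $|\theta|\ge\theta_0$) gives $\Real\Phi(\theta)-\Phi(0)\le -c\theta_0^2/w$ uniformly on the tail, so the tail contribution is smaller than any power of $w$. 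The main obstacle is justifying that this tail decay, together with the cubic truncation error, remains uniform in the regime where $1-\lambda\to 0$, and this is exactly the role played by the hypothesis $(1-\lambda)n^{1/3}\to\infty$.

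Finally, dividing by $R^n=n^ne^{-\lambda n}$, multiplying by $n!\sim\sqrt{2\pi n}\,n^ne^{-n}$, and cancelling $e^{-wn^2/2}=e^{-\lambda n/2}$ against the exponential factor $e^{n(1-\lambda/2)+\lambda n}/n^n\cdot e^{-n}$, the Gaussian constant $\sqrt{w(1-\lambda)/(2\pi\lambda)}\cdot\sqrt{2\pi n}$ collapses to $\sqrt{1-\lambda}$ (using $nw=\lambda$), producing the final estimate $(1-\lambda)^{1-r}F(\lambda)$ as claimed.
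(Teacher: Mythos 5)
Your overall plan --- Cauchy's integral on the circle $|zw| = \lambda e^{-\lambda}$, saddle at $\theta = 0$, a central/tail split, and a final Stirling cancellation --- matches the paper's proof in outline, and your closing arithmetic (the Stirling collapse yielding $(1-\lambda)^{1-r}F(\lambda)$) is correct. But the central-region estimate has a genuine gap, because you skip the change of variables $t = T(zw)$ that the paper performs before applying the saddle point. After that substitution the contour integral becomes $\oint_{|t|=\lambda}e^{nf(t,\lambda)}(1-t)^{3/2-r}F(t)\,dt/t^{n+1}$ with $f(t,\lambda)=(t-t^2/2)/\lambda+t$, and the exponent $n\bigl(f(\lambda e^{i\theta},\lambda)-i\theta\bigr)$ has Taylor coefficients in $\theta$ that are \emph{bounded uniformly in $\lambda\in(0,1)$} (the cubic coefficient is $in(3\lambda-1)/6$, the quartic is $O(n)$). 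This is precisely what makes the analysis survive up to the edge of the regime $(1-\lambda)n^{1/3}\to\infty$.

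Working directly with $\Phi(\theta)=U(\lambda e^{-\lambda}e^{i\theta})/w-in\theta$, as you do, you pay the price that $U$ has a square-root singularity at $e^{-1}$, so its higher $\theta$-derivatives blow up as $\lambda\to1^-$. Explicitly,
\[
\Phi'''(0)=-\frac{i}{w}\left[\frac{\lambda}{1-\lambda}+\frac{\lambda^2(2-\lambda)}{(1-\lambda)^3}\right],
\]
the second term coming from $(zw)^2T''(zw)$, so $|\Phi'''(0)|\asymp 1/\bigl(w(1-\lambda)^3\bigr)$, not $1/\bigl(w(1-\lambda)^2\bigr)$ as you assert. Plugging in your $\theta_0 = w^{1/3}(1-\lambda)^{-1/3}\log n$, the cubic remainder satisfies
\[
|\Phi'''(0)|\,\theta_0^3 \asymp \frac{(\log n)^3}{(1-\lambda)^4},
\]
which diverges as $\lambda\to1^-$, so the claim that the cubic error ``tends to zero uniformly over the central range'' is false, and the Laplace-type estimate you invoke is not justified (even your optimistic order for $\Phi'''$ would give $(\log n)^3/(1-\lambda)^3$, also divergent). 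Shrinking $\theta_0$ does not fully repair this: the two constraints $\theta_0\gg\sqrt{w(1-\lambda)\log\bigl(1/(w(1-\lambda))\bigr)}$ (so the tail is negligible compared to the Gaussian main term) and $\theta_0\ll w^{1/3}(1-\lambda)$ (so the genuine cubic term is uniformly small) are jointly satisfiable only when $(1-\lambda)n^{1/3}\gg\log n$, a strictly stronger assumption than the lemma's hypothesis. The paper covers the full range via two devices you omit: (a) the change of variables $t=T(zw)$, which keeps all higher Taylor coefficients bounded; and (b) a dominated-convergence argument after the rescaling $x=\sqrt{n(1-\lambda)}\,\theta$, which copes with the purely imaginary (hence merely oscillatory) cubic term without requiring it to be uniformly negligible. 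Without at least one of these ideas your central estimate does not go through.
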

\begin{proof}
We start by representing the coefficient extraction using Cauchy's integral formula:
\[
    [z^n] H(z, w)
    =
    \dfrac{1}{2 \pi i}
    \oint_{|z| = R}
    H(z, w)
    \dfrac{\mathrm dz}{z^{n+1}},
\]
where the value \( R \) is chosen to be $\lambda e^{-\lambda} / w$
so that $H(z, w)$ can be replaced by its uniform approximation
\[
    [z^n] H(z, w)
    \sim
    \dfrac{1}{2 \pi i}
    \oint_{|z| = R}
    e^{U(zw)/w} (1 - T(zw))^{1/2 - r}F(T(zw))
    \dfrac{\mathrm dz}{z^{n+1}}.
\]
We apply the change of variables $t = T(z w)$,
using the relations
$z T'(z) = T(z) / (1 - T(z))$,
$z = t e^{-t} / w$
and $U(z) = T(z) - T(z)^2/2$,
and deform the integration path to be a circle of radius $\lambda$ to get
\begin{equation}
  \label{eq:asymp-equiv-path-lambda}
    [z^n] H(z, w)
    \sim
    \dfrac{w^n}{2 \pi i}
    \oint_{|t| = \lambda}
    e^{(t - t^2/2)/w} e^{n t} (1 - t)^{3/2 - r}F(t)
    \dfrac{\mathrm dt}{t^{n+1}}.
\end{equation}
We will see in a moment that this choice of integration path
is motivated by the fact that $\lambda$
is an approximation of the saddle point.
Replacing $w$ with $\lambda / n$, the expression becomes
\[
    [z^n] H(z, w)
    \sim
    \dfrac{(\lambda / n)^n}{2 \pi i}
    \oint_{|t| = \lambda}
    e^{n f(t, \lambda)}
    (1 - t)^{3/2 - r}F(t)
    \dfrac{\mathrm dt}{t^{n+1}},
\]
where
\begin{equation}
\label{eq:f:of:t:lambda}
    f(t, \lambda) = (t - t^2/2)/\lambda + t\, .
\end{equation}
We parametrise the integration path so that $t = \lambda e^{i \theta}$:
\[
    [z^n] H(z, w)
    \sim
    \dfrac{n^{-n}}{2 \pi}
    \int_{-\pi}^{\pi}
    e^{n (f(\lambda e^{i \theta}, \lambda) - i \theta)}
    (1 - \lambda e^{i \theta})^{3/2 - r}
    F(\lambda e^{i \theta})
    \mathrm d \theta.
\]
We have the following Taylor expansions at $\theta = 0$:
\begin{align*}
    f(\lambda e^{i \theta}, \lambda) - i \theta
    &=
    \frac{\lambda e^{i \theta} - \lambda^2 e^{2 i \theta}/2}{\lambda}
    + \lambda e^{i \theta}
    =
    1 + \frac{\lambda}{2}
    - (1 - \lambda) \frac{\theta^2}{2}
    + i (3 \lambda - 1) \frac{\theta^3}{6}
    + \bigO(\theta^4),
    \\
    1 - \lambda e^{i \theta}
    &=
    (1 - \lambda)
    \left(
        1
        - i \frac{\lambda}{1 - \lambda} \theta
        + \frac{\bigO(\theta^2)}{1 - \lambda}
    \right).
\end{align*}
The path of integration has been chosen to cross the saddle point,
which explains why the first derivative of
$f(\lambda e^{i \theta}, \lambda) - i \theta$
vanishes at $0$.
We now cut the integral into the central part and the tail.
We gather requirements on a function
$\epsilon$ of $n$ and $\lambda$ such that
the central part $\theta \in (-\epsilon, \epsilon)$
contains the dominant contribution of the integral,
while the tail $\theta \in (-\pi, \pi] \setminus (-\epsilon, \epsilon)$
is negligible.
This $\epsilon$ will be chosen later in the proof.

Let us start with the central part estimate.
In typical applications of the saddle point method,
one chooses the central part large enough so that
the second term in the Taylor expansion of $n f(\lambda e^{i \theta}, \lambda)$
tends to infinity, but small enough so that the third term goes to $0$.
In the present case, we have to be more cautious
and choose $\epsilon$ so that the fourth term goes to $0$.
The two conditions on $\epsilon$ are then
\begin{equation} \label{eq:analytic:first:epsilon}
    n (1 - \lambda) \epsilon^2 \to +\infty
    \quad \text{and} \quad
    n \epsilon^4 \to 0.
\end{equation}
Taking the quotient, they imply that $(1 - \lambda)^{-1} \epsilon^2$
tends to $0$.
Thus, for $\theta$ in the central part $(-\epsilon, \epsilon)$,
the error terms of the Taylor expansions of $f(\lambda e^{i \theta}, \lambda)$
and $1 - \lambda e^{i \theta}$ tend to $0$,
so we obtain the following central part estimate
\begin{align*}
    \int_{-\epsilon}^{\epsilon}
    e^{n (f(\lambda e^{i \theta}, \lambda) - i \theta)}
    (1 - \lambda e^{i \theta})^{3/2 - r}     F(\lambda e^{i \theta})
    \mathrm d \theta
    \sim\ &
    e^{n(1 + \lambda / 2)}
    (1 - \lambda)^{3/2 - r} F(\lambda)
    \\&
    \times \int_{-\epsilon}^{\epsilon}
    e^{- n (1 - \lambda) \theta^2 / 2
       + i n (3 \lambda - 1) \theta^3 / 6}
    \left( 1 - i \frac{\lambda}{1 - \lambda} \theta \right)
    \mathrm d \theta.
\end{align*}
After the change of variables $x = \sqrt{n (1 - \lambda)} \theta$,
the integral of the right-hand side becomes
\[
    \frac{1}{\sqrt{n (1 - \lambda)}}
    \int_{-\sqrt{n (1 - \lambda)} \epsilon}^{\sqrt{n (1 - \lambda)} \epsilon}
    e^{- x^2 / 2}
    \exp \left(
        \frac{i}{\sqrt{n}}
        \frac{3 \lambda - 1}{(1 - \lambda)^{3/2}}
        \frac{x^3}{6}
    \right)
    \left(
        1 - \frac{i \lambda x}{\sqrt{n} (1 - \lambda)^{3/2}}
    \right)
    \mathrm d x.
\]
The domain of integration tends to infinity
according to the conditions~\eqref{eq:analytic:first:epsilon},
and the integrand converges pointwise to $e^{-x^2/2}$
because $n^{1/3} (1 - \lambda)$ tends to infinity.
Furthermore, the integrand is dominated by $e^{-x^2/2} (1 + |x|)$
for $n$ large enough so that $\sqrt{n} (1 - \lambda)^{3/2} > 1$.
Thus, by the dominated convergence theorem,
the last expression is asymptotically equal to
\[
    \frac{1}{\sqrt{n (1 - \lambda)}}
    \int_{-\infty}^{+\infty}
    e^{-x^2/2}
    \mathrm d x
    =
    \sqrt{\frac{2 \pi}{n (1 - \lambda)}}.
\]
Thus, the central part is asymptotically equal to
\[
    \int_{-\epsilon}^{\epsilon}
    e^{n (f(\lambda e^{i \theta}, \lambda) - i \theta)}
    (1 - \lambda e^{i \theta})^{3/2 - r}F(\lambda e^{i \theta})
    \mathrm d \theta
    \sim
    \sqrt{2 \pi / n}
    e^{n(1 + \lambda / 2)}
    (1 - \lambda)^{1 - r}
    F(\lambda).
\]
We now bound the tail. We only present the proof
for $\theta \in (\epsilon, \pi]$,
since the other half is symmetrical. Firstly,
\[
    \left|
    \int_{\epsilon}^{\pi}
    e^{n (f(\lambda e^{i \theta}, \lambda) - i \theta)}
    (1 - \lambda e^{i \theta})^{3/2 - r}
    \mathrm d \theta
    \right|
    \leq
    \int_{\epsilon}^{\pi}
    e^{n \Real(f(\lambda e^{i \theta}, \lambda) - i \theta)}
    |1 - \lambda e^{i \theta}|^{3/2 - r}
    \mathrm d \theta.
\]
The real part of the exponent is
\[
    \Real(f(\lambda e^{i \theta}, \lambda) - i \theta) =
    (1 + \lambda) \cos(\theta) - \frac{\lambda}{2} \cos(2 \theta).
\]
Its derivative
\[
    (2 \lambda \cos(\theta) - 1 - \lambda) \sin(\theta)
\]
is negative on $(\epsilon, \pi)$,
so the real part is strictly decreasing
and its maximum is reached only at $\theta = \epsilon$:
\[
    \Real(f(\lambda e^{i \theta}, \lambda) - i \theta) \leq
    (1 + \lambda) \cos(\epsilon) - \frac{\lambda}{2} \cos(2 \epsilon)
    =
    1 + \frac{\lambda}{2}
    - (1 - \lambda) \frac{\epsilon^2}{2}
    + \bigO(\epsilon^4).
\]
We also have the bound
\[
    \sup_{\theta \in (\epsilon, \pi)}
    |1 - \lambda e^{i \theta}|^{3/2 - r}|F( \lambda e^{i \theta})|
    \leq
    \begin{cases}
        (1 + 2 \lambda)^{3/2-r}F(\lambda) & \text{if } 3/2 - r \geq 0,\\
        (1 - \lambda)^{3/2-r} F(\lambda) & \text{otherwise,}
    \end{cases}
\]
which in both cases is a $\bigO((1 - \lambda)^{3/2-r})$.
Applying those two bounds to the integral, we obtain
\begin{align*}
    \left|
    \int_{\epsilon}^{\pi}
    e^{n (f(\lambda e^{i \theta}, \lambda) - i \theta)}
    (1 - \lambda e^{i \theta})^{3/2 - r}F(\lambda e^{i\theta})
    \mathrm d \theta
    \right|
    &=
    e^{n \left( 1 + \frac{\lambda}{2}
    - (1 - \lambda) \frac{\epsilon^2}{2}
    + \bigO(\epsilon^4) \right)}
    \bigO \left( (1 - \lambda)^{3/2-r} F(\lambda)\right)
    \\&=
    \frac{e^{n(1 + \frac{\lambda}{2})}}{\sqrt{n}}
    (1 - \lambda)^{1 - r}F(\lambda)
    \sqrt{n (1 - \lambda)}
    e^{-n (1 - \lambda) \frac{\epsilon^2}{2}}
    \bigO(1).
\end{align*}
Thus, the tail is negligible compared to the central part if
\[
    n (1 - \lambda) e^{-n (1 - \lambda) \epsilon^2}
\]
tends to $0$.
The two conditions of~\eqref{eq:analytic:first:epsilon}
and this third condition are satisfied,
when $(1 - \lambda) n^{1/3}$ tends to infinity,
by choosing
\[
    \epsilon = \sqrt{\frac{2 \log(n (1 - \lambda))}{n (1 - \lambda)}}.
\]
Indeed, we then have
\begin{align*}
    n (1 - \lambda) \epsilon^2 &=
    2 \log(n (1 - \lambda))
    \to +\infty,
    \\
    n \epsilon^4 &=
    \frac{\log(n (1 - \lambda))}{\sqrt{n (1 - \lambda)}}
    \frac{4}{(n^{1/3} (1 - \lambda))^{3/2}}
    \to 0,
    \\
    n (1 - \lambda) e^{- n (1 - \lambda) \epsilon^2} &=
    \frac{1}{n (1 - \lambda)}
    = \bigO(n^{-2/3})
    \to 0.
\end{align*}
To conclude, using Stirling's formula, we have
\[
  \begin{split}
    e^{-w n^2/2} n! [z^n] H(z,w) &\sim
    e^{-w n^2/2} n^n e^{-n} \sqrt{2 \pi n}
    \frac{n^{-n}}{2 \pi}
    \sqrt{\frac{2 \pi}{n}}
    e^{n (1 + \lambda / 2)}
    (1 - \lambda)^{1 - r}F(\lambda)\\
    &\sim (1 - \lambda)^{1 - r}F(\lambda)\, .
    \end{split}
\]
\end{proof}

\begin{remark}
The above lemma has a probabilistic interpretation.
Let us call a random multigraph from $\GrERMulti(n, \lambda/n)$
\emph{subcritical} if $w := \lambda/n$ stays in a closed subinterval of $(0,1)$
or $w$ tends to $1$ as $1 + \mu n^{-1/3}$ with $\mu \to -\infty$.
The total weight of multigraphs on $n$ vertices
that contain only trees and unicycles,
with a variable $u$ marking the unicycles, is
\[
    n! [z^n] e^{U(z w)/w}
    \exp \left(\frac{u}{2} \log \left( \frac{1}{1 - T(z w)} \right) \right).
\]
Therefore we observe that the underlying generating function of~\cref{lemma:analytic:first} is related to the function
\[
    t \mapsto e^{-w n^2/2} n! [z^n] e^{U(z w)/w}
    \exp \left(\frac{e^t}{2} \log \left( \frac{1}{1 - T(z w)} \right) \right),
\]
which is the moment generating function of the number of unicycles
in multigraphs from $\GrERMulti(n, \lambda/n)$
conditioned on containing only trees and unicycles.
The conclusion of the last lemma is that
this function tends to $\exp \left( (e^t - 1) \frac{1}{2} \log \frac{1}{1 - \lambda} \right)$ (the value $r$ has been replaced by $(1 + e^t)/2$),
which implies that the limit law of the number of unicycles is Poisson
of parameter $\frac{1}{2} \log \frac{1}{1 - \lambda}$.
This is no surprise, as we know since \cite{Erdos1959}
that subcritical (multi)graphs contain typically only trees and unicycles,
and that the number of unicycles follows a Poisson limit law.
\end{remark}

In order to treat the critical and the supercritical phases of the phase
transition, we need to show that
for various
different expressions involving generating functions of digraph families,
when \( w \to 0^+ \),
the contribution of the external integral
over the circle \( z = \rho e^{iu} \) can be approximated by its contribution
around only the local part \( z = \rho \pm 0i \), even if \( \rho \) is not a
saddle point. Generally speaking, the (multi-)graphic generating function of the multidigraph
families of interest is expressed as a product of various functions of the form
\( \phi_r(z, w) \) (see~\cref{section:symbolic:method}), and therefore, we need to establish
the asymptotic behaviour of the complex contour integral of such a product.

\begin{lemma} \label{lem:cauchy}
    Let \( \phi_r \) be defined as in~\eqref{eq:phi:tilde:phi},
    \[
        \phi_r(z, w; F(\cdot)) = \dfrac{1}{\sqrt{2 \pi w}}
        \int_{-\infty}^{+\infty}
        (1 - zw e^{-ix})^{r}
        \exp \left(
            - \dfrac{x^2}{2 w}
            -  e^{-ix}
        \right)
        F(z w e^{- i x})
        \mathrm dx.
    \]
    Let \( (r_i)_{i=1}^k \) denote integer values,
    \( (p_i)_{i=1}^k \) be integer values summing to $1$,
    and
    \[
        R = \sum_{i=1}^k r_i p_i
        \quad \text{and} \quad
        \xi(z, w) = \prod_{i=1}^k \phi_{r_i}(z, w; F_i(\cdot))^{p_i},
    \]
    where $F_i(z)$ does not vanish on $\mathbb{C} \setminus \{0\}$
    for $i \in \{1, \ldots, k\}$.

    \noindent \textbf{Part (a)}
    If \( w = \lambda / n \) with $\lambda \in (0, 1 - n^{\epsilon - 1/3})$
    for some $\epsilon > 0$, then
    \[
        e^{-w n^2/2} n ! [z^n] \frac{1}{\xi(z,w)}
        \sim
        (1 - \lambda)^{1 - R} \prod_{i=1}^k F_i(\lambda)^{-p_i}.
    \]

    \noindent \textbf{Part (b)}
    If \( w = n^{-1}(1 + \mu n^{-1/3}) \)
    with \( \mu  = \smallo(n^{1/12})\),
    then
    \[
        e^{-wn^2/2} n!
        [z^n] \dfrac{1}{\xi(z, w)}
        \sim
        \frac{(-1)^R
        n^{(R-1)/3}
        2^{-(R+1)/3}}{ \prod_{i=1}^k F_i(1)^{p_i}}
        \dfrac{1}{2 \pi i}
        \int_{-i \infty}^{i \infty}
        \dfrac{
            \exp (-\tfrac{\mu^3}{6} - \mu s)
        }
        {
            \prod_{i=1}^k \ai(r_i; -2^{1/3}s)^{p_i}
        }
        \mathrm ds.
    \]

    \noindent \textbf{Part (c)}
    Consider a fixed real value \( \theta \) such that
    \( \ai(r_i; 2^{1/3}\theta) \neq 0\) for $i \in \{1, \ldots, k\}$,
    and define \( \rho = (1 - \theta w^{2/3}) (ew)^{-1} \).
    Then
    \begin{equation}
    \label{eq:cauchy_up}
        \dfrac{1}{2 \pi i}
        \oint_{|z| = \rho}
        \dfrac{1}{
            \xi(z, w)
        }
        \dfrac{\mathrm dz}{z^{n+1}}
        =
        \mathcal O \left(
            \dfrac{w^{2/3}}{\xi(\rho, w) \rho^n}
        \right).
    \end{equation}
\end{lemma}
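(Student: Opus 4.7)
My plan is to use Cauchy's integral formula,
\[
[z^n]\frac{1}{\xi(z,w)} = \frac{1}{2\pi i}\oint_{|z|=\rho}\frac{dz}{\xi(z,w)\,z^{n+1}},
\]
with radius $\rho$ tailored to each regime: $\rho = \lambda e^{-\lambda}/w$ in part~(a), $\rho = 1/(ew)$ in part~(b), and $\rho = (1-\theta w^{2/3})/(ew)$ in part~(c). In every case the contour passes through (or near) the saddle of $U(zw)/w - n\log z$, so a Laplace-type analysis driven by the pointwise asymptotics of $\xi$ supplied by Lemma~\ref{prop:complex:general} yields the claim.

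For part~(a) everything reduces to earlier results. On the contour, $|zw|\leq \lambda e^{-\lambda}<e^{-1}$, so part~(a) of Lemma~\ref{prop:complex:general} applies uniformly to every factor. Using $\sum_i p_i=1$ the factors combine to
\[
\xi(z,w) \sim e^{-U(zw)/w}(1-T(zw))^{R-1/2}\prod_i F_i(T(zw))^{p_i},
\]
so $1/\xi$ meets the hypothesis of Lemma~\ref{lemma:analytic:first} with $r$ replaced by $R$ and $F(x)=\prod_i F_i(x)^{-p_i}$. The condition $\lambda\leq 1-n^{\varepsilon-1/3}$ forces $(1-\lambda)n^{1/3}\geq n^{\varepsilon}\to\infty$, so the lemma applies and yields the claim.

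For part~(b) I parametrize $z=\rho e^{i\theta}$ and split $[-\pi,\pi]$ into a central arc $|\theta|\leq \theta_0$ with $\theta_0=w^{2/3-\delta}$ (for a small positive $\delta$) and its complement. On the central arc, $\tau=(1-e^{i\theta})w^{-2/3}$ stays in the regime of part~(c) of Lemma~\ref{prop:complex:general}. The change of variables $s=i\sigma$ with $\theta = \sigma w^{2/3} + O(\sigma^2 w^{4/3})$ produces the Jacobian $w^{2/3}(1+o(1))$, and I then collect all the prefactors. Stirling gives $n!/\rho^n\sim (nw)^n\sqrt{2\pi n}$; telescoping the expansions
\[
(nw)^n = e^{\mu n^{2/3}-\frac{\mu^2}{2}n^{1/3}+\frac{\mu^3}{3}+o(1)},\qquad \tfrac{1}{2w} = \tfrac{n}{2}-\tfrac{\mu}{2}n^{2/3}+\tfrac{\mu^2}{2}n^{1/3}-\tfrac{\mu^3}{2}+o(1)
\]
together with $e^{-wn^2/2}=e^{-n/2-\mu n^{2/3}/2}$, the $n^{2/3}$ and $n^{1/3}$ contributions cancel and leave exactly $e^{-\mu^3/6+o(1)}$; the $o(1)$ error is $O(\mu^4 n^{-1/3})$, which vanishes under the hypothesis $\mu=o(n^{1/12})$. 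Similarly, combining $e^{-\tau/w^{1/3}}=e^{i\sigma/w^{1/3}}$ with the phase $e^{-in\theta}$ produces $e^{-i\sigma\mu}=e^{-\mu s}$, while the surviving polynomial factors collapse to $(-1)^R n^{(R-1)/3}\,2^{-(R+1)/3}\prod_i F_i(1)^{-p_i}$, exactly matching the target. For the tail, the Puiseux expansion of $T$ gives $1-|T(zw)|\gtrsim w^{1/3-\delta/2}$, so part~(a) of Lemma~\ref{prop:complex:general} applies; since $\Real U(e^{i\theta}/e) = 1/2 - \theta^2/2 + o(\theta^2)$ is maximised at $\theta=0$, the ratio $|\xi(\rho e^{i\theta},w)|/|\xi(\rho,w)|$ grows at least like $e^{\theta_0^2/(2w)}=e^{w^{1/3-2\delta}/2}$, dwarfing any power of $w$.

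Part~(c) uses the same contour decomposition with the shifted radius $\rho=(1-\theta w^{2/3})/(ew)$, a central window $|\psi|\leq Kw^{2/3}$, and its tail. On the central window, part~(c) of Lemma~\ref{prop:complex:general} shows $|\xi(\rho e^{i\psi},w)|$ is comparable to $|\xi(\rho,w)|$: the argument $\tau = \theta - i\psi w^{-2/3}+O(w^{2/3})$ traces a bounded arc on which the Airy factors remain uniformly bounded away from zero by the hypothesis $\ai(r_i;2^{1/3}\theta)\neq 0$. Integrating over this window of length $O(w^{2/3})$ produces exactly the $O(w^{2/3}/|\xi(\rho,w)\rho^n|)$ estimate claimed, and the tail is controlled as in part~(b). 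The main obstacle throughout is the tail estimate in parts~(b) and~(c): it requires invoking the appropriate case of Lemma~\ref{prop:complex:general} depending on the position on the contour, and verifying that the real part of the exponent $-U(zw)/w$ decreases away from $\theta=0$ strongly enough to absorb polynomial factors in $w$; the rest of the argument is systematic bookkeeping of Stirling, Taylor and Airy asymptotics.
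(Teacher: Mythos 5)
Your part (a) is correct and follows essentially the same path as the paper: approximate every factor by part (a) of Lemma~\ref{prop:complex:general}, multiply, and feed the result to Lemma~\ref{lemma:analytic:first}.

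Parts (b) and (c) have a genuine gap. You split the contour into only two pieces, a central arc $|\theta|\le\theta_0=w^{2/3-\delta}$ on which you invoke part (c) of Lemma~\ref{prop:complex:general}, and its complement on which you invoke part (a). But part~(c) of that lemma is proved only \emph{uniformly for $\tau$ in a bounded set}, and on your central arc the parameter $\tau \approx (1-e^{i\theta})w^{-2/3}$ has modulus as large as $w^{-\delta}$, which is unbounded as $w\to 0^+$ for every $\delta>0$. If you instead shrink the central arc to $|\theta|\le Kw^{2/3}$ so that $\tau$ is bounded, the complementary region begins at $\theta\asymp w^{2/3}$, where $1-|T(zw)|\asymp w^{1/3}$ --- too small for the hypothesis $1-|T(zw)|\ge w^{1/3-\varepsilon}$ of part~(a). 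So there is an unavoidable annulus, roughly $Kw^{2/3}\lesssim|\theta|\lesssim w^{2/3-\varepsilon}$, on which neither of the two estimates you call on is valid. The paper closes it with a \emph{three}-region decomposition, inserting a middle band $|\theta|\in[\alpha w^{2/3},w^{2/3-\varepsilon}]$ on which part~(b) of Lemma~\ref{prop:complex:general} is applied, and showing (via the Airy asymptotics~\eqref{eq:ai_asymp}) that the rescaled integrand there decays like $\exp(-\tfrac43 s^{3/2})$ with $s=\theta w^{-2/3}$; that band contributes at the same $O(w^{2/3})$ scale as the central window and is the tail of the limiting Airy integral, so it cannot simply be discarded.

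A secondary slip feeds the same problem: your tail bound asserts the ratio grows like $e^{\theta_0^2/(2w)}=e^{w^{1/3-2\delta}/2}$, but for $\delta<1/6$ this tends to $1$, not infinity, so it dominates no power of $w$. The error comes from keeping only $\Real U(e^{i\theta}/e)=\tfrac12-\tfrac{\theta^2}{2}+o(\theta^2)$. Using the Puiseux expansion of $T$ one has $(1-T(zw))^2 = 2(1-ezw) - \tfrac{4\sqrt2}{3}(1-ezw)^{3/2}+\cdots$, and for $1-ezw\approx -i\theta$ the real part of the second term, $\tfrac{4}{3}\theta^{3/2}$, dominates the $\theta^2$ from the first; hence $\Real(U(zw)-\tfrac12)\approx -\tfrac23\theta^{3/2}$ and the correct decay is $\exp(-\Omega(\theta^{3/2}/w))$. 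Even with this fix the bound is only useful for $\theta\gg w^{2/3}$ up to constants, which again forces the third region.
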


\begin{proof}
    \textbf{Part (a)}
    Consider $|z w| \leq \lambda e^{-\lambda}$
    and assume $\lambda \in (0, 1 - n^{\epsilon - 1/3})$.
    Then
    \[
        |e z w| \leq \lambda e^{1 - \lambda} < 1
    \]
    and
    \[
        1 - |T(zw)|
        \geq
        1 - T(|z w|)
        \geq
        1 - T(\lambda e^{-\lambda})
        \geq
        1 - \lambda
        \geq
        n^{\epsilon - 1/3}
        \geq
        \lambda^{\epsilon - 1/3} w^{1/3 - \epsilon}
        \geq
        w^{1/3 - \epsilon}.
    \]
    Thus, Part~$(a)$ of \cref{prop:complex:general}
    is applicable and implies
    \[
        \phi_r(z,w; F(\cdot))
        \sim
        e^{-U(z w) / w}
        (1 - T(z w))^{r - 1/2}
        F(T(z w)).
    \]
    Taking the product, using the fact that $\sum_{i=1}^k p_i = 1$ and the definition of $R$,
    we obtain
    \[
        \xi(z,w) \sim
        e^{- U(z w) / w}
        (1 - T(z w))^{R - 1/2}
        \prod_{i=1}^k F_i(T(z w))^{p_i}.
    \]
    We then apply \cref{lemma:analytic:first} to conclude
    \[
        e^{-w n^2/2} n ! [z^n] \frac{1}{\xi(z,w)}
        \sim
        (1 - \lambda)^{1 - R} \prod_{i=1}^k F_i(\lambda)^{-p_i}.
    \]

    \textbf{Part (c)}
    Using the parametrisation \( z = \rho e^{iu} \) with \( u \in [-\pi, \pi] \),
    the integral expression from the statement is rewritten as
    \[
        \dfrac{1}{2 \pi i} \oint_{|z| = \rho}
        \dfrac{1}{\xi(z, w)} \frac{\mathrm dz}{z^{n+1}}
        =
        \dfrac{1}{2 \pi \xi (\rho, w) \rho^n}
        \int_{-\pi}^{\pi}
        \dfrac{\xi(\rho, w) e^{-iun}}{\xi(\rho e^{iu}, w)}
        \mathrm du.
    \]
    Let $\alpha$ denote a positive constant fixed later in the proof
    and depending only on $\theta$,
    and \( \varepsilon \) a small enough positive constant.
    The integration path is split into three regions:
    \begin{enumerate}[label=(\roman*)]
    \item \( u \in [-\pi, -w^{2/3-\varepsilon}] \cup [w^{2/3-\varepsilon},\pi] \),
    \item \( u \in [-w^{2/3-\varepsilon}, -\alpha w^{2/3}] \cup [\alpha w^{2/3}, w^{2/3-\varepsilon}] \)
     and
    \item \( u \in [-\alpha w^{2/3}, \alpha w^{2/3}] \).
    \end{enumerate}
    This partitioning corresponds to parts (a), (b) and (c) of~\cref{prop:complex:general},
    where \( \varepsilon \) in parts (a) and (b) is chosen to be
    smaller than \( 1/ 6 \).
    We shall also use the fact that the asymptotic expressions of parts (b) and (c) overlap
    when \( w^{-1/3}u \) is in a bounded closed interval of \( \mathbb R \).

    \textbf{First region.}
    The series $T(z)$ has nonnegative coefficients at the origin and is aperiodic.
    According to the Daffodil Lemma \cite[p.266]{FSBook},
    the maximum of its absolute value on any circle centred at the origin
    is reached at the crossing with the positive real line.
    Thus, in the first case, writing \( z = \rho e^{iu} \), we have
    \[
        |T(z w)| \leq
        \left|
        T \left(
            \frac{1 - \theta w^{2/3}}{e} e^{i w^{2/3-\varepsilon}}
        \right)
        \right|.
    \]
    The argument is converging to the singularity $e^{-1}$ of $T(z)$,
    so we apply the Newton--Puiseux expansion from \cref{lemma:complex:plane:trees}
    to deduce
    \begin{equation} \label{eq:T:zw:first:part}
        \left|
            T \left(
            \frac{1 - \theta w^{2/3}}{e} e^{i w^{2/3-\varepsilon}}
            \right)
        \right|
        =
        1 - w^{1/3 - \varepsilon/2} + \bigO(w^{1/3 + \varepsilon/2}).
    \end{equation}
    Thus, the conditions of part (a) of \cref{prop:complex:general} are satisfied
    and it provides an estimate for the denominator of the integrand.
    The numerator is approximated using part (c),
    whose conditions are satisfied as well.
    We obtain
    \[
        \left|
        \dfrac
        {\phi_r(\rho, w; F(\cdot))}
        {\phi_r(\rho e^{iu}, w; F(\cdot))}
        \right|
        \sim
        \sqrt{2 \pi}
        C_r
        \frac{w^{r/3-1/6} F(1)}{F(T(\rho w))}
        \exp\left(
            \dfrac{1}{w}
            \Real\left(U(zw) - \frac12 \right)
            + \theta w^{-1/3}
        \right)
        |1 - T(zw)|^{1/2 - r}\, ,
    \]
    where \( C_r = |\ai(r; 2^{1/3} \theta)| 2^{r/3+1/3} \),
    \( z = \rho e^{iu} \). By multiplying the absolute values of the ratios, we
    obtain, since \( \sum_{i=1}^k p_i = 1 \),
    \[
        \left|
        \dfrac
        {\xi(\rho, w)e^{-iun}}
        {\xi(\rho e^{iu}, w)}
        \right|
        =
        \mathcal O\left(
            w^{R/3-1/6}
            \exp\left(
                \dfrac{1}{w}
                \Real\left(U(zw) - \frac12 \right)
                + \theta w^{-1/3}
            \right)
            |1 - T(zw)|^{1/2 - R}
        \right),
    \]
    where \( R = \sum_{i=1}^k r_i p_i \),
    and the implicit constant is independent of \( n \), but can be expressed
    using \( (r_i)_{i=1}^k \) and \( (p_i)_{i=1}^k \).

    By \( \Omega_+(f(w)) \), we denote some function that is bounded from below by \( f(w) \) times a positive constant, as \( w \to 0^+ \).
    As we saw, when \( |u| \geq w^{2/3 - \varepsilon} \), \( |1 - T(zw)| \) is bounded away from zero, and since $T(zw)$ is in a disk of radius less than $1$ centred at the origin, we also have
    \[
        |1 - T(zw)| \geq 1 - |T(w \rho e^{i w^{2/3-\varepsilon}})| = \Omega_+(w^{1/3 - \varepsilon/2}).
    \]
    Since
    \( U(z) = \tfrac12 (1 - (1 - T(z))^2) \),
    the absolute value of the ratio is of order
    \[
        \left|
        \dfrac
        {\xi(\rho, w)e^{-iun}}
        {\xi(\rho e^{iu}, w)}
        \right|
        =
        \mathcal O\left(
            w^{R/3 - 1/6}
            \exp\left(
                -
                \dfrac{1}{2w}
                \Real\left( (1 - T(\rho e^{iu}w))^2 \right)
                + \theta w^{-1/3}
            \right)
            |1-T(zw)|^{1/2-R}
        \right).
    \]
    Furthermore, depending on the sign of $1/2 - R$,
    the multiple $|1 - T(zw)|^{1/2-R}$ is bounded above by
    $\mathcal O(1)$ if $R \leq 1/2$, or by
    $\mathcal O(w^{(1/3-\varepsilon/2)(1/2-R)})$ if $R > 1/2$.
    Since
    \( \Real\big( (1 - T(\rho e^{iu} w))^2\big)
    \geq (1 - |T(\rho e^{iu} w)|)^2
     = \Omega_+(w^{2/3 - \varepsilon}) \),
    we have
    \[
        -\frac{1}{2 w} \Real((1- T(\rho e^{iu} w))^2)+\theta w^{-1/3} \leq
        - \dfrac{\Omega_+(w^{2/3-\varepsilon})}{2w} + \mathcal O(w^{-1/3})
        =
        -\Omega_+(w^{-1/3-\varepsilon})
    \]
    uniformly for \( u \in [-\pi, -w^{2/3-\varepsilon}]\cup [w^{2/3-\varepsilon}, \pi] \).
    Therefore, the above integral tends to zero at exponential speed.

    \textbf{Second region.}
    The function $x \mapsto |T(z e^{i x})|$ is strictly increasing
    on $(-\pi, 0)$ and strictly decreasing on $(0, \pi)$.
    Furthermore, we have $T(\bar{z}) = \overline{T(z)}$.
    Thus, in the second region $u \in [-w^{2/3-\varepsilon}, - \alpha w^{2/3}] \cup [\alpha w^{2/3}, w^{2/3-\varepsilon}]$, we have
    \[
        \left|
            T \left(
                \frac{1 - \theta w^{2/3}}{e} e^{i w^{2/3 - \varepsilon}}
            \right)
        \right|
        \leq
        |T(z w)|
        \leq
        \left|
            T \left(
                \frac{1 - \theta w^{2/3}}{e} e^{i \alpha w^{2/3}}
            \right)
        \right|.
    \]
    We already derived an approximation of the lower bound in \eqref{eq:T:zw:first:part}.
    For the upper bound, we apply again the Newton--Puiseux expansion
    from \cref{lemma:complex:plane:trees} and obtain
    \begin{equation} \label{eq:T:part:two}
        T \left(
           \frac{1 - \theta w^{2/3}}{e} e^{i w^{2/3 - \varepsilon}}
        \right)
        =
        1 - \sqrt{2} \sqrt{\theta - i \alpha} w^{1/3}
        + \bigO(w^{2/3}),
    \end{equation}
    so
    \[
        \left| T \left(
                \frac{1 - \theta w^{2/3}}{e} e^{i \alpha w^{2/3}}
        \right) \right|
        =
        1 - \sqrt{2} \Real\left(\sqrt{\theta - i \alpha}\right) w^{1/3}
        + \bigO(w^{2/3}).
    \]
    When $\alpha$ tends to infinity,
    then the absolute value of $\sqrt{\theta - i \alpha}$ tends to infinity,
    while its argument tends to $-\pi/4$.
    Therefore, its real part tends to infinity,
    and we can choose $\alpha$ large enough so that
    $\sqrt{2} \Real\left(\sqrt{\theta - i \alpha}\right)$
    is greater than $1$.
    Thus, to estimate the denominator and numerator,
    the hypothesis of parts (b) and (c) of \cref{prop:complex:general}
    are satisfied when choosing $0 < \varepsilon < 1/6$ and we obtain
    \[
        \left|
        \dfrac
        {\phi_r(\rho, w; F(\cdot))}
        {\phi_r(\rho e^{iu}, w; F(\cdot))}
        \right|
        \sim
        \left|
        \dfrac{
            \ai(r; 2^{1/3} \theta)
        }{
            {\ai}\big(r; \frac{(1 - T(zw))^2}{2^{2/3} w^{2/3}}\big)
        }
        \exp \left(
            \dfrac{1}{w} \left(
                U(zw) - \dfrac12
            \right)
            + \theta w^{-1/3}
            - \dfrac{1}{3w} (1 - T(zw))^3
        \right)
        \right|.
    \]
    Let us rewrite $u$ as $s w^{2/3}$.
    We plug instead the approximation from \eqref{eq:T:part:two}
    for the value of $T(z w)$ (with $\alpha$ replaced by $s$)
    and $U(z w) = \frac{1}{2} (1 - (1 - T(w z))^2)$ to obtain
    \begin{align*}
        \left|
        \dfrac
        {\phi_r(\rho, w; F(\cdot))}
        {\phi_r(\rho e^{iu}, w; F(\cdot))}
        \right|
        &\sim
        \left|
        \dfrac{
            \ai(r; 2^{1/3} \theta)
        }{
            \ai(r; 2^{1/3}(\theta - is))
        }
        \exp\left(
            i s w^{-1/3}
            - \frac{2^{3/2}}{3} (\theta - i s)^{3/2}
            + \mathcal O(s^2 w^{1/3})
        \right)
        \right|
        \\&\sim
        \left|
        \dfrac{
            \ai(r; 2^{1/3} \theta)
        }{
            \ai(r; 2^{1/3}(\theta - is))
        }
        e^{2^{3/2} (\theta - i s)^{3/2} / 3}
        \right|
    \end{align*}
    because \( \bigO(s^2 w^{1/3}) = \bigO(w^{1/3 - 2 \varepsilon}) \)
    tends to zero if \( \varepsilon \) is smaller than \( 1/6 \).
    Next, we express \( \xi(z, w) \) as a product and obtain
    \[
        \left|
        \dfrac
        {\xi(\rho, w) e^{-iun}}
        {\xi(\rho e^{iu}, w)}
        \right|
        \sim
        \left|
        \dfrac{
            \prod_{i=1}^k \ai(r_i; 2^{1/3} \theta)^{p_i}
        }{
            \prod_{i=1}^k \ai(r_i; 2^{1/3} (\theta - is))^{p_i}
        }
        e^{2^{3/2} (\theta - i s)^{3/2} / 3}
        \right|.
    \]
    By applying the change of variables \( u = s w^{2/3} \) to the integral,
    we obtain
    \[
        \int_{\alpha w^{2/3}}^{w^{2/3-\varepsilon}}
        \left|
        \dfrac
        {\xi(\rho, w) e^{-iun}}
        {\xi(\rho e^{iu}, w)}
        \right|
        \mathrm du
        \sim
        w^{2/3}
        \int_{\alpha}^{w^{-\varepsilon}}
        \left|
        \dfrac{
            e^{2^{3/2} (\theta - i s)^{3/2} / 3}
            \prod_{i=1}^k \ai(r_i; 2^{1/3} \theta)^{p_i}
        }{
            \prod_{i=1}^k \ai(r_i; 2^{1/3} (\theta - is))^{p_i}
        }
        \right|
        \mathrm ds.
    \]
    The integral is bounded by the integral over $\mathbb{R}$.
    Let us now prove that it is convergent.
    We consider $s$ going to $+\infty$, the case $s \to -\infty$ being symmetrical.
    Since $\theta - i s$ becomes equivalent with $-i s$,
    using \eqref{eq:airy:asymptotics} and $\Real((-i)^{3/2}) = -1/\sqrt{2}$,
    we obtain
    \[
        |\ai(r;2^{1/3}(\theta - i s))| \sim
        \frac{(2^{1/3} s)^{r/2-1/4}}{2 \sqrt{\pi}}
        \exp \left( - \frac{2}{3} \Real \left( (-2^{1/3} i s)^{3/2} \right) \right)
        \sim
        \frac{(2^{1/3} s)^{r/2-1/4}}{2 \sqrt{\pi}}
        \exp \left( \frac{2}{3} s^{3/2} \right),
    \]
    so
    \[
        \left|
        \dfrac{
            e^{2^{3/2} (\theta - i s)^{3/2} / 3}
        }{
            \prod_{i=1}^k \ai(r_i; 2^{1/3} (\theta - is))^{p_i}
        }
        \right|
        \sim
        \frac{(2^{1/3} s)^{R/2-1/4}}{2 \sqrt{\pi}}
        \exp \left( -\frac{4}{3} s^{3/2} \right),
    \]
    which is integrable over $\mathbb{R}$.
    We deduce
    \begin{equation} \label{eq:second:region:bound}
        \int_{\alpha w^{2/3}}^{w^{2/3-\varepsilon}}
        \left|
        \dfrac
        {\xi(\rho, w) e^{-iun}}
        {\xi(\rho e^{iu}, w)}
        \right|
        \mathrm du
        =
        \bigO(w^{2/3}).
    \end{equation}

    \textbf{Third region.}
    For the third region, we use part (c) of \cref{prop:complex:general}
    to estimate both the numerator and denominator.
    We use again $u = s w^{2/3}$ to get
    \[
        \left|
        \dfrac
        {\phi_r(\rho, w; F(\cdot))}
        {\phi_r(\rho e^{iu}, w; F(\cdot))}
        \right|
        \sim
        \left|
        \dfrac{
            \ai(r; 2^{1/3} \theta)
        }{
            \ai(r; 2^{1/3} (\theta - is))
        }
        \right|.
    \]
    Next, we express \( \xi(z, w) \) as a product and obtain
    \[
        \left|
        \dfrac
        {\xi(\rho, w) e^{-iun}}
        {\xi(\rho e^{iu}, w)}
        \right|
        \sim
        \left|
        \dfrac{
            \prod_{i=1}^k \ai(r_i; 2^{1/3} \theta)^{p_i}
        }{
            \prod_{i=1}^k \ai(r_i; 2^{1/3} (\theta - is))^{p_i}
        }
        \right|.
    \]
    By applying the change of variables \( u = s w^{2/3} \) to the integral,
    we finally obtain
    \[
        \int_{-\alpha w^{2/3}}^{\alpha w^{2/3}}
        \left|
        \dfrac
        {\xi(\rho, w) e^{-iun}}
        {\xi(\rho e^{iu}, w)}
        \right|
        \mathrm du
        =
        w^{2/3}
        \int_{-\alpha}^{\alpha}
        \left|
        \dfrac{
            \prod_{i=1}^k \ai(r_i; 2^{1/3} \theta)^{p_i}
        }{
            \prod_{i=1}^k \ai(r_i; 2^{1/3} (\theta - is))^{p_i}
        }
        \right|
        \mathrm ds
        =
        \Theta(w^{2/3})
    \]
    because the integrand is integrable over $\mathbb{R}$,
    as explained in the part that is dealing with the second region.

    \textbf{Part (b).}
    In order to settle the case when \( w \) is in the critical region of the
    phase transition, we assume \( w = n^{-1}(1 + \mu n^{-1/3}) \) and $\theta = 0$,
    so $\rho = (e w)^{-1}$.
    We use a Cauchy integral on a circle of radius $\rho$
    to represent the coefficient extraction
    \[
        e^{-w n^2/2} n! [z^n] \frac{1}{\xi(z,w)}
        =
        e^{-w n^2/2} n! \frac{\rho^{-n}}{2 \pi}
        \int_{-\pi}^{\pi}
        \frac{e^{- i u n}}{\xi(\rho e^{i u}, w)}
        \mathrm d u.
    \]
    The integral is divided into the usual three regions.
    As we saw previously, the contribution of the first region
    is exponentially small and therefore negligible.
    Consider a small positive value $\delta$.
    We now choose $\alpha$ large enough so that
    the integrals
    \[
        \int_{\mathbb{R} \setminus [-\alpha, \alpha]}
        \left|
            \frac{e^{2^{3/2} (- i s)^{3/2} / 3}}
                {\prod_{i=1}^k \ai(r_i; -2^{1/3} i s)^{p_i}}
        \right|
        \mathrm d s
        \quad \text{and} \quad
        \left|
        \int_{\mathbb{R} \setminus [-\alpha, \alpha]}
        e^{- i \mu s}
        \prod_{i=1}^k
        \ai(r_i;-2^{1/3} i s)^{-p_i}
        \mathrm d s
        \right|
    \]
    are both smaller than $\delta$.
    This is possible, because both integrands are integrable over $\mathbb{R}$,
    as explained in the part dealing with the second region.
    We use the previous result~\eqref{eq:second:region:bound} and conclude
    that the contribution from the second region is bounded by
    \[
    \frac{e^{-w n^2/2}}{|\xi(\rho, w)|}
    \frac{n! \rho^{-n}}{2 \pi}
    w^{2/3}
    \int_{\mathbb{R} \setminus [-\alpha, \alpha]}
        \left|
            \frac{e^{2^{3/2} (- i s)^{3/2} / 3} \prod_{i=1}^k \ai(r_i; 0)^{p_i}}
                {\prod_{i=1}^k \ai(r_i; -2^{1/3} i s)^{p_i}}
        \right|
        \mathrm d s.
    \]
    As previously, $\xi(\rho, w)$ is approximated using
    part (c) of \cref{prop:complex:general}:
    \[
        \xi(\rho, w) \sim
        (-1)^R \sqrt{2 \pi}
        2^{R/3+1/3} w^{R/3-1/6}
        \bigg( \prod_{i=1}^k F_i(1)^{p_i} \bigg)
        \exp\left(- \frac{1}{2 w}\right)
        \prod_{i=1}^k
        \ai(r_i; 0)^{p_i}.
    \]
    Thus, the contribution from the second region is bounded by
    \[
        \frac{2^{-R/3 - 1/3}
        w^{-R/3 + 5/6}}{ \prod_{i=1}^k F(1)^{p_i}}
        \exp \left( \frac{1}{2 w} - \frac{w n^2}{2} \right)
        \frac{n! e^n w^n}{(2 \pi)^{3/2}}
        \int_{\mathbb{R} \setminus [-\alpha, \alpha]}
        \left|
            \frac{e^{2^{3/2} (- i s)^{3/2} / 3}}
                {\prod_{i=1}^k \ai(r_i; -2^{1/3} i s)^{p_i}}
        \right|
        \mathrm d s.
    \]
    With our choice of $\alpha$, the integral is bounded by $\delta$.
    Since $w = n^{-1} (1 + \mu n^{-1/3})$, we have, using Stirling's formula,
    \begin{equation} \label{eq:region:stirling}
        w^{-R/3 + 5/6}
        \exp \left( \frac{1}{2 w} - \frac{w n^2}{2} \right)
        \frac{n! e^n w^n}{\sqrt{2 \pi}}
        \sim
        n^{(R-1)/3}
        \exp \left(
        - \frac{\mu^3}{6} + \bigO(\mu^4 n^{-1/3})
        \right).
    \end{equation}
    The error term is negligible when $\mu = \smallo(n^{1/12})$.
    The asymptotic bound on the contribution from the second region
    becomes
    \[
        n^{(R-1)/3}
        2^{-R/3 - 1/3}
        \frac{e^{-\mu^3/6}}{2 \pi}
        \delta
        \prod_{i=1}^k F(1)^{-p_i}.
    \]
    Finally, the contribution of the third region is
    \[
        e^{-w n^2/2} n! \frac{\rho^{-n}}{2 \pi}
        \int_{-\alpha w^{2/3}}^{\alpha w^{2/3}}
        \frac{e^{- i u n}}{\xi(\rho e^{i u}, w)}
        \mathrm d u.
    \]
    We apply part (c) of \cref{prop:complex:general}
    to approximate $\xi(\rho e^{i u}, w)$,
    replace $\rho$ with $(e w)^{-1}$
    and apply the change of variables $u = s w^{2/3}$ to obtain
    \[
        \frac{(-1)^R
        2^{-R/3 - 1/3}
        w^{-R/3 + 5/6}}{\prod_{i=1}^k F(1)^{p_i}}
        \exp \left( \frac{1}{2 w} - \frac{w n^2}{2} \right)
        \frac{n! e^n w^n}{(2 \pi)^{3/2}}
        \int_{-\alpha}^{\alpha}
        \frac{\exp \left( - i s w^{2/3} n + i s w^{-1/3} \right)}
            {\prod_{i=1}^k \ai(r_i; - i 2^{1/3} s)^{p_i}}
        \mathrm d s.
    \]
    We apply~\eqref{eq:region:stirling}, the relation
    \[
        - i s w^{2/3} n + i s w^{-1/3}
        =
        - i \mu s + \bigO(n^{-1/3})
    \]
    and a change of variables $s \mapsto -i s$
    to obtain
    \[
        e^{-w n^2/2} \frac{n! \rho^{-n}}{2 \pi}
        \int_{-\alpha w^{2/3}}^{\alpha w^{2/3}}
        \frac{e^{- i u n}}{\xi(\rho e^{i u}, w)}
        \mathrm d u
        \sim
        \frac{(-1)^R
        2^{-(R+1)/3}}{ \prod_{i=1}^k F(1)^{p_i} }
        \frac{n^{(R-1)/3}}{2 \pi i}
        \int_{-i \alpha}^{i \alpha}
        \frac{e^{- \mu^3 / 6 - \mu s}}
            {\prod_{i=1}^k \ai(r_i; - 2^{1/3} s)^{p_i}}
        \mathrm d s.
    \]
    Because $\alpha$ is large enough, the absolute value
    of the tail of the integral is smaller than $\delta$.
    Gathering the contributions of the three regions,
    we obtain that
    \[
    \frac{1}{n^{(R-1)/3}}
    \left|
        e^{-w n^2/2} n! [z^n] \frac{1}{\xi(z, w)} -
        \frac{(-1)^R n^{(R-1)/3} 2^{-(R+1)/3}}{ \prod_{i=1}^k F(1)^{p_i} }
        \frac{1}{2 \pi i}
        \int_{-i \infty}^{i \infty}
        \frac{e^{-\mu^3/6 - \mu s}}
            {\prod_{i=1}^k \ai(r_i; -2^{1/3} s)^{p_i}}
    \mathrm ds \right |
    \]
    is bounded by a constant multiplied by $\delta$.
    Since this is the case for any positive $\delta$,
    we deduce
    \[
        e^{-w n^2/2} n! [z^n] \frac{1}{\xi(z, w)}
        \sim
        \frac{(-1)^R n^{(R-1)/3} 2^{-(R+1)/3}}{ \prod_{i=1}^k F(1)^{p_i} }
        \frac{1}{2 \pi i}
        \int_{-i \infty}^{i \infty}
        \frac{e^{-\mu^3/6 - \mu s}}
            {\prod_{i=1}^k \ai(r_i; -2^{1/3} s)^{p_i}}\mathrm ds.
    \]
\end{proof}

        \subsection{The case of simple digraphs}
        \label{section:external:integration:simple}

The tools we developed to analyse the \( \DiGilbMulti(n, p) \) model
will need some adaptation for the \( \DiGilb(n, p) \)
and \( \DiGilbBoth(n, p) \) models,
but the driving ideas stay the same.

As we have seen in \cref{lemma:P:Dnp}, the probability $\mathbb
P(n,p)$ that a (simple) digraph $D$ belongs in a family $\mathcal F$
of digraphs whose graphic generating function is $\widehat F(z,w)$ is
\[
  \mathbb P_{\mathcal{F}}(n,p) = (1-p)^{\binom{n}{2}} n! [z^n] \widehat F(z,w) \quad
  \mbox{where}
  \begin{cases}
    w = \frac{p}{1-2p} &\mbox{ if } D \in \DiGilb(n,p)\,,  \\
    w = \frac{p}{1-p} &\mbox{ if } D \in \DiGilbBoth(n,p)\,  .
    \end{cases}
\]
Set $p = \lambda / n$.  Again, we will describe the probability
$\mathbb P_{\mathcal{F}}(n,p)$ when $\lambda$ ranges over $[0, +\infty)$.

In the subcritical case, \ie, when $(1-\lambda)n^{1/3} \to \infty$ as
$n\to \infty$, the integrals are  amenable  to the basic saddle point
method. However, the choice of $w$ involves
the saddle point so that we need a careful study of each case.
In order to cover both  models $\DiGilb(n,p)$ and
$\DiGilbBoth(n,p)$, we set
$w = \frac{p}{1-ap} = \frac{\lambda}{n-a\lambda}$
where $a=2$ corresponds to $\DiGilb(n,p)$ and
$a=1$ to $\DiGilbBoth(n,p)$.

Technical results for the subcritical range are summarised in the
following lemma.

\begin{lemma}
    \label{corollary:lemma:analytic:first2}
    Let \( r \) be a fixed real value, and let
    \( w = \frac{\lambda'}{n} \)
    with $ \lambda' = \lambda \left(1-a\frac{\lambda}{n}\right)^{-1} $ for fixed
    $a$ and \(\lambda \in (0, 1) \).
    Let $F(\cdot)$ be an entire function that does not vanish on $\mathbb{C} \setminus \{0\}$.
    Suppose that a function \( H(z, w) \) satisfies an
    approximation
    \[
        H(z, w) \sim e^{U(zw)/w+U(zw)/2} (1 - T(zw))^{1/2 - r}F(T(zw))
    \]
    uniformly for \( |zw| \leq \lambda' e^{-\lambda'} \).
    Then,  when  \( (1 - \lambda)n^{1/3} \to \infty \)
    as \( n \to \infty \), we have
    \[
        \left( 1 - \frac{\lambda}{n}\right)^{\binom{n}{2}}  n!
        [z^n] H(z, w)\Big|_{w=\frac{\lambda'}{n}}
        \sim
        e^{\lambda+(a-1)\lambda ^2/2 }
        (1 - \lambda)^{1 - r} F(\lambda)
        .
    \]
\end{lemma}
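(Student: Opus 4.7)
The plan is to follow the same saddle point strategy as in the proof of \cref{lemma:analytic:first} for multidigraphs, carefully tracking two modifications: (i) the extra factor $e^{U(zw)/2}$ present in the hypothesis on $H(z,w)$; and (ii) the substitution $w = \lambda'/n$ rather than $w = \lambda/n$, along with the prefactor $(1-\lambda/n)^{\binom{n}{2}}$ in place of $e^{-wn^2/2}$. Since $\lambda' - \lambda = O(\lambda^2/n)$ and the hypothesis gives $(1-\lambda)n^{1/3}\to\infty$, one first checks that $(1-\lambda')n^{1/3}\to\infty$ as well, so the range of validity of the saddle point approach transfers.

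First, I would apply Cauchy's integral formula with radius $R = \lambda' e^{-\lambda'}/w$ and substitute the given approximation for $H$. The change of variables $t = T(zw)$ followed by deformation to $|t|=\lambda'$ gives, using $U(x)=x-x^2/2$,
\[
    [z^n]H(z,w) \sim \frac{w^n}{2\pi i}\oint_{|t|=\lambda'} e^{(t-t^2/2)/w}\,e^{(t-t^2/2)/2}\,e^{nt}\,(1-t)^{3/2-r}F(t)\,\frac{\mathrm dt}{t^{n+1}},
\]
which differs from the multidigraph analogue only by the slowly varying factor $e^{(t-t^2/2)/2}$. Parametrizing $t = \lambda' e^{i\theta}$ and applying the saddle point method exactly as in \cref{lemma:analytic:first}, with saddle at $\theta=0$ and the same central part / tail dichotomy, produces the evaluation $e^{\lambda'/2 - (\lambda')^2/4}$ of the extra factor at the saddle. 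Combining with Stirling then yields
\[
    n! [z^n] H(z,w)\Big|_{w=\lambda'/n} \sim e^{n\lambda'/2}\, e^{\lambda'/2 - (\lambda')^2/4}\, (1-\lambda')^{1-r} F(\lambda').
\]

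Finally, I would multiply by $(1-\lambda/n)^{\binom{n}{2}}$ and collect the $O(1)$ terms in the exponent. Expanding logarithms,
\[
    \binom{n}{2}\log\!\Big(1-\frac{\lambda}{n}\Big) = -\frac{n\lambda}{2}+\frac{\lambda}{2}-\frac{\lambda^2}{4}+O(1/n),
\]
and using $\lambda' = \lambda + a\lambda^2/n + O(1/n^2)$ so that $n\lambda'/2 = n\lambda/2 + a\lambda^2/2 + O(1/n)$, the total exponent collapses to $\lambda + (a-1)\lambda^2/2$. Continuity of $(1-\cdot)^{1-r}F(\cdot)$ at $\lambda$, together with $\lambda' \to \lambda$, delivers $(1-\lambda')^{1-r}F(\lambda')\sim (1-\lambda)^{1-r}F(\lambda)$, yielding the claimed formula.

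The main obstacle is not the saddle point argument itself, which is a near-verbatim adaptation of \cref{lemma:analytic:first}, but the bookkeeping of sub-leading corrections. One must simultaneously track: the $O(1)$ constants $\lambda/2-\lambda^2/4$ from $(1-\lambda/n)^{\binom{n}{2}}$; the $O(1)$ contribution $a\lambda^2/2$ from the shift $\lambda\mapsto\lambda'$ at the Stirling scale $n\lambda'/2$; and the $e^{\lambda'/2-(\lambda')^2/4}$ contribution from $e^{U(zw)/2}$. These three pieces must combine into the exact constant $\lambda + (a-1)\lambda^2/2$, and any miscounting in the expansion of $\lambda'$ would propagate to an incorrect exponent. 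Uniformity of the saddle point estimate as $\lambda\to 1^-$ transfers from the multidigraph case because $\lambda'-\lambda = O(1/n)$ is negligible compared to $1-\lambda \gg n^{-1/3}$.
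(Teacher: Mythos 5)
Your proof is correct and follows essentially the same route as the paper. The one cosmetic difference is that the paper avoids re-running the saddle point argument by observing that $e^{U(zw)/2}(1-T(zw))^{1/2-r}F(T(zw)) = (1-T(zw))^{1/2-r}G(T(zw))$ with $G(x) := F(x)e^{x/2 - x^2/4}$, then invoking \cref{lemma:analytic:first} verbatim with $F\mapsto G$ and $\lambda\mapsto\lambda'$, so the extra factor $e^{(t-t^2/2)/2}$ is absorbed once and for all rather than tracked through the integral by hand; your explicit re-derivation reaches the same exponent $\lambda + (a-1)\lambda^2/2$ with identical bookkeeping, including the correct check that $(1-\lambda')n^{1/3}\to\infty$ transfers from $(1-\lambda)n^{1/3}\to\infty$.
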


\begin{proof}
Recall that $U(z) = T(z) - T(z)^2/2$, so,
defining $G(x) = F(x) e^{x/2 - x^2/4}$, we have
\[
    e^{U(zw)/w+U(zw)/2} (1 - T(zw))^{1/2 - r}F(T(zw)) =
    e^{U(zw)/w} (1 - T(zw))^{1/2 - r}G(T(zw)).
\]
Observe also that $\lambda' = \lambda + \bigO(n^{-1})$.
We apply \cref{lemma:analytic:first} with $\lambda$ replaced by $\lambda'$
and $F$ replaced by $G$.
Its hypotheses are satisfied, because $w = \lambda' / n$,
we have the expected uniform approximation for $|z w| \leq \lambda' e^{- \lambda'}$,
$0 < \lambda' < 1$ for $n$ large enough,
and $(1 - \lambda') n^{1/3} = (1 - \lambda) n^{1/3} + \bigO(n^{-2/3})$ tends to infinity.
We obtain
\[
    e^{-w n^2 / 2} [z^n] H(z, w) \sim
    (1 - \lambda')^{1 - r} G(\lambda')
    \sim
    (1 - \lambda)^{1 - r} e^{\lambda/2 - \lambda^2/4} F(\lambda).
\]
Taking the exponential of the logarithm, we obtain
\[
    \left( 1 - \frac{\lambda}{n} \right)^{\binom{n}{2}} =
    \exp \left(
        - \left( \frac{n^2}{2} - \frac{n}{2} \right)
        \left(\frac{\lambda}{n} + \frac{1}{2} \frac{\lambda^2}{n^2} + \bigO(n^{-3}) \right)
    \right)
    \sim
    e^{ - \lambda n / 2 - \lambda^2/4 + \lambda / 2}.
\]
Combined with
\[
    e^{-w n^2 / 2} = e^{- \lambda' n / 2} \sim e^{- \lambda n / 2 - a \lambda^2 / 2},
\]
we deduce
\[
    \left( 1 - \frac{\lambda}{n} \right)^{\binom{n}{2}} \sim
    e^{- w n ^2 / 2 - \lambda^2/4 + \lambda / 2 + a \lambda^2/2}
\]
and finally
\[
    \left( 1 - \frac{\lambda}{n} \right)^{\binom{n}{2}} [z^n] H(z, w)
    \sim
    e^{- w n ^2 / 2 - \lambda^2/4 + \lambda / 2 + a \lambda^2/2} [z^n] H(z, w)
    \sim
    (1 - \lambda)^{1 - r} e^{\lambda + (a-1) \lambda^2/2} F(\lambda).
\]
\end{proof}

The following lemma will be used to analyse critical
and super-critical simple digraphs.

\begin{lemma} \label{cor:lem:cauchy}
    Let $(\widetilde \phi_{r_i}(z, w,F_i(\cdot)))_{i=1}^k$
    be defined as in~\eqref{eq:phi:tilde:phi} by
    \[
      \widetilde  \phi_r(z, w; F(\cdot)) =
        \dfrac{1}{\sqrt{2 \pi \alpha}}
        \int_{-\infty}^{+\infty}
        (1 - z\alpha \beta e^{-i x})^r
        \exp \left(
            -\dfrac{x^2}{2\alpha} - z\beta e^{-i x}
        \right)
        F(z\alpha\beta e^{-i x})
        \mathrm dx,
  \]
    where \(\alpha=\log(1+w)\) and \(\beta=\sqrt{1+w}\).
    Let \( (r_i)_{i=1}^k \) be integers,
    \( (p_i)_{i=1}^k \) be integers summing to $1$,
    and set
    \[
        R = \sum_{i=1}^k r_i p_i
        \quad \text{and} \quad
        \widetilde \xi(z, w) = \prod_{i=1}^k \widetilde
        \phi_{r_i}(z, w; F_i(\cdot))^{p_i}\, ,
    \]
    where $F_i(\cdot)$ does not vanish on $\mathbb{C} \setminus \{0\}$
    for $i \in \{1, \ldots, k\}$.

    \noindent \textbf{Part (a).}
    If $p = \lambda / n$ with $\lambda \in (0, 1 - n^{\epsilon - 1/3})$
    for some $\epsilon > 0$ and $w = \frac{p}{1 - a p}$ for a fixed positive $a$,
    then
    \[
        (1 - p)^{\binom{n}{2}}
        n! [z^n] \frac{1}{\widetilde \xi(z, w)} \sim
        e^{\lambda + (a-1) \lambda^2/2}
        (1 - \lambda)^{1-R}
        \prod_{i=1}^k F_i(\lambda)^{-p_i}.
    \]

    \noindent \textbf{Part (b).}
    If \( w \) is of the form \( w = \frac{p}
    {1-ap }\) with $p= n^{-1}(1 + \mu
    n^{-1/3})$, $a$ fixed   and \( \mu = \smallo(n^{1/12})\), then
    \[
        (1-p)^ {\binom{n}{2}} n! [z^n] \dfrac{1}{\widetilde \xi(z,
          w)}    =
        n^{(R-1)/3}
        \frac{(-1)^R
        e^{(a+1)/2}
        2^{-(R+1)/3}}{\prod_{i=1}^k F_i(1)^{p_i}}
        \dfrac{1}{2 \pi i}
        \int_{-i \infty}^{i \infty}
        \dfrac{
            \exp (-\tfrac{\mu^3}{6} - \mu s)
        }
        {
            \prod_{i=1}^k \ai(r_i; -2^{1/3}s)^{p_i}
        }
        \mathrm ds\, .
    \]

    \noindent \textbf{Part (c).}
    Consider a fixed real value \( \theta \) such that
    \( \ai(r_i, 2^{1/3}\theta) \neq 0\) for $i \in \{1, \ldots, k\}$,
    and define  \( \rho = (1 - \theta w^{2/3}) (ew)^{-1} \).
    Then we have
    \[
        \dfrac{1}{2 \pi i}
        \oint_{|z| = \rho}
        \dfrac{1}{\widetilde
            \xi(z, w)
        }
        \dfrac{\mathrm dz}{z^{n+1}}
        =
        \mathcal O \left(
            \dfrac{w^{2/3}}{\widetilde\xi(\rho, w) \rho^n}
        \right).
    \]
    \end{lemma}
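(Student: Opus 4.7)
The plan is to mirror the proof of \cref{lem:cauchy} step by step, using \cref{corollary:complex:general} in place of \cref{prop:complex:general} at each invocation, and then carefully account for the different prefactors arising on the digraph side. The core fact enabling this reduction is the identity $\widetilde\phi_r(z,w;F(\cdot)) = \phi_r(z\sqrt{1+w},\log(1+w);F(\cdot))(1 + \mathcal O(w^{2/3}))$ established in the proof of \cref{corollary:complex:general}, which allows all the analytic inputs (uniform approximations in three regions, integrability of Airy ratios, Gaussian tail bounds) to be transferred verbatim.

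For part (a), I would invoke \cref{corollary:complex:general}(a) in the regime where $1 - |T(zw)| \geq w^{1/3 - \varepsilon}$ and $|ezw| \leq 1 + Kw^{2/3}$, which, together with $\sum p_i = 1$, yields the uniform approximation
\[
\widetilde\xi(z,w) \sim e^{-U(zw)/w - U(zw)/2}(1 - T(zw))^{R - 1/2}\prod_{i=1}^{k} F_i(T(zw))^{p_i}.
\]
This is precisely the shape required by \cref{corollary:lemma:analytic:first2}, which I apply with $\lambda' = \lambda/(1 - a\lambda/n)$ on the Cauchy contour $|z| = \lambda' e^{-\lambda'}/w$, giving the stated $e^{\lambda + (a-1)\lambda^2/2}(1-\lambda)^{1-R}\prod F_i(\lambda)^{-p_i}$.

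For parts (b) and (c), I would repeat the three-region decomposition from \cref{lem:cauchy}, splitting the contour $z = \rho e^{iu}$ into $u \in [-\pi, -w^{2/3-\varepsilon}]\cup[w^{2/3-\varepsilon},\pi]$, $u \in [-w^{2/3-\varepsilon}, -\alpha w^{2/3}]\cup[\alpha w^{2/3}, w^{2/3-\varepsilon}]$ and $u \in [-\alpha w^{2/3}, \alpha w^{2/3}]$, and estimate the integrand via parts (a), (b), (c) of \cref{corollary:complex:general} respectively. The first region contributes exponentially small terms, the second gives a contribution of order $w^{2/3}\widetilde\xi(\rho,w)^{-1}\rho^{-n}$ (after choosing $\alpha$ large enough that the tails of $\ai(r_i; 2^{1/3}(\theta - is))^{-p_i}\exp(2^{3/2}(\theta-is)^{3/2}/3)$ are small), and the third contributes a Riemann-sum approximation of the Airy integral; this immediately proves part (c), and for part (b) the third region produces, after the substitution $u = sw^{2/3}$ and $s \mapsto -is$, the main term.

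For part (b), the remaining work is bookkeeping of constants. The factor $e^{-1/4}$ distinguishing $\widetilde\phi_r$ from $\phi_r$ in \cref{corollary:complex:general}(c) accumulates, via $\sum p_i = 1$, to $e^{-1/4}$ in $\widetilde\xi(\rho,w)$, hence to $e^{1/4}$ in its reciprocal. This combines with the Stirling-type ratio arising from $(1-p)^{\binom{n}{2}}$ versus $e^{-wn^2/2}$: expanding $\log(1-p) = -p - p^2/2 + \mathcal O(p^3)$ together with $w = p + ap^2 + \mathcal O(p^3)$ and $pn = 1 + \mu n^{-1/3}$ gives
\[
\frac{(1-p)^{\binom{n}{2}}}{e^{-wn^2/2}} \sim \exp\!\left(\tfrac{pn}{2} + \bigl(\tfrac{a}{2} - \tfrac{1}{4}\bigr)p^2 n^2\right) \sim e^{(a+1)/2 - 1/4},
\]
whose product with the $e^{1/4}$ just identified yields the $e^{(a+1)/2}$ prefactor in the statement. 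The main obstacle I expect is exactly this constant tracking: making sure that every subleading correction in both $\widetilde\phi_r$ versus $\phi_r$ and in $(1-p)^{\binom{n}{2}}$ versus $e^{-wn^2/2}$ contributes correctly, and that the $\mathcal O(\mu^4 n^{-1/3})$ and $\mathcal O(w^{2/3})$ error terms do not conspire to modify the leading coefficient. The qualitative analytic input is otherwise a direct transcription of the multidigraph proof.
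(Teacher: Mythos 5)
Your proposal is correct and follows essentially the same route as the paper: all three parts transfer the multidigraph argument from \cref{lem:cauchy} to the simple/strict digraph setting by replacing each invocation of \cref{prop:complex:general} with the corresponding part of \cref{corollary:complex:general}, reducing Part (a) to \cref{corollary:lemma:analytic:first2}, and reusing the three-region decomposition for Parts (b) and (c). Your constant bookkeeping in Part (b) — combining the $e^{1/4}$ from $1/\widetilde\xi$ versus $1/\xi$ with the ratio $(1-p)^{\binom{n}{2}}/e^{-wn^2/2}\to e^{(a+1)/2-1/4}$ to get $e^{(a+1)/2}$ — is a slightly more modular presentation of the paper's Stirling-type product $(1-p)^{\binom{n}{2}}e^{1/(2w)}n!e^nw^n\sim\sqrt{2\pi n}\,e^{1/4+a/2}$, and the two computations agree; the residual $a\mu n^{-1/3}$ and $\mu^2 n^{-2/3}$ terms vanish precisely under the hypothesis $\mu=\smallo(n^{1/12})$, which you correctly rely on.
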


    \begin{proof}
    The proof is established along similar lines as in \cref{lem:cauchy}.

    \textbf{Part (a).}
    If $|z w| \leq \lambda'e^{- \lambda'}$, then
    \[
        |e z w| \leq \lambda'e^{1 - \lambda'} < 1.
    \]
    If furthermore $\lambda \in (0, 1 - n^{\epsilon - 1/3})$ for some $\epsilon > 0$
    (and thus, $\epsilon < 1/3$ so $\lambda$ exists), then
    $\lambda'$ is in $(0, 1 - n^{\epsilon / 2 - 1/3})$ for $n$ large enough, so
    \[
        1 - |T(zw)| \geq 1 - T(|z w|) \geq
        1 - T(\lambda'e^{-\lambda'}) \geq 1 - \lambda' \geq n^{\epsilon/2 - 1/3} \geq
        \lambda'^{\epsilon/2 - 1/3} w^{1/3 - \epsilon/2}
        \geq w^{1/3 - \epsilon/2}.
    \]
    Thus, Part (a) of \cref{corollary:complex:general} is applicable
    and provides the approximation
    \[
        \widetilde \phi_r(z,w; F(\cdot)) \sim
        e^{-U(z w) / w - U(zw)/2}
        (1 - T(zw))^{r - 1/2}
        F(T(zw))
    \]
    uniformly for $|z w| \leq \lambda' e^{-\lambda'}$.
    Taking the product, using $\sum_{i=1}^k p_i = 1$ and the definition of $R$, we obtain
    \[
        \frac{1}{\widetilde \xi(z,w)} \sim
        e^{U(zw)/w + U(zw)/2}
        (1 - T(zw))^{1/2 - R}
        \prod_{i=1}^k F_i(T(zw))^{-p_i}.
    \]
    We now apply \cref{corollary:lemma:analytic:first2} to conclude
    \[
        (1-p)^{\binom{n}{2}} n! [z^n] \frac{1}{\widetilde \xi(z,w)}
        \sim
        e^{\lambda + (a-1) \lambda^2/2}
        (1 - \lambda)^{1-R}
        \prod_{i=1}^k F_i(\lambda)^{-p_i}.
    \]

      \textbf{Part (b)}
      Now, assume in addition that $ w =  \frac{p}{1-ap}$ and $\theta
      = 0$, so $\rho = (ew)^{-1}$. We follow the proof of Part (b)
      of \cref{lem:cauchy} to see
      how the substitution of $w$ into  $\frac{p}{1-ap}$ instead of
      $p$  changes  the final results, besides the factor
      $e^{-1/4}$. We want to compute
      \begin{align*}
          (1-p)^ {\binom{n}{2}}  n! [z^n] \frac{1}{\widetilde\xi(z,w)}
        &=
        (1-p)^ {\binom{n}{2}}  n! \frac{\rho^{-n}}{2 \pi}
        \int_{-\pi}^{\pi}
        \frac{e^{- i u n}}{\widetilde \xi(\rho e^{i u}, w)}
        \mathrm d u\\
        &\sim    e^{1/4}
        \left(\prod_{i=1}^k F_i(1)^{-p_i}\right)
        (1-p)^ {\binom{n}{2}} n!
        \frac{\rho^{-n}}{2 \pi}
        \int_{-\pi}^{\pi}
        \frac{e^{- i u n}}{\xi(\rho e^{i u}, w)}
        \mathrm d u\, .
      \end{align*}
    As shown is the proof  of \cref{lem:cauchy}, the contributions of the
    first and second regions can be neglected since $\mu =
    o(n^{1/12})$ and the contribution of the third region becomes
    \[
                  e^{1/4}\left(\prod_{i=1}^k F_i(1)^{-p_i} \right)
                  (1-p)^ {\binom{n}{2}}  n!
                  \frac{\rho^{-n}}{2 \pi}
        \int_{-\alpha w^{2/3}}^{\alpha w^{2/3}}
        \frac{e^{- i u n}}{\xi(\rho e^{i u}, w)}
        \mathrm d u\, ,
    \]
    which is,
    by means of asymptotic approximations of
    $\xi(\rho e^{i u}, w)$ from part (c) of
    \cref{prop:complex:general}
    and under a change of variable $u = s w^{2/3}$,
    asymptotically equivalent to
      \begin{equation}
        \label{eq:third-region-simple}
      \frac{e^{1/4} (1-p)^ {\binom{n}{2}} (-1)^R
        2^{-R/3 - 1/3}
        w^{-R/3 + 5/6}
        e^{\frac{1}{2 w} }}
        {\prod_{i=1}^k F_i(1)^{p_i}}
         \frac{n! e^n w^n}{(2 \pi)^{3/2}}
        \int_{-\alpha}^{\alpha}
        \frac{\exp \left( - i s w^{2/3} n + i s w^{-1/3} \right)}
            {\prod_{i=1}^k \ai(r_i; - i 2^{1/3} s)^{p_i}}
            \mathrm d s,
      \end{equation}
      for some large enough $\alpha$. Then, we get
      \[
        \begin{split}
          (1-p)^ {\binom{n}{2}}  \cdot
          e^{1/(2w)} \cdot
          n! \cdot e^{n}\cdot
          w^n
          & \sim
          e^{1/4} e^{-n/2}e^{-\mu n ^{2/3}/2}
          \cdot e^{-\mu^3/2}e^{-a/2}e^{n/2}e^{-\mu n^{2/3}/2}e^{\mu^2
            n^{1/3}} \\
          &\qquad \cdot \sqrt{2\pi n}n^{n}e^{-n}
          \cdot e^{n} \cdot e^{\mu^3/2}e^{a}
          e^{\mu n^{2/3}}e^{-\mu^{2}n^{1/3}}\\
         & = \sqrt{2\pi n} e^{1/4+a/2}\, .
        \end{split}
      \]
      Combining this last result with \eqref{eq:third-region-simple}
      leads to the desired result since  the relation  $
        - i s w^{2/3} n + i s w^{-1/3}
        =
        - i \mu s + \bigO(n^{-1/3})
    $ is still valid and $w^{5/6} \sqrt{n}\sim n^{-1/3}$.

      \textbf{Part (c)} We follow the proof of \cref{lem:cauchy}
      and define $\xi(z,w)$ and the three regions in the same way.
      By comparing the asymptotics of
      $\phi_{r_i}(z,w; F_i(\cdot))$ in \cref{prop:complex:general}  and  $\widetilde
      \phi_{r_i}(z, w,F_i(\cdot))$ in \cref{corollary:complex:general}, and
      applying the fact that $\sum_{i=1}^k p_i = 1$, we
      observe that
      \[
        \widetilde \xi(z, w) \sim
        \xi(z, w) e^{-U(zw)/2}
      \]
      for the first region and
      \[
        \widetilde \xi(z, w) \sim
        \xi(z, w)
        e^{-1/4}
      \]
      for the second and third regions. Bringing the factor
      $e^{-U(zw)/2}
      $ or  $e^{-1/4}$ with $\xi(z,w)$
      does not affect the
      process of extracting asymptotic coefficients in the three regions.
  \end{proof}

\section{Asymptotics of multidigraph families}
\label{section:asymptotics:multidigraphs}

Having established the asymptotic approximations from~\cref{section:external:integral},
we can compute the asymptotic probabilities of various digraph families.
We start with the easiest digraph model, which is the model of multidigraphs,
and we will extend the technique to the case of simple digraphs
in~\cref{section:asymptotics:simple}.

\subsection{Asymptotics of directed acyclic multigraphs}
\label{section:dag:multi}

As we have said, the probability that a (simple) digraph is acyclic has been
computed
when \( p \) was constant in~\cite{Bender86}.
Their results can be extended to bound the probability
that a  multigraph is acyclic in the stated range since
acyclicity is a monotone decreasing graph property.
Hereby, we compute this probability for the remaining
part when \( p \to 0 \), distinguishing three regimes.

\begin{theorem}\label{theo:MD_model_lambda_fixed}
  Let \( p = \lambda/n \), \( \lambda \geq 0 \) fixed.
  Then, the probability $\mathbb P_{\mathsf{acyclic}}(n,p)$  that a random multidigraph
  $\DiERMulti(n,p)$ is   acyclic satisfies the following asymptotic
  formulas as $n\to \infty$:
    \[
        \mathbb P_{\mathsf{acyclic}}(n,p) \sim
        \begin{cases}
            1 - \lambda, & \lambda \in [0,1) \, ; \\
            \gamma_1 n^{-1/3}, & \lambda = 1 \, ; \\
            \gamma_2(\lambda) n^{-1/3}
            \exp \Big(
            {-\alpha(\lambda) n} + a_1 \beta(\lambda) n^{1/3}
            \Big), &
            \lambda > 1 \, ,
        \end{cases}
    \]
    where
    \[
    \arraycolsep=1.4pt\def\arraystretch{2.2}
    \begin{array}{rlcrl}
        \gamma_1 &= \dfrac{2^{-1/3}}{2\pi i}
                    \displaystyle\int_{-i\infty}^{i\infty}
                    \frac{1}{\ai(-2^{1/3}t)}
                    \mathrm dt \approx 0.488736706,
        %
        %
        %
        %
    &\quad&
        \gamma_2(\lambda)
            &= \dfrac{2^{-2/3}}{\mathrm{Ai'}(a_1)}
              \lambda^{5/6}e^{(\lambda-1)/6},
              \\[.5em]
        \alpha(\lambda)
            &= \dfrac{\lambda^2-1}{2\lambda} - \log \lambda,
    &\quad&
        \beta(\lambda)
            &= 2^{-1/3}\lambda^{-1/3} (\lambda - 1),
    \end{array}
    \]
and $a_1 \approx -2.338107$ is the zero of the Airy function $\ai(z)$ with the
smallest modulus, and where the numerical value of \( \ai'(a_1) \) is given by
\( \ai'(a_1) \approx 0.701211 \).
\end{theorem}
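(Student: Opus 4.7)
The starting point is the exact expression
\[
    \mathbb P_{\mathsf{acyclic}}(n,p) = e^{-n^2 p/2}\, n!\, [z^n]\, \frac{1}{\phi(z,p)},
\]
obtained from \cref{lemma:P:Dnp} combined with \cref{lemma:ggf:dags}. The plan is to apply \cref{lem:cauchy} with $k=1$, $r_1=0$, $p_1=1$, $F_1\equiv 1$ (so $R=0$) in the two ``easy'' regimes, and to use a residue calculation built on \cref{theo:smallest_zero} in the supercritical regime.

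In the subcritical case $\lambda \in [0,1)$, part~(a) of \cref{lem:cauchy} directly yields $\mathbb P_{\mathsf{acyclic}}(n,p) \sim (1-\lambda)^{1} F_1(\lambda)^{-1} = 1-\lambda$. In the critical case $\lambda = 1$ (i.e.\ $\mu=0$), part~(b) of the same lemma gives
\[
    \mathbb P_{\mathsf{acyclic}}(n,p) \sim n^{-1/3} \cdot 2^{-1/3} \cdot \frac{1}{2\pi i}\int_{-i\infty}^{i\infty} \frac{\mathrm ds}{\ai(-2^{1/3}s)},
\]
which is precisely $\gamma_1 n^{-1/3}$.

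The substantive case is $\lambda > 1$. Here I will deform the contour in $[z^n]\phi(z,p)^{-1} = \frac{1}{2\pi i}\oint \phi(z,p)^{-1} z^{-n-1}\,\mathrm dz$ outward, past the smallest positive zero $\varrho_1(p)$ of $\phi(\cdot,p)$ (shown to be simple in \cref{theo:smallest_zero}), to a circle of radius $\rho = (ep)^{-1}(1-\theta p^{2/3})$ with $\theta$ chosen strictly between $2^{-1/3}a_2$ and $2^{-1/3}a_1$, so that only $\varrho_1(p)$ is enclosed. The residue theorem then produces
\[
    [z^n]\,\phi(z,p)^{-1} = -\frac{1}{\partial_z\phi(\varrho_1(p),p)\,\varrho_1(p)^{n+1}} + \frac{1}{2\pi i}\oint_{|z|=\rho}\frac{\mathrm dz}{\phi(z,p)\,z^{n+1}}.
\]
The remainder integral is bounded by part~(c) of \cref{lem:cauchy}, giving an $O(p^{2/3}/(\phi(\rho,p)\rho^n))$ contribution which, after comparison with the asymptotic size of the residue term via part~(c) of \cref{prop:complex:general}, is shown to be negligible because $\ai(2^{1/3}\theta)$ is bounded below. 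The asymptotics of $\varrho_1(p)$ and of $\partial_z\phi(\varrho_1(p),p)$ are supplied by \cref{theo:smallest_zero} with $F\equiv 1$. Substituting these expansions, applying Stirling's formula to $n!$, and multiplying through by $e^{-n^2 p/2}$, the exponential contributions reorganise: the $n\log\lambda$ from $\varrho_1(p)^{-n}$ combines with $-n\lambda/2$ from the prefactor and $n/(2\lambda)$ from $\exp(1/(2p))$ to yield $-\alpha(\lambda)n$; the $p^{2/3}$ and $p^{-1/3}$ terms assemble into $a_1\beta(\lambda)n^{1/3}$; and the $O(1)$ contributions (including $e^{\lambda/6}$ from expanding $(1-p/6)^{-n}$, together with $e^{-1/6}$ from $e/e^{7/6}$ after Stirling) collapse into $\gamma_2(\lambda) = 2^{-2/3}\lambda^{5/6}e^{(\lambda-1)/6}/\ai'(a_1)$.

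The main obstacle lies in the supercritical bookkeeping: one must retain the expansion of $\varrho_1(p)$ to order $p^{4/3}$ in \cref{theo:smallest_zero}, because terms that are nominally $O(p^{4/3})$ generate $n \cdot O(p^{4/3}) = O(\lambda^{4/3}n^{-1/3})$ errors in $\log\varrho_1(p)^{-n}$ that must remain under control; and one must verify explicitly that the remainder contour on $|z|=\rho$ is of strictly smaller order than the residue, which amounts to confirming that the exponent $-1/(2w)+2^{-1/3}\theta/w^{1/3}$ from \cref{prop:complex:general}(c) exceeds the corresponding exponent at $\varrho_1$ by a factor that dominates the polynomial $p^{2/3}$ prefactor. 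The subcritical and critical arguments are essentially immediate applications of the tools of \cref{section:external:integration:multi}.
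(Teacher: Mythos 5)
Your proof is correct and mirrors the paper's own argument: the subcritical and critical cases come directly from parts (a) and (b) of \cref{lem:cauchy}, and the supercritical case proceeds via the residue theorem around $\varrho_1(w)$ from \cref{theo:smallest_zero}, with the remainder contour bounded by part (c) of \cref{lem:cauchy} and rendered negligible by the choice $\theta<2^{-1/3}a_1$. Your explicit tracking of the $e^{\lambda/6}$ (from $(1-w/6+\cdots)^{-n-1}$) and $e^{-1/6}$ (from the $e$ in $(ew)^{n+1}$ against the $e^{7/6}$ in $\kappa_1$) makes concrete a bookkeeping step that the paper simply delegates to a computer algebra system.
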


\begin{proof}
    Using \cref{th:summary:multidigraph}, the probability $\mathbb
    P_{\mathsf{acyclic}}(n,p)$ that a random multidigraph $\DiERMulti(n,p)$ is acyclic
    is given by
    \[
        \mathbb P_{\mathsf{acyclic}}(n,p) =
        e^{- n^2 p / 2} n ! [z^n] \frac{1}{\phi(z,p)}\,.
    \]
    We therefore let \( w = p \).
    Recall that $\phi(z,w) = \phi_0(z,w; 1)$.
    The cases $\lambda \in [0, 1)$ and $\lambda = 1$
    are direct applications of Parts (a) and (b) of \cref{lem:cauchy}
    with $k = 1$, $r = 0$, $p = 1$ and $F(z) = 1$.

    \textbf{Case $\lambda >1$:}
    Let \( a_1 > a_2 > a_3 > \cdots \) denote the roots of the Airy function
    \( \ai(z) \) (which are all negative real numbers).
    Let  $\rho=(1-\theta w^{2/3})(ew)^{-1}$, where $\theta$ is fixed and
    satisfies $a_2< 2^{1/3}\theta < a_1$. This implies
    $\ai(2^{1/3}\theta)\neq 0$ and, by \cref{theo:smallest_zero},
    $\varrho_1(w)<\rho<\varrho_2(w)$ when $n$ is large enough.
    Hence, by the residue theorem, we have
    \[
        [z^n] \frac{1}{\phi(z,w)} =
        -\frac{1}{\varrho_1(w)^{n+1}
        \partial_z\phi (\varrho_1(w),w)}+
        \frac{1}{2\pi i}\oint_{|z|=\rho}\frac{1}{\phi(z,w)z^{n+1}}
        \mathrm dz.
    \]
    Then, we use \eqref{eq:cauchy_up} of \cref{lem:cauchy} to
    estimate the integral on the right-hand side, so we get
    \[
        [z^n] \frac{1}{\phi(z,w)} =
        -\frac{1}{\varrho_1(w)^{n+1}\partial_z\phi(\varrho_1(w),w)}
        +\mathcal O\left(\frac{w^{2/3}}{|\phi(\rho,w)|\rho^n}\right).
    \]
    Next, using~\cref{theo:smallest_zero}
    to estimate $\varrho_1(w)$ and $ \partial_z \phi (\varrho_1(w),w)$
    and Part~(c) of~\cref{prop:complex:general} to estimate
    $\phi(\rho,w)$, we obtain,  with the help of a computer algebra system,
    \begin{align}
        -\frac{n!e^{-wn^2/2}}{\varrho_1(w)^{n+1}
        \partial_z\phi(\varrho_1(w),w)}
        & \sim \gamma_2(\lambda) n^{-1/3}
        \exp \left(
          -\alpha(\lambda)n+2^{-1/3}a_1\lambda^{-1/3}(\lambda-1)n^{1/3}
        \right)
        \label{eq:sup_m},\\[.5em]
        \frac{n!e^{-wn^2/2} w^{2/3}}{|\phi(\rho,w)|\rho^n}
        &=
        \mathcal O \left( n^{-1/3}
            \exp\left(
                -\alpha(\lambda)n+\theta\lambda^{-1/3}(\lambda-1)n^{1/3}
            \right)
        \right),\label{eq:sup_e}
    \end{align}
    where the constants $\gamma_2(\lambda)$ and $\alpha(\lambda)$
    are precisely as defined in
    \cref{theo:MD_model_lambda_fixed}. Since we chose $\theta$ in such a way
    that $\theta<2^{-1/3}a_1$, the left-hand side of \eqref{eq:sup_e} is
    exponentially (in $n^{1/3}$) smaller than that of  \eqref{eq:sup_m} as
    $n\to\infty$. Therefore,  the right-hand side of \eqref{eq:sup_m} is
    indeed the main term of $\mathbb P_{\mathsf{acyclic}}(n,p).$
\end{proof}

\begin{remark}
\label{remark:sum:residues}
    We can further compare the asymptotics in the case when $\lambda = 1$ and in the supercritical case when $\lambda > 1$, but with taking the limit $\lambda \to 1^+$.
    By analysing the third expression in~\cref{theo:MD_model_lambda_fixed}, we note that $\alpha(1) = \beta(1) = 0$, and furthermore, direct computation shows that $\gamma_2(1) \approx 0.898389$, while $\gamma_1 \approx 0.488736$. This discontinuity is explained by the fact that taking the limit $\lambda \to 1^+$ does not lead to the correct asymptotic probability, because according to the proof of~\cref{theo:MD_model_lambda_fixed},
    $\lambda$ needs to be separated from $1^+$ by a certain quantity that cannot go to zero too fast: the scaling parameter $\mu$ should go to infinity.

    The same phenomenon can be observed from a different viewpoint.
    For any fixed \( \mu > 0 \) we can expand the contour integral for \( \lambda = 1 \) as a sum of residues, which yields
    \[
        \mathbb P_{\mathsf{acyclic}}(n, \tfrac{1}{n}(1 + \mu n^{1/3}))
        \sim
        2^{-2/3} n^{-1/3} e^{-\mu^3/6}
        \sum_{k \geq 1}
        \dfrac{e^{2^{-1/3} \mu a_k}}{\ai'(a_k)},
    \]
    where \( (a_k)_{k = 1}^\infty \) are the roots of the Airy function sorted
    by their absolute values. Indeed, an integral on a line segment $[-iR, iR]$ can be completed by an integral over a semicircle $z = R e^{i \theta}$, where $\theta \in [\pi/2, 3\pi/2]$, which is negligible because the inverse Airy function decays at exponential speed when $\theta \neq \pi$, and the integrand is dominated by $e^{2^{-1/3} \mu z}$ when $\theta$ is in the vicinity of $\pi$. Moreover, it can be proven that the above series is convergent because, due to~\cite[9.9]{NIST}, the sequence \( a_k \) is negative and grows like $k^{2/3}$, and the sequence $\ai'(a_k)$ is alternating and is growing like $k^{1/6}$ as $k \to \infty$.

    However, in the supercritical
    case, when $\lambda > 1$,  we are considering only the first summand of this sum of residues, because the other residues are negligible when $\mu \to \infty$.
    Therefore, taking the limit $\lambda \to 1^+$ does not give the desired sum, which explains the observed discontinuity.
\end{remark}

\begin{figure}[ht]
    \RawFloats
    \begin{minipage}{.5\textwidth}
    \includegraphics[width=\linewidth]{./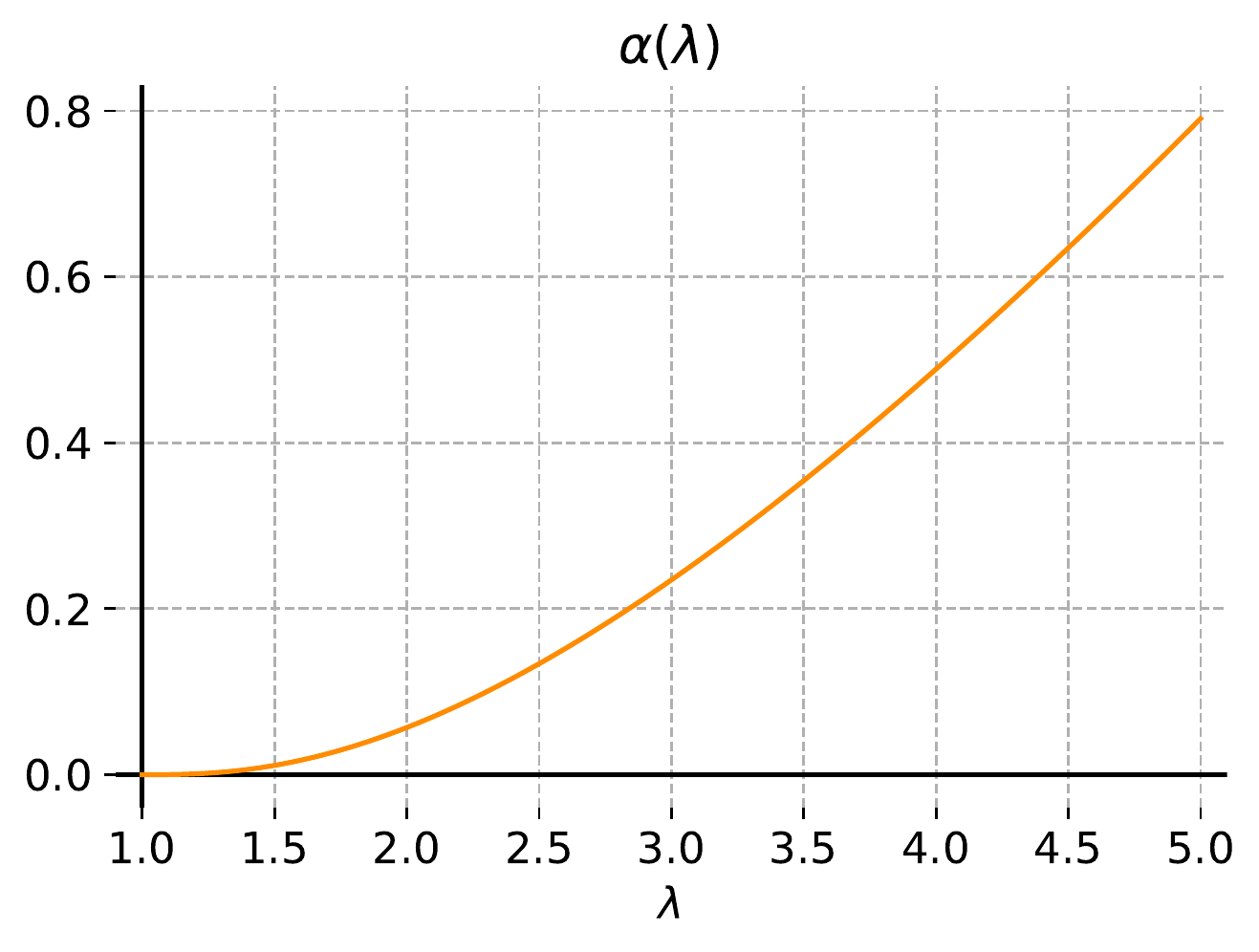}
    \caption{Numerical plot of $\alpha(\lambda)$}
    \end{minipage}%
    \begin{minipage}{.5\textwidth}
    \includegraphics[width=\linewidth]{./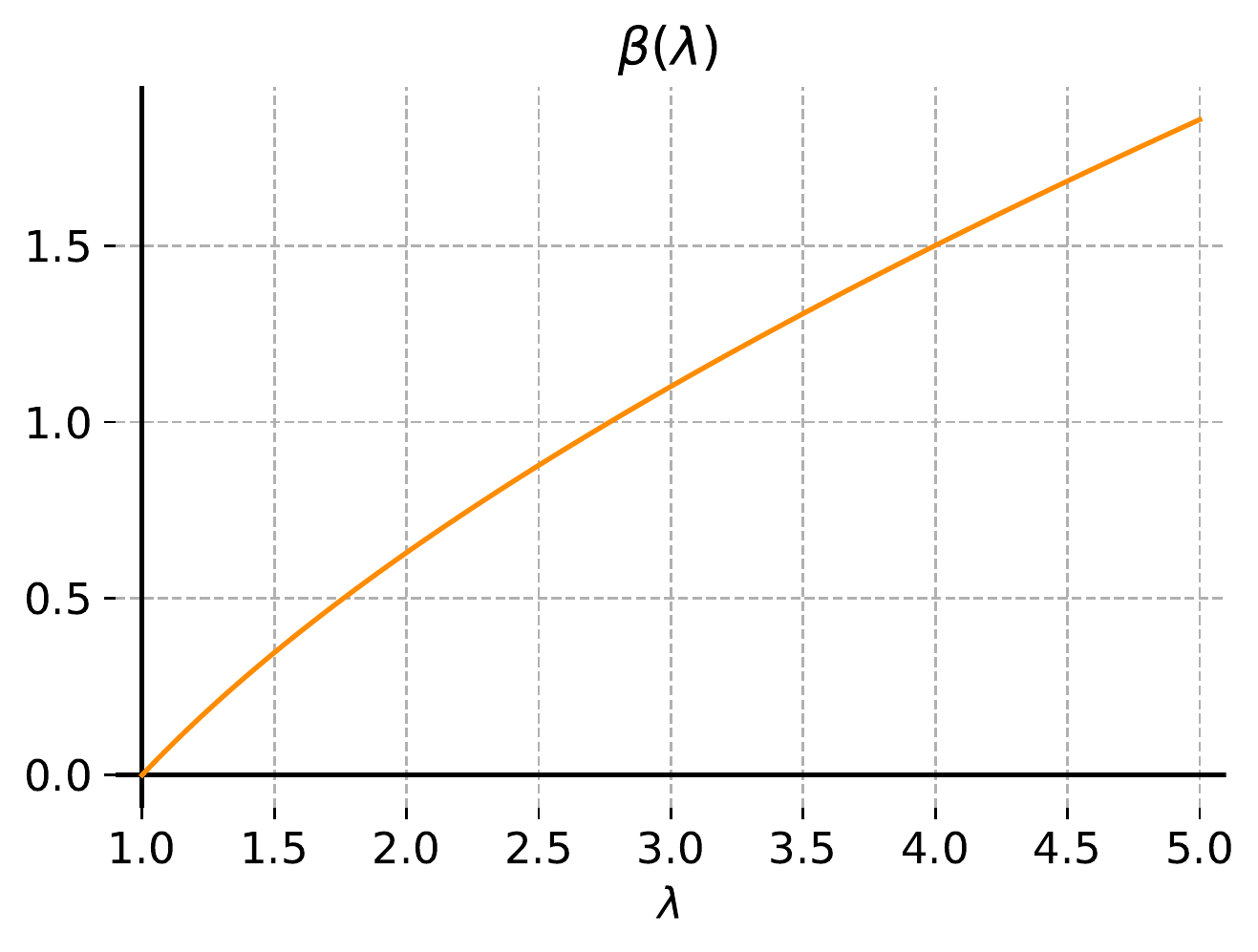}%
    \caption{Numerical plot of $\beta(\lambda)$}
    \end{minipage}
\end{figure}

The asymptotic result from \cref{theo:MD_model_lambda_fixed}
can be also refined and explicitly stated inside the
critical window.

\begin{theorem}
  \label{th:acyclic-window}
  Let \( p = (1+\mu n^{-1/3})/n \),
  $a_1 \approx -2.338107$ denote the zero of the Airy function of smallest absolute value,
  and let $\varphi(\mu)$ denote the function
  \[
      \varphi(\mu) =
      2^{-1/3} e^{-\mu^3/6} \frac{1}{2 \pi i}
      \int_{-i \infty}^{i \infty}
      \frac{e^{-\mu s}}{\mathrm{Ai}(-2^{1/3} s)}
      \mathrm{d} s.
  \]
  Then, the probability $\mathbb P_{\mathsf{acyclic}}(n,p)$  that a random multidigraph
  $\DiERMulti(n,p)$ is   acyclic satisfies the following asymptotic
  formula as $n\to \infty$:
    \[
      \mathbb P_{\mathsf{acyclic}}(n,p) \sim
      \begin{cases}
        |\mu|n^{-1/3} & \mbox{ if $\mu \to - \infty$ with $|\mu| = \smallo(n^{1/3})$}\, ;\\
      \varphi(\mu) n^{-1/3}
      & \mbox{ if $\mu = \smallo(n^{1/12})$} \, ;\\
      \dfrac{2^{-2/3}}{\mathrm{Ai'}(a_1)}
      \exp\left( - \dfrac{\mu^3}{6}+2^{-1/3}a_1\mu
        \right) n^{-1/3}&
      \mbox{ if $\mu \to +\infty$ with $\mu = \smallo(n^{1/12})$}\, .
      \end{cases}
    \]
\end{theorem}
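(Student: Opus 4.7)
The starting point is the exact formula
\[
\mathbb P_{\mathsf{acyclic}}(n,p) = e^{-n^2 p/2}\, n!\, [z^n]\frac{1}{\phi(z,p)}
\]
from \cref{th:summary:multidigraph}, specialised to $w=p=(1+\mu n^{-1/3})/n$. I plan to recover all three cases from \cref{lem:cauchy} applied with $k=1$, $r_1=0$, $p_1=1$, $F_1\equiv 1$ (so $R=0$), matching the structure used to obtain Cases~$\lambda=1$ and $\lambda>1$ of \cref{theo:MD_model_lambda_fixed}.

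\emph{Critical window} ($\mu = o(n^{1/12})$). Part~(b) of \cref{lem:cauchy} applies directly and yields
\[
\mathbb P_{\mathsf{acyclic}}(n,p) \sim n^{-1/3}\cdot 2^{-1/3}\cdot\frac{1}{2\pi i}\int_{-i\infty}^{i\infty}\frac{e^{-\mu^3/6-\mu s}}{\ai(-2^{1/3}s)}\,\mathrm{d}s = \varphi(\mu)\, n^{-1/3},
\]
which is the middle case of the theorem.

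\emph{Supercritical tail} ($\mu\to+\infty$, $\mu=o(n^{1/12})$). I am still in the critical-window regime where $\mathbb P_{\mathsf{acyclic}}\sim\varphi(\mu)n^{-1/3}$, so the task reduces to evaluating $\varphi(\mu)$ as $\mu\to+\infty$. I would close the vertical contour with a large semicircle in the right half-plane. The integrand is meromorphic there with simple poles exactly at $s_k=-a_k/2^{1/3}>0$ and residues
\[
\mathop{\mathrm{Res}}_{s=s_k}\frac{e^{-\mu s}}{\ai(-2^{1/3}s)} = -\frac{e^{\mu a_k/2^{1/3}}}{2^{1/3}\ai'(a_k)}.
\]
The residue theorem then produces the series from \cref{remark:sum:residues}, and since $|a_1|<|a_k|$ for $k\geq 2$, the first term dominates as $\mu\to+\infty$, giving the third case with multiplicative constant $2^{-2/3}/\ai'(a_1)$ and exponent $-\mu^3/6+2^{-1/3}a_1\mu$.

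\emph{Subcritical tail} ($\mu\to-\infty$, $|\mu|=o(n^{1/3})$). Here $\lambda=1+\mu n^{-1/3}<1$ satisfies $(1-\lambda)n^{1/3}=|\mu|\to\infty$, so Part~(a) of \cref{lem:cauchy} gives
\[
\mathbb P_{\mathsf{acyclic}}(n,p) \sim (1-\lambda)^{1-R}\cdot F_1(\lambda)^{-1} = 1-\lambda = |\mu|\,n^{-1/3},
\]
the first case of the theorem.

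\emph{Main obstacles.} The purely bookkeeping difficulty is that Part~(a) of \cref{lem:cauchy} is stated with $\lambda<1-n^{\epsilon-1/3}$ for \emph{fixed} $\epsilon>0$, which does not cover $|\mu|$ growing very slowly (say $|\mu|=\log\log n$). One either inspects the proof of \cref{lemma:analytic:first} to see that only $(1-\lambda)n^{1/3}\to\infty$ is needed, or observes that the slow regime overlaps the window $|\mu|=o(n^{1/12})$ where Part~(b) applies and separately establishes $\varphi(\mu)\sim|\mu|$ as $\mu\to-\infty$ (by matching with Part~(a) on the overlap $n^{\epsilon}\leq|\mu|=o(n^{1/12})$, or by a direct saddle-point analysis of the inner integral). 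The genuine analytic step lies in Case~3: to justify closing the contour, one picks radii $R_N$ strictly between consecutive $s_N$ and $s_{N+1}$ and shows that the semicircular arc contributes a vanishing amount as $N\to\infty$. This combines the bound $|e^{-\mu s}|=e^{-\mu\Re s}$, which decays on the interior of the right half-plane for $\mu>0$, with the Airy asymptotics~\eqref{eq:ai_asymp}: $|1/\ai(-2^{1/3}s)|$ decays super-exponentially on the imaginary axis (where $\arg(-2^{1/3}s)=\pm\pi/2$ is in the exponential-decay sector of $\ai$) and is bounded between poles on the positive real axis. The convergence of the resulting residue series, already indicated in \cref{remark:sum:residues}, rests on the standard growth rates $|a_k|\asymp k^{2/3}$ and $|\ai'(a_k)|\asymp k^{1/6}$.
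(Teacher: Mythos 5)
Your subcritical and critical cases coincide with the paper's proof: Part (a) resp. Part (b) of \cref{lem:cauchy} with $k=1$, $r_1=0$, $p_1=1$, $F_1\equiv1$, and the overlap argument (applying Part (b) on a range $n^\epsilon\leq|\mu|=o(n^{1/12})$ that intersects the range of Part (a), then exploiting that $\varphi$ is $n$-independent to deduce $\varphi(\mu)\sim|\mu|$) to bridge the gap left by the condition $\lambda<1-n^{\epsilon-1/3}$ in Part (a). Regarding your ``first option'' of inspecting the proof of \cref{lemma:analytic:first}: that lemma indeed only needs $(1-\lambda)n^{1/3}\to\infty$, but the bottleneck is earlier, namely the hypothesis $1-|T(zw)|\geq w^{1/3-\varepsilon}$ of Part~(a) of \cref{prop:complex:general}, which genuinely forces $1-\lambda\geq n^{\epsilon-1/3}$; so the overlap route (your ``second option'') is the one that actually works, and it is the paper's choice.

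Your supercritical case takes a genuinely different route. The paper re-runs the $\lambda>1$ argument of \cref{theo:MD_model_lambda_fixed} --- Cauchy's formula on a circle of radius $\rho$ between $\varrho_1(w)$ and $\varrho_2(w)$, residue at $\varrho_1(w)$ via \cref{theo:smallest_zero}, error term controlled by Part~(c) of \cref{lem:cauchy} --- and shows this survives $\lambda\to1^+$ as long as $(\lambda-1)n^{1/3}=\mu\to\infty$, Taylor-expanding $\alpha(\lambda),\beta(\lambda),\gamma_2(\lambda)$ near $\lambda=1$ to recast everything in terms of $\mu$. You instead stay entirely within Part (b), using the uniformity of $\mathbb P_{\mathsf{acyclic}}\sim\varphi(\mu)n^{-1/3}$ for $\mu=o(n^{1/12})$, and then analyse the fixed function $\varphi(\mu)$ as $\mu\to+\infty$ by closing the contour in the right $s$-half-plane (where the poles $s_k=-a_k/2^{1/3}>0$ live) and reading off the dominant residue at $s_1$. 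Your residue calculation and the arc-estimate sketch (radii between consecutive poles, exponential decay of $e^{-\mu s}$ near the real axis versus super-exponential decay of $1/\ai(-2^{1/3}s)$ away from it) are sound, and the approach recovers precisely the residue series of \cref{remark:sum:residues}. The trade-off: your route keeps the $n$-asymptotics and the $\mu$-asymptotics cleanly separated (a single appeal to Part (b), then a purely classical analysis of a $\mu$-dependent Airy integral), at the cost of having to justify the contour closure; the paper instead leans on machinery it already built for fixed $\lambda>1$ and avoids the contour closure, but has to track the $\lambda\to1^+$ limit of several expansions. Note also that the paper actually runs the implication in the opposite direction in \cref{th:digraph-acyclic-window}: it deduces $\varphi(\mu)\sim 2^{-2/3}\ai'(a_1)^{-1}e^{-\mu^3/6+2^{-1/3}a_1\mu}$ \emph{from} Parts (b) and (c) of \cref{th:acyclic-window}, whereas you establish that asymptotic of $\varphi$ independently and use it to derive Part (c).
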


\begin{proof}
    We follow the same steps as in the proof of \cref{theo:MD_model_lambda_fixed}.
    Recall that, according to~\cref{th:summary:multidigraph},
    the probability we are interested in is
    \[
        \mathbb P_{\mathsf{acyclic}}(n,p) =
        e^{-pn^2/2} n ! [z^n] \frac{1}{\phi(z, p)},
    \]
    and we set
    \[
        w = p = \frac{\lambda}{n}
        =
        \frac{1}{n} \left(1 + \mu n^{-1/3} \right).
    \]
    \textbf{First result.}
    We start by proving a weaker version of the first result,
    adding the assumption $|\mu| \geq n^{\epsilon}$
    for some small enough positive $\epsilon$.
    During the proof of the second part, we will be able to complete
    the full range of \( \mu \to -\infty \) as \( n \to \infty \).
    We apply Part (a) of \cref{lem:cauchy}
    with \( \xi(z, w) = \phi(z, w) = \phi_0(z,w; 1)\),
    $k = 1$, $r = 0$, $p = 1$ and $F(z) = 1$ to conclude.

    \noindent \textbf{Second result.}
    The result follows directly from Part (b) of \cref{lem:cauchy}.
    In the first result, we could assume $\epsilon$ as small as wanted,
    so let us fix $\epsilon$ in $(0, 1/12)$.
    Observe that the range of $\mu$ of this second result
    overlaps with the first result
    (take, for example, $\mu = - n^{\alpha}$ for any $\alpha \in [\epsilon, 1/12)$).
    We deduce, for $\mu$ in this intersection,
    \[
        \mathbb P_{\mathsf{acyclic}}(n,p) \sim \varphi(\mu) n^{-1/3} \sim |\mu| n^{-1/3}.
    \]
    Since the function \( \varphi(\mu) \) does not depend on \( n \), we can conclude that
    $\varphi(\mu) \sim |\mu|$ as $\mu \to -\infty$.
    This completes the proof of the first result of the theorem.

    \noindent \textbf{Third result.}
    We follow the same proof as for the case $\lambda > 1$
    of \cref{theo:MD_model_lambda_fixed},
    arriving at the conclusion that
    $\mathbb P_{\mathsf{acyclic}}(n,p)$
    is the sum of the two terms given
    in~\cref{eq:sup_m,eq:sup_e},
    and recalled here for convenience:
    \begin{align*}
        -\frac{n!e^{-wn^2/2}}{\varrho_1(w)^{n+1}
        \partial_z\phi(\varrho_1(w),w)}
        & \sim \gamma_2(\lambda) n^{-1/3}
        \exp \left(
            -\alpha(\lambda)n+2^{-1/3}a_1\lambda^{-1/3}(\lambda-1)n^{1/3}
        \right)\,,\\[.5em]
        \frac{n!e^{-wn^2/2} w^{2/3}}{|\phi(\rho,w)|\rho^n}
        &=
        \mathcal O \left( n^{-1/3}
            \exp\left(
                -\alpha(\lambda)n+\theta\lambda^{-1/3}(\lambda-1)n^{1/3}
            \right)
        \right) \, .
    \end{align*}
    Since $\lambda$ converges to $1$, the term $\gamma_2(\lambda)$
    converges to
    \[
        \gamma_2(1) = \frac{2^{-2/3}}{\mathrm{Ai'}'(a_1)}.
    \]
    The two exponential terms differ
    in that $\theta < 2^{-1/3} a_1$.
    Since $(\lambda - 1)n^{1/3} = \mu$ tends to infinity,
    the second term is negligible compared to the first one.
    We conclude
    \[
        \mathbb P_{\mathsf{acyclic}}(n,p) \sim
        \frac{2^{-2/3}}{\mathrm{Ai'}(a_1)} n^{-1/3}
        \exp \left(
            -\alpha(\lambda)n+2^{-1/3}a_1 \lambda^{-1/3}\mu
        \right).
    \]
    Using $\mu = \smallo(n^{1/12})$, we obtain
    \[
        \lambda^{-1/3} \mu = \mu + \smallo(1)
    \]
    and
    \[
        \alpha(\lambda) = \frac{\lambda^2 - 1}{2 \lambda} - \log(\lambda)
        =
        \frac{\mu^3 n^{-1}}{6} + \smallo(n^{-1}).
    \]
    Plugging in those two approximations in the previous asymptotics,
    we conclude that
    \[
        \mathbb P_{\mathsf{acyclic}}(n,p) \sim
        \frac{2^{-2/3}}{\mathrm{Ai'}(a_1)} n^{-1/3}
        \exp \left(
            -\mu^3/6+2^{-1/3}a_1 \mu
        \right).
    \]
\end{proof}

\begin{remark}
Note that if \( \mu = -C n^{1/3} \), where \( C \in (0, 1) \), then by
applying~\cref{theo:MD_model_lambda_fixed} one obtains again
\( \mathbb P_{\mathsf{acyclic}}(n,p) \sim |\mu| n^{-1/3} \) as \( n \to \infty \).
In the case when \( \mu \to +\infty \) faster than \( n^{1/12} \), the supercritical
case of~\cref{theo:MD_model_lambda_fixed} is again applicable, although the
asymptotic formula takes a different form if expressed in terms of \( \mu \) due to
appearance of additional terms in the Taylor expansion in the exponent.
\cref{fig:critical_acyclic} illustrates the result
by plotting the limiting probabilities inside the critical window
for \( \mu \in [-3, 3] \).
\end{remark}

    \subsection{Asymptotics of elementary digraphs and complex components}
    \label{section:elementary:multi}

The next step after having the asymptotics of directed acyclic graphs is to
express the probability that a random digraph or a multidigraph is
\emph{elementary} (all strong components are single vertices or cycles).

\begin{theorem}
  \label{theo:elementary:multi}
    Let \( p = \lambda / n \).
    The probability that a random multidigraph
    \( D \in \DiERMulti(n, p) \) is elementary
    is
    \[
        \mathbb P_{\mathsf{elementary}}(n,p) \sim
        \begin{cases}
            1, & \lambda < 1;
            \\
            - 2^{-2/3}\dfrac{1}{2 \pi i}
            \displaystyle\int_{-i \infty}^{i\infty}
            \dfrac{\exp(-\mu s - \mu^3/6)}{\ai'(-2^{1/3} s)}
            \mathrm d s, & \lambda = 1 + \mu n^{-1/3};
            \\[1em]
            \sigma_2(\lambda) \exp \Big(
                {-\alpha(\lambda) n} + a_1' \beta(\lambda) n^{1/3}
            \Big)
            , & \lambda > 1,
        \end{cases}
    \]
    where \( a_1' \approx -1.018793 \)
    is the zero of the derivative of the Airy function
    \( \ai'(z) \) with the smallest modulus,
    \( \ai(a_1') \approx 0.53565666 \),
    the constants \( \alpha(\lambda) \)
    and \( \beta(\lambda) \) are as in~\cref{theo:MD_model_lambda_fixed}, and
    \[
        \sigma_2(\lambda)
        =
        - \dfrac{\lambda^{1/2} e^{-(\lambda-1)/6}}{2 a_1' \ai(a_1')}.
    \]
\end{theorem}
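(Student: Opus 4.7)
The plan is to apply \cref{th:summary:multidigraph}, which expresses the target probability as
\[
    \mathbb P_{\mathsf{elementary}}(n,p) = e^{-n^2 p/2}\, n!\, [z^n]\, \frac{1}{\phi_1(z,p)},
\]
and then invoke \cref{lem:cauchy} with $\xi(z,w) = \phi_1(z,w;1)$, i.e.\ $k=1$, $r_1 = 1$, $p_1 = 1$, $F_1 \equiv 1$, and in particular $R = \sum r_i p_i = 1$ and $F_1(1)=1$. Each of the three regimes corresponds to one of the three parts of that lemma.

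For the subcritical regime $\lambda < 1$, Part (a) of \cref{lem:cauchy} immediately yields $(1-\lambda)^{1-R} F_1(\lambda)^{-1} = 1$, so $\mathbb P_{\mathsf{elementary}}(n,p) \to 1$.
For the critical regime $\lambda = 1 + \mu n^{-1/3}$ with $\mu = o(n^{1/12})$, Part (b) of \cref{lem:cauchy} gives
\[
    \mathbb P_{\mathsf{elementary}}(n,p) \sim \frac{(-1)\cdot n^{0} \cdot 2^{-2/3}}{1}\cdot \frac{1}{2\pi i}\int_{-i\infty}^{i\infty} \frac{\exp(-\mu^3/6 - \mu s)}{\ai(1;-2^{1/3}s)}\,\mathrm ds,
\]
and, since $\ai(1;z) = \ai'(z)$ by \cref{def:special:airy}, this is exactly the formula claimed.

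The main work, as in the proof of \cref{theo:MD_model_lambda_fixed}, is the supercritical regime $\lambda>1$. Here I would fix $\theta$ real with $a_2' < 2^{1/3}\theta < a_1'$ and set $\rho = (1-\theta w^{2/3})/(ew)$. By \cref{theorem:zeros:phi:r}, for $w$ small enough $\phi_1(z,w;1)$ has a unique (simple) real zero $\varsigma_1(w)$ with $\varsigma_1(w) < \rho < \varsigma_2(w)$, so the residue theorem gives
\[
    [z^n]\frac{1}{\phi_1(z,w)} = -\frac{1}{\varsigma_1(w)^{n+1}\,\partial_z\phi_1(\varsigma_1(w),w;1)} + \frac{1}{2\pi i}\oint_{|z|=\rho}\frac{\mathrm dz}{\phi_1(z,w)\,z^{n+1}}.
\]
The residue is evaluated using the estimates of \cref{theorem:zeros:phi:r} (the three-term expansion of $\varsigma_1(w)$ and the leading term of $\partial_z\phi_1(\varsigma_1(w),w;1)$ with $\varkappa_1 = 2\sqrt{2\pi}\,a_1'\,\ai(a_1')$), while the contour integral is bounded by Part (c) of \cref{lem:cauchy}. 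After multiplying by $e^{-wn^2/2} n!$ and applying Stirling's formula, the residue contributes
\[
    \sigma_2(\lambda)\exp\bigl(-\alpha(\lambda)n + a_1'\,\beta(\lambda)\,n^{1/3}\bigr),
\]
while the contour integral, which has the same $e^{-\alpha(\lambda)n}$ factor but exponent $\theta\lambda^{-1/3}(\lambda-1)n^{1/3}$ with $\theta < 2^{-1/3} a_1'$ (recall $a_1' < 0$), is exponentially smaller and therefore negligible.

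The main obstacle is the bookkeeping of the constants in the supercritical computation: unlike the DAG case, the derivative $\partial_z\phi_1$ at $\varsigma_1(w)$ introduces the factor $\varkappa_1 = 2\sqrt{2\pi}\,a_1'\,\ai(a_1')$ rather than $\ai'(a_1)$, and the correction term in $\varsigma_1(w)$ has the opposite sign of the correction term in $\varrho_1(w)$ (compare \cref{theo:smallest_zero} with \cref{theorem:zeros:phi:r} when $F\equiv 1$). This sign flip is precisely what turns the $e^{(\lambda-1)/6}$ factor appearing in $\gamma_2(\lambda)$ into the $e^{-(\lambda-1)/6}$ factor in $\sigma_2(\lambda)$, and changes the power of $\lambda$ from $5/6$ to $1/2$. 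Once these corrections are carefully tracked, a computer algebra verification (as in \cref{table:cas}) reproduces the stated formula for $\sigma_2(\lambda)$.
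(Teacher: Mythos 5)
Your proof follows the same route as the paper: Proposition~\ref{th:summary:multidigraph} reduces the problem to extracting a coefficient of $1/\phi_1(z,p;1)$, Parts~(a) and~(b) of Lemma~\ref{lem:cauchy} (with $k=1$, $r_1=p_1=1$, $F_1\equiv 1$, $R=1$, $\ai(1;\cdot)=\ai'$) dispose of the subcritical and critical cases, and the supercritical case is handled by the residue at $\varsigma_1(w)$ plus the Part~(c) bound on the contour, exactly as in the paper. Your heuristic remark on the constants has a small misattribution (the change of the power of $\lambda$ from $5/6$ to $1/2$ and the disappearance of the $n^{-1/3}$ factor both come from $\partial_z\phi_1$ carrying $w^{1/2}$ rather than $w^{1/6}$, not from the sign flip in the $w/6$ term of the root expansion), but this does not affect the validity of the argument.
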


\begin{figure}[ht]
    \centering
    \includegraphics[width=.7\linewidth]{./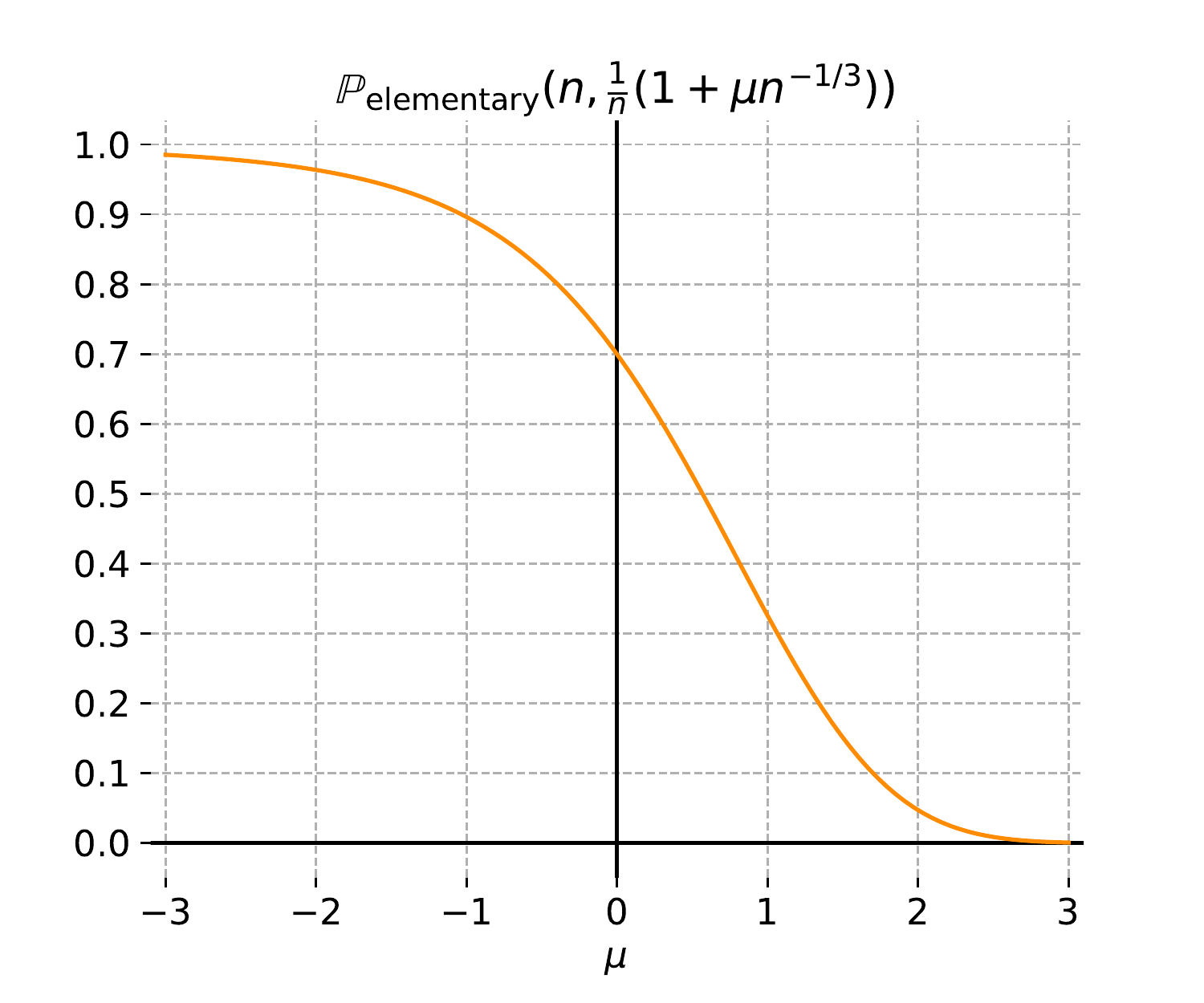}\\
    \caption{
    \label{fig:elementary}
      Numerical plot of the theoretical limit of the
      probability $\mathbb P_{\mathsf{elementary}}(n,p)$ that a random multidigraph is elementary
      inside the critical window for $\mu \in[-3,3]$.}
\end{figure}
\begin{proof}
    According to~\cref{th:summary:multidigraph},
    the probability that a random multidigraph $D$ is elementary is given by
    \[
        \mathbb P_{\mathsf{elementary}}(n, p)
        =
        e^{- n^2 p / 2} n!
        [z^n]
        \dfrac{1}{\phi_{1}(z, p)}.
    \]
    We let \( w = p \) from now on.

    \noindent \textbf{Case $\lambda < 1$.}
    The result is a direct application of Part (a) of \cref{lem:cauchy}
    with \( \xi(z, w) := \phi_1(z, w) = \phi_1(z,w;1) \),
    $k=1$, $r = 1$, $p = 1$ so $R = 1$.

    \noindent \textbf{Case $\lambda = 1 + \mu n^{-1/3}$.}
    The result is a direct application of Part (b) of \cref{lem:cauchy}.

    \noindent \textbf{Case $\lambda > 1$.}
    Let \( a_1' > a_2' > a_3' > \cdots \) denote the roots of \( \ai'(z)
    \). It is known that they are all negative and distinct.
    Consider \( \theta \) such that \( a_2' < 2^{1/3} \theta < a_1' \)
    and define \( \rho = (1 - \theta w^{2/3}) (ew)^{-1} \).
    Let \( \varsigma_j(w) \) denote the solution of the equation
    \( \phi_1(z, w) = 0 \) that is the $j$-th closest to $0$.
    Then, according to \cref{theorem:zeros:phi:r},
    for any small enough $w$, we have
    \( \varsigma_1(w) < \rho < \varsigma_2(w) \).
    Thus, by taking for $z$ the contour of integration \( |z| = \rho \),
    the residue theorem implies
    \[
        [z^n] \dfrac{1}{\phi_{1}(z, w)} =
        -\mathrm{Res}_{z = \varsigma_1(w)}
        \dfrac{1}{z^{n+1} \phi_1(z, w)}
        +
        \dfrac{1}{2 \pi i}
        \oint_{|z| = \rho} \dfrac{1}{\phi_1(z, w) z^{n+1}}
        \mathrm dz.
    \]
    From~\cref{theorem:zeros:phi:r} we conclude that as \( w \to 0^+ \),
    the first root of \( \phi_1(z, w) \) is simple.
    This allows for a simpler expression for the residue.
    We also use \eqref{eq:cauchy_up} of \cref{lem:cauchy} to
    estimate the integral on the right-hand side
    \[
        [z^n] \dfrac{1}{\phi_{1}(z, w)} =
        - \dfrac{1}{\varsigma_1(w)^{n+1} \partial_z \phi_1(\varsigma_1(w), w)}
        +
        \bigO \left( \frac{w^{2/3}}{|\phi_1(\rho, w)| \rho^n} \right).
    \]
    Next, using~\cref{theorem:zeros:phi:r}
    to estimate $\varsigma_1(w)$ and $ \partial_z \phi_1 (\varsigma_1(w),w)$
    and Part~(c) of~\cref{prop:complex:general} to estimate
    $\phi_1(\rho,w)$, we obtain
    \begin{align*}
        \mathbb P_{\mathsf{elementary}}(n,p) & \sim
        {
            - \dfrac
            {n! e^{-wn^2/2}}
            {
                \varsigma_1(w)^{n+1}
                \partial_z \phi_1(\varsigma_1(w), w)
            }
        }
        + \mathcal O \left(
            e^{
                -\alpha(\lambda) n
                + \theta \lambda^{-1/3}(\lambda-1)n^{1/3}
            }
        \right)
        \\ &
        \sim
        \sigma_2(\lambda) \exp \left(
            -\alpha(\lambda) n + a_1' \beta(\lambda) n^{1/3}
        \right).
    \end{align*}
\end{proof}

\begin{figure}[ht]
    \RawFloats
    \begin{minipage}{.5\textwidth}
    \includegraphics[width=\linewidth]{./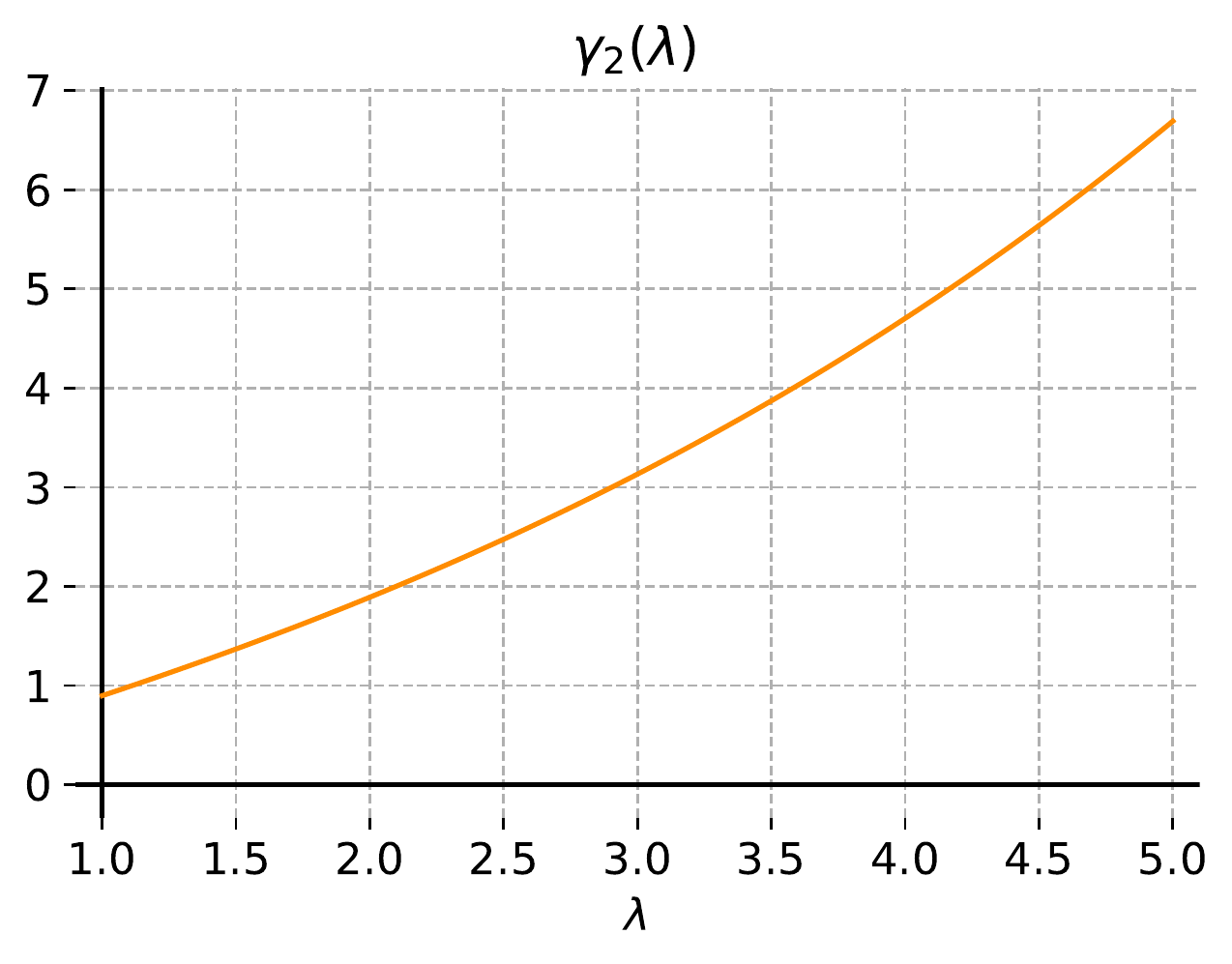}
    \caption{Numerical plot of $\gamma_2(\lambda)$}
    \end{minipage}%
    \begin{minipage}{.5\textwidth}
    \includegraphics[width=\linewidth]{./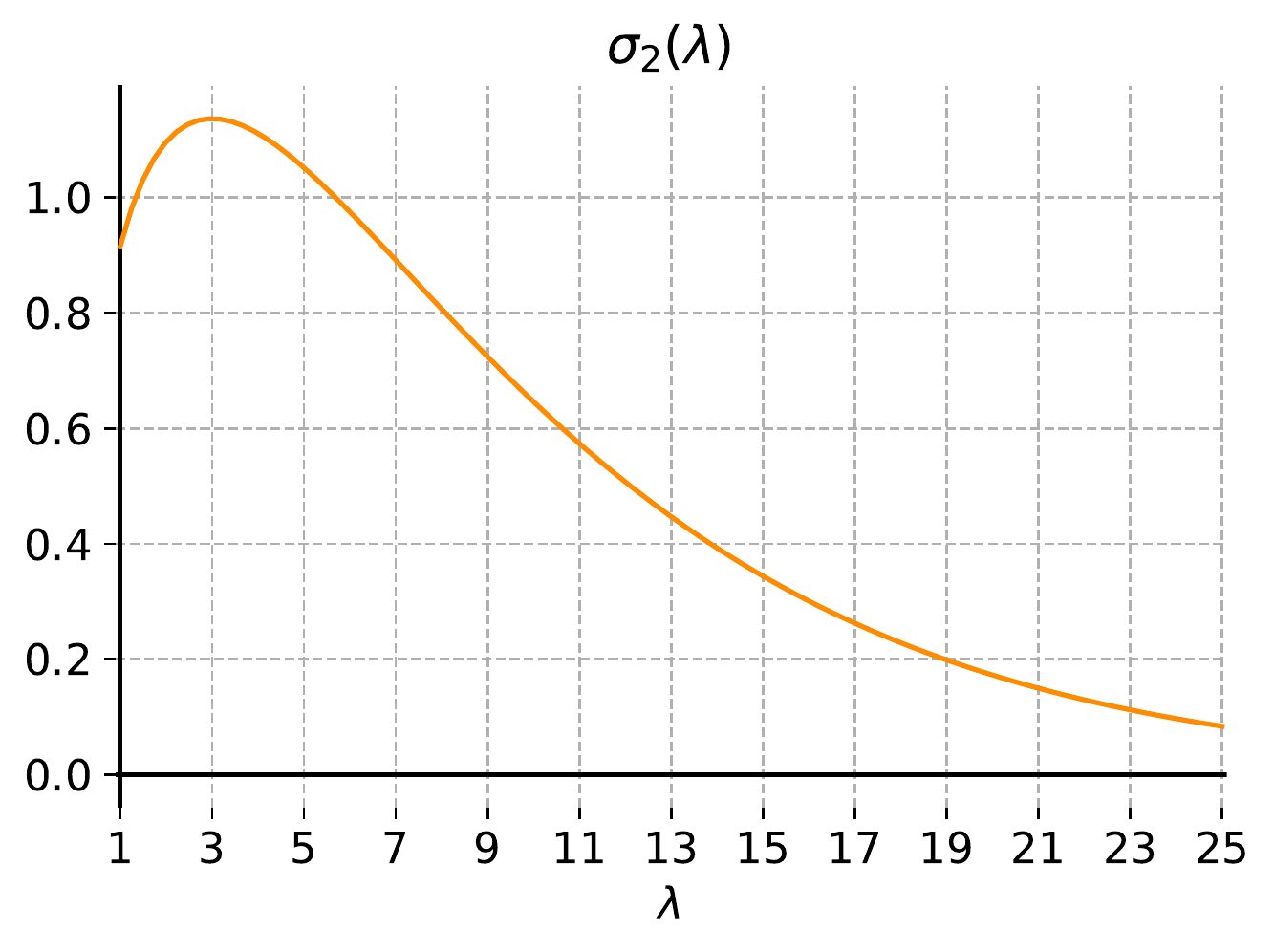}
    \caption{Numerical plot of $\sigma_2(\lambda)$}
    \end{minipage}
\end{figure}

\begin{remark}
    Again, by comparing the critical case with \( \mu = 0 \) and taking the limit
    of the supercritical case as \( \lambda \to 1^+ \), we observe that the
    answers have the same growth order \( \Theta(1) \), but there is
    again a discontinuity between $\sigma_2(1) \approx 0.916215$,
    while $\mathbb P_{\mathsf{elementary}}(n, \tfrac{1}{n}) \approx 0.699687$. Again, when $\mu > 0$, the contour integral
    could be expressed as a sum of residues and we obtain for
    \( \lambda = 1 + \mu n^{-1/3} \),
    \[
        \mathbb P_{\mathsf{elementary}}(n, p) \sim
        - \dfrac{e^{-\mu^3/6}}{2}
        \sum_{k \geq 1}
        \dfrac{\exp(2^{-1/3} \mu a_k')}{\ai''(a_k')}
        =
        - \dfrac{e^{-\mu^3/6}}{2}
        \sum_{k \geq 1}
        \dfrac{\exp(2^{-1/3} \mu a_k')}{a_k' \ai(a_k')}
        \, ,
    \]
    where \( (a_k')_{k=1}^\infty \) are the roots of \( \ai'(\cdot) \) sorted by
    their absolute values. As in~\cref{remark:sum:residues}, the first summand
    becomes dominant in the supercritical phase.
\end{remark}


A similar analysis can reveal the probabilities that a digraph has a given
ensemble of complex components, for example when there is exactly one complex
component of  excess \( r \) whose kernel is cubic or has a deficiency \( d \)
(see~\cref{th:complex:scc}).

\begin{theorem}
\label{theorem:probability:one:strong:component}
Let \( p = \lambda / n \), \( \lambda \geq 0 \).
The probability \( \mathbb P(n, p) \)
that a random multidigraph sampled from \( \DiGilbMulti(n, p) \)
has exactly one complex strong component
and that this component has
a given kernel of excess \( r \) and deficiency \( d \)
(hence \( 2r - d \) vertices and \( 3r - d \) edges),
satisfies
\[
    \mathbb P(n, p) \sim
    \dfrac{1}{(2r - d)!}
    \times
    \begin{cases}
        n^{-r} \lambda^{3r-d} (1 - \lambda)^{-3r + d},
        & \lambda < 1; \\
        n^{-d/3} \varphi_{r,d}(\mu)
        ,
        & \lambda = 1 + \mu n^{-1/3}; \\
        n^{1/3 - d/3} \sigma_{r,d}(\lambda)
        \exp \Big(
            {-\alpha(\lambda)n} + a_1' \beta(\lambda) n^{1/3}
        \Big)
        ,
        & \lambda > 1,
    \end{cases}
\]
where the functions \( \alpha(\cdot) \), \( \beta(\cdot) \) are
from~\cref{theo:MD_model_lambda_fixed}, \( a_1' \) is the zero of \(
\ai'(\cdot) \) with the smallest modulus,
\[
    \varphi_{r,d}(\mu)
    =
    (-1)^{1 + 3r - d}
    2^{- 2/3 - r + d/3}
    \dfrac{1}{2 \pi i}
    \int_{-i \infty}^{i \infty}
    \dfrac{
    \ai(1 - 3r + d; -2^{1/3}s)}
    {\ai'(-2^{1/3}s)^2}
    e^{-\mu s - \mu^3/6}
    \mathrm d s,
\]
and
\[
    \sigma_{r,d}(\lambda)
    =
    (-1)^{1 + 3r - d}
    2^{- 4/3 - r + d/3}
    (\lambda - 1)
    \lambda^{1/6 + d/3}
    e^{1/6-\lambda/6}
    \dfrac{\ai(1 - 3r + d; a_1')}
    { (a_1' \ai(a_1'))^2}
    \, .
\]
\end{theorem}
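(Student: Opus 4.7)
My plan is to start from Proposition~\ref{th:summary:multidigraph}, which expresses
\[
\mathbb P(n,p) = \frac{p^r}{(2r-d)!}\,e^{-n^2p/2}\,n!\,[z^n]\,\frac{\phi_{1-3r+d}(z,p;\,x\mapsto x^{2r-d})}{\phi_1(z,p;1)^2},
\]
set $w=p$, and feed the reciprocal $\xi(z,w):=\phi_1(z,w;1)^2/\phi_{1-3r+d}(z,w;\,x\mapsto x^{2r-d})$ into Lemma~\ref{lem:cauchy} with $k=2$, $(r_1,p_1,F_1)=(1,2,1)$, $(r_2,p_2,F_2)=(1-3r+d,-1,\,x^{2r-d})$, so that $\sum_i p_i=1$ and $R=\sum_i r_ip_i=1+3r-d$.

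The subcritical case is immediate from Part~(a) of Lemma~\ref{lem:cauchy}: $e^{-wn^2/2}n![z^n]\xi^{-1}\sim(1-\lambda)^{-3r+d}\lambda^{2r-d}$, and multiplying by $p^r/(2r-d)!$ yields the first branch. For $\lambda=1+\mu n^{-1/3}$, Part~(b) of the same lemma applies; using $\ai(1;\cdot)=\ai'(\cdot)$, the product $\prod_i\ai(r_i;-2^{1/3}s)^{p_i}$ simplifies to $\ai'(-2^{1/3}s)^2/\ai(1-3r+d;-2^{1/3}s)$, and bookkeeping of the powers of $2$ and $n$ produces $n^{-d/3}\varphi_{r,d}(\mu)/(2r-d)!$ with the announced constants.

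The supercritical case $\lambda>1$ is the main work. I choose $\theta$ with $a_2'<2^{1/3}\theta<a_1'$ so that, by Theorem~\ref{theorem:zeros:phi:r}, the circle $|z|=\rho:=(1-\theta w^{2/3})(ew)^{-1}$ encloses only the smallest zero $\varsigma_1(w)$ of $g:=\phi_1(\cdot,w;1)$. The residue theorem gives, with $f$ denoting the numerator,
\[
[z^n]\frac{f(z)}{g(z)^2} = -\mathrm{Res}_{z=\varsigma_1(w)}\frac{f(z)}{g(z)^2 z^{n+1}} + \frac{1}{2\pi i}\oint_{|z|=\rho}\frac{f(z)\,dz}{g(z)^2 z^{n+1}},
\]
where the contour integral is subdominant to the residue, since Part~(c) of Lemma~\ref{lem:cauchy} yields for it an exponent $(2^{1/3}\theta)\beta(\lambda)n^{1/3}<a_1'\beta(\lambda)n^{1/3}$. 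Because $\varsigma_1(w)$ is a simple zero of $g$, the integrand has a double pole, and a Laurent expansion shows that its residue equals $\frac{f(\varsigma_1)}{g'(\varsigma_1)^2\,\varsigma_1^{n+2}}\bigl[(n+1)-\varsigma_1 f'(\varsigma_1)/f(\varsigma_1)+\varsigma_1 g''(\varsigma_1)/g'(\varsigma_1)\bigr]$ up to higher-order corrections.

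The main obstacle will be the bookkeeping of this residue, because $\varsigma_1\sim 1/(ew)\sim n/(e\lambda)$ is itself of order $n$, making all three bracketed terms of the same magnitude. The Remark following Theorem~\ref{theorem:zeros:phi:r} gives $\varsigma_1 f'/f\sim-1/w=-n/\lambda$, and its $k=2$ case gives $\varsigma_1 g''/g'\sim -2n/\lambda$; the bracket then collapses to $n(\lambda-1)/\lambda+O(1)$, producing the $(\lambda-1)$ factor of $\sigma_{r,d}(\lambda)$. The constant $e^{1/6-\lambda/6}$ will come from three distinct $O(1)$ contributions: evaluating $f(\varsigma_1)$ via Part~(c) of Lemma~\ref{prop:complex:general} at $\tau=a_1'/2^{1/3}-w^{1/3}/6+O(w^{2/3})$ produces an $e^{-1/6}$ factor through $\exp(\tau/w^{1/3})$; the $w/6$ correction in the expansion of $\varsigma_1(w)$ from Theorem~\ref{theorem:zeros:phi:r}, combined with Stirling, produces $e^{-\lambda/6}$ through $\varsigma_1(w)^{-(n+2)}$; and the constant $5/6$ in the exponent of $\partial_z\phi_1(\varsigma_1,w;1)$ in the same theorem produces $e^{-5/3}$ through $(g')^2$. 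Combined with the Stirling $e^2$, these collapse to $e^{2-1/6-\lambda/6-5/3}=e^{1/6-\lambda/6}$, and the remaining powers of $\lambda$, $n$, $2$ then align to yield exactly $\sigma_{r,d}(\lambda)\,n^{1/3-d/3}\exp\bigl(-\alpha(\lambda)n+a_1'\beta(\lambda)n^{1/3}\bigr)/(2r-d)!$.
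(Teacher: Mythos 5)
Your proposal follows the paper's proof line by line: Proposition~\ref{th:summary:multidigraph}, then Lemma~\ref{lem:cauchy} Parts~(a), (b) for the subcritical and critical ranges, then a residue extraction at the first zero $\varsigma_1(w)$ of $\phi_1$ with the contour contribution bounded via Part~(c) for $\lambda>1$. The parameter assignments $(r_1,p_1,F_1)=(1,2,1)$, $(r_2,p_2,F_2)=(1-3r+d,-1,x^{2r-d})$, hence $R=1+3r-d$, are correct, and your simplification $\ai(1;\cdot)=\ai'(\cdot)$ recovers $\varphi_{r,d}$. Your observation that all three terms in the residue bracket are of order $n$ — with $\varsigma_1 f'/f\sim -1/w$, $\varsigma_1 g''/g'\sim -2/w$ so the bracket collapses to $n(\lambda-1)/\lambda$ — is exactly the paper's key simplification, and your accounting of the $O(1)$ exponential factors ($e^{-1/6}$ from $\phi_*$, $e^{-\lambda/6}$ from $\varsigma_1^{-n}$, $e^{-5/3}$ from $(g')^2$) is also correct.

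Two small slips worth flagging. First, your residue formula has the wrong sign: for a double pole at $z_0$ the residue of $f/(g^2 z^{n+1})$ equals
\[
-\,\frac{f(z_0)}{g'(z_0)^2 z_0^{n+2}}\left[(n+1)-\frac{z_0 f'(z_0)}{f(z_0)}+\frac{z_0 g''(z_0)}{g'(z_0)}\right],
\]
so the minus sign absorbs the leading minus from $-\mathrm{Res}$ and makes the final answer positive; you will catch this once you insert $\lambda>1$. Second, the $e^2$ you ascribe to Stirling does not come from Stirling's formula (which contributes $e^{-n}$, cancelling $(ew)^n$ into $\lambda^n e^{-n(\lambda^2-1)/2\lambda}$); it comes from the two separate factors $e(\lambda-1)$ in the simplified numerator and $\varsigma_1^{-1}\sim ew$ in $\varsigma_1^{-(n+1)}$. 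The total still equals $e^{1/6-\lambda/6}$, so the final arithmetic is right — only the attribution is off.
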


\begin{proof}
    According to \cref{th:summary:multidigraph},
    the probability we are seeking is
    \[
        \mathbb P(n, p) =
        \dfrac{p^r}{(2 r - d)!}
        e^{- n^2 p / 2} n ! [z^n]
        \frac{\phi_{1 - 3 r + d}(z,p;x \mapsto x^{2r-d})}
        {\phi_1(z,p)^2}.
    \]
    As usual, we let $w = p$ in the following.
    We also denote by
    \[
        \widehat H_{\mathcal S}(z, w) =
        \dfrac{w^r}{(2 r - d)!}
        \frac{\phi_{1 - 3 r + d}(z,w;x \mapsto x^{2r-d})}
        {\phi_1(z,w)^2}
    \]
    the multi-graphic generating function
    of the multidigraph family under investigation.

    \noindent \textbf{Case $\lambda < 1$.}
    Direct application of Part (a) of \cref{lem:cauchy}
    with $p_1 = 2$, $r_1 = 1$, $p_2 = -1$, $r_2 = 1 - 3r + d$,
    $F_1(z) = z^{2r - d}$, $F_2(z) = 1$,
    and therefore with \( R = 2 - (1 - 3r + d) = 1 + 3r - d \).

    \noindent \textbf{Case $\lambda = 1 + \mu n^{-1/3}$.}
    Direct application of Part (b) of \cref{lem:cauchy}.

    \noindent \textbf{Case $\lambda > 1$.}
    Let \( a_1' > a_2' > \cdots \) denote the roots of \( \ai'(\cdot) \),
    fix \( \theta \in (2^{-1/3} a_2', 2^{-1/3} a_1') \)
    and set \( \rho = (1 - \theta w^{2/3}) (ew)^{-1} \).
    Let us also write \( \phi_\ast(z, w) \) for \(\phi_{1 - 3r + d}(z, w; F(\cdot)) \),
    and denote by \( \varsigma_j(w) \) the $j$th zero of \( \phi_1(z, w) = 0 \),
    ordered by increasing modulus.
    Then \cref{theorem:zeros:phi:r} implies that,
    as $w$ tends to $0$, $\varsigma_1(w) < \rho < \varsigma_2(w)$.
    Thus, expressing the coefficient extraction as a Cauchy integral
    and applying the residue theorem, we obtain
    \begin{equation}
        \label{eq:residue}
        [z^n] \widehat H_{\mathcal S}(z, w) =
        - \dfrac{w^r}{(2r-d)!}
            \mathrm{Res}_{z = \varsigma_1(w)}
            \dfrac{\phi_{\ast}(z, w)}{z^{n+1} \phi_1(z, w)^2}
            +
            \dfrac{w^r}{(2r-d)!}
            \dfrac{1}{2 \pi i}
            \oint_{|z| = \rho}
            \dfrac{\phi_{\ast}(z, w)}{z^{n+1} \phi_1(z, w)^2}
            \mathrm dz.
    \end{equation}
    The second term is bounded using~\eqref{eq:cauchy_up}:
    from \cref{lem:cauchy}
    \[
        \dfrac{w^r}{(2r-d)!}
            \dfrac{1}{2 \pi i}
            \oint_{|z| = \rho}
            \dfrac{\phi_{\ast}(z, w)}{z^{n+1} \phi_1(z, w)^2}
            \mathrm dz
        =
        \bigO \left(
            \frac{w^{r+2/3}}{\rho^n}
            \frac{\phi_\ast(\rho, w)}{\phi_1(\rho, w)^2}
        \right).
    \]
    According to Taylor's theorem, as
    \( z \to \varsigma_1(w) \),
    \[
        \phi_1(z, w)
        =
        (z - \varsigma_1(w)) \partial_z \phi_1(\varsigma_1(w), w)
        +
        \dfrac{(z - \varsigma_1(w))^2}{2}
        \partial_z^2 \phi_1(\varsigma_1(w), w)
        + O((z - \varsigma_1(w))^3),
    \]
    and
    \[
        \phi_\ast(z, w)
        =
        \phi_\ast(\varsigma_1(w))
        +
        (z - \varsigma_1(w)) \partial_z \phi_\ast(\varsigma_1(w), w)
        + O((z - \varsigma_1(w))^2).
    \]
    By constructing the Laurent series of the fraction and extracting the
    coefficient at \( (z - \varsigma_1(w))^{-1} \) of
    \( \frac{\phi_\ast(z, w)}{z^{n+1} \phi_1(z, w)^2} \), we obtain
    \[
        \mathrm{Res}_{z = \varsigma_1(w)}
        \dfrac{\phi_{\ast}(z, w)}{z^{n+1}\phi_1(z, w)^2}
        =
        \left.
        \dfrac{
            \partial_z \phi_\ast(z, w) \partial_z \phi_1(z, w)
            -
            \phi_\ast(z, w)
            \big(
            \partial_z^2 \phi_1(z, w)
            +
            \tfrac{n+1}{z} \partial_z \phi_1(z, w)
            \big)
        }
        {
            z^{n+1}
            (\partial_z \phi_1(z, w))^3
        }
        \right|_{z = \varsigma_1(w)}.
    \]
    Let us gather some identities to simplify the numerator.
    We consider the point $z = \varsigma_1(w)$.
    \cref{theorem:zeros:phi:r} implies
    $\partial_z^2 \phi_1(z, w) \sim - 2 e \partial_z \phi_1(z, w)$
    and, using its very last statement,
    $\partial_z \phi_\ast(z, w) \sim - \frac{1}{z w} \phi_{1-3r+d}(z, w; x \mapsto x F(x))$.
    The asymptotics of $\varsigma_1(w)$ implies $z w \sim 1/e$, so
    $\partial_z \phi_\ast(z, w) \sim - e \phi_\ast(z, w)$
    and $(n+1) / z \sim \lambda e$.
    Part $(c)$ of \cref{prop:complex:general} is applied
    to approximate $\phi_\ast(\varsigma_1(w), w)$,
    $\phi_{1-3r+d}(\rho, w; x \mapsto x F(x))$ and $\phi_1(\rho, w)$.
    \cref{theorem:zeros:phi:r} is applied to approximate
    $\partial_z \phi_1(\varsigma_1(w), w)$. Note that under this approximation,
    we also have
    $\phi_{1-3r+d}(\rho, w; x \mapsto x F(x)) \sim \phi_\ast(\varsigma_1(w), w)$.
    The numerator of the residue simplifies into
    \[
        \partial_z \phi_\ast(z, w) \partial_z \phi_1(z, w)
        -
        \phi_\ast(z, w)
        \partial_z^2 \phi_1(z, w)
        - \tfrac{n+1}{z}\phi_\ast(z, w) \partial_z \phi_1(z, w)
        \sim
        e (1 - \lambda)\phi_\ast(z, w) \partial_z \phi_1(z, w)
    \]
    and~\cref{eq:residue} becomes
    \[
        [z^n] \widehat H_{\mathcal S}(z, w) \sim
        \dfrac{w^r}{(2r-d)!}
        \frac{e (\lambda - 1) \phi_\ast(\varsigma_1(w), w)}
            {\varsigma_1(w)^{n+1} \left( \partial_z \phi_1(\varsigma_1(w), w) \right)^2}
            +
            \bigO \left(
            \frac{w^{r+2/3}}{\rho^n}
            \frac{\phi_\ast(\rho, w)}{\phi_1(\rho, w)^2}
        \right).
    \]
    The error term is negligible compared to the first summand,
    and we obtain
    \[
    \mathbb P(n,p) =
    e^{-w n^2 / 2} n! [z^n] \widehat H_{\mathcal S}(z, w)
        \sim
        \dfrac{1}{(2r-d)!}
        n^{\tfrac{1-d}{3}} \sigma_{r,d}(\lambda)
        e^{
            {-\alpha(\lambda)n} + a_1' \beta(\lambda) n^{1/3}
        }.
    \]
\end{proof}

\begin{remark}
    It may seem at first glance that the growth order of the supercritical
    probability receives an additional factor \( n^{1/3} \) compared to the
    respective critical phase, when $\lambda \to 1^+$. However, if one develops the contour integral
    representing \( \varphi_{r,d}(\mu) \) as a sum of residues, then the
    formula obtained by taking the first residue has the same growth rate as the limit
    of supercritical probability as \( \lambda \to 1 \). Indeed, as the contour
    integral contains a square of the function \( \ai'(\cdot) \) in the
    denominator, its zero is not a simple zero any longer, but a double one.
    Therefore, in order to extract the dominant residue, one has to take the
    cross-product of the derivatives of the numerator and the denominator, using
    the formula \( \mathrm{Res}_{z = \rho} f(z) / g(z)^2 = (f(\rho)g''(\rho) -
    f'(\rho) g'(\rho)) / g'(\rho)^3 \). It turns out that the derivative of \(
    e^{-\mu \tau} \) with respect to \( \tau \) gives an additional factor \(
    \mu = (\lambda - 1) n^{1/3} \) which then dominates the two other summands.
    This proves that the growth rates corresponding to the critical and the
    supercritical phases are identical.
\end{remark}

\begin{corollary}
    \label{corollary:bicyclic}
    Let \( p = n^{-1}(1 + \mu n^{-1/3}) \), \( \mu \in \mathbb R \).
    The probability \( \mathbb P(n, p) \) that one component of a random
    multidigraph sampled from \( \DiGilbMulti(n,p) \) is complex and
    bicyclic is
    \begin{equation}
    \label{eq:probability:bicyclic}
        \mathbb P(n,p) \sim
        \dfrac{1}{8}
        \cdot
        \dfrac{1}{2 \pi i}
        \int_{-i \infty}^{i \infty}
        \dfrac{
            \ai(-2,\tau)
        }{
            (
                \ai'(\tau)
            )^2
        }
        e^{2^{-1/3}\mu \tau - \mu^3/6}
        \mathrm d \tau.
    \end{equation}
    Moreover, when \( \mu \to -\infty \), this probability is asymptotically
    equal to \( |\mu|^{-3} / 2 \). The probability that the kernel of the
    complex component is not cubic is also at most \( O(n^{-1/3}) \).
\end{corollary}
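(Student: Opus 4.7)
Starting from Corollary~\ref{lemma:one:bicycle} together with Lemma~\ref{lemma:P:Dnp}, the target probability equals
\[
    \mathbb{P}(n,p)
    = \frac{p}{2}\, e^{-n^2 p/2}\, n!\, [z^n]\,
        \frac{\phi_{-2}(z,p;\, x\mapsto x)}{\phi_1(z,p;1)^2}.
\]
Set $w=p=n^{-1}(1+\mu n^{-1/3})$ and define
\(
    \xi(z,w) = \phi_1(z,w;1)^2 / \phi_{-2}(z,w;\,x\mapsto x),
\)
i.e.\ $k=2$, $(r_1,p_1,F_1)=(-2,-1,x\mapsto x)$, $(r_2,p_2,F_2)=(1,2,1)$. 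Check $\sum p_i=1$ and $R=\sum r_i p_i=4$.

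The main step is to apply Part~(b) of Lemma~\ref{lem:cauchy}. Since $F_1(1)=F_2(1)=1$, $\ai(1;\cdot)=\ai'(\cdot)$, and $\prod_i\ai(r_i;-2^{1/3}s)^{p_i}=\ai'(-2^{1/3}s)^2/\ai(-2;-2^{1/3}s)$, it gives
\[
    e^{-wn^2/2}\,n!\,[z^n]\frac{1}{\xi(z,w)}
    \sim
    n \cdot 2^{-5/3}\cdot
    \frac{1}{2\pi i}\int_{-i\infty}^{i\infty}
        \frac{\ai(-2;-2^{1/3}s)}{\ai'(-2^{1/3}s)^2}
        e^{-\mu s - \mu^3/6}\,\mathrm ds.
\]
Multiplying by $p/2 \sim 1/(2n)$ gives a prefactor $2^{-5/3-1}=2^{-8/3}$. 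A change of variable $\tau=-2^{1/3}s$ contributes a further factor $2^{-1/3}$ (and, after tracking contour orientation, reverses the direction of integration so that no overall sign appears), yielding exactly the stated constant $2^{-8/3}\cdot 2^{-1/3}=1/8$ and the integral in~\eqref{eq:probability:bicyclic}. This is the only real calculation; the routine part is the bookkeeping of the change of variable and verifying that $F_1(z)=z$ satisfies the nonvanishing hypothesis of Lemma~\ref{lem:cauchy} on $\mathbb{C}\setminus\{0\}$.

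For the $\mu\to-\infty$ tail, I apply instead Part~(a) of Lemma~\ref{lem:cauchy} with the same $\xi$, valid for $\lambda\in(0,1-n^{\epsilon-1/3})$, which yields
\[
    e^{-wn^2/2}\,n!\,[z^n]\frac{1}{\xi(z,w)}
    \sim (1-\lambda)^{1-R}\,\lambda^{-(-1)}\cdot 1^{-2}
    = \lambda\,(1-\lambda)^{-3}.
\]
Substituting $1-\lambda=|\mu|n^{-1/3}$ and multiplying by $p/2$ gives $\lambda^{2}|\mu|^{-3}/2\to |\mu|^{-3}/2$. Since the ranges of validity of Parts~(a) and~(b) overlap for, say, $\mu\in(-n^{1/12},-n^{\varepsilon})$, the two asymptotics must agree there, which both establishes the $|\mu|^{-3}/2$ behaviour and, by independence of $n$, implies the same asymptotic for the integral in~\eqref{eq:probability:bicyclic} as $\mu\to-\infty$.

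Finally, for the statement about non-cubic kernels: the only bicyclic strongly connected kernel that is not cubic corresponds to excess $r=1$ and deficiency $d=1$ (a single vertex with two loops, giving $K_{1,1}=1$ as identified in the derivation of Lemma~\ref{lemma:one:bicycle}). Applying Theorem~\ref{theorem:probability:one:strong:component} with $(r,d)=(1,1)$ gives a probability of order $n^{-d/3}\varphi_{1,1}(\mu)=O(n^{-1/3})$, which is the desired bound. The main obstacle is the careful sign/contour bookkeeping in the change of variable $s\mapsto\tau$ and the verification that the formal hypotheses (in particular $\sum p_i=1$ and the nonvanishing of each $F_i$) of Lemma~\ref{lem:cauchy} are met; everything else reduces to substitution into results proved earlier in the paper.
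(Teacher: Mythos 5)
Your proof is correct and follows the paper's intended route: starting from Corollary~\ref{lemma:one:bicycle}, feeding $\xi(z,w)=\phi_1(z,w;1)^2/\phi_{-2}(z,w;x\mapsto x)$ (so $R=4$, $\prod F_i(1)^{p_i}=1$) into Part~(b) of Lemma~\ref{lem:cauchy}, multiplying by $p/2$, and substituting $\tau=-2^{1/3}s$ with the orientation flip absorbing the sign to land on the constant $2^{-8/3}\cdot 2^{-1/3}=1/8$; the $\mu\to-\infty$ tail via the Part~(a)/Part~(b) overlap and the $O(n^{-1/3})$ bound for the non-cubic $(r,d)=(1,1)$ kernel via Theorem~\ref{theorem:probability:one:strong:component} are both handled as the paper would. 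Note as an aside that the prefactor quoted in Proposition~\ref{th:summary:multidigraph} and Theorem~\ref{theorem:probability:one:strong:component} appears to be missing a $1/(3r-d)!$ relative to what Lemmas~\ref{th:complex:scc} and~\ref{lemma:one:marked:scc} give, but your derivation avoids this by going through Corollary~\ref{lemma:one:bicycle} directly (where the kernel counts $K_{1,0}=6$ and $K_{1,1}=1$ are already aggregated), and the $O(n^{-1/3})$ estimate is insensitive to constant factors, so the conclusion is unaffected.
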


\begin{figure}[ht]
    \centering
    \includegraphics[width=.7\linewidth]{./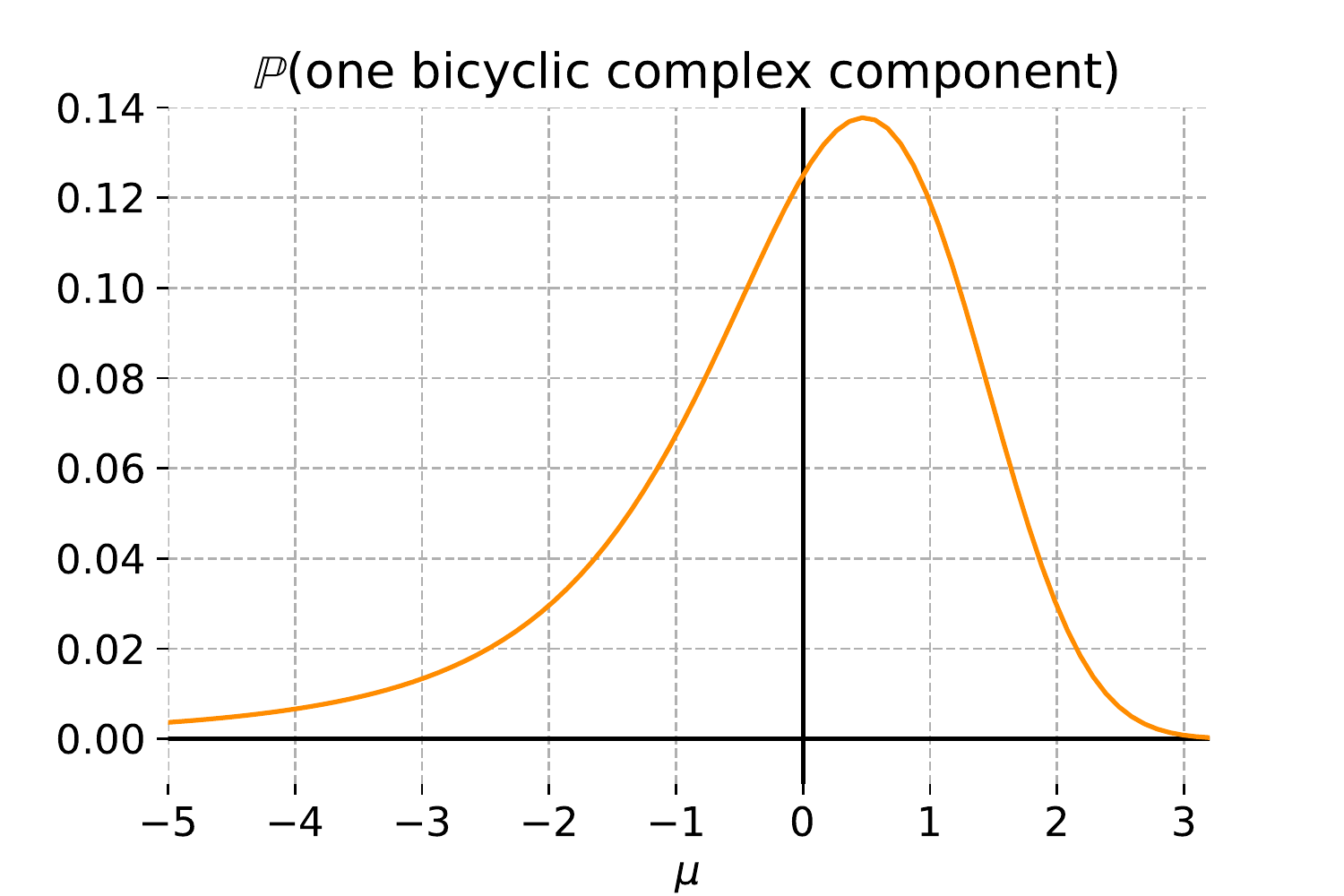}\\
    \caption{
    \label{fig:probability:bicyclic}
      Numerical plot of the theoretical limit of the
      probability $\mathbb P(n,p)$ that a random multidigraph has one 
      bicyclic complex component inside the critical window for
      $\mu \in[-5,3]$.} 
\end{figure}

Another interesting phenomenon happens when \( \mu = 0 \) for the case with one
bicyclic complex component. If we expand the contour integral into an infinite
sum of residues, \textit{each of the residues will be zero}!
However, by considering the residue at infinity, we can show that the integral
takes a finite rational value.

\begin{theorem}
    The probability that a random multidigraph \( \DiGilbMulti(n, \tfrac1n) \)
    has one  bicyclic complex component is asymptotically equal to \( 1/8 \).
\end{theorem}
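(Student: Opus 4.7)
The proof reduces to evaluating, at $\mu=0$, the integral arising in \cref{corollary:bicyclic}:
\[
    J := \frac{1}{2\pi i}\int_{-i\infty}^{i\infty}\frac{\ai(-2,\tau)}{(\ai'(\tau))^2}\,d\tau,
\]
so the combined statement reduces to $J=1$. The plan is to compute all residues of the integrand at its finite poles, observe that they miraculously all vanish (which is the source of the quoted ``phenomenon''), and then recover the value $J=1$ from the contribution at infinity.

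First I would classify the poles. Since $\ai'$ has only simple zeros $(a_k')_{k\geq 1}$, the integrand has a double pole at each $a_k'$. Using the Airy ODE $\ai''(z)=z\ai(z)$ together with its consequence $\ai'''(a_k')=\ai(a_k')$ (recalling $\ai'(a_k')=0$), one expands
\[
    \ai'(\tau) = a_k'\ai(a_k')(\tau-a_k') + \tfrac{\ai(a_k')}{2}(\tau-a_k')^2 + \bigO\bigl((\tau-a_k')^3\bigr),
\]
and, coupled with the Taylor expansion $\ai(-2,\tau)=\ai(-2,a_k')+\ai(-1,a_k')(\tau-a_k')+\bigO((\tau-a_k')^2)$, a short calculation gives
\[
    \mathrm{Res}_{\tau=a_k'}\frac{\ai(-2,\tau)}{(\ai'(\tau))^2} = \frac{a_k'\ai(-1,a_k')-\ai(-2,a_k')}{(a_k')^3\ai(a_k')^2}.
\]
The next step is to show the numerator vanishes. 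By integration by parts on the contour representation of \cref{def:special:airy} (equivalently, the recurrence of \cref{lem:airy_function} extended to negative $k$ with $k=-2$), one obtains the identity $\ai'(z)=z\ai(-1,z)-\ai(-2,z)$. Evaluating at $z=a_k'$ and using $\ai'(a_k')=0$ yields $a_k'\ai(-1,a_k')=\ai(-2,a_k')$, so every residue is zero.

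To recover $J$, I would close the contour by a large semicircle $C_R$ of radius $R$ in the left half-plane, choosing $R$ to stay bounded away from the zeros $a_k'$. Since every enclosed residue is zero, the closed-contour integral vanishes, and hence $J$ equals the negative of the contribution of $C_R$ as $R\to\infty$ (a ``residue at infinity''). Using $\ai(-2,\tau)=\tau\ai(-1,\tau)-\ai'(\tau)$, I split $J$ into the two pieces $J_1 := \frac{1}{2\pi i}\int\frac{\tau\ai(-1,\tau)}{(\ai'(\tau))^2}d\tau$ and $J_2 := -\frac{1}{2\pi i}\int\frac{d\tau}{\ai'(\tau)}$, then evaluate each via the known Airy asymptotics in the sectors $|\arg\tau|<\pi$: classical exponential decay/growth of $\ai$ on the imaginary sectors and oscillatory asymptotics near $\arg\tau=\pm\pi$. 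After a careful stationary-phase analysis of the contributions along $C_R$ (and taking the limit), the two pieces combine to give $J=1$.

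The main obstacle is the semicircle analysis near the negative real axis: here $\ai$ and $\ai'$ oscillate with polynomial amplitude and $\ai'$ has its infinitely many zeros, so the integrand has large spikes that must be integrated out. I expect this to be handled by partitioning $C_R$ into a highly oscillatory arc near the negative real axis (treated by stationary phase, producing only subleading corrections) and the remaining arcs where the standard exponential asymptotics of the Airy function apply. The residue-at-infinity computation then isolates the constant $1$.
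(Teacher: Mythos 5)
Your first half --- that every residue of $\ai(-2,\tau)/(\ai'(\tau))^2$ at the zeros $a_k'$ of $\ai'$ vanishes, via the identity $\ai'(z) = z\ai(-1,z) - \ai(-2,z)$ evaluated at $z = a_k'$ --- is correct and is exactly the argument the paper uses. The Taylor coefficients of $\ai'$ at $a_k'$ and the resulting residue formula $\bigl(a_k'\ai(-1,a_k') - \ai(-2,a_k')\bigr)/\bigl((a_k')^3\ai(a_k')^2\bigr)$ all check out.

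The second half, however, has a genuine gap. You propose closing the contour with a left semicircle $C_R$, concluding from the vanishing residues that $J = -\lim_{R\to\infty}\frac{1}{2\pi i}\int_{C_R}$, and then ``a careful stationary-phase analysis'' is to produce the value $1$. This final step is asserted, not carried out, and it is where all the difficulty of the problem lives. Away from the negative real axis the Airy asymptotics give exponential decay of $\ai(-2,\tau)/\ai'(\tau)^2$, so those arcs are fine; but near $\arg\tau = \pi$ one is in the oscillatory sector, the zeros $a_k'$ accumulate at density $\Theta(R^{1/2})$ along the arc, and the integrand --- even with $R$ chosen between consecutive zeros --- has only polynomial decay modulated by denominators of unbounded local variation. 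Extracting the (conjectured) limit $-2\pi i$ from this is a nontrivial oscillatory-integral estimate that you neither set up nor prove, and the further split $J = J_1 + J_2$ with $J_1 = \frac{1}{2\pi i}\int \tau\ai(-1,\tau)/\ai'(\tau)^2\,d\tau$ and $J_2 = -\frac{1}{2\pi i}\int d\tau/\ai'(\tau)$ only multiplies the number of such estimates without simplifying any of them.

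The paper avoids the semicircle entirely by producing an explicit antiderivative: using the Wronskian $\ai\bi' - \ai'\bi = 1/\pi$, the Scorer relation $\ai\hi' - \ai'\hi = (\ai(-1;z)+1)/\pi$, and the Airy equation, one verifies
\[
\frac{\mathrm d}{\mathrm d z}\,\frac{\pi\bigl(\bi'(z) - \hi'(z)\bigr)}{\ai'(z)} = \frac{\ai(-2;z)}{\bigl(\ai'(z)\bigr)^2},
\]
so the definite integral along the imaginary axis equals $\frac{1}{2\pi i}\bigl[G(i\infty) - G(-i\infty)\bigr]$. On the imaginary axis $\hi'$ is bounded while $\ai'$ and $\bi'$ grow with phases that make $\bi'/\ai' \to \pm i$, giving $G(\pm i\infty) = \pm\pi i$ and hence $J = 1$. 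If you want to retain your contour-closing framework, the missing ingredient is precisely this antiderivative (or an equivalent exact primitive for each of your pieces $J_1$, $J_2$); without it, the stationary-phase step remains a conjecture rather than a proof.
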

\begin{proof}
    According to~\cref{corollary:bicyclic}, we need to evaluate the contour
    integral at \( \mu = 0 \).
    First of all, we can replace the arguments \( (-\tau) \) of the generalised
    Airy functions in~\eqref{eq:probability:bicyclic} by \( \tau \), because the
    generalised Airy function is even on the imaginary axis. Then, the zeroes of
    the denominator are \( (a_i')_{i=1}^\infty \)---the zeroes of the
    derivative of the Airy function. Let us calculate the residues at each of
    these zeros.

    By expanding the Taylor series of the numerator and denominator, and taking
    the cross-product, we obtain
    \[
        \left.
        \mathrm{Res}\,
        \dfrac{
            \ai(-2,\tau)
        }{
            (
                \ai'(\tau)
            )^2
        }
        \right|_{\tau = a_i'}
        =
        \dfrac{1}{(a_i')^2 \ai(a_i')^2}
        \left(
            \ai(-1, a_i')
            -
            \dfrac{\ai(-2, a_i')}{a_i'}
        \right).
    \]
    Next, we can use the linear relation~\eqref{eq:airy:recurrence} to obtain
    \begin{equation}
    \label{eq:ai:minus:two}
        \ai(1, z) = -\ai(-2, z) + z \ai(-1, z).
    \end{equation}
    By putting \( z = a_i' \), the term \( \ai(1, z) \) vanishes and it turns
    out that each of the residues becomes zero.

    In order to evaluate the integral, we compute the indefinite integral
    explicitly, and then substitute the endpoints. As pointed out
    in~\cite{hughes1968},
    \( \ai(-1; z) = \int_{-\infty}^z \ai(t) \mathrm dt - 1 \),
    and by comparing it with~\cite[(9.10.2)]{NIST}, we obtain
    \[
        \ai(z) \hi'(z) - \ai'(z) \hi(z)
        =
        \dfrac{\ai(-1; z) + 1}{\pi},
    \]
    where \( \hi(z) \) is the \emph{Scorer function} satisfying
    \begin{equation}
    \label{eq:scorer}
        \hi''(z) = x \hi(z) + \dfrac{1}{\pi}
        \quad \text{and} \quad
        \hi(z) = \dfrac{1}{\pi} \int_{0}^{+\infty}
        \exp \left(
            - \dfrac{1}{3} t^3 + zt
        \right)
        \mathrm dt.
    \end{equation}
    If \( \bi(\cdot) \) is the second solution of the Airy equation, we can use
    the Wronskian formula~\cite[(9.2.7)]{NIST}
    \[
        \ai(x) \bi'(x) - \ai'(x) \bi(x) = \dfrac{1}{\pi}
    \]
    and therefore deduce, using~\eqref{eq:ai:minus:two}, that
    \begin{align*}
        \dfrac{\mathrm d}{\mathrm dz}
        \dfrac{\pi(\bi'(z) - \hi'(z))}{\ai'(z)}
        &=
        \pi \left(
            \dfrac{
                (\bi''(z) - \hi''(z))\ai'(z)
                -
                (\bi'(z) - \hi'(z)) \ai''(z)
            }{
                (\ai'(z))^2
            }
        \right)
        \\
        &=
        \dfrac{
            -z - \ai'(z) + z \ai(-1; z) + z
        }{
            (\ai'(z))^2
        }
        =
        \dfrac{\ai(-2; z)}{(\ai'(z))^2}.
    \end{align*}
    This gives us the explicit antiderivative of the integrand expression.

    In order to compute the corresponding definite integral, we need to find the
    limiting values of the antiderivative at the imaginary infinity.
    Note that, according to the derivative of the integral
    representation~\eqref{eq:scorer}, the function \( \hi'(z) \) is bounded on the
    imaginary axis by \( \pi^{-1} | \int_0^\infty \exp(-t^3/3) \mathrm
    dt| \). At the same
    time, the limits of \( \ai'(\cdot) \) and \( \bi'(\cdot) \)
    at the imaginary infinity are (see~\cite[(9.7.10),(9.7.12)]{NIST})
    \[
        \ai'(z) \sim
        \dfrac{(-z)^{1/4}}{\sqrt \pi}
        \sin \left(
            \dfrac{2 (-z)^{3/2}}{3} - \dfrac{\pi}{4}
        \right)
        \quad \text{and} \quad
        \bi'(z) \sim
        \dfrac{(-z)^{1/4}}{\sqrt \pi}
        \cos \left(
            \dfrac{2 (-z)^{3/2}}{3} - \dfrac{\pi}{4}
        \right).
    \]
    Consequently,
    \[
        \lim_{x \to i \infty} \frac{\pi(\bi'(x) - \hi'(x))}{\ai'(x)}
        = \pi i
        \quad \text{and} \quad
        \lim_{x \to -i \infty} \frac{\pi(\bi'(x) - \hi'(x))}{\ai'(x)}
        = -\pi i.
    \]
    This implies
    \[
        \dfrac{1}{2 \pi i}
        \int_{- i \infty}^{i \infty}
        \dfrac{\ai(-2; \tau)}{(\ai'(\tau))^2}
        \mathrm d \tau = 1
    \]
    and finishes the proof.
\end{proof}

\begin{remark}
\label{remark:knessl}
    Definite integrals involving Airy functions have been studied by many
    authors since at least 1966~\cite{aspnes1966}.
    Using the Wronskian of the Airy function \( \ai(\cdot) \) and the second
    solution of the Airy equation \( \mathrm{Bi}(\cdot) \),
    Albright and Gavathas~\cite{albright1986} presented a
    systematic way to compute indefinite integrals of the form
    \[
        \int \dfrac{1}{\ai^2(x)} f \left(
            \dfrac{\mathrm{Bi}(x)}{\ai(x)}
        \right)
        \mathrm dx
        \, .
    \]
    This includes, as a particular case, the
    integral \( (2 \pi i)^{-1} \int_{-i\infty}^{i \infty} \ai^{-2}(x) \mathrm dx
    = 1 \), which is also mentioned in a paper of Knessl~\cite{knessl2000exact}
    in the context of reflected Brownian motions with drift.

    We expect that such integrals of generalised Airy functions could be
    probably of interest to a broader community of researchers in the context of
    mathematical physics, stochastic processes and queueing theory.
    As an example, in the paper of Knessl~\cite{knessl2000exact} the parameters of
    interest are expressed in terms of integrals of Airy functions. In later
    papers by Janson~\cite{janson2012} and others~\cite{janson2010}, a Brownian
    motion with parabolic drift is considered as well. The expected location of its maximum (and
    the moments) turn out to be expressible in terms of integrals of Airy
    functions. Interestingly, Brownian motion with parabolic drift appears in a
    different context, namely the study of critical digraphs, in a recent paper by
    Goldschmidt and Stephenson~\cite{Christina2019}. Taking into account that
    the expression provided by Knessl~\cite[(3.8), Theorem 3]{knessl2000exact} is
    very much reminiscent of the expressions we obtain for critical
    probabilities, we expect that the Airy integrals might provide an additional
    link reinforcing the quantitative study of the scaling limits of directed
    graphs from the Brownian viewpoint.

    Below, in~\cref{section:numerical:results}, we provide accurate numerical
    values of the first several Airy integrals, the necessary constants for the
    supercritical case for the first several terms, and the figures representing
    the graphs of the Airy integrals proportional to the critical probability
    distribution functions \( \varphi_{r,d}(\cdot) \).
\end{remark}

\section{Asymptotics of simple digraph families}
\label{section:asymptotics:simple}

We started our analysis with multidigraphs,
in the previous section,
because the calculations were slightly simpler compared to digraphs.
We follow the same proofs as in \cref{section:asymptotics:multidigraphs},
applying the results from \cref{section:external:integration:simple}
to replace those of \cref{section:external:integration:multi}.
It is instructive to see how the model choice affects the asymptotics.

\subsection{Asymptotics of acyclic digraphs}
\label{section:dag:simple}

Now, we can state the  results on the probability that a
random digraph is acyclic when $\lambda < 1$, $\lambda \sim 1$ and
$\lambda >1$.  These theorems are handled with the same techniques as
in the previous section by means of
\cref{corollary:lemma:analytic:first2,cor:lem:cauchy}. As we can see below,
results are roughly the same as in the case of multidigraphs except
that models of digraphs involve some additional factor due to
forbidden configurations.

\begin{theorem}\label{theo:SD_model_lambda_fixed}
  Let \( p = \lambda/n \), \( \lambda \geq 0 \) fixed.
  Then, the probability $\mathbb P_{\mathsf{acyclic}}(n,p)$  that a random
  simple digraph
  $D$ is acyclic satisfies the following asymptotic
  formulas as $n\to \infty$:
    \[
        \mathbb P_{\mathsf{acyclic}}(n,p) \sim
        \begin{cases}
            \delta_1(\lambda) (1 - \lambda),
                & \lambda \in [0,1) \, ; \\
            \delta_1(1) \varphi(\mu) n^{-1/3},
                & \lambda = 1 + \mu n^{-1/3} \, ; \\
            \delta_2(\lambda) n^{-1/3}
            \gamma_2(\lambda)
            \exp \left(
                - \alpha(\lambda) n + a_1 \beta(\lambda) n^{1/3}
            \right), &
            \lambda > 1,
        \end{cases}
    \]
    where
    \begin{align*}
        \delta_1(\lambda) =
        \begin{cases}
            e^{\lambda + \lambda^2/2} & \mbox{ if } D \in   \DiER(n,p)  \\
            e^{\lambda } & \mbox{ if } D \in   \DiERBoth(n,p)  \\
        \end{cases},
        \quad
        \delta_2(\lambda) =
        \begin{cases}
            e^{-{\lambda^2/4+5\lambda/2-3/4}}
            & \mbox{ if } D \in   \DiER(n,p) \\
            e^{-{\lambda^2/4+3\lambda/2-1/4}}
            & \mbox{ if } D \in   \DiERBoth(n,p)
        \end{cases}
    \end{align*}
    and the functions $\alpha(\cdot), \beta(\cdot)$, $\gamma_2(\cdot)$
    are the same as given
in~\cref{theo:MD_model_lambda_fixed}, and $\varphi(\cdot)$ is the same as
given in \cref{th:acyclic-window}.
\end{theorem}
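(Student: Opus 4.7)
I would follow the same three-step structure as the proof of \cref{theo:MD_model_lambda_fixed}, systematically replacing each multidigraph tool by its simple (or strict) digraph analogue. By \cref{th:summary:digraph} the acyclicity probability equals
\[
\mathbb P_{\mathsf{acyclic}}(n,p)
= (1-p)^{\binom{n}{2}} n! [z^n] \frac{1}{\widetilde\phi(z, w)}\bigg|_{w = \frac{p}{1-ap}},
\]
with $a = 1$ for $\DiERBoth(n,p)$ and $a = 2$ for $\DiER(n,p)$; here $\widetilde\phi(z,w) = \widetilde\phi_0(z,w;1)$. All three regimes are then handled by \cref{cor:lem:cauchy} applied with $k = 1$, $r_1 = 0$, $p_1 = 1$ and $F_1 \equiv 1$, so that $R = 0$.

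For $\lambda \in [0,1)$, part~(a) of \cref{cor:lem:cauchy} immediately yields $\mathbb P_{\mathsf{acyclic}}(n,p) \sim e^{\lambda + (a-1)\lambda^2/2}(1-\lambda)$, which matches $\delta_1(\lambda)(1-\lambda)$ in both models. Inside the critical window $\lambda = 1 + \mu n^{-1/3}$ with $\mu = \smallo(n^{1/12})$, part~(b) gives
\[
\mathbb P_{\mathsf{acyclic}}(n,p) \sim n^{-1/3}\, e^{(a+1)/2} \cdot 2^{-1/3} \cdot \frac{1}{2\pi i}\int_{-i\infty}^{i\infty}\frac{\exp(-\mu^3/6 - \mu s)}{\ai(-2^{1/3} s)}\,\mathrm ds = \delta_1(1)\,\varphi(\mu)\,n^{-1/3},
\]
since $e^{(a+1)/2}$ equals $e = \delta_1(1)|_{\DiERBoth}$ or $e^{3/2} = \delta_1(1)|_{\DiER}$.

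For $\lambda > 1$, I transport the residue argument from the multidigraph case. Fix $\theta$ with $a_2 < 2^{1/3}\theta < a_1$ and set $\rho = (1 - \theta w^{2/3})(ew)^{-1}$. By \cref{theo:smallest_zero_simple}, for $w$ small enough the root $\widetilde\varrho_1(w)$ is simple and lies strictly between $0$ and $\rho$, while $\widetilde\varrho_2(w) > \rho$, so the residue theorem applied on $|z| = \rho$ splits the coefficient extraction as a residue at $\widetilde\varrho_1(w)$ plus a contour integral which, by part~(c) of \cref{cor:lem:cauchy}, is exponentially smaller (in $n^{1/3}$) than the residue thanks to $2^{1/3}\theta < a_1$. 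The residue itself is evaluated using the asymptotic formulas of \cref{theo:smallest_zero_simple} for $\widetilde\varrho_1(w)$ and $\partial_z\widetilde\phi(\widetilde\varrho_1(w),w)$.

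The main obstacle will be bookkeeping for the prefactor $\delta_2(\lambda)$, which I would obtain by tracking three independent subleading contributions relative to the multidigraph formula. First, $(1-p)^{\binom{n}{2}}$ replaces $e^{-w n^2/2}$; the ratio $(1-p)^{\binom{n}{2}}/e^{-w n^2/2}$ with $w = \lambda/(n-a\lambda)$ evaluates asymptotically to $e^{\lambda/2 + (2a-1)\lambda^2/4}$ after Taylor-expanding $\log(1-\lambda/n)$ to subleading order. Second, the substitution $w = \lambda/(n-a\lambda)$ shifts the effective parameter in the multidigraph formula by $\lambda \mapsto \lambda + a\lambda^2/n + \bigO(n^{-2})$; expanding $\alpha(\cdot) n$ via $\alpha'(\lambda)\, a\lambda^2 = a(\lambda-1)^2/2$ contributes an additional factor $e^{-a(\lambda-1)^2/2}$, while $\gamma_2$ and $\beta$ are unaffected in the limit. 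Third, \cref{corollary:complex:general} contains a universal factor $e^{-1/4}$ compared to \cref{prop:complex:general}, which through $\widetilde\kappa_1/\kappa_1 = e^{-1/4}$ produces an extra $e^{1/4}$ in the denominator. Collecting these three contributions simplifies to the exponent $(1-2a)/4 + (2a+1)\lambda/2 - \lambda^2/4$, which matches $\delta_2(\lambda)$ for both $a = 1$ and $a = 2$, while the $n^{-1/3}$ power and the $\gamma_2(\lambda)\exp(-\alpha(\lambda)n + a_1\beta(\lambda)n^{1/3})$ factor transfer unchanged from the multidigraph case.
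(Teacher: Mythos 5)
Your proof is correct and follows essentially the same route as the paper: express the probability via \cref{th:summary:digraph}, apply Parts~(a) and (b) of \cref{cor:lem:cauchy} for the subcritical and critical regimes, and, for $\lambda > 1$, the residue argument on $|z|=\rho$ combined with \cref{theo:smallest_zero_simple} and \cref{corollary:complex:general}, with the remaining contour contribution controlled by Part~(c). The one place where you proceed a little differently is in computing $\delta_2(\lambda)$: the paper plugs in the asymptotic formulas for $\widetilde\varrho_1$, $\partial_z\widetilde\phi$ and $\widetilde\phi(\rho,w)$ and leaves the simplification to a computer algebra system, whereas you instead treat the multidigraph asymptotics as a black box and compute three explicit correction ratios — the replacement of $e^{-wn^2/2}$ by $(1-p)^{\binom{n}{2}}$, the shift $\lambda\mapsto\lambda'=wn$ inside $\alpha(\cdot)n$ (using $\alpha'(\lambda)a\lambda^2 = a(\lambda-1)^2/2$), and the ratio $\kappa_1/\widetilde\kappa_1 = e^{1/4}$ from \cref{theo:smallest_zero_simple}. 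I checked all three and their product gives exactly the exponent $-\lambda^2/4 + (2a+1)\lambda/2 + (1-2a)/4$ that the paper records, so your bookkeeping is right; your decomposition has the advantage of making the origin of each piece of $\delta_2$ transparent. (One remark: this implicitly uses that the multidigraph asymptotics of \cref{theo:MD_model_lambda_fixed} are stable under the $O(1/n)$ perturbation $\lambda\mapsto\lambda'$, which holds since $\alpha,\beta,\gamma_2$ are smooth and $\lambda>1$ is fixed, but is worth noting since $\lambda'$ depends on $n$.)
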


\begin{proof}
  The proof is mainly the same as in
\cref{theo:MD_model_lambda_fixed}.
  According to \cref{th:summary:digraph}, the probability
  $\mathbb P_{\mathsf{acyclic}}(n,p)$ that a random simple digraph is acyclic
  is given by
  \[
      \mathbb P_{\mathsf{acyclic}}(n,p) =
      (1 - p)^{\binom{n}{2}} n! [z^n]
      \widehat D_{\mathrm{DAG}}^{(\text{simple})}
      \left(
        z, \dfrac{p}{1 -ap}
      \right)
       = (1 - p)^{\binom{n}{2}} n!
      [z^n]
      \frac{1}{\widetilde\phi(z,w)}\Big|_{w=\frac{p}{1-ap}}\, ,
\]
where $a=1$ corresponds to the statement for the $\DiERBoth(n,p)$ model,
and $a=2$ for the $\DiER(n,p)$ model.

\textbf{Case $\lambda\in [0,1)$.}
We apply Part (a) of \cref{cor:lem:cauchy} with
$\widetilde \xi(z,w) = \widetilde \phi_0(z,w;1) = \widetilde \phi(z,w)$,
so that $R=0$, and obtain
\[
  (1 - p)^{\binom{n}{2}} n!
  [z^n] \frac{1}{\widetilde\phi(z, w)}
  \Big|_{w=\frac{p}{1-ap}}
  \sim
  e^{\lambda+(a-1)\lambda ^2/2 }(1 - \lambda)\, .
\]

\textbf{Case $\lambda = 1 + \mu n^{-1/3}$.}
We apply Part (b) of \cref{cor:lem:cauchy} to obtain
\[
    (1 - p)^{\binom{n}{2}} n!
    [z^n] \frac{1}{\widetilde\phi(z, w)} \sim
    e^{(a+1)/2}\varphi(\mu) n^{-1/3}\, .
\]

\textbf{Case $\lambda >1$.}
  By taking the same contour of integration as in the
  proof of \cref{theo:MD_model_lambda_fixed}, the residue theorem gives
  \[
  [z^n] \widehat D_{\mathrm{DAG}}^{(\text{simple})}(z,w)=-\frac{1}{\widetilde \varrho_1(w)^{n+1}
    \partial_z\widetilde \phi (\widetilde\varrho_1(w),w)}+
  \frac{1}{2\pi i}\oint_{|z|=\rho}\frac{1}{
\widetilde \phi(z,w)z^{n+1}}
  \mathrm dx.
\]
Then \cref{cor:lem:cauchy} implies
\[
  [z^n] \widehat D_{\mathrm{DAG}}^{(\text{simple})}(z,w)=
  -\frac{1}{\widetilde \varrho_1(w)^{n+1}\partial_z
\widetilde\phi(\widetilde\varrho_1(w),w)}
  +O\left(\frac{w^{2/3}}{|\widetilde \phi(\rho,w)|\rho^n}\right).
\]
Next, by means of asymptotic formulas for
$\widetilde\varrho_1(w)$ and $ \partial_z \widetilde
\phi (\widetilde \varrho_1(w),w)$ in \cref{theo:smallest_zero_simple}  and
$\widetilde \phi(\rho,w)=\widetilde\phi_0(\rho,w;1)$
in~\cref{corollary:complex:general}, and by letting
$w=\lambda/(n-a\lambda)$, we obtain,
with the help of a computer algebra system,
\begin{align}
-\frac{n!(1-p)^{\binom{n}{2}}}{\widetilde \varrho_1(w)^{n+1}
\partial_z \widetilde \phi(\widetilde \varrho_1(w),w)}
  & \sim  e^{\lambda/2-{\lambda}^{2}/4 -a/2+1/4+a\lambda}\gamma_2(\lambda)
    n^{-1/3} e^{-\alpha(\lambda)n+a_1\beta(\lambda)n^{1/3}},
    \label{eq:sup_m_simple} \\[.5em]
\frac{n!(1-p)^{\binom{n}{2}} w^{2/3}}
{|\widetilde\phi(\rho,w)|\rho^n} &=
 O\left( n^{-1/3}e^{-\alpha(\lambda)n+\theta\beta(\lambda)n^{1/3}}\right),
\label{eq:sup_e_simple}
\end{align}
where the constant
$\gamma_2(\cdot)$ is precisely as given in
\cref{theo:SD_model_lambda_fixed},
 $\alpha(\cdot)$ and $\beta(\cdot)$
are precisely as given in
\cref{theo:MD_model_lambda_fixed}.
Choosing $\theta$
in the same way as in \cref{theo:MD_model_lambda_fixed},
the right-hand side of \eqref{eq:sup_m_simple} is
indeed the main term of $\mathbb P_{\mathsf{acyclic}}(n,p).$
The statement of the theorem follows by setting $a=1$
for the  $\DiERBoth(n,p)$ model and $a=2$ for $\DiER(n,p)$.
\end{proof}

\begin{figure}[ht]
    \RawFloats
    \begin{minipage}[t]{.47\textwidth}
      \begin{center}
        \includegraphics[width=\linewidth]{./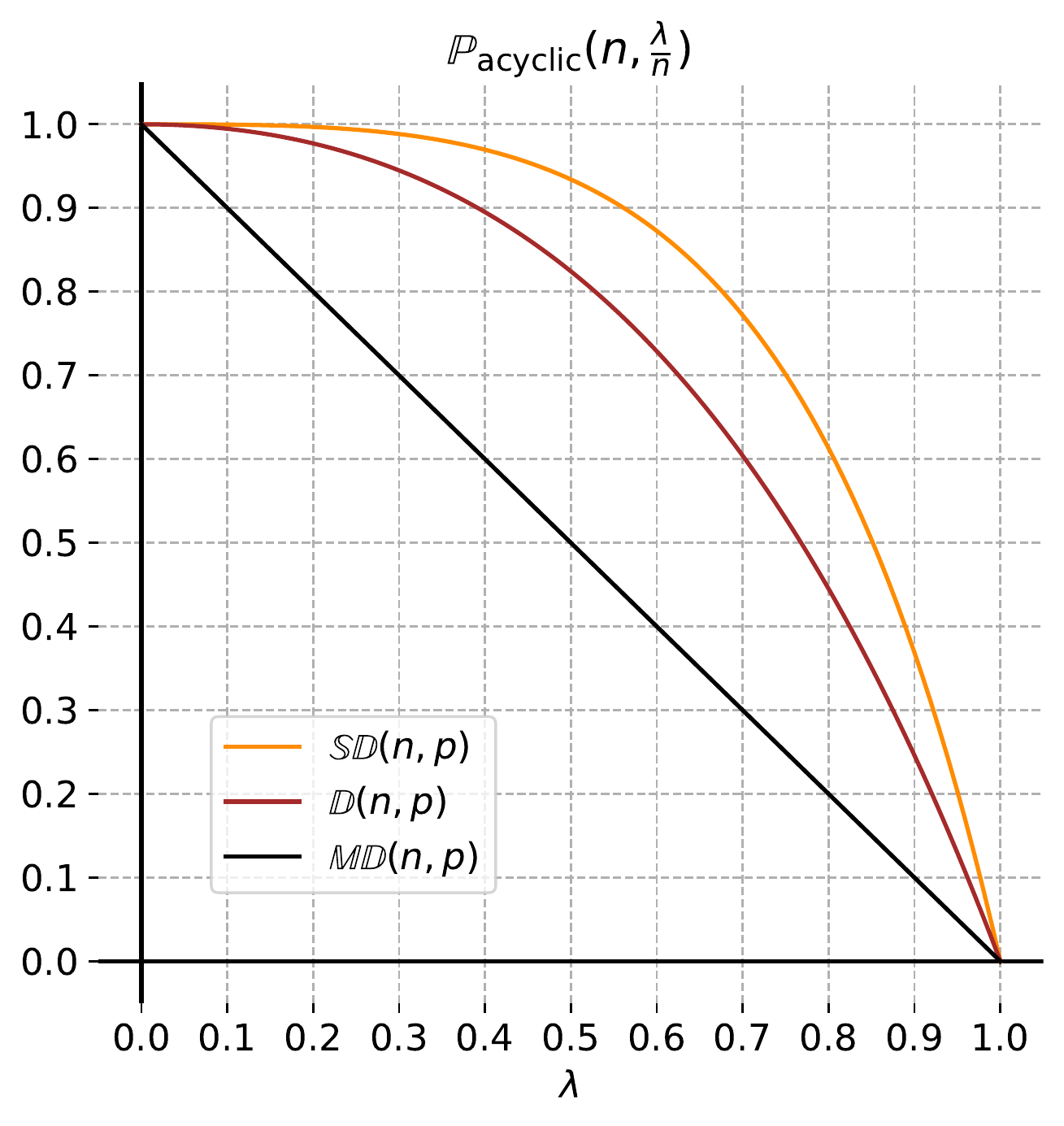}
      \end{center}
      \caption{
      \label{fig:subcritical_acyclic}
      Numerical plots of the theoretical limit of the
      probability $\mathbb P_{\mathsf{acyclic}}(n,p)$ that a random
      digraph is acyclic
      for $np = \lambda <1$.
      }
    \end{minipage}\hfill
    \begin{minipage}[t]{.47\textwidth}
      \begin{center}
        \includegraphics[width=\linewidth]{./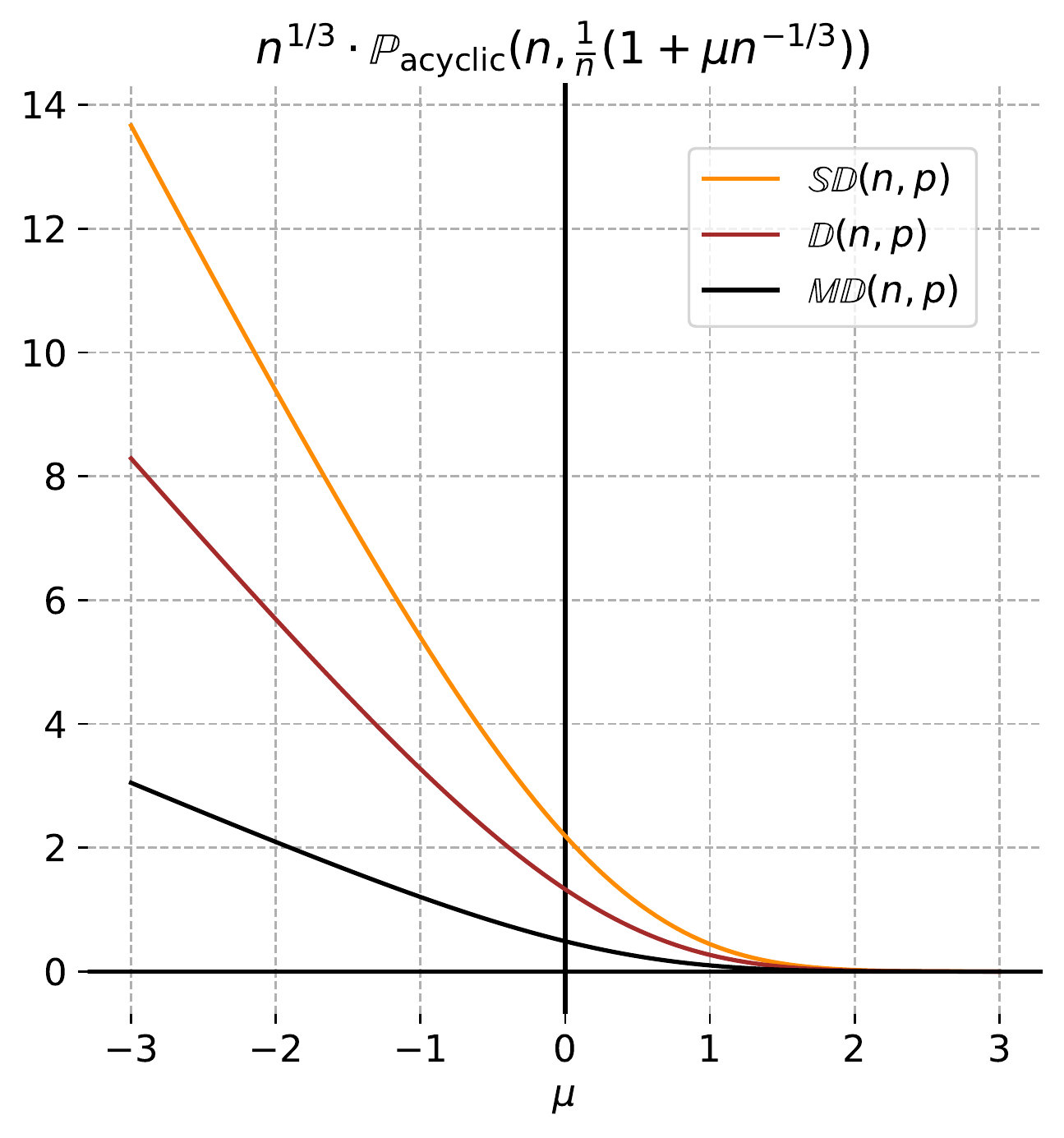}
      \end{center}
      \caption{\label{fig:plt:varphi}\label{fig:critical_acyclic}
      Numerical plots of the theoretical limit of the rescaled
  probability  $n^{1/3}\mathbb P_{\mathsf{acyclic}}(n,p)$ that a
  random digraph is acyclic
  inside the critical window for $\mu \in[-3,3]$.}
    \end{minipage}
\end{figure}

\begin{corollary} \label{th:digraph-acyclic-window}
  Let \( p = (1+\mu n^{-1/3})/n \) with \( |\mu| \) going to infinity with $n$.
  Then, the probability $\mathbb P_{\mathsf{acyclic}}(n,p)$  that a random digraph
  $D$ is acyclic satisfies the following asymptotic
  formula as $n\to \infty$:
    \[
      \mathbb P_{\mathsf{acyclic}}(n,p) \sim
      \begin{cases}
          \delta |\mu|n^{-1/3} & \mbox{ if $\mu \to - \infty$ and $|\mu| = \smallo(n^{1/3})$};\\
        \delta
        \dfrac{2^{-2/3}}{\ai'(a_1)}
        n^{-1/3}  \exp\left( - \dfrac{\mu^3}{6}+2^{-1/3}a_1\mu\right)
        & \mbox{ if $\mu \to +\infty$ and $\mu = \smallo(n^{1/12})$},
      \end{cases}
    \]
    where $a_1$ is the zero of the Airy function of smallest modulus $a_1 \approx -2.338107$ and
        \[
          \delta=
          \begin{cases}
              e^{3/2}& \mbox{ if } D \in   \DiER(n,p);  \\
              e & \mbox{ if } D \in   \DiERBoth(n,p).
          \end{cases}
    \]
\end{corollary}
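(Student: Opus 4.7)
The plan is to specialize~\cref{theo:SD_model_lambda_fixed} to $\lambda = 1 + \mu n^{-1/3}$ with $|\mu|\to\infty$, mirroring the passage from~\cref{theo:MD_model_lambda_fixed} to~\cref{th:acyclic-window} in the multidigraph case. The key observation enabling the matching of the two regimes is that, for both models, the subcritical prefactor $\delta_1(\lambda)$ and the supercritical prefactor $\delta_2(\lambda)$ coincide at $\lambda = 1$:
\[
    \delta_1(1) = \delta_2(1) = \delta,
    \qquad \text{with } \delta = e^{3/2} \text{ for } \DiER(n,p)
    \text{ and } \delta = e \text{ for } \DiERBoth(n,p),
\]
which follows by direct evaluation of the formulas for $\delta_1, \delta_2$ given in~\cref{theo:SD_model_lambda_fixed}.

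For the case $\mu \to -\infty$ with $|\mu| = o(n^{1/3})$, I would invoke the subcritical estimate of~\cref{theo:SD_model_lambda_fixed}, whose underlying analytic ingredient (\cref{corollary:lemma:analytic:first2} through~\cref{cor:lem:cauchy}(a)) is valid whenever $(1-\lambda)n^{1/3} \to \infty$. Since $\delta_1$ is continuous at $1$ and $1 - \lambda = |\mu|n^{-1/3}$, this directly yields
\[
    \mathbb P_{\mathsf{acyclic}}(n,p) \sim \delta_1(\lambda)(1-\lambda) \sim \delta\, |\mu|\, n^{-1/3}.
\]

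For the case $\mu\to+\infty$ with $\mu = o(n^{1/12})$, I would use the supercritical estimate of~\cref{theo:SD_model_lambda_fixed}. The prefactor $\delta_2(\lambda)\gamma_2(\lambda)$ converges by continuity to $\delta \cdot 2^{-2/3}/\ai'(a_1)$ as $\lambda\to 1^+$. What remains is to expand the exponent. The Taylor expansions
\[
    \alpha(\lambda) = \frac{(\lambda-1)^3}{6} + \mathcal O\bigl((\lambda-1)^4\bigr),
    \qquad
    \beta(\lambda) = 2^{-1/3}(\lambda-1) + \mathcal O\bigl((\lambda-1)^2\bigr)
\]
are obtained by straightforward computation at $\lambda=1$. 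Substituting $\lambda - 1 = \mu n^{-1/3}$ gives
\[
    -\alpha(\lambda)n + a_1 \beta(\lambda) n^{1/3}
    = -\frac{\mu^3}{6} + 2^{-1/3} a_1 \mu + \mathcal O\!\left(\mu^4 n^{-1/3}\right) + \mathcal O\!\left(\mu^2 n^{-1/3}\right),
\]
and the hypothesis $\mu = o(n^{1/12})$ forces both error terms to $0$, so the exponential converges to $\exp(-\mu^3/6 + 2^{-1/3} a_1 \mu)$ as required.

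The main delicate point is verifying uniformity in the approach to $\lambda = 1$: the asymptotic estimates feeding~\cref{theo:SD_model_lambda_fixed}---the refined approximation of the smallest zero (\cref{theo:smallest_zero_simple}) and the contour bound (\cref{cor:lem:cauchy})---must remain valid uniformly as $\lambda-1$ shrinks to order $n^{-1/3+\varepsilon}$ on either side. Since those lemmas are stated in terms of $w\to 0^+$ in ranges that already accommodate this shrinking (mirroring how~\cref{th:acyclic-window} is obtained from~\cref{theo:MD_model_lambda_fixed}), no genuinely new analytic work is needed; the corollary is essentially a consolidation of the three regimes of~\cref{theo:SD_model_lambda_fixed} at their common boundary, with the coincidence $\delta_1(1)=\delta_2(1)$ ensuring continuity of the leading constant across the phase transition.
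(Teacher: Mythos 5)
Your key observations are correct: the coincidence $\delta_1(1)=\delta_2(1)=\delta$ for both models (readily verified from the formulas in \cref{theo:SD_model_lambda_fixed}) is indeed the reason the two regimes match up, and your Taylor expansions of $\alpha$ and $\beta$ at $\lambda=1$ are accurate. The supercritical $\mu\to+\infty$ case is essentially right in spirit, though the paper proceeds differently: rather than re-running the supercritical machinery of \cref{theo:SD_model_lambda_fixed} for $\lambda\to 1^+$ and checking uniformity, the paper applies \cref{cor:lem:cauchy}(b) to get the critical formula $\delta\,\varphi(\mu)\,n^{-1/3}$ for $|\mu|=\smallo(n^{1/12})$ and then imports the asymptotic $\varphi(\mu)\sim\tfrac{2^{-2/3}}{\ai'(a_1)}\exp(-\mu^3/6+2^{-1/3}a_1\mu)$ as $\mu\to+\infty$ from the multidigraph \cref{th:acyclic-window}, using that $\varphi$ is a pure function of $\mu$ independent of the model and of $n$.

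There is a genuine gap in your subcritical argument. You assert that the subcritical estimate is valid ``whenever $(1-\lambda)n^{1/3}\to\infty$'', but Part~(a) of \cref{cor:lem:cauchy} is stated under the stronger hypothesis $\lambda\in(0,1-n^{\epsilon-1/3})$ for some $\epsilon>0$, i.e.\ $|\mu|\geq n^\epsilon$. This requirement is not cosmetic: it propagates from $1-|T(zw)|\geq w^{1/3-\epsilon}$ in \cref{corollary:complex:general}(a), which is what supplies the uniform approximation fed into \cref{corollary:lemma:analytic:first2}. Consequently, a direct invocation of \cref{cor:lem:cauchy}(a) leaves uncovered the window where $|\mu|\to\infty$ but slower than any power of $n$ (e.g.\ $|\mu|=\log n$), even though the corollary claims the formula for all $|\mu|=\smallo(n^{1/3})$. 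You do not actually re-characterise this range correctly (you speak of ``$\lambda-1$ shrinks to order $n^{-1/3+\varepsilon}$'', which is exactly the power-separated regime, not the logarithmic one). The paper bridges this by a matching argument: it proves the subcritical formula $\delta|\mu|n^{-1/3}$ for $|\mu|\geq n^\epsilon$ (step 1), the critical formula $\delta\varphi(\mu)n^{-1/3}$ for $|\mu|=\smallo(n^{1/12})$ (step 2), and then uses the $n$-independent fact $\varphi(\mu)\sim|\mu|$ (inherited from \cref{th:acyclic-window}) on the overlap to stitch the two regimes together. Without this overlap argument, the subcritical half of your proof only covers a proper subrange of the stated claim.
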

\begin{proof}
    We follow the same proof as for \cref{th:acyclic-window}.
    As in the proof of the previous theorem, we start with the expression
    \[
      \mathbb P_{\mathsf{acyclic}}(n,p) =
      (1 - p)^{\binom{n}{2}} n!
      [z^n]
      \frac{1}{\widetilde\phi(z,w)}\Big|_{w=\frac{p}{1-ap}}\, .
    \]

    \noindent \textbf{First step.}
    For $\mu$ going to $-\infty$ with $|\mu| \geq n^{\epsilon}$
    for some small enough positive $\epsilon$,
    we apply Part (a) of \cref{cor:lem:cauchy} with
    $\widetilde \xi(z,w) = \widetilde \phi_0(z,w;1) = \widetilde \phi(z,w)$, so $R = 0$,
    and conclude
    \begin{equation} \label{eq:simple:acyclic:lower:critical}
        \mathbb P_{\mathsf{acyclic}}(n,p) \sim
        \delta |\mu| n^{-1/3}.
    \end{equation}

    \noindent \textbf{Second step.}
    For $p = n^{-1} (1 + \mu n^{-1/3})$ with $|\mu| = \smallo(n^{1/12})$,
    we now apply Part (b) of \cref{cor:lem:cauchy} to conclude,
    as in the previous theorem,
    \[
        \mathbb P_{\mathsf{acyclic}}(n,p) \sim
        \delta_1(1) \varphi(\mu) n^{-1/3}.
    \]
    From Part (a) and (b) of \cref{th:acyclic-window},
    since $\varphi(\mu)$ is independent of $n$,
    we deduce that as $\mu$ tends to $-\infty$, we have
    $\varphi(\mu) \sim |\mu|$.
    Thus, for $\mu$ going to $-\infty$ with $|\mu| = \smallo(n^{1/12})$, we have
    \[
        \mathbb P_{\mathsf{acyclic}}(n,p) \sim
        \delta_1(1) \varphi(\mu) n^{-1/3}.
    \]
    Since $\delta = \delta_1(1)$ and $1/12 < 1/3$,
    the asymptotic formula \eqref{eq:simple:acyclic:lower:critical}
    obtained in the first step of the proof
    holds for any $\mu$ going to $-\infty$ with $|\mu| = \smallo(n^{1/3})$.

    \noindent \textbf{Third step.}
    Combining Part (b) and (c) of \cref{th:acyclic-window},
    we deduce that as $\mu$ tends to infinity, we have
    \[
        \varphi(\mu) \sim
        \frac{2^{-2/3}}{\ai'(a_1)}
        \exp \left( - \frac{\mu^3}{6} + 2^{-1/3} a_1 \mu \right).
    \]
    Combining this asymptotic formula with the second step, we conclude
    that when $\mu \to \infty$ while $\mu = \smallo(n^{1/12})$, we have
    \[
        \mathbb P_{\mathsf{acyclic}}(n,p) \sim
        \delta \frac{2^{-2/3}}{\ai'(a_1)}
        \exp \left( - \frac{\mu^3}{6} + 2^{-1/3} a_1 \mu \right)
        n^{-1/3}.
    \]
\end{proof}

\begin{remark}
  As we have seen in \cref{remark:sum:residues}
  when $\lambda = 1+ \mu n^{-1/3}$ and $\mu > 0$,
  the probability $\mathbb P_{\mathsf{acyclic}}(n,p)$  that $\DiGilb(n,p)$ or
  $\DiGilbBoth(n,p)$ is acyclic  can be expressed as a
  summation by expanding the contour integral.  The results
  differ by a factor  $e^{3/2}$ for $\DiGilb(n,p)$ and by $e^1$ for
  $\DiGilbBoth(n,p)$.
\end{remark}

\subsection{Asymptotics of elementary digraphs and complex components}
\label{section:elementary:simple}

The next step after having the asymptotics of directed
acyclic graphs, is to express the probability that a random
digraph is \emph{elementary}
or contains \emph{one complex component}
of a given excess.  The following theorem expresses the probability that
a random digraph is elementary when $\lambda<1$,  $\lambda \sim 1 $
and  $\lambda >1$.
\begin{theorem}
  \label{theo:D_model_lambda_fixed}
  Let \( p = \lambda / n \).
  The probability that a random simple digraph
  $D\in \DiGilb(n,p)$ (resp. $D \in \DiGilbBoth(n,p)$)   is elementary
    is
    \[
        \mathbb P_{\mathsf{elementary}}(n,p) \sim
        \begin{cases}
            1, & \lambda < 1;
            \\
            - 2^{-2/3}\dfrac{1}{2 \pi i}
            \displaystyle\int_{-i \infty}^{i\infty}
            \dfrac{\exp(-\mu \tau - \mu^3/6)}{\ai'(-2^{1/3} \tau)}
            \mathrm d \tau, & \lambda = 1 + \mu n^{-1/3};
            \\[1em]
            \omega_{\mathsf{elem}}(\lambda)
            \exp \Big({-\alpha(\lambda) n} + a_1' \beta(\lambda)
            n^{1/3}  \Big)
            , & \lambda > 1\, ,
        \end{cases}
    \]
    where \( a_1' \approx -1.018793 \) is the zero of $\ai'(\cdot)$ of smallest modulus,
    and
    \begin{align*}
        \alpha(\lambda) &= \frac{\lambda^2-1}{2 \lambda} - \log \lambda,
        \\
        \beta(\lambda) &= 2^{-1/3} \lambda^{-1/3} (\lambda - 1),
        \\
      \omega_{\mathsf{elem}}(\lambda)
      &=
      -\frac{\sqrt{\lambda}}{2a_1'\ai(a_1')} \exp\left(
        - \frac{\lambda^2}{4}
        +\frac{\lambda}{3}
        - \frac{1}{12}
      \right) \, .
    \end{align*}
  \end{theorem}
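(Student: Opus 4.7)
The plan is to follow the structure of the proof of \cref{theo:elementary:multi} for multidigraphs, but replace the multidigraph tools from \cref{section:external:integration:multi} by their simple-digraph analogues from \cref{section:external:integration:simple}. The starting point is the exact expression from \cref{th:summary:digraph}:
\[
\mathbb P_{\mathsf{elementary}}(n,p) = (1-p)^{\binom{n}{2}} n!\,[z^n] \frac{1}{\widetilde\phi_1(z, w; F_a(\cdot))}\Big|_{w = p/(1-ap)},
\]
where $a = 1, F_1(z) = e^z$ in the $\DiGilbBoth$ case and $a = 2, F_2(z) = e^{z+z^2/2}$ in the $\DiGilb$ case. Throughout we apply \cref{cor:lem:cauchy} with $k=1$, $r_1 = 1$, $p_1 = 1$, $F_1 = F_a$, so that $R = 1$ and $\widetilde\xi(z,w) = \widetilde\phi_1(z,w;F_a(\cdot))$.

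For $\lambda \in [0,1)$, Part~(a) of \cref{cor:lem:cauchy} gives directly
\[
\mathbb P_{\mathsf{elementary}}(n,p) \sim e^{\lambda + (a-1)\lambda^2/2} (1-\lambda)^0 F_a(\lambda)^{-1},
\]
and a routine check shows this equals $1$ for both $a = 1$ and $a = 2$, because the factor $F_a(\lambda)^{-1}$ exactly cancels the ``forbidden short cycles'' exponential from $\delta_1$-type corrections. For $\lambda = 1 + \mu n^{-1/3}$, Part~(b) of the same lemma yields
\[
\mathbb P_{\mathsf{elementary}}(n,p) \sim \frac{-e^{(a+1)/2} \cdot 2^{-2/3}}{F_a(1)} \cdot \frac{1}{2\pi i} \int_{-i\infty}^{i\infty} \frac{e^{-\mu^3/6 - \mu s}}{\ai(1; -2^{1/3} s)}\, \mathrm d s,
\]
and again a direct computation gives $e^{(a+1)/2}/F_a(1) = 1$ in both models (this is what makes the critical-window answer model-independent), producing the stated integral representation after identifying $\ai(1;\cdot) = \ai'(\cdot)$.

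For $\lambda > 1$, I would imitate the supercritical argument from the proof of \cref{theo:elementary:multi}. Let $a_1' > a_2' > \cdots$ be the zeros of $\ai'(\cdot)$, pick $\theta$ with $a_2' < 2^{1/3}\theta < a_1'$, and set $\rho = (1-\theta w^{2/3})(ew)^{-1}$. By \cref{corollary:zeros:phi:r} the first zero $\widetilde\varsigma_1(w)$ of $\widetilde\phi_1$ is simple and satisfies $\widetilde\varsigma_1(w) < \rho < \widetilde\varsigma_2(w)$ for small $w$. Writing $[z^n]\widetilde\phi_1^{-1}$ as a Cauchy integral on $|z|=\rho$ and applying the residue theorem together with the contour bound from Part~(c) of \cref{cor:lem:cauchy}, one gets
\[
[z^n]\frac{1}{\widetilde\phi_1(z,w;F_a(\cdot))} = -\frac{1}{\widetilde\varsigma_1(w)^{n+1} \partial_z \widetilde\phi_1(\widetilde\varsigma_1(w),w;F_a(\cdot))} + \bigO\!\left(\frac{w^{2/3}}{|\widetilde\phi_1(\rho,w;F_a(\cdot))|\rho^n}\right).
\]
The error term is exponentially (in $n^{1/3}$) smaller than the main term by the choice of $\theta$, exactly as in the multidigraph case. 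Substituting the asymptotics of $\widetilde\varsigma_1(w)$ and $\partial_z \widetilde\phi_1$ from \cref{corollary:zeros:phi:r} together with Stirling's formula and the expansion of $(1-p)^{\binom{n}{2}}$ yields the claimed form with $\alpha(\lambda), \beta(\lambda)$ and the prefactor $\omega_{\mathsf{elem}}(\lambda)$.

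The main obstacle will be the bookkeeping of the prefactor $\omega_{\mathsf{elem}}(\lambda)$. One must combine six multiplicative contributions: the Stirling factor $n! \sim \sqrt{2\pi n}\,n^n e^{-n}$, the expansion of $(1-p)^{\binom{n}{2}} \sim e^{-\lambda n/2 -\lambda^2/4+\lambda/2}$, the factor $\widetilde\varsigma_1(w)^{-(n+1)} \sim (ew)^{n+1}\exp(a_1' 2^{-1/3}\lambda^{-1/3}(\lambda-1)n^{1/3}+ \cdots)$, the inverse derivative $1/\partial_z \widetilde\phi_1$, whose factor $\exp\bigl(-\tfrac{1}{2w}+\tfrac{2^{-1/3}a_1'}{w^{1/3}}-\tfrac{5}{12}-\tfrac{F_a'(1)}{F_a(1)}+1\bigr)$ comes from the second-derivative-based \cref{corollary:zeros:phi:r}, the subleading $w^{1/3}$-term in $K_1(\tau)$ that produces the $-\tfrac{5}{12}$ and the $F_a'(1)/F_a(1)$ contributions, and finally the model-dependent shift $w = p/(1-ap)$. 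The required algebra is straightforward but the cancellations with $F_a(1)$ and $F_a'(1)/F_a(1)$ must be carried out carefully; the key identities to keep in mind are $F_1'(1)/F_1(1) = 1$ and $F_2'(1)/F_2(1) = 2$ combined with $F_a(1) = e^{(a+1)/2}$, which should conspire with the $e^{-5/12+1}$ and $(1-p)^{\binom{n}{2}}$ expansions to produce the uniform expression $-\tfrac{\sqrt\lambda}{2a_1'\ai(a_1')}\exp(-\tfrac{\lambda^2}{4}+\tfrac{\lambda}{3}-\tfrac{1}{12})$ in both models. This identical cancellation across models is the reassuring check that the calculation is correct.
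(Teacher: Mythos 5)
Your proposal is correct and follows essentially the same route as the paper: starting from \cref{th:summary:digraph}, applying Parts~(a) and (b) of \cref{cor:lem:cauchy} with $\widetilde\xi = \widetilde\phi_1(\cdot,\cdot;F_a)$ and $R=1$ for the subcritical and critical ranges, and then using the residue theorem together with \cref{corollary:zeros:phi:r} and Part~(c) of \cref{cor:lem:cauchy} for the supercritical range. The cancellations you identify — $e^{\lambda+(a-1)\lambda^2/2}F_a(\lambda)^{-1}=1$, $e^{(a+1)/2}/F_a(1)=1$, and the role of $C_a'(1)=F_a'(1)/F_a(1)$ — are exactly the ones the paper relies on, and your final bookkeeping outline for $\omega_{\mathsf{elem}}(\lambda)$ matches the paper's $e^{-\lambda^2/4+\lambda/3+5/12-C_a(1)+a/2}$ which reduces to $e^{-\lambda^2/4+\lambda/3-1/12}$ in both models. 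One small imprecision: the constant $-\tfrac{5}{12}$ in $\partial_z\widetilde\phi_1$ is not purely from the $w^{1/3}$-term of $K_1(\tau)$ — that term contributes $-\tfrac16$, and the extra $-\tfrac14$ comes from the $e^{-1/4}$ factor that appears when passing from $\phi_1$ to $\widetilde\phi_1$ in \cref{corollary:complex:general}/\cref{corollary:zeros:phi:r}.
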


  \begin{proof}
    According to \cref{th:summary:digraph}, the probability
    $\mathbb P_{\mathsf{elementary}}(n, p)$ that
    a random digraph is elementary is given by
    \[
     \left(1-\frac{\lambda}{n}\right)^{\binom{n}{2}} n!
      [z^n]\frac{1}{\widetilde \phi_1(z,w; F_a(\cdot))}\Big|_{w =
        \frac{\lambda}{n-a\lambda}}\, .
    \]
    We notice that the analytic function  $F_a(\cdot) = e^{C_a(\cdot)}$ satisfies the
    conditions of \cref{theorem:zeros:phi:r},
    as it is entire and does not vanish on $\mathds{C} \setminus \{0\}$.

    \noindent \textbf{Case $\lambda \in [0, 1)$.}
    We apply Part~(a) of \cref{cor:lem:cauchy} with
    $\widetilde \xi(z,w) = \widetilde \phi_1(z,w;F_a(\cdot))$
    so $k$, $r_1$, $p_1$ and $R$ are all equal to $1$:
    \[
        \left(1-\frac{\lambda}{n}\right)^{\binom{n}{2}} n! [z^n]
        \frac{1}{\widetilde \phi_1(z,w; F_a(\cdot))}
        \Big|_{w = \frac{\lambda}{n-a\lambda}}
        \sim 1\, .
    \]

    \noindent \textbf{Case $\lambda = 1 + \mu n^{-1/3}$.}
    We assume $\mu$ to be fixed or in a bounded real interval,
    and apply Part~(b) of \cref{cor:lem:cauchy} with
    $w=\lambda/(n-a\lambda)$.
    We obtain
    \[
        \left( 1-\frac{\lambda}{n} \right)^{\binom{n}{2}} n! [z^n]
        \frac{1}{\widetilde \phi_1(z,w; F_a(\cdot))}
        \Big|_{w = \frac{\lambda}{n-a\lambda}}
        \sim
        - 2^{-2/3}
        \frac{e^{(a+1)/2}}{F_a(1)}
        \dfrac{1}{2 \pi i}
        \int_{-i \infty}^{i\infty}
        \dfrac{\exp(-\mu \tau - \mu^3/6)}{\ai'(-2^{1/3} \tau)}
        \mathrm d \tau\, .
    \]
    We observe that $e^{(a+1)/2}F_a(1)^{-1} = 1$ for $a\in \{1,2\}$.

    \noindent \textbf{Case $\lambda >1$.}
    We follow the same argument as in
    \cref{theo:elementary:multi} by using the residue theorem.
    The probability
    $\mathbb P_{\mathsf{elementary}}(n, p)$ is asymptotically
    equivalent to
    \[
      -\frac{n!(1-p)^{\binom{n}{2}}}{\widetilde\varsigma_1(w)^{n+1}
        \partial_z\widetilde\phi_1(\widetilde\varsigma_1(w),w;F_a(\cdot))}
      \Big|_{w=\frac{\lambda}{n-a \lambda}} \, ,
    \]
    where $\widetilde \varsigma_{1}(w)$ is the solution to the equation
    \( \widetilde \phi_1(z, w; F_a(\cdot)) = 0 \) that is the closest to zero.
    By plugging in the expressions for $\widetilde\varsigma_1(w)$ and
    $\widetilde \phi_1(\widetilde\varsigma_1(w),w;F_a(\cdot))$ from
    \cref{corollary:zeros:phi:r}, we obtain
    \[
        \mathbb P_{\mathsf{elementary}}(n, p)
        \sim
        - \frac{1}{2}\frac{\sqrt{\lambda}}
        {a_1'\ai_1(a_1')}
        e^{ - \lambda^2/4+\lambda/3+ 5/12-C_a(1)+a/2}
        \exp \Big(
            {-\alpha(\lambda) n}
            + a_1' \beta(\lambda) n^{1/3}
        \Big)
        \, .
    \]
\end{proof}

\begin{remark}
    In~\cite{Panafieu2020} it was proven that as \( \mu \to -\infty \), the
    probability that a random digraph is elementary can be further refined as
    \[
        \mathbb P_{\mathsf{elementary}}(n, p) \sim 1 - \dfrac{1}{2 |\mu|^3}.
    \]
    This result was derived for the model of simple digraphs in the model \(
    \Di(n, m) \), and can be deduced similarly for digraphs in the model
    \( \Di(n, p) \) as well. In principle, with an even more refined
    analysis, a complete asymptotic expansion in powers of \( |\mu|^{-3} \)
    could be obtained as well, and it can be shown that the coefficients of this
    expansion are identical for all the models considered
    in~\cref{section:models}.

    By further analysis of the asymptotic probability of the presence of a complex
    component with given excess, it can be seen that when \( \mu \to -\infty \),
    the asymptotic probability that a digraph has one  bicyclic complex component
    is \( \dfrac{1}{2 |\mu|^3} \). These two probabilities sum up
    to one if two terms of the asymptotic expansion are taken into account:
    \( \left(1 - \dfrac{1}{2 |\mu|^3}\right) + \dfrac{1}{2 |\mu|^3} = 1 \),
    leaving an error of order \( O(|\mu|^{-6}) \).
  \end{remark}

\begin{theorem} \label{theorem:digraph:one:strong:component}
Let \( p = \lambda / n \), \( \lambda \geq 0 \).
The probability \( \mathbb P(n, p) \)
that a random  digraph  $D$ in the model \(\DiGilb(n,p)\) or \(\DiGilbBoth(n,p)\)
has exactly one complex strong component
and that the excess of this component is \( r \) satisfies
\[
\mathbb P(n, p) \sim
    \begin{cases}
        c_r(\lambda) n^{-r} \lambda^r (1 - \lambda)^{-3r },
        & \lambda < 1; \\
        c_r(1) \widetilde \varphi_{r}(\mu)
        ,
        & \lambda = 1 + \mu n^{-1/3}; \\
        c_r(1)
        n^{1/3}
        \omega_r(\lambda)
        \exp \Big(
            {-\alpha(\lambda)n} + a_1' \beta(\lambda) n^{1/3}
        \Big)
        ,
        & \lambda > 1,
    \end{cases}
\]
where the functions \( \alpha(\cdot) \), \( \beta(\cdot) \) are
from~\cref{theo:MD_model_lambda_fixed},
\( a_1' \) is the zero of \(
\ai'(\cdot) \) with the smallest modulus,
\(
    \widetilde  \varphi_{r}(\mu)
    =
    \varphi_{r,0}(\mu)
\) from \cref{theorem:probability:one:strong:component},
\[
    \omega_r (\lambda)
    =
    2^{-4/3-r} (\lambda-1) \lambda^{1/6}
    \dfrac
        {(-1)^{1 - 3r} \ai(1-3r; a_1')}
        {(a_1' \ai(a_1'))^2}
    \exp \left(
        - \frac{\lambda^2}{4}
        + \frac{\lambda}{3}
        - \frac{1}{12}
    \right)
\,
\]
and
\[
    c_r(\lambda)  =
  \begin{cases}
A^{(\text{simple})}_r(\lambda) &  \mbox{ if }   D\in \DiGilbBoth(n,p);\\
A^{(\text{strict})}_r(\lambda)  & \mbox{ if }  D \in \DiGilb(n,p).
 \end{cases}
\]
$A^{(\text{simple})}_r(\cdot)$ and $A^{(\text{strict})}_r(\cdot)$ are defined in
\cref{th:complex:scc} and computable using \cref{rem:strong:complex:simple}.
In particular, a singularity analysis for fixed $r$
gives $c_r(1) = A^{(\text{simple})}_r(1) = A^{(\text{strict})}_r(1)=s_{r,0} =s_r$, where $s_r$ is given in \cref{th:complex:scc}.
\end{theorem}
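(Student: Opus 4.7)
The plan is to follow the template established by Theorem~\ref{theorem:probability:one:strong:component} (the multidigraph analogue) and Theorem~\ref{theo:D_model_lambda_fixed} (the elementary simple-digraph case), using the simple-graphic tools of Corollary~\ref{corollary:complex:general}, Corollary~\ref{corollary:zeros:phi:r} and Lemma~\ref{cor:lem:cauchy}. The starting point is the exact formula from Proposition~\ref{th:summary:digraph}: setting $w = p/(1-ap)$ with $a=1$ (resp.\ $a=2$) for $\DiGilbBoth(n,p)$ (resp.\ $\DiGilb(n,p)$), we must analyse
\[
    \mathbb P(n,p) = (1-p)^{\binom{n}{2}} n! [z^n] \,
    w^{r}\,
    \frac{\widetilde\phi_{1-3r}(z,w; F_a(\cdot))}{\widetilde\phi_1(z,w; G_a(\cdot))^2},
\]
where $F_a(z) = A_r^{(\cdot)}(z)e^{C_a(z)}$ and $G_a(z) = e^{C_a(z)}$. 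Writing this in the form $w^r/\xi(z,w)$ with $\xi = \widetilde\phi_1(z,w;G_a)^{2}/\widetilde\phi_{1-3r}(z,w;F_a)$ fits Lemma~\ref{cor:lem:cauchy} with $k=2$, $(r_1,p_1,F_1)=(1,2,G_a)$ and $(r_2,p_2,F_2)=(1-3r,-1,F_a)$, so that $\sum p_i = 1$ and $R = 1+3r$.

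For the subcritical regime $\lambda<1$, Part~(a) of Lemma~\ref{cor:lem:cauchy} yields directly
\[
    \mathbb P(n,p) \sim w^r\, e^{\lambda+(a-1)\lambda^2/2}(1-\lambda)^{-3r}
    \frac{F_a(\lambda)}{G_a(\lambda)^2}.
\]
Using $F_a/G_a^2 = A_r(\lambda)e^{-C_a(\lambda)}$ together with the identity $\lambda + (a-1)\lambda^2/2 = C_a(\lambda)$ (true for $a\in\{1,2\}$) and $w^r\sim(\lambda/n)^r$ gives the subcritical formula $c_r(\lambda)\lambda^r(1-\lambda)^{-3r}n^{-r}$. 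For the critical regime $\lambda=1+\mu n^{-1/3}$ with $\mu$ bounded, Part~(b) of Lemma~\ref{cor:lem:cauchy} produces the integral representation; after identifying the product $\prod \ai(r_i;-2^{1/3}s)^{p_i} = \ai(1;-2^{1/3}s)^2/\ai(1-3r;-2^{1/3}s)$, absorbing the factor $e^{(a+1)/2}/\prod F_i(1)^{p_i}=e^{(a+1)/2-C_a(1)}=1$, and combining with $w^r \sim n^{-r}$ and $R=1+3r$, the resulting integral is exactly $c_r(1)\widetilde\varphi_r(\mu)$ in the notation of the theorem (using $c_r(1)=A_r(1)=s_r$ from Lemma~\ref{th:complex:scc}).

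For the supercritical regime $\lambda>1$, the strategy is residue calculus around the dominant pole. Fix $\theta$ with $a_2'<2^{1/3}\theta<a_1'$ and set $\rho=(1-\theta w^{2/3})/(ew)$. By Corollary~\ref{corollary:zeros:phi:r}, the smallest root $\widetilde\varsigma_1(w)$ of $\widetilde\phi_1(\cdot,w;G_a(\cdot))$ lies inside $|z|=\rho$ while no other root does; writing the coefficient extraction as a Cauchy integral, splitting off the residue at $\widetilde\varsigma_1(w)$, and bounding the remaining contour integral on $|z|=\rho$ via Part~(c) of Lemma~\ref{cor:lem:cauchy} (which is exponentially smaller since $\theta<2^{-1/3}a_1'$), we reduce to computing a double-pole residue. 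The Laurent expansion of
$\widetilde\phi_*(z,w)/[z^{n+1}\widetilde\phi_1(z,w)^2]$ at $z=\widetilde\varsigma_1(w)$ gives
\[
    -\frac{\partial_z\widetilde\phi_*\cdot\partial_z\widetilde\phi_1 - \widetilde\phi_*\cdot\bigl(\partial_z^{2}\widetilde\phi_1 + \tfrac{n+1}{z}\partial_z\widetilde\phi_1\bigr)}{z^{n+1}(\partial_z\widetilde\phi_1)^{3}},
\]
where $\widetilde\phi_* = \widetilde\phi_{1-3r}(\cdot,w;F_a(\cdot))$. Substituting the estimates from Corollary~\ref{corollary:zeros:phi:r} ($\partial_z^2\widetilde\phi_1\sim -2e\,\partial_z\widetilde\phi_1$, $\partial_z\widetilde\phi_*\sim -e\,\widetilde\phi_*$, $(n+1)/z\sim\lambda e$) the numerator collapses to $e(1-\lambda)\widetilde\phi_*\,\partial_z\widetilde\phi_1$, exactly as in the multidigraph proof. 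Estimating $\widetilde\phi_*(\widetilde\varsigma_1(w),w)$ by Part~(c) of Corollary~\ref{corollary:complex:general} with $\tau = 2^{-1/3}a_1'$ and $\partial_z\widetilde\phi_1(\widetilde\varsigma_1(w),w)$ by Corollary~\ref{corollary:zeros:phi:r}, then multiplying by $w^r(1-p)^{\binom{n}{2}}n!/\widetilde\varsigma_1(w)^{n+1}$ and invoking Stirling, yields the claimed asymptotic with prefactor $c_r(1)\,\omega_r(\lambda)\,n^{1/3}$.

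The main obstacle is bookkeeping in the supercritical case: four asymptotic ingredients (the $(1-p)^{\binom{n}{2}}$ correction from Lemma~\ref{corollary:lemma:analytic:first2}, the $e^{-1/4}$ factor inherent to the simple-digraph version of Corollary~\ref{corollary:complex:general}, the Taylor expansion of $\alpha(\lambda)n$ around $\lambda=1$ via $U(\widetilde\varsigma_1(w)w)/\widetilde w$, and the cubic pole coming from $(\partial_z\widetilde\phi_1)^3$) must combine exactly to produce the exponential factor $\exp(-\lambda^2/4+\lambda/3-1/12)$ appearing in $\omega_r(\lambda)$. No new technique is required beyond those of Theorem~\ref{theo:D_model_lambda_fixed} and Theorem~\ref{theorem:probability:one:strong:component}, but the simplification benefits from computer-algebra assistance as indicated in Table~\ref{table:cas}. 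The identity $c_r(1)=s_r$ (for both $a=1$ and $a=2$) follows from Lemma~\ref{th:complex:scc}, ensuring the critical and supercritical formulas match consistently.
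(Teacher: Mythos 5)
Your proposal follows exactly the same route as the paper's own proof: start from Proposition~\ref{th:summary:digraph}, apply Parts (a) and (b) of Lemma~\ref{cor:lem:cauchy} with the decomposition $(r_1,p_1,F_1)=(1,2,G_a)$, $(r_2,p_2,F_2)=(1-3r,-1,F_a)$ (hence $R=1+3r$), and in the supercritical regime perform the identical double-pole residue computation at $\widetilde\varsigma_1(w)$ using Corollaries~\ref{corollary:zeros:phi:r} and \ref{corollary:complex:general} together with the error bound from Part~(c). One small bookkeeping slip in the critical case: $e^{(a+1)/2}/\prod F_i(1)^{p_i}$ equals $A_r(1)=c_r(1)$, not $1$ (you dropped the $A_r(1)$ factor from $F_a(1)$), but since you write the final answer as $c_r(1)\widetilde\varphi_r(\mu)$ the conclusion is unaffected.
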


\begin{proof}
  We follow the same structure as in the proof
  of \cref{theorem:probability:one:strong:component}.
  We defined $A^{(\text{simple})}_r(\cdot)$ and $A^{(\text{strict})}_r(\cdot)$
  in \cref{th:complex:scc} so that
  the exponential generating functions of strongly connected simple digraphs of excess $r$
  and strongly connected strict digraphs of excess $r$
  are, respectively
  \[
    S_r(z,w;1) = \frac{w^r A^{(\text{simple})}_r(w z)}{(1 - w z)^{3 r}},
    \quad
    S_r(z,w;2) = \frac{w^r A^{(\text{strict})}_r(w z)}{(1 - w z)^{3 r}}.
  \]
  Let \( \widehat H_{S_r}^{\textrm{(simple)}}(z, w;k) \)
  denote the graphic generating function of digraphs containing
  exactly one strongly connected component of excess $r$,
  while the others are single vertices or cycles of length $>k$.
  Let also $C_k(z) = z + z^2/2 + \cdots + z^k/k$.
  According to \cref{lemma:one:marked:scc:simple},
  we have
  \begin{equation}
    \label{eq:ggf:complex:simple}
    \widehat H_{\mathcal S}^{\textrm{(simple)}}(z, w;1) =
    w^r \dfrac{
      \widetilde \phi_{1-3r} (z,w;F_1(\cdot))
    }{
      {\widetilde \phi}_1(z, w;G_1(\cdot))^2
    }\, ,
    \quad
    \mbox{and}
    \quad
    \widehat H_{\mathcal S}^{\textrm{(simple)}}(z, w;2)   = w^r \dfrac{
      \widetilde \phi_{1-3r} (z,w;F_2(\cdot))
    }{
      {\widetilde \phi}_1(z, w;G_2(\cdot))^2
    }\, ,
  \end{equation}
  where  $F_1(z) =  A^{(\text{simple})}_r(z)  e^{z} $  and   $G_1(z) = e^{z}$,
  $F_2(z) =  A^{(\text{strict})}_r(z)  e^{z+z^2/2} $  and   $G_2(z) = e^{z+z^2/2}$.

  In the rest of the proof, we restrict our attention to simple digraphs.
  The case of strict digraphs can be obtained in a straightforward manner.
  We want to compute the probability
  \begin{equation}
    \label{eq:proba-one-strong-supercritical}
    \mathbb P(n,p)=\left( 1 - \frac{\lambda}{n}\right)^{\binom{n}{2}}
    n![z^n] \widehat H_{\mathcal S}^{\textrm{(simple)}}(z, w;1) \,.
  \end{equation}

  \noindent \textbf{Case $\lambda \in [0, 1)$.}
  We apply Part~(a) of \cref{cor:lem:cauchy} by setting
  \[
      \frac{1}{\widetilde \xi(z,w)} = \dfrac{
      \widetilde \phi_{1-3r} (z,w;F_1(\cdot))
    }{
      {\widetilde \phi}_1(z, w;G_1(\cdot))^2
    }
  \]
  so, in the notation of this lemma,
  $r_1 = 1 - 3 r$, $p_1 = -1$, $r_2 = 1$, $p_2 = 2$ and $a=1$.
  This implies $R=2-(1-3r)=3r+1$ and we obtain
  \[
     \left( 1 - \frac{\lambda}{n}\right)^{\binom{n}{2}}  n![z^n] \widehat H_{\mathcal
       S}^{\textrm{(simple)}}(z, w;1)
    \Big|_{w=\frac{\lambda}{n-\lambda}}  \sim
    \frac{\lambda^r}{n^r}\cdot e^{\lambda} (1-\lambda)^{-3r}
    A^{(\text{simple})}_r(\lambda)e^{-\lambda}\, .
  \]

  \noindent \textbf{Case $\lambda = 1 + \mu n^{-1/3}$.}
  Part~(b) of \cref{cor:lem:cauchy} is applied with the previous parameters.

  \noindent \textbf{Case $\lambda > 1$.}
    Let $a_1' > a_2' > \cdots$ denote the roots of $\ai'(\cdot)$,
    fix $\theta \in (2^{-1/3}a_2',\, 2^{-1/3} a_1')$
    and set $\rho = (1 - \theta w^{2/3}) (e w)^{-1}$.
    Let us also write \( \widetilde \phi_\ast(z, w) \)
    for \( \widetilde \phi_{1 - 3r}(z, w;F_1(\cdot)) \),
    and denote by \( \widetilde \varsigma_j(w) \) the $j$th solution of
    \( \widetilde \phi_1(z, w, G_1(\cdot)) = 0 \),
    ordered by increasing modulus.
    Then \cref{corollary:zeros:phi:r} implies that,
    as $w$ tends to $0$,
    $\widetilde \varsigma_1(w) < \rho < \widetilde \varsigma_2(w)$.
    Thus, expressing the coefficient extraction as a Cauchy integral
    and applying the residue theorem, we obtain
    \[
        [z^n] \widehat H_{\mathcal S}^{(\textrm{simple})}(z, w;1) =
          - w^r
          \mathrm{Res}_{z = \widetilde \varsigma_1(w)}
          \dfrac{\widetilde \phi_{\ast}(z, w)}{z^{n+1} \widetilde
            \phi_1(z, w;G_1(\cdot))^2}
          +
          \dfrac{w^r}{2 \pi i}
          \oint_{|z| = \rho}
          \dfrac{\widetilde \phi_{\ast}(z, w)}{z^{n+1}
            \widetilde \phi_1(z, w;G_1(\cdot))^2}
            \mathrm dz.
    \]
    The second term is bounded using Part~(c) of \cref{cor:lem:cauchy}:
    \begin{equation} \label{eq:residue:simple:error}
        \dfrac{w^r}{2 \pi i}
          \oint_{|z| = \rho}
          \dfrac{\widetilde \phi_{\ast}(z, w)}{z^{n+1}
            \widetilde \phi_1(z, w;G_1(\cdot))^2}
            \mathrm dz
        =
        \bigO \left(
            \frac{w^r}{\rho^n}
            \frac{\widetilde \phi_{\ast}(\rho, w)}
                {\widetilde \phi_1(\rho, w;G_1(\cdot))^2}
        \right).
    \end{equation}
    Taylor's theorem is applied to $\widetilde\phi_1(z,w;G_1(\cdot))$
    at $z = \widetilde\varsigma_1(w)$,
    then the Laurent series of
    \[
        \dfrac{\widetilde \phi_{\ast}(z, w)}
            {z^{n+1} \widetilde \phi_1(z, w;G_1(\cdot))^2}
    \]
    is constructed and the residue at $z = \widetilde \varsigma_1(w)$ is computed as
    \[
        \left.
        \dfrac{
          \partial_z \widetilde \phi_\ast(z, w)
          \partial_z \widetilde \phi_1(z, w;G_1(\cdot))
            -
            \widetilde\phi_\ast(z, w)
            \big(
            \partial_z^2 \widetilde \phi_1(z, w; G_1(\cdot))
            +
            \tfrac{n+1}{z} \partial_z \widetilde \phi_1(z, w;G_1(\cdot))
            \big)
        }
        {
            z^{n+1}
            (\partial_z \widetilde \phi_1(z, w;G_1(\cdot)))^3
        }
        \right|_{z = \widetilde\varsigma_1(w)}\, .
    \]
    We apply \cref{corollary:zeros:phi:r} to approximate
    $\widetilde\varsigma_1(w)$
    and the derivatives of $\widetilde \phi_\ast$ and $\widetilde \phi_1$
    at $z = \widetilde\varsigma_1(w)$.
    We apply Part~(c) of \cref{corollary:complex:general} to approximate
    $\widetilde \phi_{\ast}$ and $\widetilde \phi_1$
    at $z = \rho$ and $z = \widetilde\varsigma_1(w)$.
    The asymptotics of the residue and the error term
    are simplified using a computer algebra system.
    The error term from \eqref{eq:residue:simple:error} is negligible
    and the asymptotics of the residue provide the statement of the theorem.
\end{proof}

\begin{remark} According to the exponential generating function of bicycles from
    \cref{lemma:one:marked:scc:simple}, \cref{corollary:bicyclic} holds
    for both $\DiGilb(n,p)$ and
    $\DiGilbBoth(n,p)$.  In these cases, the constant $c_1(1)$
    is $s_1=A_1(1)=B_1(1)=\frac{1}{2}$.
    Hence, the probability
    that $\DiGilb(n,\frac{1}{n})$ or $\DiGilbBoth(n,\frac{1}{n})$ has
    one  bicyclic complex component is asymptotically equal to $1/8$.
\end{remark}

\section{Numerical results}
\label{section:numerical:results}

In this section we provide numerical values for the probability that a
random digraph (simple or multi-) with \( n \) vertices and parameter \( p \)
belongs to a given family for different values of \( n \) and \( p \).
This is accomplished by computing the coefficients of the generating functions.
We explore empirically how fast they tend to their respective limiting values.
We also provide numerical values for the new special functions that we introduced
in the form of Airy integrals.

\subsection{Airy integrals}
\label{section:numerical:airy}

We start by providing a table of numerical values of Airy integrals of the
form appearing in~\cref{theorem:probability:one:strong:component}.
Let
\[
    \mathcal I(n,\mu) := \dfrac{(-1)^n}{2 \pi i}
    \int_{-i \infty}^{i \infty}
    \dfrac{\ai(-n, \tau)}{\ai'(\tau)^2}
    e^{2^{-1/3} \mu \tau - \mu^3/6}
    \mathrm d \tau
    \, .
\]
The first few values of \( \mathcal I(n, \mu) \) corresponding to
different negative values of \( n \) and integer values of \( \mu \), and also
several values of \( \ai(-n, a_1') \), where \( a_1' \) is the dominant root of
the derivative of the Airy function, are given
in~\cref{table:complex:airy,table:complex:airy:cont}.

\begin{table}[hbt]
    \centering
    \begin{tabular}{c|c|cccc}
        \hline
        \hline
        $n$
        & \( (-1)^n \ai(-n, a_1') \)
        & \( \mathcal I(n,-4) \)
        & \( \mathcal I(n,-3) \)
        & \( \mathcal I(n,-2) \)
        & \( \mathcal I(n,-1) \)
        \\
        \hline
0
& \textsf{0.53566\,66560}  
& \textsf{0.60833\,46108}  
& \textsf{0.78025\,12697}  
& \textsf{1.04871\,55827}  
& \textsf{1.43382\,48189}  
\\
1
& \textsf{0.80907\,32963}  
& \textsf{0.18173\,76482}  
& \textsf{0.29442\,48000}  
& \textsf{0.51700\,82249}  
& \textsf{0.93651\,41410}  
\\
2
& \textsf{0.82428\,81878}  
& \textsf{5.30647\,07\,$\cdot\,\mathsf{10}^{\mathsf{-2}}$}  
& \textsf{0.10665\,46071}  
& \textsf{0.23667\,55204}  
& \textsf{0.53903\,08295}  
\\
3
& \textsf{0.68771\,27402}  
& \textsf{1.51748\,85\,$\cdot\,\mathsf{10}^{\mathsf{-2}}$}  
& \textsf{3.72902\,31\,$\cdot\,\mathsf{10}^{\mathsf{-2}}$}  
& \textsf{0.10211\,30475}  
& \textsf{0.28327\,71366}  
\\
4
& \textsf{0.50324\,67342}  
& \textsf{4.25661\,62\,$\cdot\,\mathsf{10}^{\mathsf{-3}}$}  
& \textsf{1.26360\,41\,$\cdot\,\mathsf{10}^{\mathsf{-2}}$}  
& \textsf{4.19423\,86\,$\cdot\,\mathsf{10}^{\mathsf{-2}}$}  
& \textsf{0.13868\,39970}  
\\
5
& \textsf{0.33424\,30589}  
& \textsf{1.17273\,42\,$\cdot\,\mathsf{10}^{\mathsf{-3}}$}  
& \textsf{4.16331\,71\,$\cdot\,\mathsf{10}^{\mathsf{-3}}$}  
& \textsf{1.65168\,25\,$\cdot\,\mathsf{10}^{\mathsf{-2}}$}  
& \textsf{6.40797\,49\,$\cdot\,\mathsf{10}^{\mathsf{-2}}$}  
\\
6
& \textsf{0.20565\,74439}  
& \textsf{3.17750\,22\,$\cdot\,\mathsf{10}^{\mathsf{-4}}$}  
& \textsf{1.33725\,97\,$\cdot\,\mathsf{10}^{\mathsf{-3}}$}  
& \textsf{6.26910\,09\,$\cdot\,\mathsf{10}^{\mathsf{-3}}$}  
& \textsf{2.81950\,38\,$\cdot\,\mathsf{10}^{\mathsf{-2}}$}  
\\
7
& \textsf{0.11879\,14841}  
& \textsf{8.47523\,11\,$\cdot\,\mathsf{10}^{\mathsf{-5}}$}  
& \textsf{4.19653\,25\,$\cdot\,\mathsf{10}^{\mathsf{-4}}$}  
& \textsf{2.30288\,09\,$\cdot\,\mathsf{10}^{\mathsf{-3}}$}  
& \textsf{1.18914\,35\,$\cdot\,\mathsf{10}^{\mathsf{-2}}$}  
\\
8
& \textsf{6.50381\,41\,$\cdot\,\mathsf{10}^{\mathsf{-2}}$}  
& \textsf{2.22706\,27\,$\cdot\,\mathsf{10}^{\mathsf{-5}}$}  
& \textsf{1.28898\,06\,$\cdot\,\mathsf{10}^{\mathsf{-4}}$}  
& \textsf{8.21386\,92\,$\cdot\,\mathsf{10}^{\mathsf{-4}}$}  
& \textsf{4.83151\,22\,$\cdot\,\mathsf{10}^{\mathsf{-3}}$}  
\\
9
& \textsf{3.39885\,81\,$\cdot\,\mathsf{10}^{\mathsf{-2}}$}  
& \textsf{5.77144\,16\,$\cdot\,\mathsf{10}^{\mathsf{-6}}$}  
& \textsf{3.88113\,77\,$\cdot\,\mathsf{10}^{\mathsf{-5}}$}  
& \textsf{2.85229\,69\,$\cdot\,\mathsf{10}^{\mathsf{-4}}$}  
& \textsf{1.89861\,70\,$\cdot\,\mathsf{10}^{\mathsf{-3}}$}  
\\
10
& \textsf{1.70465\,23\,$\cdot\,\mathsf{10}^{\mathsf{-2}}$}  
& \textsf{1.47285\,31\,$\cdot\,\mathsf{10}^{\mathsf{-6}}$}  
& \textsf{1.14711\,18\,$\cdot\,\mathsf{10}^{\mathsf{-5}}$}  
& \textsf{9.66439\,06\,$\cdot\,\mathsf{10}^{\mathsf{-5}}$}  
& \textsf{7.23897\,30\,$\cdot\,\mathsf{10}^{\mathsf{-4}}$}  
\\
        \hline
    \end{tabular}
    \caption{
    \label{table:complex:airy}
        Numerical values of \( \ai(\cdot, a_1') \)
        and \( \mathcal I(\cdot, \mu) \)
        for \( \mu \in \{-4,-3,-2,-1\} \).}
\end{table}

\begin{table}[hbt]
    \centering
    \begin{tabular}{c|ccccc}
        \hline
        \hline
        $n$
        & \( \mathcal I(n,0) \)
        & \( \mathcal I(n,1) \)
        & \( \mathcal I(n,2) \)
        & \( \mathcal I(n,3) \)
        & \( \mathcal I(n,4) \)
        \\
        \hline
0
& \textsf{1.71175\,17513}  
& \textsf{1.18977\,74561}  
& \textsf{0.24108\,61434}  
& \textsf{5.93679\,28\,$\cdot\,\mathsf{10}^{\mathsf{-3}}$}  
& \textsf{6.86168\,24\,$\cdot\,\mathsf{10}^{\mathsf{-6}}$}  
\\
1
& \textsf{1.43728\,66407}  
& \textsf{1.19994\,59529}  
& \textsf{0.27300\,07028}  
& \textsf{7.21364\,85\,$\cdot\,\mathsf{10}^{\mathsf{-3}}$}  
& \textsf{8.71568\,73\,$\cdot\,\mathsf{10}^{\mathsf{-6}}$}  
\\
2
& \textsf{1.00000\,00000}  
& \textsf{0.95171\,34841}  
& \textsf{0.23452\,58919}  
& \textsf{6.49354\,09\,$\cdot\,\mathsf{10}^{\mathsf{-3}}$}  
& \textsf{8.07052\,10\,$\cdot\,\mathsf{10}^{\mathsf{-6}}$}  
\\
3
& \textsf{0.61548\,95913}  
& \textsf{0.65239\,09401}  
& \textsf{0.17163\,35341}  
& \textsf{4.93680\,60\,$\cdot\,\mathsf{10}^{\mathsf{-3}}$}  
& \textsf{6.27592\,36\,$\cdot\,\mathsf{10}^{\mathsf{-6}}$}  
\\
4
& \textsf{0.34611\,79347}  
& \textsf{0.40319\,77130}  
& \textsf{0.11237\,21118}  
& \textsf{3.34254\,42\,$\cdot\,\mathsf{10}^{\mathsf{-3}}$}  
& \textsf{4.33391\,88\,$\cdot\,\mathsf{10}^{\mathsf{-6}}$}  
\\
5
& \textsf{0.18124\,38129}  
& \textsf{0.23001\,35204}  
& \textsf{6.75809\,67\,$\cdot\,\mathsf{10}^{\mathsf{-2}}$}  
& \textsf{2.07309\,81\,$\cdot\,\mathsf{10}^{\mathsf{-3}}$}  
& \textsf{2.73680\,17\,$\cdot\,\mathsf{10}^{\mathsf{-6}}$}  
\\
6
& \textsf{8.94935\,23\,$\cdot\,\mathsf{10}^{\mathsf{-2}}$}  
& \textsf{0.12295\,49383}  
& \textsf{3.79527\,70\,$\cdot\,\mathsf{10}^{\mathsf{-2}}$}  
& \textsf{1.19839\,99\,$\cdot\,\mathsf{10}^{\mathsf{-3}}$}  
& \textsf{1.60897\,45\,$\cdot\,\mathsf{10}^{\mathsf{-6}}$}  
\\
7
& \textsf{4.20355\,45\,$\cdot\,\mathsf{10}^{\mathsf{-2}}$}  
& \textsf{6.22242\,32\,$\cdot\,\mathsf{10}^{\mathsf{-2}}$}  
& \textsf{2.01266\,66\,$\cdot\,\mathsf{10}^{\mathsf{-2}}$}  
& \textsf{6.53278\,00\,$\cdot\,\mathsf{10}^{\mathsf{-4}}$}  
& \textsf{8.91294\,64\,$\cdot\,\mathsf{10}^{\mathsf{-7}}$}  
\\
8
& \textsf{1.89047\,15\,$\cdot\,\mathsf{10}^{\mathsf{-2}}$}  
& \textsf{3.00370\,59\,$\cdot\,\mathsf{10}^{\mathsf{-2}}$}  
& \textsf{1.01604\,43\,$\cdot\,\mathsf{10}^{\mathsf{-2}}$}  
& \textsf{3.38662\,88\,$\cdot\,\mathsf{10}^{\mathsf{-4}}$}  
& \textsf{4.69243\,93\,$\cdot\,\mathsf{10}^{\mathsf{-7}}$}  
\\
9
& \textsf{8.18150\,40\,$\cdot\,\mathsf{10}^{\mathsf{-3}}$}  
& \textsf{1.39096\,40\,$\cdot\,\mathsf{10}^{\mathsf{-2}}$}  
& \textsf{4.91269\,11\,$\cdot\,\mathsf{10}^{\mathsf{-3}}$}  
& \textsf{1.68015\,59\,$\cdot\,\mathsf{10}^{\mathsf{-4}}$}  
& \textsf{2.36312\,51\,$\cdot\,\mathsf{10}^{\mathsf{-7}}$}  
\\
10
& \textsf{3.42087\,90\,$\cdot\,\mathsf{10}^{\mathsf{-3}}$}  
& \textsf{6.20709\,94\,$\cdot\,\mathsf{10}^{\mathsf{-3}}$}  
& \textsf{2.28591\,84\,$\cdot\,\mathsf{10}^{\mathsf{-3}}$}  
& \textsf{8.01635\,84\,$\cdot\,\mathsf{10}^{\mathsf{-5}}$}  
& \textsf{1.14409\,39\,$\cdot\,\mathsf{10}^{\mathsf{-7}}$}  
\\
        \hline
    \end{tabular}
    \caption{
    \label{table:complex:airy:cont}
        Numerical values of
        \( \mathcal I(\cdot, \mu) \)
        for \( \mu \in \{0,1,2,3,4\} \).}
\end{table}

We provide the plots for \( \mu \in [-3,3] \) for the functions \( \mathcal
I(n,\mu) \)  with \( n \in \{1,2,3,4,5\} \) in~\cref{fig:I}.
Note that the value \( n = 2
\) corresponds to the plot in~\cref{fig:probability:bicyclic}, which provides
the remaining numbers for~\cref{theorem:probability:one:strong:component} when
\( 1 - 3r + d \in [-5, -1]\). All the figures represent bell-shaped curves
whose maxima lie in the interval \( (0, 1) \).
We rescale the plots by $2^{n/3}$ because this scaling factor also enters
the final probabilities in~\cref{theorem:probability:one:strong:component}.

\begin{figure}[ht]
  \includegraphics[width=.8\linewidth]{./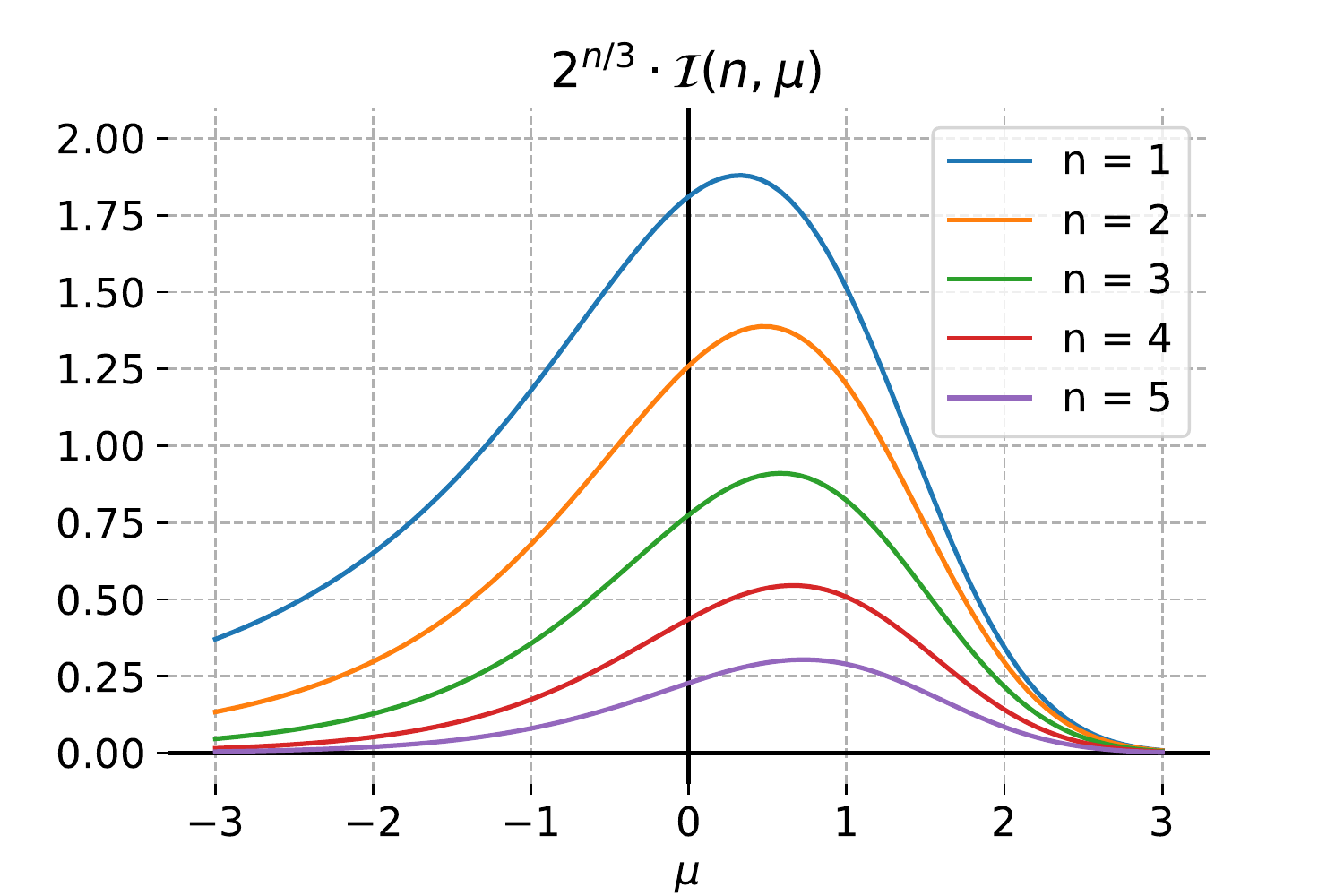}\\
  \caption{
  \label{fig:I}
  Numerical plots of the Airy integrals \( \mathcal I(n,\mu) \).
  }
\end{figure}

\subsection{Empirical probabilities within the critical window}
\label{section:empirical:probabilities:within}

As the theorems in~\cref{section:asymptotics:multidigraphs} provide only the
limiting values, with an error term of order \( O(n^{-1/3}) \) when \( \mu \) is
bounded, it is interesting to track down how quickly the empirical
probabilities for concrete values of \( n \) and \( p \) in the model \(
\DiGilbMulti(n,p) \) converge to their limiting values. These values (for small
values of \( n \) and for \( p = 1 + \mu n^{-1/3} \) inside the critical window)
are given in~\cref{table:mdag,table:elementary,table:bicyclic}.

It is also interesting to compare the analogous convergence for the case of
simple digraphs, depending on whether 2-cycles are allowed or not. The numerical
results for the probabilities that a digraph is elementary (inside the centre
of the critical window, \ie, when \( p = \tfrac{1}{n} \)) are given
in~\cref{table:simple} for small values of \( n \).

\begin{table}[hbt!]
    \centering
    \begin{tabular}{c|ccccccc}
        \hline
        \hline
        $n$
        & \( \mu = -3 \)
        & \( \mu = -2 \)
        & \( \mu = -1 \)
        & \( \mu =  0 \)
        & \( \mu =  1 \)
        & \( \mu =  2 \)
        & \( \mu =  3 \)
        \\
        \hline
        100
        & 3.00671 
        & 2.03423 
        & 1.14068 
        & 0.46304 
        & 0.11671 
        & 0.01642 
        & 0.00124 
        \\
        1000
        & 3.02522 
        & 2.06335 
        & 1.17410 
        & 0.47705 
        & 0.10793 
        & 0.01012 
        & 0.00030 
        \\
        3000
        & 3.03190 
        & 2.07219 
        & 1.18326 
        & 0.48068 
        & 0.10527 
        & 0.00859 
        & 0.00018 
        \\
        5000
        & 3.03442 
        & 2.07542 
        & 1.18652 
        & 0.48196 
        & 0.10430 
        & 0.00807 
        & 0.00014 
        \\
        10000
        & 3.03733 
        & 2.07906 
        & 1.19017 
        & 0.48337 
        & 0.10319 
        & 0.00751 
        & 0.00011 
        \\
        \hline
        $\infty$ &
        3.04943 &
        2.09362 &
        1.20431 &
        0.48873 &
        0.09876 &
        0.00550 &
        0.00004 \\
        \hline
    \end{tabular}
    \caption{
    \label{table:mdag}
        The rescaled probability \( n^{1/3} \mathbb P(n, \tfrac{1}{n}(1 +
        \mu n^{-1/3})) \)  that a random multidigraph is acyclic, for
        \( \mu \in \{-3,-2,-1,0,1,2,3\} \).}
\end{table}

\begin{table}[hbt!]
    \centering
    \begin{tabular}{c|ccccccc}
        \hline
        \hline
        $n$
        & \( \mu = -3 \)
        & \( \mu = -2 \)
        & \( \mu = -1 \)
        & \( \mu =  0 \)
        & \( \mu =  1 \)
        & \( \mu =  2 \)
        & \( \mu =  3 \)
        \\
        \hline
        100
        & 0.99782 
        & 0.98513 
        & 0.92149 
        & 0.71322 
        & 0.36692 
        & 0.10522 
        & 0.01574 
        \\
        1000
        & 0.99216 
        & 0.97411 
        & 0.90794 
        & 0.70645 
        & 0.34684 
        & 0.07442 
        & 0.00511 
        \\
        3000
        & 0.99020 
        & 0.97095 
        & 0.90435 
        & 0.70446 
        & 0.34077 
        & 0.06604 
        & 0.00333 
        \\
        5000
        & 0.98947 
        & 0.96981 
        & 0.90308 
        & 0.70374 
        & 0.33854 
        & 0.06307 
        & 0.00280 
        \\
        10000
        & 0.98863 
        & 0.96854 
        & 0.90167 
        & 0.70293 
        & 0.33601 
        & 0.05979 
        & 0.00229 
        \\
        \hline
        $\infty$ &
        0.98521 &
        0.96354 &
        0.89622 &
        0.69968 &
        0.32582 &
        0.04740 &
        0.00089 \\
        \hline
    \end{tabular}
    \caption{
    \label{table:elementary}
        The probability \( \mathbb P(n, \tfrac{1}{n}(1 +
        \mu n^{-1/3})) \)  that a random multidigraph is elementary, for
        \( \mu \in \{-3,-2,-1,0,1,2,3\} \).}
\end{table}

\begin{table}[hbt!]
    \centering
    \begin{tabular}{c|ccccccc}
        \hline
        \hline
        $n$
        & \( \mu = -3 \)
        & \( \mu = -2 \)
        & \( \mu = -1 \)
        & \( \mu =  0 \)
        & \( \mu =  1 \)
        & \( \mu =  2 \)
        & \( \mu =  3 \)
        \\
        \hline
        100
        & 0.00213 
        & 0.01360 
        & 0.05777 
        & 0.13535 
        & 0.14434 
        & 0.06652 
        & 0.01378 
        \\
        1000
        & 0.00741 
        & 0.02231 
        & 0.06352 
        & 0.12980 
        & 0.13073 
        & 0.04610 
        & 0.00448 
        \\
        3000
        & 0.00914 
        & 0.02459 
        & 0.06481 
        & 0.12833 
        & 0.12712 
        & 0.04081 
        & 0.00293 
        \\
        5000
        & 0.00978  
        & 0.02539  
        & 0.06525  
        & 0.12781  
        & 0.12585  
        & 0.03897  
        & 0.00248  
        \\
        10000
        & 0.01049 
        & 0.02627 
        & 0.06571 
        & 0.12723 
        & 0.12443 
        & 0.03693 
        & 0.00203 
        \\
        \hline
        $\infty$ &
        0.01333 &
        0.02958 &
        0.06737 &
        0.12500 &
        0.11896 &
        0.02931 &
        0.00081 \\
        \hline
    \end{tabular}
    \caption{
    \label{table:bicyclic}
        The probability \( \mathbb P(n, \tfrac{1}{n}(1 +
        \mu n^{-1/3})) \)  that a random multidigraph has one bicyclic complex
        component, for \( \mu \in \{-3,-2,-1,0,1,2,3\} \).}
\end{table}

\begin{table}[hbt!]
    \centering
    \begin{tabular}{c|ccc}
        \hline
        \hline
        $n$
        & \( \DiGilb(n,\tfrac{1}{n}) \)
        & \(\vphantom{\displaystyle\int} \DiGilbBoth(n,\tfrac{1}{n}) \)
        & \( \DiGilbMulti(n,\tfrac{1}{n}) \)
        \\
        \hline
        100
        & 0.777319 
        & 0.743468 
        & 0.713229 
        \\
        1000
        & 0.724241 
        & 0.714032 
        & 0.706451 
        \\
        3000
        & 0.713629 
        & 0.708269 
        & 0.704469 
        \\
        5000
        & 0.710426 
        & 0.706492 
        & 0.703748 
        \\
        10000
        & 0.707258 
        & 0.704693 
        & 0.702935 
        \\
        \hline
        $\infty$ &
        0.699687 &
        0.699687 &
        0.699687 \\
        \hline
    \end{tabular}
    \caption{
    \label{table:simple}
        The probability
        \( \mathbb P(n, \tfrac{1}{n}) \)  that a random
        (multi-)digraph is elementary.}
\end{table}

\subsection{Convergence outside the critical window}
\label{section:empirical:probabilities:outside}

In this section we analyse the ratio of the empirical probability and the theoretical prediction
when $p = \dfrac{\lambda}{n}$ when $\lambda$ are different fixed values separated from $1$,
and $n$ is large. We are dealing with all the three models
$\DiGilbMulti$, $\DiGilbBoth$ and $\DiGilb$.
For each of the models, we are dealing with three different families: directed acyclic digraphs,
elementary digraphs, and digraphs having one bicyclic complex component.
Furthermore, for each of the families, we consider the subcritical case $\lambda < 1$ and $\lambda > 1$.
In total, this gives $3 \times 3 \times 2 = 18$ cases.
For each of the 18 cases we compute the ratios of the empirical
and theoretical probabilities and demonstrate their convergence to 1 by using
log-log scale plots. These results are presented in~\cref{table:numerical}.

When $\lambda < 1$, we expect that
$\mathbb P_{\mathsf{empirical}}(n,p) / \mathbb P_{\mathsf{theoretical}}(n,p) - 1
= \mathcal O(n^{-1})$, while for $\lambda > 1$ we expect this difference to
converge as $\mathcal O(n^{-1/3})$. The constant in the error term is a function
of \( \lambda \). If this convergence takes place, then the values on the y-axis
\[
    \log \left[
    \left|
        \dfrac{\mathbb P_{\mathsf{empirical}}(n,p)}{\mathbb P_{\mathsf{theoretical}}(n,p)} - 1
        \right|
    \right]
\]
will be approximated by decreasing linear functions when \( n \) is large
enough, with a slope 
$\gamma = -1$ for $\lambda < 1$ and $\gamma = -\frac13$ for $\lambda > 1$.
The sign of the difference 
\(
    \dfrac
    {\mathbb P_{\mathsf{empirical}}(n,p)}
    {\mathbb P_{\mathsf{theoretical}}(n,p)} - 1
\) is empirically observed to be positive or negative, depending on the model,
which is reflected when labelling the y-axis.

Although we observe nearly parallel decreasing lines for all our plots, the
absolute value of the difference is higher for some models. This difference is
most clearly seen, for example, in the supercritical case for digraphs with one
bicyclic complex component, where in the multidigraph model we have the
log-error of order $-3.2$ for $\lambda = 10$ and $\log n \approx 7$, while
this error raises to $\approx 1$ in the model $\DiGilbBoth$ and further
to $\approx 3.5$ for the model $\DiGilb$.

\begin{table}[hbt!]
\centering
\begin{tabular}{ccc}
$\DiGilbMulti$ & $\DiGilbBoth$ & $\DiGilb$
\\
\hline
 \includegraphics[width=0.32\textwidth]{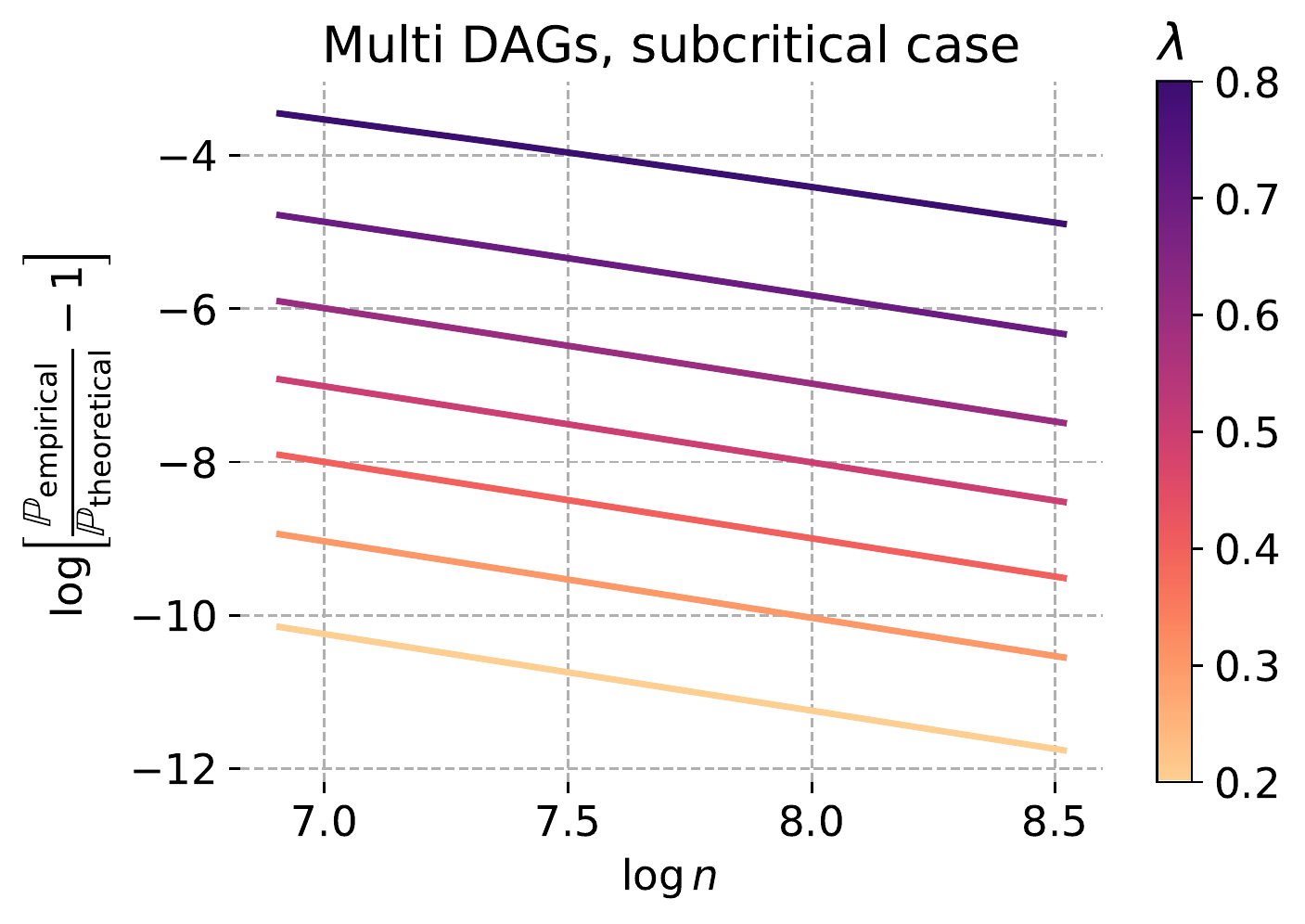}
&\includegraphics[width=0.32\textwidth]{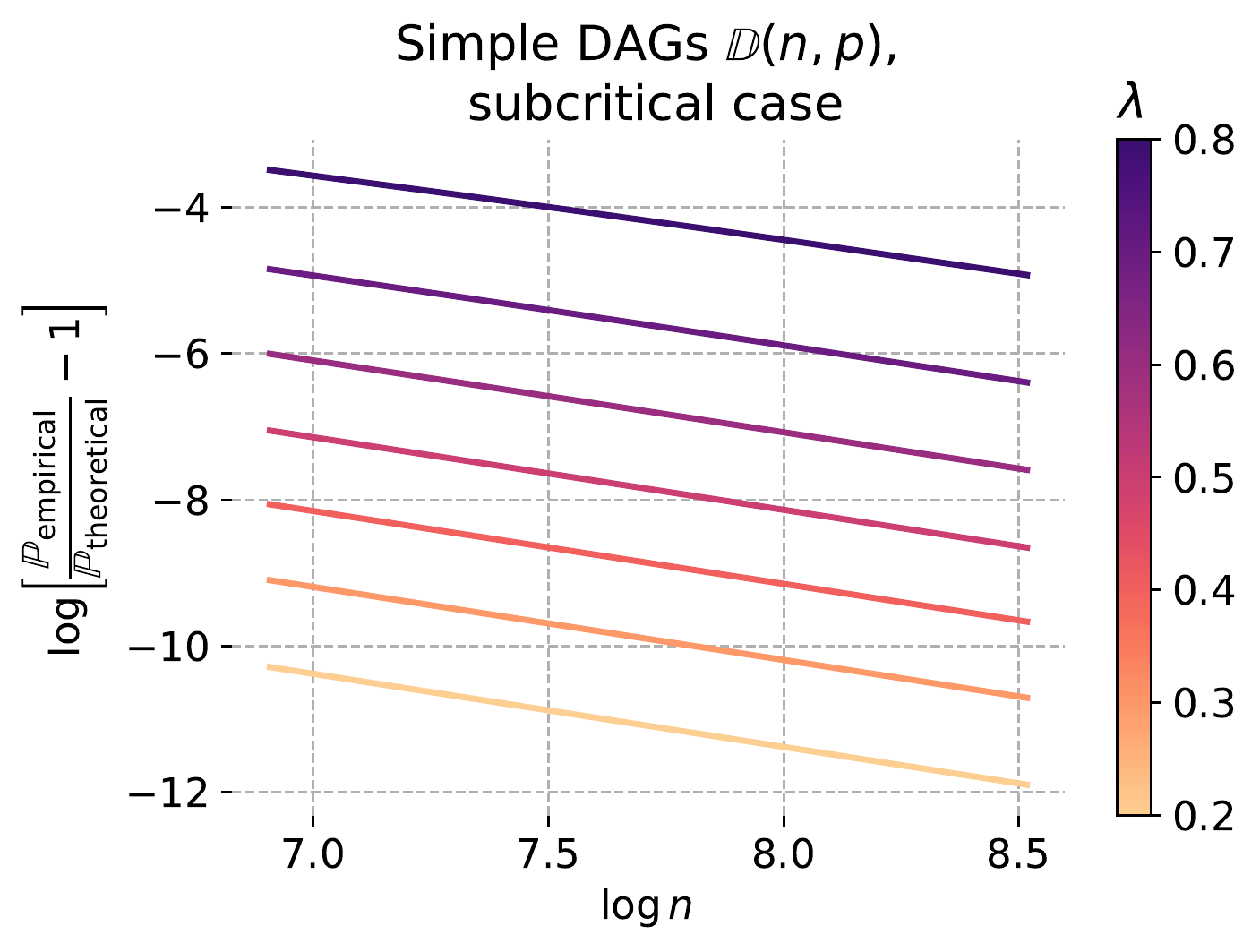}
&\includegraphics[width=0.32\textwidth]{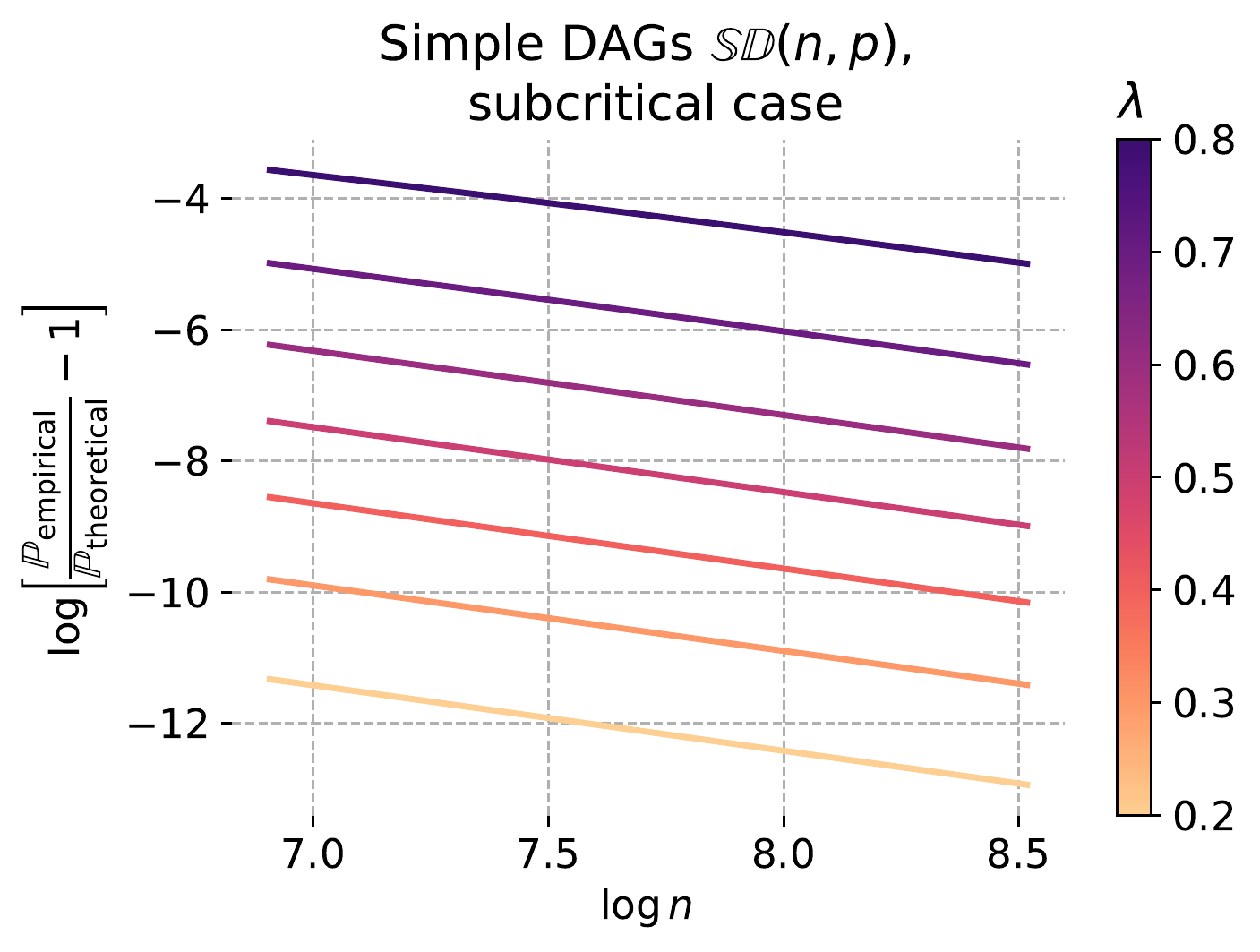}
\\
 \includegraphics[width=0.32\textwidth]{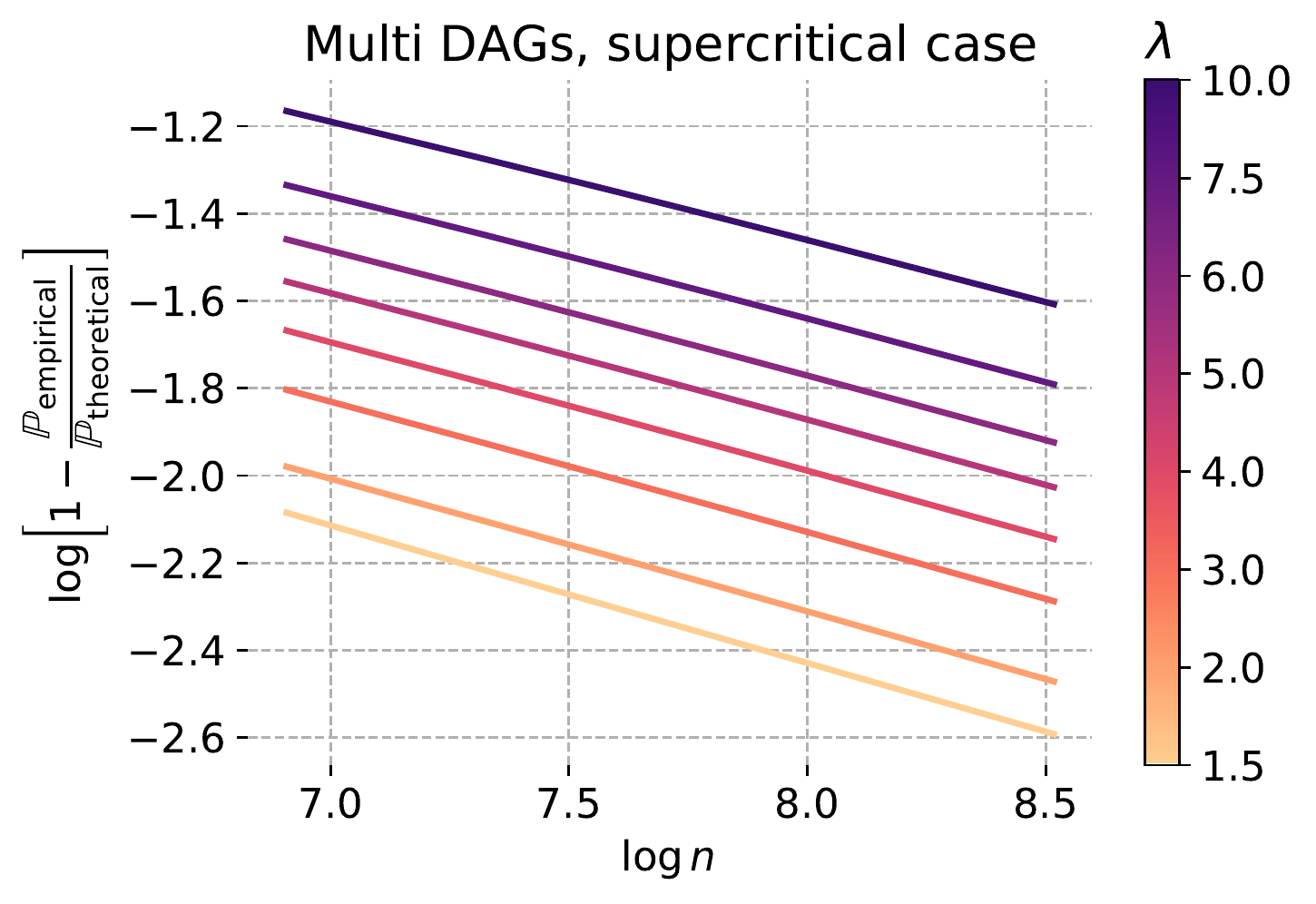}
&\includegraphics[width=0.32\textwidth]{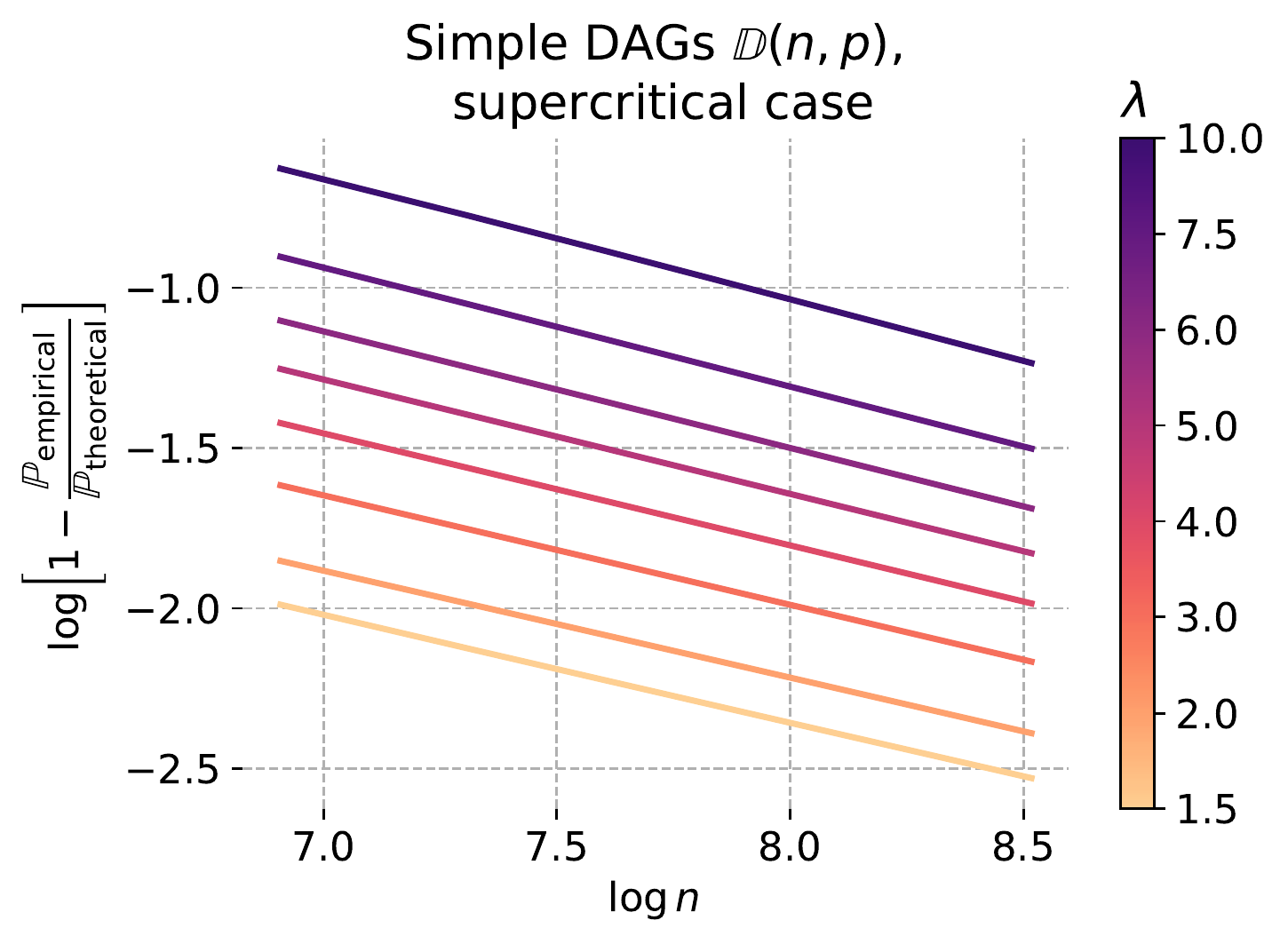}
&\includegraphics[width=0.32\textwidth]{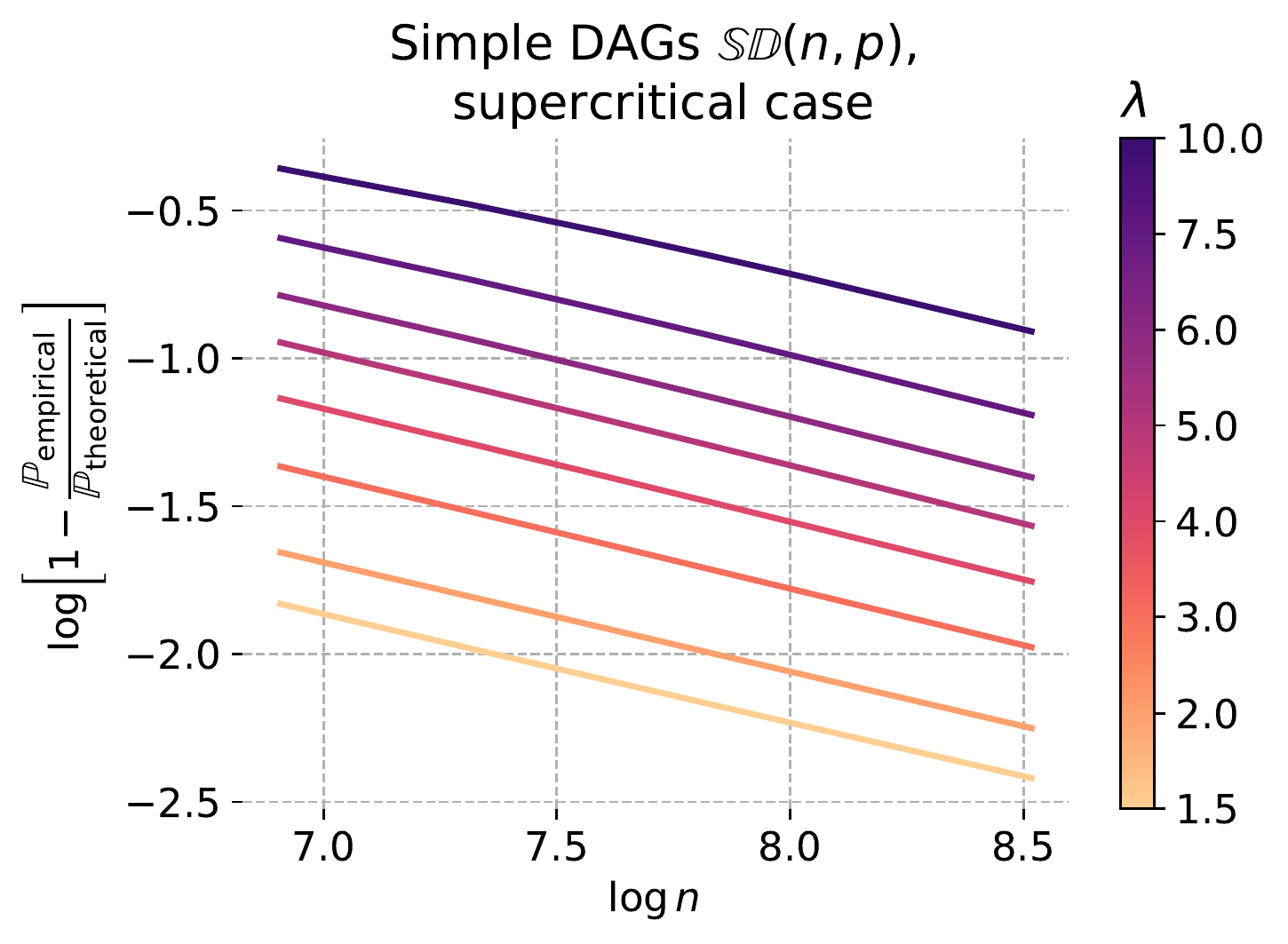}
\\
\hline
 \includegraphics[width=0.32\textwidth]{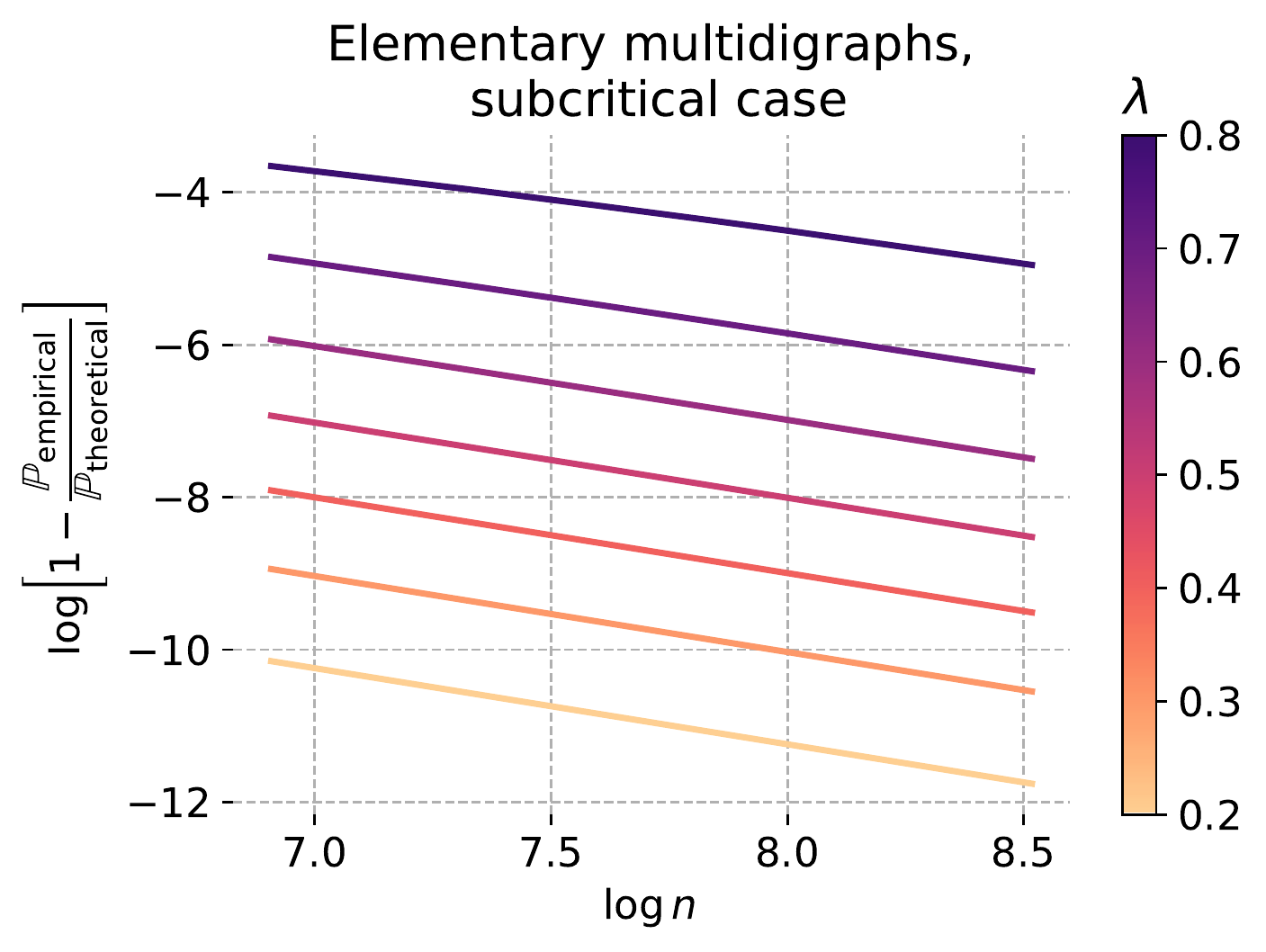}
&\includegraphics[width=0.32\textwidth]{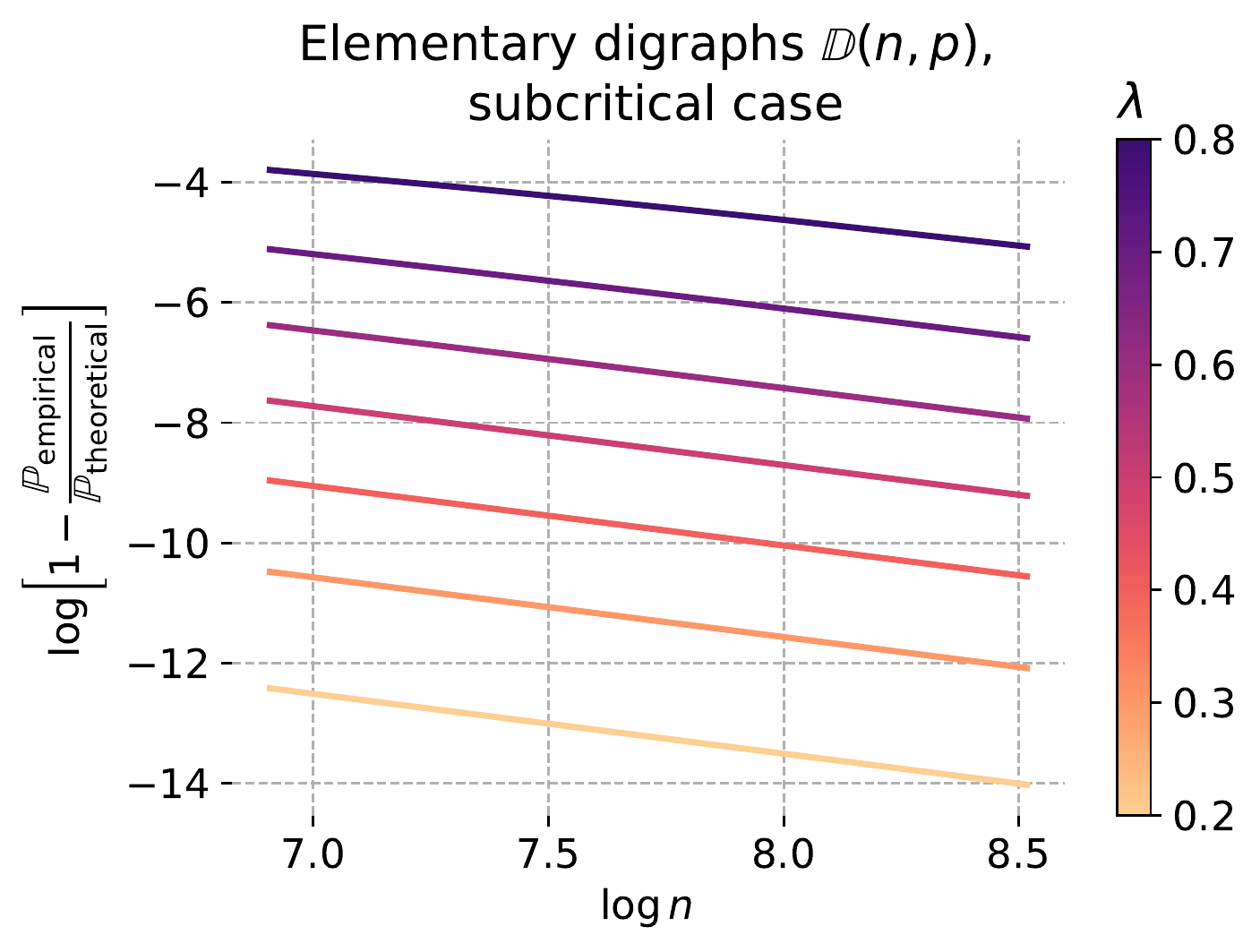}
&\includegraphics[width=0.32\textwidth]{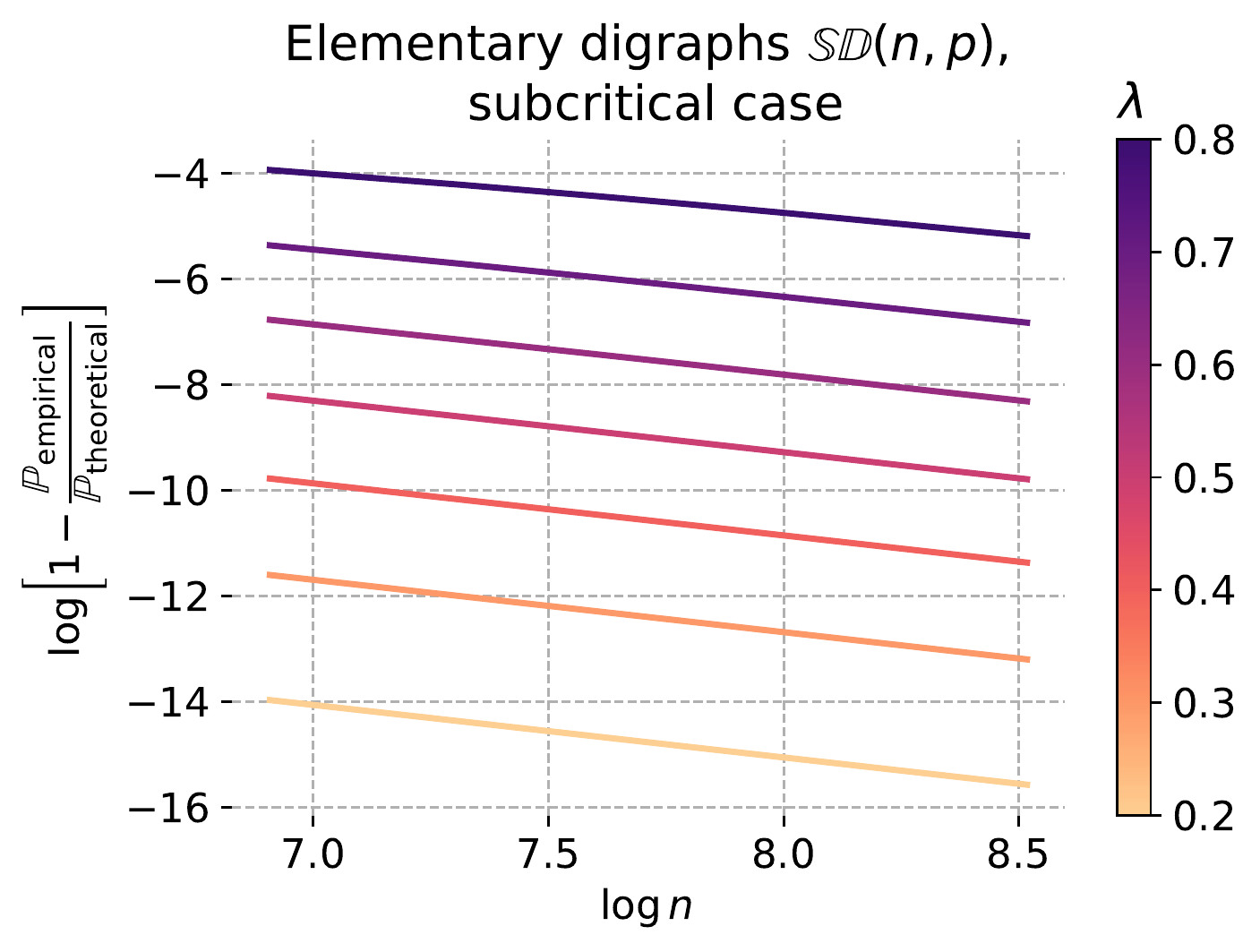}
\\
 \includegraphics[width=0.32\textwidth]{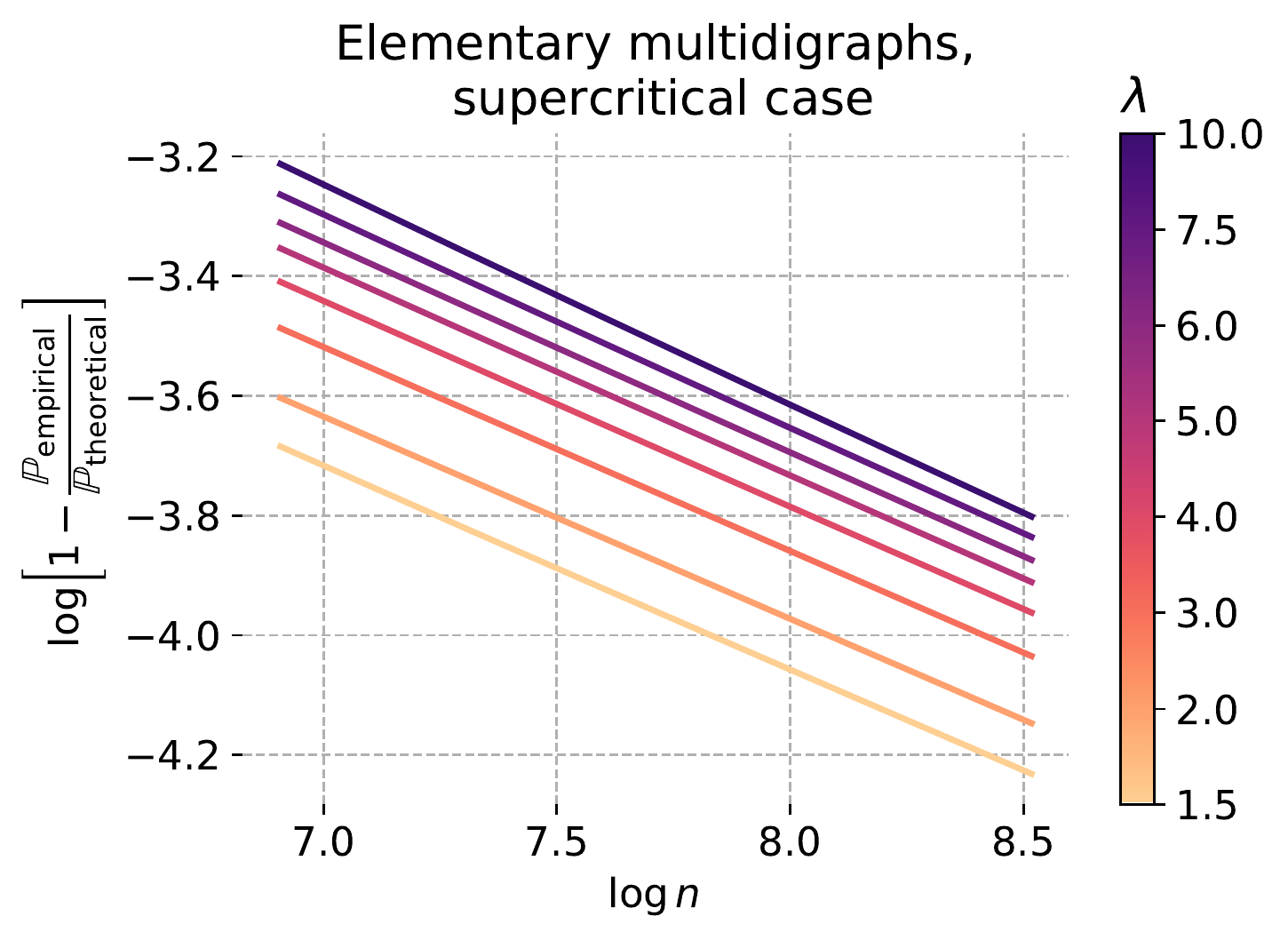}
&\includegraphics[width=0.32\textwidth]{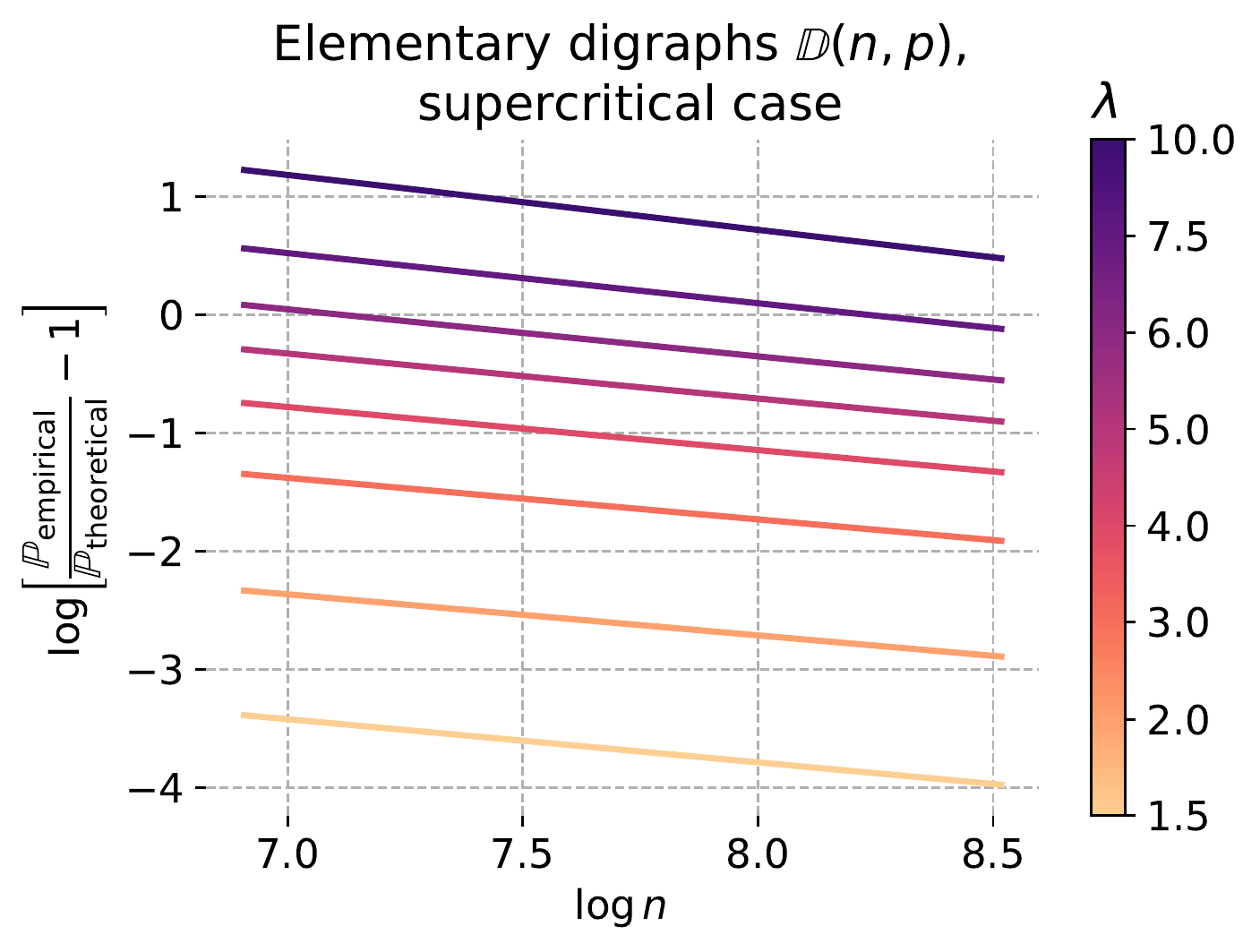}
&\includegraphics[width=0.32\textwidth]{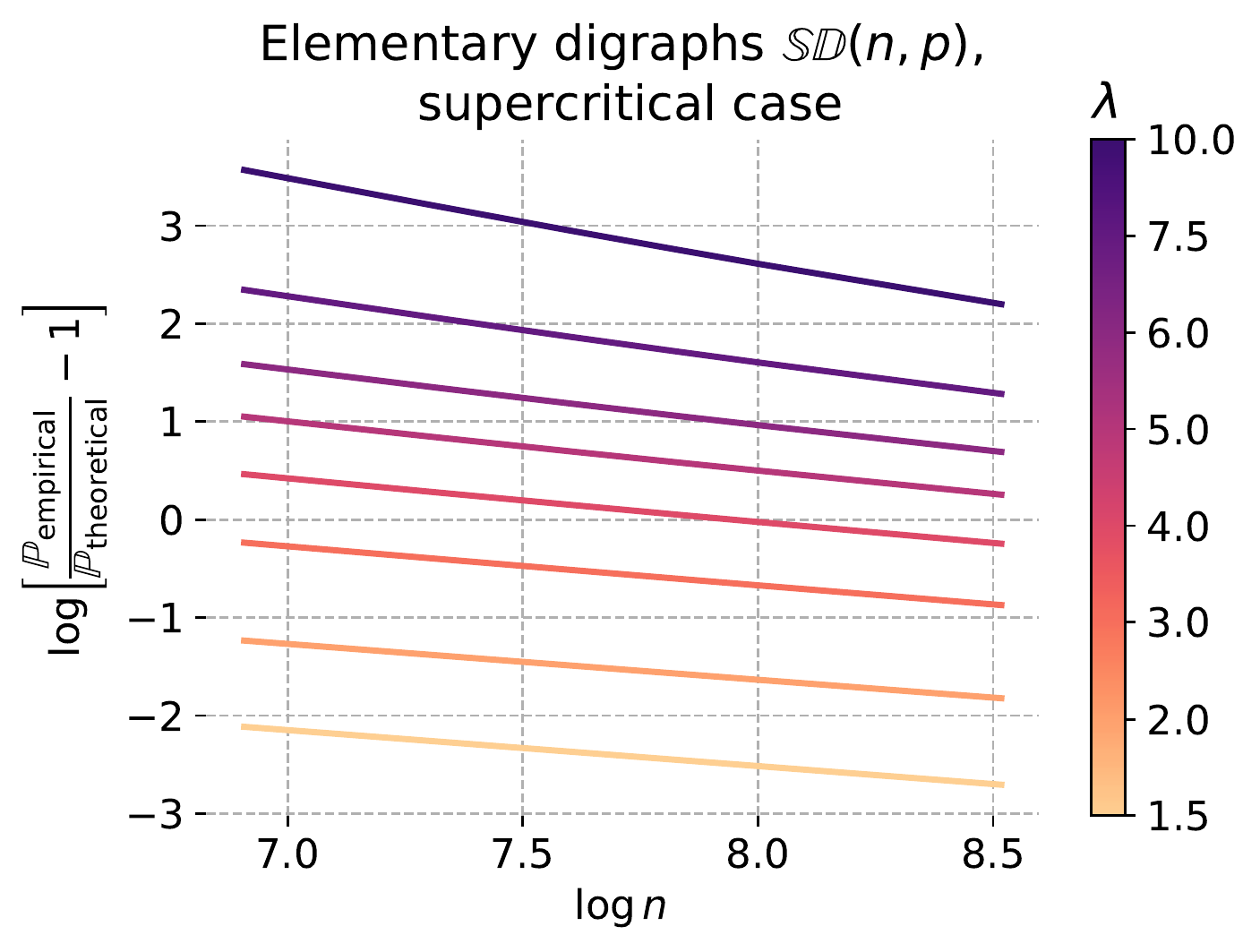}
\\
\hline
 \includegraphics[width=0.32\textwidth]{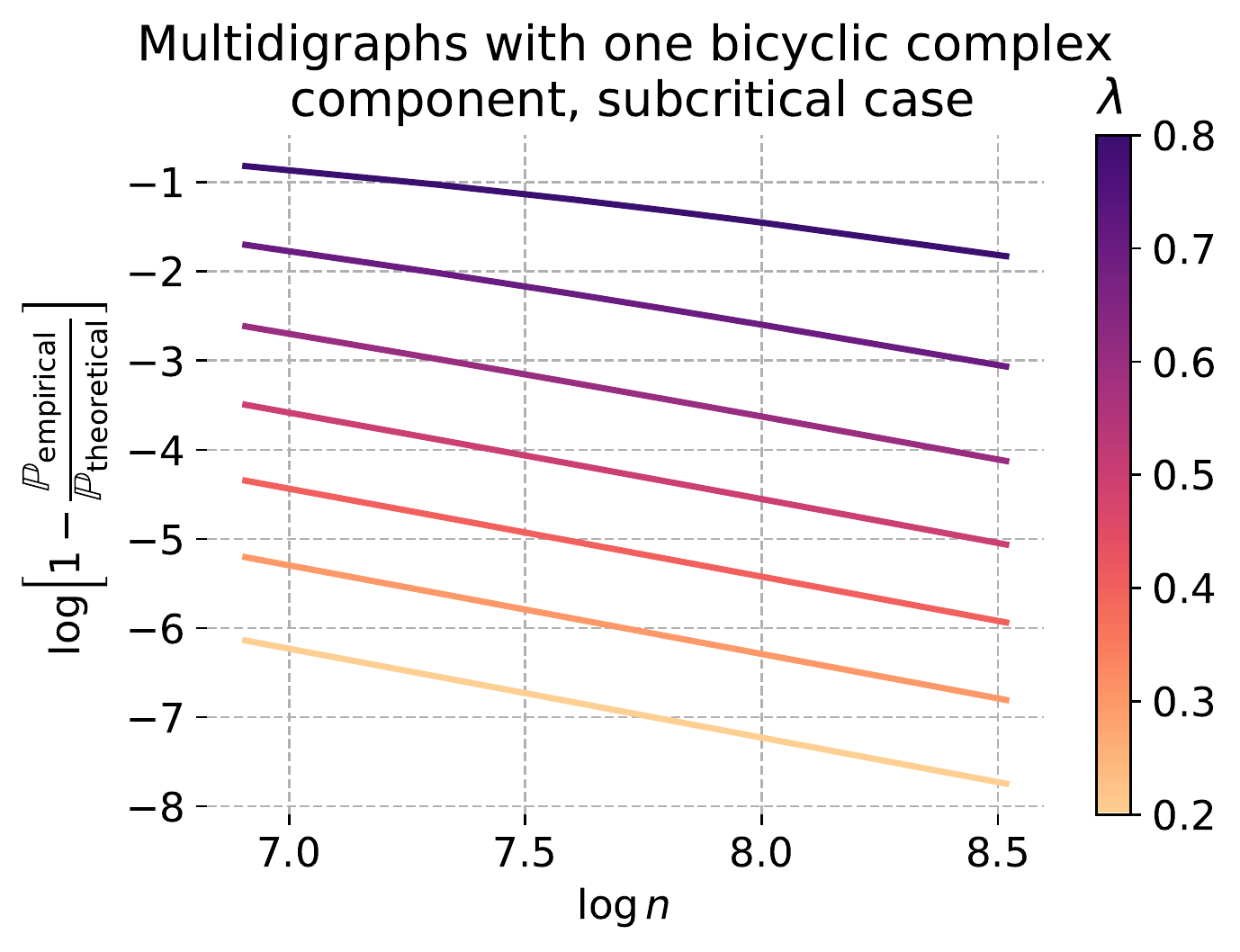}
&\includegraphics[width=0.32\textwidth]{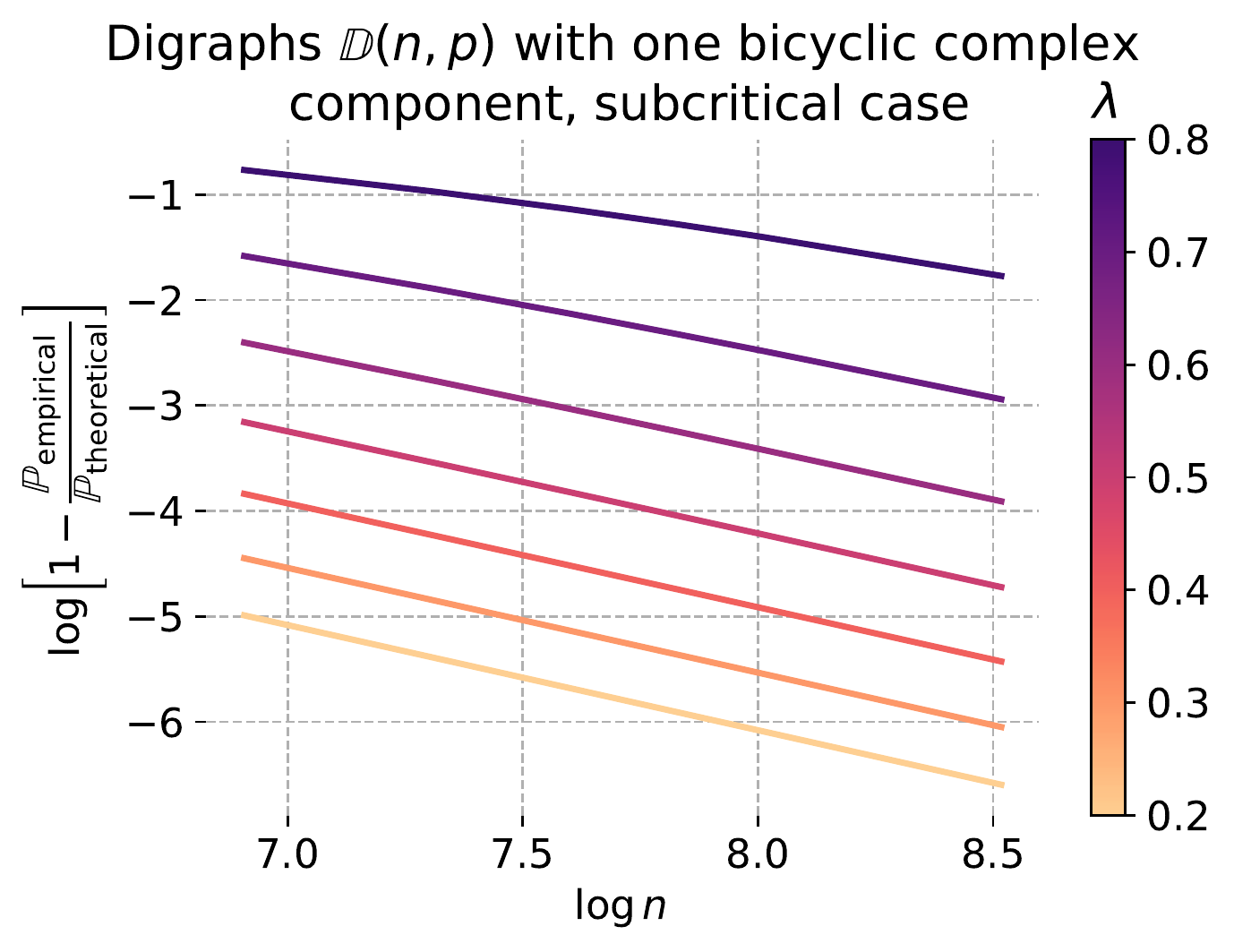}
&\includegraphics[width=0.32\textwidth]{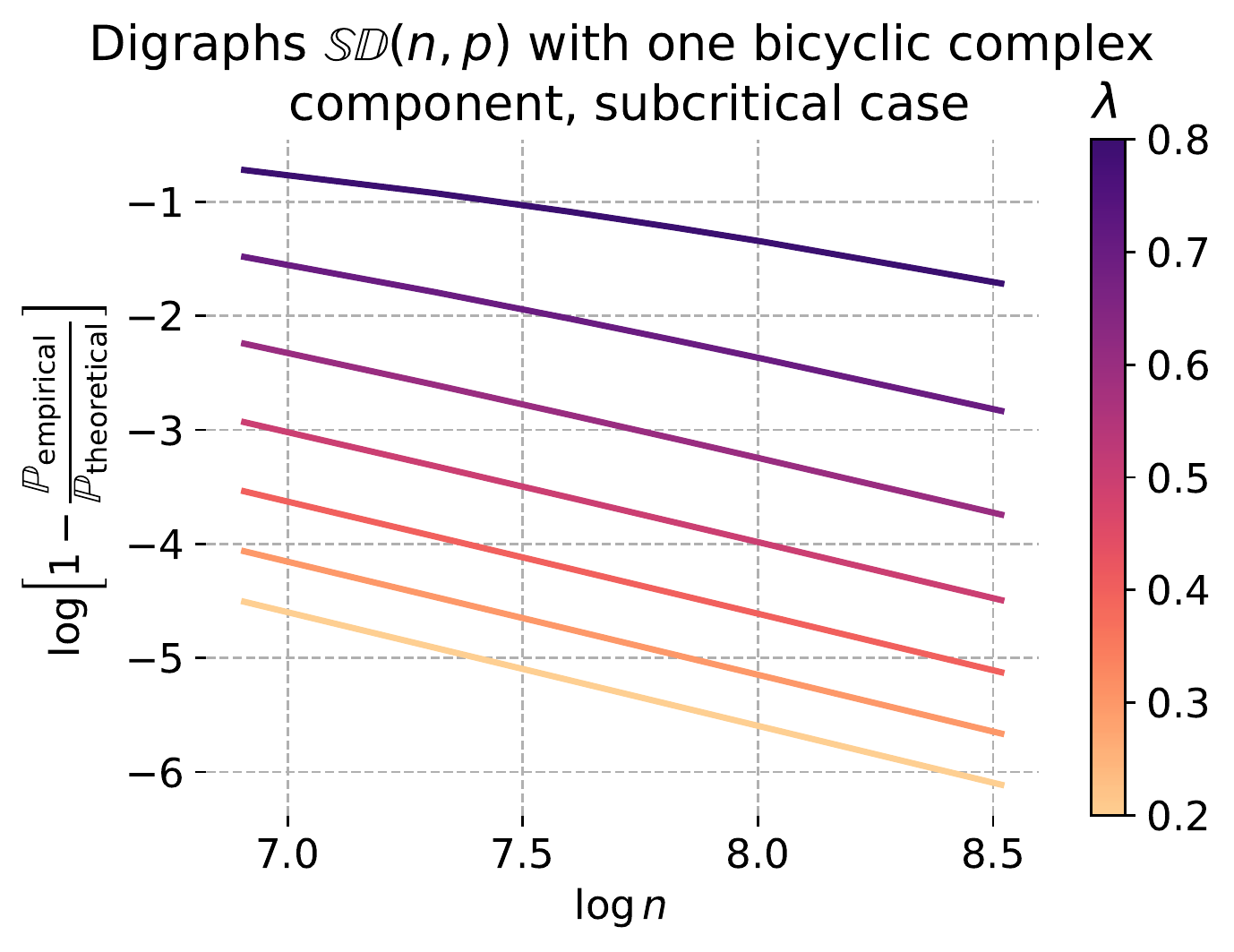}
\\
 \includegraphics[width=0.32\textwidth]{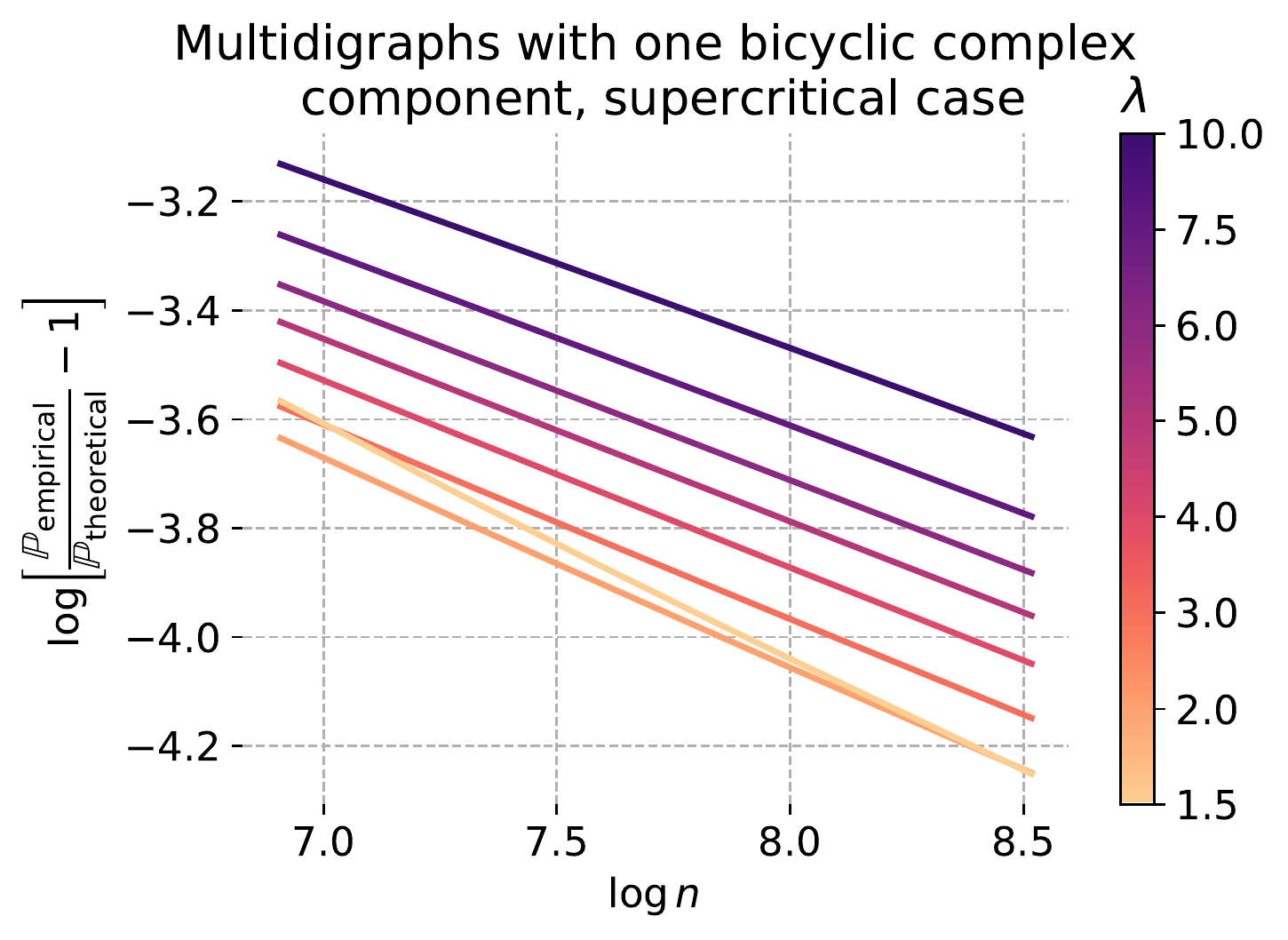}
&\includegraphics[width=0.32\textwidth]{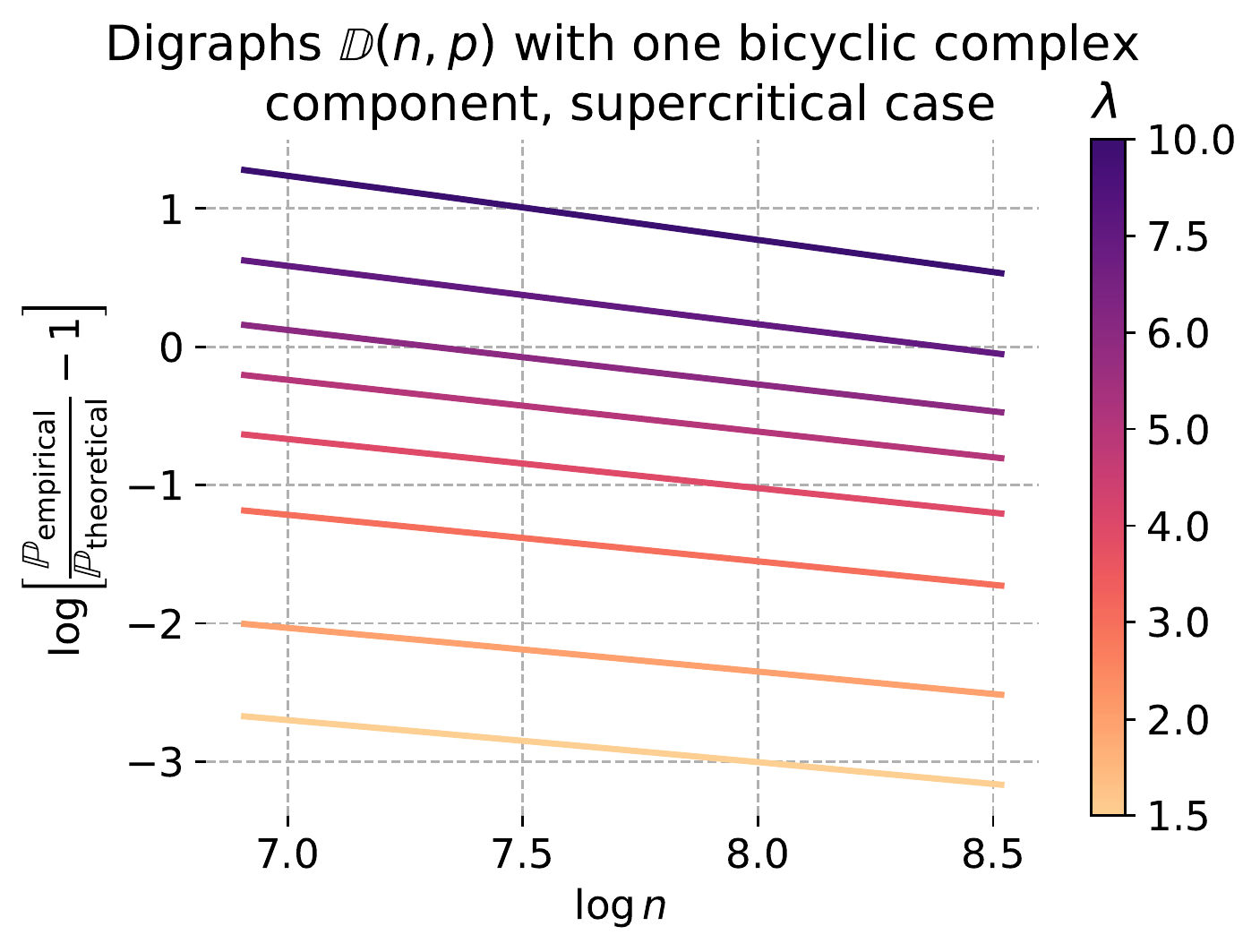}
&\includegraphics[width=0.32\textwidth]{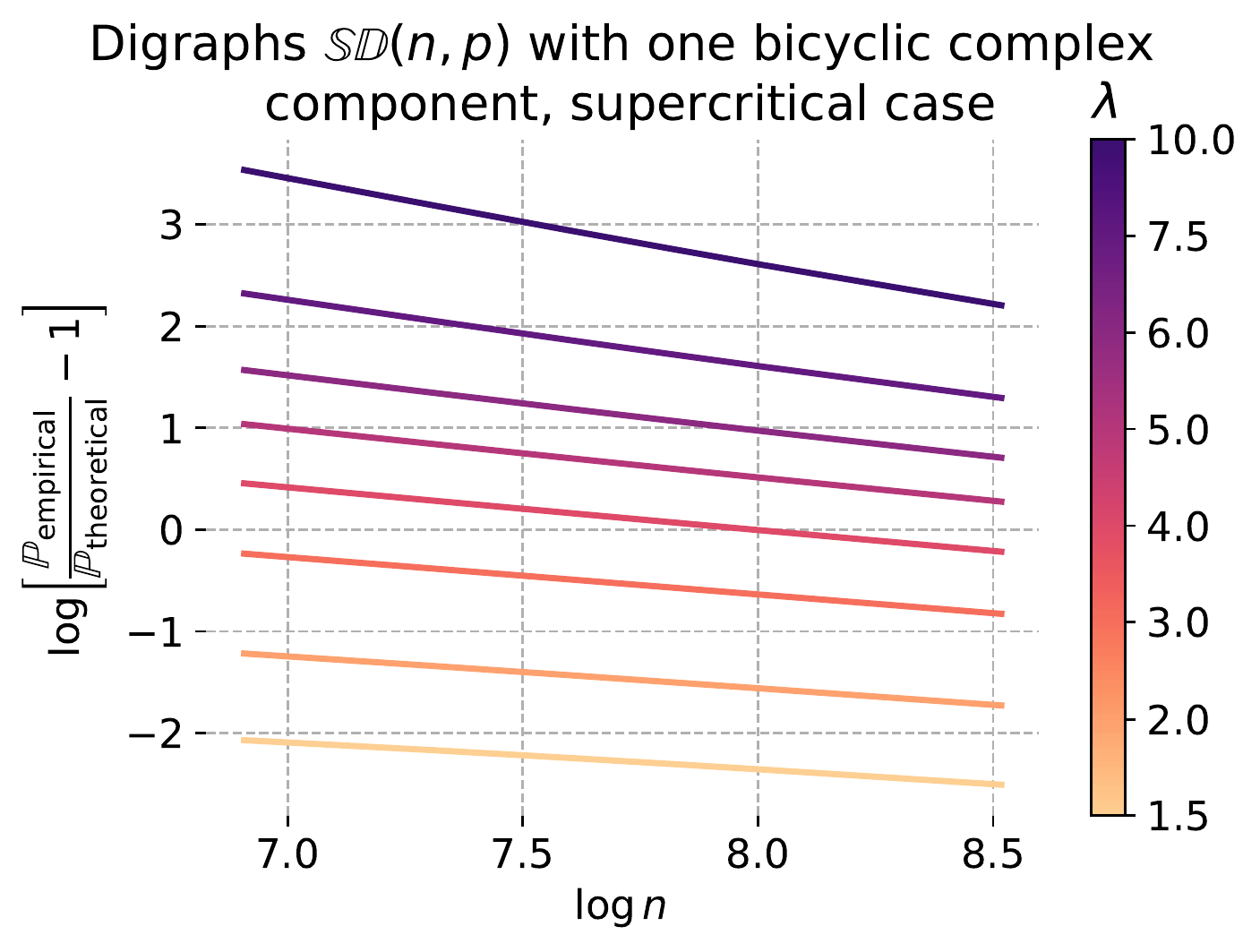}
\\
\hline
\end{tabular}
\caption{
\label{table:numerical}
Analysis of convergence of the empirical probabilities.}
\end{table}

\section{Discussion and open problems}
\label{section:discussion}

As the reader may have observed, the method that we develop in the current paper
opens lots of possibilities to study various families of digraph models. Note
that while all the analysis is carried out for \( \Di(n,p) \), \ie, when every
edge is chosen independently, a similar analysis for the model \( \Di(n,m) \)
(with a fixed number of edges) can be carried out as well. In the latter case,
the generating functions become bivariate, and one needs to extract the
coefficient in the form \( [z^n w^m] F(z, w) \). An example of such an analysis
has been performed in a previous paper of the first two
authors~\cite{Panafieu2020} in the case of simple digraphs where 2-cycles are
allowed. Their tools are equally applicable to the case of multidigraphs.
In that previous paper, a different tool was used, which
we refer to as \emph{graph decomposition}: the graphic generating functions
of DAGs and elementary digraphs could be expressed in terms of (derivatives) of
the generating function of graphs. Nevertheless, it is possible to avoid the
limitations of the graph decomposition method, and employ the double
saddle point method with respect to the two variables.

The asymptotic behaviour of the models $\DiGilbBoth(n,p)$ and $\Di(n,m)$
should essentially be the same if the average number of
edges in  $\DiGilbBoth(n,p)$ is  relatively close to the  number  of
edges $m$,  which means $pn(n-1)  \sim m$.
The same remak  holds for  $\DiGilb (n,p)$ and $\DiGilb (n,m)$,
$\DiGilbMulti(n,p)$ and  $\DiERMulti(n,m)$.
For the undirected version, such  relationships
are indicated in  \cite{Erdos1960} (see also
\cite[chap. II]{Bollobas2001}) about the binomial model $\GrER(n,p)$
and the uniform model $\GrER(n,m)$.

Many alternative graph and digraph models have recently gained more popularity,
including models
with degree constraints and with skewed degree distributions. The most typical type of
degree distribution that occurs in real-world applications is the power-law. It is therefore
tempting to extend the study of digraphs to the case where the in- and
out-degrees are constrained, and belong to two dedicated sets (possibly with
weights).

The classical \( \Di(n,p) \) models also have much more yet to study,
including the joint distribution of the parameters in the source-like and
sink-like components, the size of the giant strongly connected component, the
distribution of the excess of
the complex component, and so on. Using the symbolic method that we present
in~\cref{section:symbolic:method}, it is possible to tackle all these questions.
Note that, however, in some cases, new analytic tools still need to be obtained.
As a closely related question, since we expect that the excess of the strongly
connected component is growing when \( p \) is growing, should it mean that the
maximum point of the bell-shaped curve in~\cref{fig:I} moves towards the right
as the index of the Airy integral increases? Can it be shown that the
bell-shaped curve, when centred, is Gaussian in the limit?

Some of the questions that may arise when one thinks of the limiting
distributions in directed graphs, can be readily attacked with the tools that we
already have in~\cref{section:airy}. For example, if we are interested to know
the probability that there are two complex components with given
excesses (or three or more), we need to proceed with the coefficient extraction
from a generating function in which there is an allowed family of complex
components, each marked with a separate marking variable. The asymptotic
analysis of such expressions results in more complicated Airy integrals of the
form
\[
    C \cdot \dfrac{1}{2 \pi i} \int_{-i \infty}^{i \infty}
    \dfrac{\prod_{k = 1}^N \ai(r_k; \tau)^{q_k}}{\ai'(\tau)^M}
    e^{2^{-1/3} \mu \tau - \mu^3/6} \mathrm d \tau.
\]
Another model that also deserves to be studied is the model of directed
hypergraphs, for which a variant of the symbolic method has been recently obtained
in~\cite{Ravelomanana2020}.

As we observe empirically in~\cref{section:numerical:results}, in order to have
more precise computations for given \( n \) and \( p \) for the probabilities
that a digraph belongs to a given family, one needs more terms in the
asymptotic expansion, as the first error term is of order \( n^{-1/3} \).
Full asymptotic expansions (different for the models of
multidigraphs and simple digraphs) could be useful in this context.
For moderate values of \( n \) (\eg \( n \approx 100 \)),
the error of the approximation in the supercritical phase could reach \( 50\%
\), but it eventually converges when \( n \) grows. This means that there is a
need for several different kinds of full asymptotic expansions, each for a different
regime.

There is an intriguing similarity between the Airy integrals that appear in the
current paper, and Airy integrals that appear in the study of the probability
density function of a reflected Brownian motion with positive parabolic drift.
To give an example, in Knessl's paper~\cite[(3.8)]{knessl2000exact}, an
expression
\[
    Q(x,\mu)= e^{\mu x} 2^{2/3}
    \dfrac{1}{2 \pi i}
    \int_{-i \infty}^{i \infty}
    e^{\mu \tau - \mu^3/6}
    \dfrac{\ai(2^{1/3}(x + \tau))}{\ai(2^{1/3}\tau)^2}
    \mathrm d \tau
\]
arises as a solution to the system of partial differential equations
\[
    \begin{cases}
    Q_\mu = \tfrac12 Q_{xx} - \mu Q_x, & -\infty < \mu < \infty,\, x > 0;\\
    \tfrac12 Q_x(0, \mu) - \mu Q(0, \mu) = 0, & -\infty < \mu < \infty;\\
    Q \to \delta(x), & \mu \to -\infty,
    \end{cases}
\]
where \( Q(x, \mu) \) denotes for any given time \( \mu \) the probability
density function with respect to particle position \( x \geqslant 0 \) of a
positive reflected Brownian motion starting at \( \mu_0 = -\infty \) with drift
\( \mu \cdot \mathrm d\mu \) at time \( \mu \).
Remarkably, when \( x = 0 \), we obtain the rescaled asymptotic probability of
directed acyclic graphs times \( 2n^{1/3} \), so in fact,
\( Q(0, \mu) = 2 \varphi(\mu) \).

As we argue in~\cref{remark:knessl}, this similarity may potentially lead to another
link between the study of graphs and directed graphs and Brownian motions,
originating from the paper of Aldous~\cite{aldous1997brownian} and first applied
to the model of directed graphs in~\cite{Christina2019}. In~\cite{janson2012}, Janson also links various Airy integrals to the moments of the time when a Brownian motion with parabolic drift attains its maximum.

\paragraph*{Acknowledgements.}
Vlady Ravelomanana once asked the question why the convolution product of
generating functions was mostly concerned with \textit{disjoint} components, and
why there was seemingly no attempt to track allowed/forbidden edges
\textit{between} the distinct components.
This simple idea has grown into the discovery of the symbolic method for
directed graphs (which, as discovered later, has been independently established
around 50 years ago by Liskovets, Robinson and then continued by Stanley and
Gessel). Vlady also put in contact our two mini-groups who were working
independently on the problem of digraph asymptotics, which allowed us to join our
techniques and extend them. The authors are infinitely grateful to him for his
inspiration and passion with random graphs and their variations.

Many interesting remarks have arisen during the seminar talks and discussions on
various occasions, so we would like to thank
Cyril Banderier,
Christina Goldschmidt,
Valeriy Liskovets and
Andrea Sportiello,
for their interest and their witty observations.

Sergey Dovgal was partially supported by the HÉRA project, funded by The French National
Research Agency, grant no.: ANR-18-CE25-0002, and by the EIPHI Graduate School (contract
ANR-17-EURE-0002), FEDER and Région Bourgogne Franche-Comté.
\'Elie de Panafieu was supported by the Rise project RandNET, grant no.: H2020-EU.1.3.3 and the Lincs (\url{www.lincs.fr}). Dimbinaina Ralaivaosaona was supported by the National Research Foundation of South Africa, grant no.: 129413.
Stephan Wagner was supported by the Knut and Alice Wallenberg Foundation, grant number KAW 2017.0112.
\bibliographystyle{alpha}
\bibliography{ac-digraphs}

\end{document}